\numberwithin{equation}{section}
\newtheorem{theorem}{Theorem}[section]
\newtheorem{proposition}[theorem]{Proposition}
\newtheorem{claim}[theorem]{Claim}
\newtheorem{lemma}[theorem]{Lemma}
\theoremstyle{definition}
\newtheorem{definition}[theorem]{Definition}
\theoremstyle{remark}
\newtheorem{remark}[theorem]{Remark}
\newcommand{\ind}{\mathrm{ind}}
\newcommand{\nul}{\mathrm{nul}}
\newcommand{\area}{\mathrm{Area}}
\newcommand{\length}{\mathrm{Length}}
\begin{document}
\title[Min-max characterization of the conformal eigenvalues]{Min-max harmonic maps and a new characterization of conformal eigenvalues
}
\begin{abstract}
Given a surface $M$ and a fixed conformal class $c$ one defines $\Lambda_k(M,c)$ to be the supremum of the $k$-th nontrivial Laplacian eigenvalue over all metrics $g\in c$ of unit volume. It has been observed by Nadirashvili that the metrics achieving $\Lambda_k(M,c)$ are closely related to harmonic maps to spheres. 
In the present paper, we identify $\Lambda_1(M,c)$ and $\Lambda_2(M,c)$ with min-max quantities associated to the energy functional for sphere-valued maps. 
As an application, we obtain several new eigenvalue bounds, including a sharp isoperimetric inequality for the first two Steklov eigenvalues. This characterization also yields an alternative proof of the existence of maximal metrics realizing $\Lambda_1(M,c)$, $\Lambda_2(M,c)$ and, moreover, allows us to obtain
a regularity theorem for maximal Radon measures satisfying a natural compactness condition.

\end{abstract}

\author[M. Karpukhin]{Mikhail Karpukhin}
\address{Department of Mathematics\\
University College London\\
25 Gordon Street\\
London, WC1H 0AY, UK
}
\email{m.karpukhin@ucl.ac.uk}
\author[D. Stern]{Daniel Stern}
\address{
Department of Mathematics\\
University of Chicago\\
5734 S. University Ave,
Chicago, IL 60637, USA
}
\email{dstern@uchicago.edu}
\maketitle

\section{Introduction}

\subsection{Eigenvalues of the Laplacian}
Let $(M,g)$ be a closed Riemannian surface, and let $\Delta_g\colon C^\infty(M)\to C^\infty(M)$ be the associated Laplace operator with positive spectrum
$$
0=\lambda_0(M,g)<\lambda_1(M,g)\leqslant \lambda_2(M,g)\leqslant\ldots\nearrow\infty,
$$
where eigenvalues are written with multiplicities. Multiplying eigenvalues by the area $\area(M,g)$, one obtains the scale-invariant quantity
$$
\bar\lambda_k(M,g) = \lambda_k(M,g)\area(M,g).
$$
By results of Yang-Yau~\cite{YY} for $k=1$ and Korevaar~\cite{Korevaar} for $k\geq 1$ there exists a constant $C(M)$ depending only on the topology of $M$ such that for any metric $g$ one has $\bar\lambda_k(M,g)\leqslant C(M)k$.
Given a conformal class $c=[g]=\{g|\,g=f g_0,\,f> 0\}$ on $M$, it therefore makes sense to consider the conformal supremum of $\bar{\lambda}_k$, denoted by
$$
\Lambda_k(M,c) = \sup\limits_{g\in c}\bar\lambda_k(M,g).
$$
The quantities $\Lambda_k(M,c)$ are often referred to as the {\em conformal spectrum} of $(M,c)$, see~\cite{CES}.
Interest in the quantity $\Lambda_k(M,[g])$ stems in large part from the connection between extremal metrics for $\bar{\lambda}_k$ and the theory of harmonic maps to spheres, as described in the following theorem (see Section~\ref{intro_harm:sec} for more details). 

\begin{theorem}[Nadirashvili~\cite{NadirashviliTorus}, El Soufi-Ilias~\cite{ESI}, see also~\cite{FS2}]
\label{extremalThm:intro}
Let $h\in c=[g]$ be a metric such that 
\begin{equation}
\label{optimizer:eq}
\Lambda_k(M,c) = \bar\lambda_k(M,h).
\end{equation}
Then there exists a harmonic map $\Phi\colon(M,g)\to\mathbb{S}^n$ 
such that the components of $\Phi$ are $\lambda_k(M,h)$-eigenfunctions.
\end{theorem}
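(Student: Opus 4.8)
The plan is to run the classical variational argument of El Soufi--Ilias. By scale invariance we normalize $\area(M,h)=1$, so that $\Lambda_k(M,c)=\lambda:=\lambda_k(M,h)$; write $E\subset C^\infty(M)$ for the $\lambda$-eigenspace of $\Delta_h$ and $m=\dim E$. For each test function $\rho\in C^\infty(M)$ I would consider the conformal deformation $h_t=e^{2t\rho}h\in c$. Since in dimension two the Dirichlet energy $\int_M|\nabla u|_{h_t}^2\,dA_{h_t}=\int_M|\nabla u|_h^2\,dA_h$ is conformally invariant, $\lambda_k(M,h_t)$ equals the $k$-th eigenvalue of the generalized problem $\Delta_h u=\mu\,e^{2t\rho}u$, an analytic family of self-adjoint eigenvalue problems. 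Standard perturbation theory for eigenvalue clusters then shows that $t\mapsto\lambda_k(M,h_t)$ is Lipschitz near $0$, and that its one-sided derivatives at $t=0$ are among the eigenvalues $\mu_1(\rho)\le\cdots\le\mu_m(\rho)$ of the symmetric bilinear form $v\mapsto -2\lambda\int_M\rho\,v^2\,dA_h$ on $E$; here $\mu_1(\rho)=-2\lambda\max\{\int_M\rho\,v^2\,dA_h:v\in E,\ \|v\|_{L^2}=1\}$ and $\mu_m(\rho)=-2\lambda\min\{\int_M\rho\,v^2\,dA_h:v\in E,\ \|v\|_{L^2}=1\}$, so both one-sided derivatives lie in $[\mu_1(\rho),\mu_m(\rho)]$.

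Next, combining this with $\area(M,h_t)=1+2t\int_M\rho\,dA_h+o(t)$, the product rule, and the hypothesis that $t\mapsto\bar\lambda_k(M,h_t)$ has a maximum at $t=0$ (so its right derivative is $\le 0$ and its left derivative is $\ge 0$), I would deduce that the right derivative of $\bar\lambda_k(M,h_t)$ is $\ge\mu_1(\rho)+2\lambda\int_M\rho\,dA_h$ and the left derivative is $\le\mu_m(\rho)+2\lambda\int_M\rho\,dA_h$, whence for every $\rho\in C^\infty(M)$
\begin{equation*}
\min_{v\in E,\ \|v\|_{L^2}=1}\int_M\rho\,v^2\,dA_h\ \le\ \int_M\rho\,dA_h\ \le\ \max_{v\in E,\ \|v\|_{L^2}=1}\int_M\rho\,v^2\,dA_h.
\end{equation*}
The heart of the argument is to convert this one-parameter family of scalar inequalities into a statement about functions via convexity. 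Let $\Sigma=\{v^2:v\in E,\ \int_M v^2\,dA_h=1\}$, a compact subset of the finite-dimensional space $V=\operatorname{span}\{uv:u,v\in E\}\subset C^\infty(M)$, so that $\operatorname{conv}(\Sigma)$ is compact; the displayed inequalities say precisely that $\int_M\rho\,dA_h$ lies between the minimum and maximum of $\sigma\mapsto\int_M\rho\,\sigma\,dA_h$ over $\sigma\in\operatorname{conv}(\Sigma)$, for every $\rho$. If $1\notin\operatorname{conv}(\Sigma)$, I would separate: since every linear functional on the finite-dimensional space $V+\mathbb{R}\cdot 1$ is represented by integration against some $\rho\in C^\infty(M)$ (expand in an $L^2(dA_h)$-orthonormal basis), the Hahn--Banach separation theorem would produce a $\rho$ with $\int_M\rho\,dA_h>\max_{\sigma\in\operatorname{conv}(\Sigma)}\int_M\rho\,\sigma\,dA_h$, contradicting the above. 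Hence $1\in\operatorname{conv}(\Sigma)$, and Carath\'eodory's theorem yields $\phi_1,\ldots,\phi_{n+1}\in E$ with $\sum_\alpha\phi_\alpha^2\equiv 1$, each $\phi_\alpha$ a $\lambda_k(M,h)$-eigenfunction (and necessarily $n\ge 1$, since no single eigenfunction is $\pm1$-valued on a connected surface).

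Finally I would verify that $\Phi=(\phi_1,\ldots,\phi_{n+1})$ is the desired map: since $|\Phi|^2\equiv 1$ it takes values in $\mathbb{S}^n$; since $\Delta_h\Phi=\lambda\Phi$ componentwise, the identity $0=\Delta_h|\Phi|^2=2\lambda|\Phi|^2-2|\nabla\Phi|_h^2$ gives $|\nabla\Phi|_h^2\equiv\lambda$, hence $\Delta_h\Phi=|\nabla\Phi|_h^2\,\Phi$, which is exactly the harmonic map equation for $\Phi\colon(M,h)\to\mathbb{S}^n$. Conformal invariance of harmonicity for maps out of surfaces then upgrades this to $\Phi\colon(M,g)\to\mathbb{S}^n$ harmonic, with components that are $\lambda_k(M,h)$-eigenfunctions, as claimed. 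I expect the main obstacle to be the perturbation-theoretic input in the first paragraph — correctly establishing one-sided differentiability of $\lambda_k$ and identifying the derivatives when $\lambda$ has multiplicity $>1$ — together with the measure-theoretic care needed in the convexity step to ensure the separating functional is realized by a smooth density and that a genuine \emph{finite} convex combination is obtained.
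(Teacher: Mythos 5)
The paper does not prove Theorem~\ref{extremalThm:intro} itself but cites it to Nadirashvili, El Soufi--Ilias, and Fraser--Schoen, and your argument is a correct and complete rendition of exactly the El Soufi--Ilias / Fraser--Schoen approach those references take: one-sided derivatives of the clustered eigenvalue $\lambda_k$ along a conformal deformation $e^{2t\rho}h$ combined with the area factor give the two-sided variational condition $\min_{v\in E,\|v\|_{L^2}=1}\int\rho\,v^2\le\int\rho\le\max_{v\in E,\|v\|_{L^2}=1}\int\rho\,v^2$, a Hahn--Banach/Carath\'eodory argument in the finite-dimensional span of products of eigenfunctions then produces $\phi_1,\dots,\phi_{n+1}\in E$ with $\sum\phi_\alpha^2\equiv 1$, and the Bochner-type identity $0=\Delta_h|\Phi|^2=2\lambda|\Phi|^2-2|\nabla\Phi|_h^2$ shows $\Phi=(\phi_1,\dots,\phi_{n+1})$ solves $\Delta_h\Phi=|\nabla\Phi|_h^2\Phi$, i.e.\ is a harmonic map to $\mathbb{S}^n$, whence harmonicity with respect to $g$ follows by conformal invariance in dimension two. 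The perturbation-theoretic step you flagged as the main risk is handled correctly: after splitting into Rellich branches with derivatives $\mu_1(\rho)\le\cdots\le\mu_m(\rho)$ (the eigenvalues of $v\mapsto -2\lambda\int\rho v^2$ on $E$), the right derivative of $\lambda_k$ is some $\mu_i\ge\mu_1$ and the left derivative is some $\mu_{m-i+1}\le\mu_m$, and combining with the first-order variation $2\lambda\int\rho$ of the area and the extremality of $\bar\lambda_k$ at $t=0$ gives precisely the two inequalities you used.
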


As a result, the problem of exhibiting a metric $g\in c$ satisfying~\eqref{optimizer:eq} is of interest not only from the perspective of spectral theory, but also as a means for producing a distinguished collection of harmonic maps from Riemann surfaces into spheres. Our understanding of this problem has seen significant progress in the recent years: we refer the reader to~\cite{Petrides, Petridesk, NS, KNPP2}, where two different approaches to this problem are developed; see also Section~\ref{intro_harm:sec} below for details. 

The connection between $\bar{\lambda}_k$-extremal measures and sphere-valued harmonic maps hints at the possibility of a deeper relationship between the conformal spectrum and the variational theory of the Dirichlet energy for sphere-valued maps. In the present paper, we make this relationship explicit in the cases $k=1,2$, characterizing $\Lambda_1(M,c)$ and $\Lambda_2(M,c)$ as the min-max energies associated to certain families of sphere-valued maps on $M$.
%
%
%
%

\subsection{Min-max characterization of $\Lambda_1(M,c)$ and applications.}

Let $(M,g)$ be a closed Riemannian surface. For the purposes of intuition, we introduce the collection $\widetilde{\Gamma}_n(M)$ of families
$$\overline{B}^{n+1}\ni a\mapsto F_a\in W^{1,2}(M,\mathbb{S}^n)\text{ such that }F_a\equiv a\text{ for }a\in \mathbb{S}^n$$
continuous with respect to the \emph{weak topology} on $W^{1,2}$. A motivating example comes from composing a given map $M\to\mathbb{S}^n$ with the family of conformal dilations of $\mathbb{S}^n$, as described in Section 2.4 below.

By standard topological arguments, the boundary conditions imposed on $F\in \widetilde{\Gamma}_n(M)$ force the existence of a map $F_y$ in the family with zero average $\int_M F_y=0\in \mathbb{R}^{n+1}$, so that the Dirichlet energies $E(F_y)$ satisfy
$$
\sup_a 2E(F_a)\geqslant \lambda_1(M,g)\|F_y\|_{L^2(M,g)}^2=\bar{\lambda}_1(M,g).
$$ 
In particular, since the Dirichlet energy is a conformal invariant, it follows that the maximal eigenvalue $\Lambda_1(M,[g])$ is bounded above by an associated {\em min-max energy}
$$
\widetilde{\mathcal E_n} = \inf_{F\in \widetilde\Gamma_n(M)}\sup_{a\in B^{n+1}} E(F_a)\geqslant \frac{1}{2}\Lambda_1(M,[g]),
$$
similar to the classical conformal volume bounds of Li and Yau \cite{LiYau}. For technical reasons clarified below, we do not work directly with $\widetilde{\mathcal E_n}$, but introduce a related min-max energy $\mathcal E_n\leqslant \widetilde{\mathcal E_n}$, defined via a relaxation of Ginzburg-Landau type, see~\eqref{GL:example} below. For $n>5$, our first result confirms that these min-max energies $\mathcal{E}_n$ are achieved as the energies of harmonic maps to $\mathbb{S}^n$. Crucially, by virtue of the min-max construction, these maps also come with natural bounds on their \emph{energy index}--i.e., their Morse index as critical points of the energy functional.
\begin{theorem}
\label{MainTh1:intro}
Let $n>5$. Then for any Riemannian surface $(M,g)$, there exists a harmonic map $\Psi_n:M\to \mathbb{S}^n$ such that 
\begin{equation}
\label{Ebound:eq}
\mathcal E_n = E(\Psi_n)\geqslant \frac{1}{2}\Lambda_1(M,c),
\end{equation}
whose energy index $\ind_E(\Psi_n)$ satisfies
$$
\ind_E(\Psi_n)\leqslant n+1.
$$
\end{theorem}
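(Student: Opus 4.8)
The plan is to realize $\mathcal E_n$ as a min-max value for a Ginzburg--Landau relaxation of the Dirichlet energy, produce a critical point of the relaxed functional carrying the required index bound, and then let the relaxation parameter tend to zero. For $\epsilon>0$ consider
$$
E_\epsilon(u)=\int_M\Bigl(\tfrac12|\nabla u|^2+\tfrac1{4\epsilon^2}(1-|u|^2)^2\Bigr)\,dv_g,\qquad u\in W^{1,2}(M,\mathbb{R}^{n+1}),
$$
and the min-max value $\beta_\epsilon=\inf\sup_a E_\epsilon(F_a)$ over families $\overline{B}^{n+1}\ni a\mapsto F_a\in W^{1,2}(M,\mathbb{R}^{n+1})$ with $F_a\equiv a$ for $a\in\mathbb{S}^n=\partial\overline{B}^{n+1}$; by construction (cf.~\eqref{GL:example}) these converge to $\mathcal E_n$ as $\epsilon\to0$. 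The lower bound $\mathcal E_n\geq\frac12\Lambda_1(M,c)$ is essentially the argument given just before the theorem: the boundary condition forces every such family to contain a map $F_y$ with $\int_M F_y=0\in\mathbb{R}^{n+1}$, so $\sup_a E_\epsilon(F_a)\geq E_\epsilon(F_y)\geq E(F_y)\geq\tfrac12\lambda_1(M,g)\|F_y\|_{L^2(M,g)}^2$; since the Ginzburg--Landau penalty forces $\|F_y\|_{L^2}^2\to\area(M,g)$ along energy-bounded sequences and $E$ is conformally invariant, taking the infimum over families and letting $\epsilon\to0$ gives $\mathcal E_n\geq\tfrac12\bar\lambda_1(M,g)$ for every $g\in c$.

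First I would fix $\epsilon$. The functional $E_\epsilon$ is $C^2$ on $W^{1,2}(M,\mathbb{R}^{n+1})$ (using the two-dimensional embedding $W^{1,2}\hookrightarrow L^p$, $p<\infty$) and satisfies the Palais--Smale condition at every level, since an energy bound controls $\|u\|_{W^{1,2}}+\|u\|_{L^4}$ and the Ginzburg--Landau nonlinearity is weakly continuous. As $\beta_\epsilon\geq\tfrac12\Lambda_1(M,c)$ strictly exceeds the value $0$ attained by $E_\epsilon$ on the constant boundary maps, standard deformation theory — together with a Marino--Prodi-type perturbation to handle possible degeneracies — produces a critical point $u_\epsilon$ of $E_\epsilon$, i.e.\ a solution of $-\Delta u_\epsilon=\tfrac1{\epsilon^2}(1-|u_\epsilon|^2)u_\epsilon$, with $E_\epsilon(u_\epsilon)=\beta_\epsilon$ and Morse index $\ind_{E_\epsilon}(u_\epsilon)\leq\dim\overline{B}^{n+1}=n+1$. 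The maximum principle gives $|u_\epsilon|\leq1$, so $\{u_\epsilon\}$ is bounded in $W^{1,2}$.

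Next I would send $\epsilon\to0$. By the bubble-tree analysis for the Ginzburg--Landau system on surfaces (in analogy with the harmonic map heat flow and $\alpha$-harmonic maps, and including the energy-quantization/no-neck theorem), after passing to a subsequence $u_\epsilon\rightharpoonup\Psi$ in $W^{1,2}$, where $\Psi\colon M\to\mathbb{S}^n$ is a smooth harmonic map, energy concentrates at finitely many points, and at each of them one extracts finitely many non-constant harmonic spheres $\sigma_i\colon\mathbb{S}^2\to\mathbb{S}^n$ with the energy identity $\mathcal E_n=E(\Psi)+\sum_iE(\sigma_i)$. In parallel one needs an upper semicontinuity of the index along this degeneration,
$$
\ind_E(\Psi)+\sum_i\ind_E(\sigma_i)\ \leq\ \liminf_{\epsilon\to0}\ind_{E_\epsilon}(u_\epsilon)\ \leq\ n+1,
$$
obtained by comparing the second variation of $E_\epsilon$ at $u_\epsilon$ with those of $E$ at $\Psi$ and at the rescaled bubbles; this is one of the two technical cores of the argument.

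The other, and the main obstacle, is to exclude bubbling, i.e.\ to show $\sum_iE(\sigma_i)=0$, and this is exactly where the hypothesis $n>5$ enters. The key input is the index gap for harmonic two-spheres: every non-constant harmonic map $\mathbb{S}^2\to\mathbb{S}^n$ has $\ind_E\geq n-2$ (with equality for the totally geodesic embedding). If two or more bubbles formed, the index inequality above would force $2(n-2)\leq n+1$, i.e.\ $n\leq5$, a contradiction; so at most one bubble can occur, and this is the precise numerology fixing the threshold $5$. The remaining case of a single bubble over a (possibly constant) body $\Psi$ is the subtlest point and must be excluded by a separate argument contradicting the definition of $\mathcal E_n$: for instance, one shows that a sweepout whose top-level critical point exhibits such pure concentration can be modified, reabsorbing the bubble, into a competing family of strictly smaller maximal energy, the condition $n>5$ again supplying the room needed for the relevant deformations inside $\mathbb{S}^n$. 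Once bubbling is ruled out, $\Psi=:\Psi_n$ is the desired harmonic map: $E(\Psi_n)=\mathcal E_n\geq\tfrac12\Lambda_1(M,c)$ and $\ind_E(\Psi_n)\leq n+1$.
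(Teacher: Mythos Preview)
Your overall strategy matches the paper's: Ginzburg--Landau relaxation, min-max critical points with Morse index $\leq n+1$, bubble-tree convergence with lower semicontinuity of the index, and the use of the index gap $\ind_E\geq n-2$ to bound the number of nonconstant pieces in the bubble tree. Up to that point the argument is essentially the paper's.

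The gap is in your handling of the single-bubble case. First, the bound $\ind_E\geq n-2$ (El Soufi) holds for \emph{any} nonconstant harmonic map into $\mathbb{S}^n$, not just for harmonic $2$-spheres; applying it to the body map $\Psi$ as well shows that if a bubble forms then $\Psi$ must be constant, so the scenario ``single bubble over a nonconstant body'' is already excluded. Second, and more importantly, your proposed exclusion of the remaining case---``reabsorb the bubble by modifying the sweepout to lower the maximal energy''---is not how the paper proceeds, and it is not clear how to make such a deformation argument rigorous. The paper instead uses two further ingredients you do not mention: a sharper index bound for harmonic $2$-spheres showing that for $n>4$ any \emph{non-equatorial} harmonic map $\mathbb{S}^2\to\mathbb{S}^n$ has $\ind_E>n+1$, which forces the lone bubble to be totally geodesic and hence $\mathcal{E}_n=4\pi$; and Petrides' rigidity $\Lambda_1(M,[g])>8\pi$ for surfaces of positive genus, which then contradicts $\mathcal{E}_n\geq\tfrac12\Lambda_1(M,[g])$. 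Without these two pieces your argument does not close.
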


Note that the right hand side of~\eqref{Ebound:eq} does not depend on $n$. Therefore, it makes sense to study the behavior of this inequality as $n$ becomes large. Our second result is the following, showing that \eqref{Ebound:eq} becomes an equality for $n$ sufficiently large.

\begin{theorem}[Min-max characterization of $\Lambda_1(M,c)$]
\label{MainTh2:intro}
Given a surface $M$ and a conformal class $c$ on $M$, there exists $N=N(M,c)$ such that for all $n\geqslant N$, the components of $\Psi_n$ lie in the first positive eigenspace of the Laplacian $\Delta_{g_{\Psi_n}}$ for the conformal metric $g_{\Psi_n}=|d\Psi_n|_g^2g$ (which may have conical singularities). In particular, 
$$
\mathcal E_n = E(\Psi_n) = \frac{1}{2}\Lambda_1(M,c).
$$
\end{theorem}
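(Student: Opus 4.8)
The plan is to leverage Theorem~\ref{MainTh1:intro}, which already provides a harmonic map $\Psi_n:M\to\mathbb{S}^n$ with $\mathcal E_n=E(\Psi_n)\geqslant\tfrac12\Lambda_1(M,c)$ and energy index at most $n+1$, and to upgrade it to an equality by a dimensional stabilization argument. First I would set up the dichotomy: either the components of $\Psi_n$ are first eigenfunctions of $\Delta_{g_{\Psi_n}}$ — in which case $g_{\Psi_n}$ (suitably renormalized) is a competitor for $\Lambda_1(M,c)$ with $\bar\lambda_1(M,g_{\Psi_n})=2E(\Psi_n)$, giving $2\mathcal E_n\leqslant\Lambda_1(M,c)$ and hence equality by~\eqref{Ebound:eq} — or they are not, in which case there must exist a nontrivial eigenspace of $\Delta_{g_{\Psi_n}}$ with eigenvalue strictly below $2E(\Psi_n)/\area(M,g_{\Psi_n})$ that is "wasted." The goal is to show the second alternative cannot persist as $n\to\infty$.

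The key step is a compactness/bubbling analysis for the sequence $\Psi_n$. Since $E(\Psi_n)=\mathcal E_n$ is monotone nonincreasing in $n$ (any family into $\mathbb{S}^n$ sits inside $\mathbb{S}^{n+1}$, so $\mathcal E_{n+1}\leqslant\mathcal E_n$) and bounded below by $\tfrac12\Lambda_1(M,c)>0$, the energies converge to some limit $E_\infty\geqslant\tfrac12\Lambda_1(M,c)$. Using the uniform index bound $\ind_E(\Psi_n)\leqslant n+1$ together with the bubble-tree convergence theory for harmonic maps from surfaces, I would argue that after passing to a subsequence the maps $\Psi_n$ converge (weakly, with finitely many energy-concentration bubbles) to a harmonic map into some fixed-dimensional sphere, and that the conformal metrics $g_{\Psi_n}$ converge in an appropriate weak sense (e.g.\ as Radon measures, or in the sense of the regularity theorem alluded to in the abstract) to a limiting maximal measure $\mu_\infty$ with $\bar\lambda_1=2E_\infty$. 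The point is that a limit of harmonic maps realizing a min-max energy, with controlled index, must itself be $\lambda_1$-extremal: if it weren't, one could use a low eigenfunction of the limit to construct, for $n$ large, a competitor family lowering $\mathcal E_n$, contradicting its definition as the min-max value. This forces $E_\infty=\tfrac12\Lambda_1(M,c)$ and, by the monotonicity and the fact that the sequence $\mathcal E_n$ is integer-indexed-like in its stabilization behavior, $\mathcal E_n=E_\infty$ for all $n\geqslant N$.

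To pin down the precise claim — that for $n\geqslant N$ the components of $\Psi_n$ themselves are first eigenfunctions — I would combine the equality $\mathcal E_n=\tfrac12\Lambda_1(M,c)$ with Theorem~\ref{extremalThm:intro} or its proof: once $g_{\Psi_n}$ achieves $\Lambda_1(M,c)$, the extremal-metric structure theory identifies the coordinate functions of the associated harmonic map as first eigenfunctions, and one checks this associated map coincides with $\Psi_n$ up to the standard normalization $|d\Psi_n|_g^2 g=g_{\Psi_n}$. The main obstacle I anticipate is the compactness argument: controlling energy concentration and ruling out that the bubbles carry away enough energy or spread the image across unboundedly many dimensions in a way that decouples the limit from the $\lambda_1$-problem. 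Handling this likely requires the uniform index bound in an essential way — to limit the number and nature of bubbles — and a careful argument that the weak limit of the metrics $g_{\Psi_n}$ is admissible as a competitor (or as a maximal measure in the sense of the regularity theorem), so that its first eigenvalue genuinely bounds $\Lambda_1(M,c)$ from below. The subtlety of matching the min-max value exactly, rather than merely bounding it, is where the relaxation to the Ginzburg-Landau-type energy $\mathcal E_n\leqslant\widetilde{\mathcal E}_n$ presumably earns its keep, by making the min-max value attained and semicontinuous under the relevant limits.
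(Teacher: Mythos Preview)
Your proposal has the right overall shape—set up the dichotomy via the spectral index, use the uniform energy bound, and force $\ind_S(\Psi_n)=1$ for large $n$—but there is a genuine gap in the central step, and the mechanism you propose for resolving it is not the one that works.

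The first problem is the compactness argument. You propose to run bubble-tree convergence for the sequence $\Psi_n:M\to\mathbb{S}^n$, but the targets $\mathbb{S}^n$ have \emph{varying} dimension, so there is no standard bubble-tree theory to invoke; in particular there is no ambient space in which the maps themselves can converge. The paper's solution (Proposition~\ref{MainProp:intro}, proved as Theorem~\ref{stabilization:thm} via Lemma~\ref{bub.lem}) is to forget the maps and run the bubbling analysis on the \emph{energy densities} $|d\Psi_n|_g^2$, which are scalar functions on $M$ independent of the target dimension. The outcome is not a limiting harmonic map but a uniform bound on the spectral nullity $\nul_S(\Psi_n)$: if the nullities were unbounded, the limiting density $\rho$ (in the bubble-tree sense, on a disjoint union $M\sqcup\mathbb{S}^2\sqcup\cdots\sqcup\mathbb{S}^2$) would have an eigenvalue of infinite multiplicity, contradicting discreteness of its spectrum.

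The second problem is the mechanism for forcing $\ind_S(\Psi_n)=1$. Your suggestion—use a low eigenfunction of the limit to build a competitor family in $\Gamma_n(M)$ and lower $\mathcal{E}_n$—is not carried out, and it is not clear how one would do this: elements of $\Gamma_n(M)$ are $(n+1)$-parameter families of $\mathbb{R}^{n+1}$-valued maps with prescribed boundary behaviour, and a single scalar eigenfunction does not obviously produce such a family. The paper instead argues algebraically. The nullity bound forces every $\Psi_n$ (for $n$ large) to take values in a totally geodesic subsphere $\mathbb{S}^N\subset\mathbb{S}^n$ of fixed dimension $N$. Then Proposition~\ref{indEindS:prop} gives
\[
\ind_E(\Psi_n)=\ind_E(\Psi_n|_{\mathbb{S}^N})+(n-N)\,\ind_S(\Psi_n)\geq (n-N)\,\ind_S(\Psi_n),
\]
so if $\ind_S(\Psi_n)\geq 2$ one gets $\ind_E(\Psi_n)\geq 2(n-N)>n+1$ for $n>2N+1$, contradicting the index bound from Theorem~\ref{MainTh1:intro}. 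This index-comparison step is the actual engine of the proof, and your sketch does not contain it.
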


As an immediate consequence, one sees that our min-max procedure provides an alternative construction of conformally maximizing metrics for $\bar{\lambda}_1(M,g)$. While the existence of maximizing metrics has been established in~\cite{Petrides, KNPP2} by other methods, the novel feature of Theorem \ref{MainTh2:intro} is the identification of the supremal eigenvalue $\frac{1}{2}\Lambda_1(M,[g])$ with the min-max energies $\mathcal{E}_n$ for $n$ sufficiently large. This characterization leads to a number of new estimates relating $\Lambda_1(M,[g])$ to other spectral quantities, allowing us to refine many known eigenvalue bounds involving the Li--Yau conformal volume $V_c(M,[g])$, by replacing $V_c(M,[g])$ with $\frac{1}{2}\Lambda_1(M,[g])$. In several cases of interest--as we will see below--these refined estimates in terms of $\Lambda_1(M,[g])$ turn out to be sharp.

%
%

Let us describe some of the applications of this min-max characterization. 
In~\cite{Kokarev}, Kokarev defined a natural analog of Laplacian eigenvalues $\lambda_k(M,c,\mu)$ associated to any {\em Radon measure} $\mu$ on a surface $M$ endowed with a conformal class $c$, and noted that the first normalized eigenvalue $\bar\lambda_1(M,c,\mu)$ is bounded from above by twice the conformal volume. We are able to replace the conformal volume by $\mathcal E_n$ in this estimate, provided that the Radon measure satisfies a certain natural compactness condition (see Theorem~\ref{intro_regularity:thm}). We call such measures {\em admissible}. As a result, one has that $\Lambda_1(M,c)$ is an upper bound for the first normalized eigenvalue of any admissible Radon measure. Moreover, we are able to characterize the equality case, arriving at a {\em regularity theorem} for $\bar\lambda_1$-maximal admissible measures. This answers Question 1 in~\cite[Section 6.2]{Kokarev}.

\begin{theorem}[Regularity theorem for $\bar\lambda_1$-maximal measures]
\label{intro_regularity:thm}
Let $\mu$ be a Radon measure on $(M,g)$ such that the map
$$
T\colon W^{1,2}(M,g)\to L^2(M,\mu)
$$
is well-defined and compact. Then one has 
\begin{equation}
\label{intro_measures:ineq}
\bar\lambda_1(M,[g],\mu)\leqslant \Lambda_1(M,[g]).
\end{equation}
Suppose that $\mu$ is $\bar\lambda_1$-maximal, i.e. that inequality~\eqref{intro_measures:ineq} is an equality. Then there exists a harmonic map $\Phi\colon (M,g)\to\mathbb{S}^n$ such that
$$
d\mu = \frac{1}{\lambda_1(M,[g],\mu)}|d\Phi|_g^2\,dv_g = \frac{1}{\lambda_1(M,[g],\mu)}\,dv_{g_\Phi},
$$
where $g_\Phi = |d\Phi|_g^2\,dv_g$,  and the components of $\Phi$ are the first eigenfunctions of the Laplacian $\Delta_{g_{\Phi}}$. In particular, $d\mu$ is smooth. 
\end{theorem}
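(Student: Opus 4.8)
The plan is to establish the inequality \eqref{intro_measures:ineq} by a Hersch-type balancing argument carried out inside the Ginzburg--Landau relaxation defining $\mathcal E_n$, and then to extract the rigidity statement by following the equality case through that argument together with the proof of Theorem~\ref{MainTh1:intro}. Two facts will be used throughout: in dimension two the Dirichlet energy $\int_M|\nabla u|_g^2\,dv_g$ depends only on $c=[g]$, so that $\lambda_1(M,c,\mu)$ is the infimum of the Rayleigh quotient $\int_M|\nabla u|_g^2\,dv_g / \int_M u^2\,d\mu$ over nonzero $u\in W^{1,2}(M,g)$ with $\int_M u\,d\mu=0$; and, by Theorem~\ref{MainTh2:intro}, $2\mathcal E_n=\Lambda_1(M,c)$ for all $n\geqslant N(M,c)$.

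For the inequality, fix $n\geqslant N(M,c)$ and $\delta>0$, and choose a family $\overline{B}^{n+1}\ni a\mapsto u_a^\varepsilon\in W^{1,2}(M,\overline{B}^{n+1})$ with $u_a^\varepsilon\equiv a$ for $a\in\mathbb{S}^n$ (truncating to $\overline{B}^{n+1}$ if necessary, so that $|u_a^\varepsilon|\leqslant 1$) whose maximal relaxed energy is below $\mathcal E_n+\delta$. Since $a\mapsto u_a^\varepsilon$ is weakly continuous into $W^{1,2}(M,g)$ and $T$ is compact, the map $a\mapsto\int_M u_a^\varepsilon\,d\mu$ is continuous and equals $a\,\mu(M)$ on $\partial\overline{B}^{n+1}$, so a Brouwer degree argument yields $y=y(\varepsilon)$ with $\int_M u_y^\varepsilon\,d\mu=0$. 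Using the components of $u_y^\varepsilon$ as test functions and summing over $i$ gives $\lambda_1(M,c,\mu)\int_M|u_y^\varepsilon|^2\,d\mu\leqslant 2E(u_y^\varepsilon)\leqslant 2(\mathcal E_n+\delta)$, where $E$ denotes the Dirichlet energy. As $\varepsilon\to 0$ the relaxation forces $1-|u_y^\varepsilon|^2\to 0$ in $L^2(M,g)$; this function is bounded in $W^{1,2}(M,g)$ because $|u_y^\varepsilon|\leqslant 1$, so compactness of $T$ promotes the convergence to $L^2(M,\mu)$, whence $\int_M|u_y^\varepsilon|^2\,d\mu\to\mu(M)$. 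Letting $\delta\to 0$ gives $\bar\lambda_1(M,c,\mu)\leqslant 2\mathcal E_n=\Lambda_1(M,c)$. It is precisely this last step that uses admissibility of $\mu$: without compactness of $T$ the deficit $1-|u_y^\varepsilon|^2$ need not vanish $\mu$-a.e., and concentration of $\mu$ can break the bound.

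Now suppose that equality holds in \eqref{intro_measures:ineq}. Arguing as in the proof of Theorem~\ref{MainTh1:intro}, take the families $\{u_a^\varepsilon\}$ to be near-optimizers of the Ginzburg--Landau min-max with near-maximal slices concentrated near the critical set; the balanced slices $u_y^\varepsilon$ then converge, up to a subsequence and a priori bubbling, to a harmonic map $\Phi\colon(M,g)\to\mathbb{S}^n$, with $\int_M\Phi\,d\mu=\lim_\varepsilon\int_M u_y^\varepsilon\,d\mu=0$ by compactness of $T$. Since $\bar\lambda_1(M,c,\mu)=2\mathcal E_n$, the nonnegative quantities $\int_M|\nabla u_y^{\varepsilon,i}|_g^2\,dv_g-\lambda_1(M,c,\mu)\int_M (u_y^{\varepsilon,i})^2\,d\mu$ tend to $0$ for each $i$; combined with $u_y^\varepsilon\to\Phi$ in $L^2(M,\mu)$, weak lower semicontinuity of the Dirichlet energy, and $\int_M\Phi^i\,d\mu=0$, this forces each component $\Phi^i$ to realize the Rayleigh quotient, i.e.\ $\int_M\langle\nabla\Phi^i,\nabla\psi\rangle_g\,dv_g=\lambda_1(M,c,\mu)\int_M\Phi^i\psi\,d\mu$ for all $\psi\in W^{1,2}(M,g)$, and pins the gradient energies so that $u_y^\varepsilon\to\Phi$ strongly in $W^{1,2}$, ruling out bubbling and giving $E(\Phi)=\mathcal E_n=\tfrac12\Lambda_1(M,c)$. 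I expect the crux of the argument to lie here: combining the min-max deformation theory (harmonicity of the limit) with the surviving balancing constraint and the $L^2(M,\mu)$-compactness in order to conclude that $\Phi$ is at once a harmonic map and a $\lambda_1(M,c,\mu)$-eigenmap.

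With $\Phi$ harmonic and its components $\lambda_1(M,c,\mu)$-eigenfunctions, the rest is algebra. Harmonicity into $\mathbb{S}^n$ gives $\Delta_g\Phi=|d\Phi|_g^2\,\Phi$, so the eigenfunction identity forces the measures $|d\Phi|_g^2\Phi^i\,dv_g$ and $\lambda_1(M,c,\mu)\Phi^i\,d\mu$ to agree for each $i$; since $\sum_i(\Phi^i)^2\equiv 1$, the sets $\{|\Phi^i|\geqslant (n+1)^{-1/2}\}$ cover $M$, so $d\mu=\lambda_1(M,c,\mu)^{-1}|d\Phi|_g^2\,dv_g=\lambda_1(M,c,\mu)^{-1}\,dv_{g_\Phi}$ on all of $M$. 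By the interior regularity theory for harmonic maps from surfaces $\Phi$ is smooth, hence so is the density of $\mu$. Finally, conformal covariance of the Laplacian in two dimensions gives $\Delta_{g_\Phi}\Phi^i=\Phi^i$, while $\int_M\Phi^i\,dv_{g_\Phi}=\lambda_1(M,c,\mu)\int_M\Phi^i\,d\mu=0$, so $\lambda_1(M,g_\Phi)\leqslant 1$; conversely an eigenfunction of $\Delta_{g_\Phi}$ with eigenvalue below $1$ would have vanishing $\mu$-average and $\mu$-Rayleigh quotient below $\lambda_1(M,c,\mu)$, which is impossible, so $\lambda_1(M,g_\Phi)=1$ and the components of $\Phi$ are first eigenfunctions of $\Delta_{g_\Phi}$.
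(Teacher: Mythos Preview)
Your inequality argument is essentially the paper's: balance a near-optimal family against $\mu$, test the components, and use the Ginzburg--Landau penalty together with compactness of $T$ to recover $\int|u_y^\varepsilon|^2\,d\mu\to\mu(M)$. That part is fine.

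The equality case, however, has a genuine gap. You claim that the balanced slices $u_y^\varepsilon$ converge (up to bubbling) to a \emph{harmonic} map by ``arguing as in the proof of Theorem~\ref{MainTh1:intro}'', but the bubbling analysis there (Lemma~\ref{bubble.business}, via \cite{LW}) applies to \emph{critical points} $\Psi_\varepsilon$ of $E_\varepsilon$, not to slices of a near-optimal family. Even though in the equality case the balanced slice is forced to have near-maximal energy, there is no deformation mechanism tying it to a Palais--Smale sequence; your sentence about ``near-maximal slices concentrated near the critical set'' does not supply one. So harmonicity of the limit $\Phi$ is unjustified at the point where you invoke it.

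The fix is simpler than what you attempt, and it is exactly what the paper does in Proposition~\ref{en.meas.bd}: forget the min-max critical points entirely. Take any weak $W^{1,2}$-limit $u$ of the balanced slices; in the equality case the chain $\lambda_1(\mu,c)\leq\int_M|du|^2\leq\liminf\int_M|du_j|^2\leq 2\mathcal E_n$ collapses, yielding strong $W^{1,2}$-convergence and the eigenfunction identity $\int_M\langle du,dv\rangle\,dv_g=\lambda_1\int_M\langle T(u),T(v)\rangle\,d\mu$ for all $v$. Harmonicity is then a \emph{consequence} of this identity together with $|u|\equiv 1$: for $v\in W^{1,2}\cap L^\infty$, set $w=v-\langle v,u\rangle u$; since $\langle u,w\rangle\equiv 0$, the right-hand side vanishes, while the left-hand side equals $\int_M\langle du,dv\rangle-\int_M|du|^2\langle u,v\rangle$ (using $\langle u,\partial_\alpha u\rangle=0$), which is the weak harmonic map equation. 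Testing instead with $v=\varphi u$ gives $\lambda_1\int_M\varphi\,d\mu=\int_M|du|^2\varphi\,dv_g$ directly, without the covering argument via $\{|\Phi^i|\geq(n+1)^{-1/2}\}$. Your final paragraph on $\mathrm{ind}_S(\Phi)=1$ is correct and is a useful complement to the paper's treatment.
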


\begin{remark}
A year after the present paper was posted, in joint work with M.~Nahon and I. Polterovich \cite{KNPS}, we used the min-max characterization to establish the \emph{stability} of metrics maximizing $\bar{\lambda}_1$ in a conformal class: we show that any sequence of admissible measures $\mu_j$ with $\bar{\lambda}_1(M,[g],\mu_j)\to \Lambda_1(M,[g])$ subsequentially converges in $(W^{1,2})^*$ to a (smooth) conformally $\bar{\lambda}_1$-maximizing measure.
\end{remark}

Kokarev used his observation to obtain an upper bound for the first normalized Steklov eigenvalue on surfaces with boundary, independent of the number of boundary components of the surface~\cite[Theorem $A_1$]{Kokarev}. Recall that for a domain $(\Omega,g)\subset (M,g)$, the \emph{Steklov eigenvalues} $\sigma_k(\Omega,g)$ are defined to be the eigenvalues of the Dirichlet-to-Neumann operator $\mathcal D_g$ on $\partial \Omega$, whose spectrum is discrete if $\Omega$ is e.g. Lipschitz, see~\cite{GP,FS,FS2} for surveys of recent results. The theory of maximal metrics for Steklov eigenvalues has strong parallels with that of Laplacian eigenvalues on closed surfaces, as discussed in Section~\ref{steklov.sec} below.

As a corollary of Theorem~\ref{intro_regularity:thm} we obtain the following. 

\begin{theorem}
\label{Steklovthm:intro}
Let $\Omega\subset M$ be a Lipschitz domain. Then one has
\begin{equation}
\label{Steklov:ineq}
\sigma_1(\Omega,g)\length(\partial \Omega,g)< \Lambda_1(M,[g]).
\end{equation}
\end{theorem}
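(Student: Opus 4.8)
The plan is to deduce the Steklov bound \eqref{Steklov:ineq} from the regularity theorem (Theorem~\ref{intro_regularity:thm}) by associating to the Steklov problem on $\Omega$ an admissible Radon measure on $M$, namely the boundary arclength measure $d\mu = ds_g|_{\partial\Omega}$, viewed as a measure on $(M,g)$. First I would recall that the Steklov eigenvalues of $(\Omega,g)$ coincide with the eigenvalues $\lambda_k(M,[g],\mu)$ of Kokarev's generalized eigenvalue problem for this $\mu$: indeed, the Dirichlet-to-Neumann operator arises precisely from the min-max quotient $\int_\Omega |du|_g^2\,dv_g \big/ \int_{\partial\Omega} u^2\,ds_g$ over functions $u\in W^{1,2}(\Omega,g)$, and since the Dirichlet energy of a function on $M$ is minimized (in a given boundary trace class on $\partial\Omega$) by taking it harmonic on $\Omega$ and on each component of $M\setminus\overline\Omega$, the Rayleigh quotient $\int_M|du|_g^2\,dv_g\big/\int_M u^2\,d\mu$ over $W^{1,2}(M,g)$ has the same spectrum as $\mathcal D_g$ (the part of $M$ away from $\partial\Omega$ contributes nothing to the $L^2(\mu)$ norm, and harmonic extension across $\partial\Omega$ does not increase the numerator). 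In particular $\sigma_1(\Omega,g)\length(\partial\Omega,g) = \bar\lambda_1(M,[g],\mu)$.

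Next I would verify the compactness hypothesis of Theorem~\ref{intro_regularity:thm}, i.e. that the trace map $T\colon W^{1,2}(M,g)\to L^2(M,\mu)=L^2(\partial\Omega,ds_g)$ is well-defined and compact. Since $\Omega$ is Lipschitz, $\partial\Omega$ is a finite union of rectifiable curves, and the classical trace theorem on Lipschitz domains gives a bounded trace $W^{1,2}(\Omega,g)\to H^{1/2}(\partial\Omega)\hookrightarrow L^2(\partial\Omega)$, with the last inclusion compact by Rellich; restricting functions on $M$ to $\Omega$ is bounded, so $T$ is a composition of a bounded map with a compact one, hence compact. Thus $\mu$ is admissible and Theorem~\ref{intro_regularity:thm} applies, yielding $\bar\lambda_1(M,[g],\mu)\leqslant \Lambda_1(M,[g])$, i.e. $\sigma_1(\Omega,g)\length(\partial\Omega,g)\leqslant \Lambda_1(M,[g])$.

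It remains to upgrade this to the \emph{strict} inequality claimed in \eqref{Steklov:ineq}. Here I would argue by contradiction: if equality held, the rigidity part of Theorem~\ref{intro_regularity:thm} would force $d\mu = \lambda_1^{-1}|d\Phi|_g^2\,dv_g$ for some harmonic map $\Phi\colon (M,g)\to\mathbb S^n$, i.e.\ the arclength measure of $\partial\Omega$ would be (a constant multiple of) a \emph{smooth} area measure $dv_{g_\Phi}$ on all of $M$. But a measure concentrated on the $1$-dimensional set $\partial\Omega$ cannot be absolutely continuous with respect to $dv_g$ with smooth density on the $2$-dimensional manifold $M$ (equivalently, $g_\Phi$ would have to be a smooth metric on $M$ whose volume is supported on a curve, forcing $|d\Phi|_g\equiv 0$ off $\partial\Omega$ and hence everywhere by unique continuation for harmonic maps, contradicting $\int_M|d\Phi|_g^2 = \lambda_1\length(\partial\Omega,g)>0$). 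This contradiction gives the strict inequality.

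The main obstacle is the first step: carefully justifying that the generalized eigenvalues $\lambda_k(M,[g],\mu)$ for the boundary measure really do reproduce the Steklov spectrum $\sigma_k(\Omega,g)$, uniformly in the (possibly large) number of boundary components and allowing for the full generality of a Lipschitz domain $\Omega\subset M$. One must be slightly careful that extending a function harmonically across $\partial\Omega$ into $M\setminus\overline\Omega$ is possible in $W^{1,2}$ and does not change the trace — this is where the Lipschitz regularity of $\partial\Omega$, and the fact that $M$ is a closed surface so each component of $M\setminus\overline\Omega$ is a genuine domain, are used. Once the identification of the variational problems is in place, the remaining steps (admissibility via the trace theorem, and the rigidity/strictness argument) are routine.
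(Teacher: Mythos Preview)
Your approach is essentially the paper's: associate to $\Omega$ the boundary arclength measure $\mu=ds_g|_{\partial\Omega}$, check admissibility via the trace theorem for Lipschitz domains, apply the measure inequality/rigidity of Theorem~\ref{intro_regularity:thm}, and rule out equality because a $1$-dimensional measure cannot equal a smooth area density. This is exactly how the paper deduces Theorem~\ref{Steklov:thm} from Theorem~\ref{measures:thm}.

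One point needs correction. Your claimed identification $\sigma_1(\Omega,g)\,\length(\partial\Omega,g)=\bar\lambda_1(M,[g],\mu)$ is false in general, and the justification ``harmonic extension across $\partial\Omega$ does not increase the numerator'' is precisely where it fails: extending a function harmonically into $M\setminus\overline\Omega$ \emph{adds} the (positive) Dirichlet energy of that extension. In fact the Rayleigh quotient on $M$ with weight $\mu$ corresponds to the two-sided operator $\mathcal D_{\Omega}+\mathcal D_{M\setminus\overline\Omega}$, not to $\mathcal D_{\Omega}$ alone. What the paper uses---and what suffices---is only the inequality
\[
\sigma_k(\Omega,g)\leq \lambda_k(M,[g],\mu_{\partial\Omega}),
\]
obtained simply by restricting competitors from $M$ to $\Omega$ (the numerator can only decrease, the denominator is unchanged). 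Your rigidity step then goes through unchanged: if $\bar\sigma_1(\Omega,g)=\Lambda_1(M,[g])$, the chain $\bar\sigma_1\leq \bar\lambda_1(M,[g],\mu)\leq \Lambda_1$ forces $\bar\lambda_1(M,[g],\mu)=\Lambda_1$, and the equality case of Theorem~\ref{intro_regularity:thm} yields the contradiction you describe. So the error is harmless for the conclusion, but you should replace the claimed equality by the easy inequality and drop the discussion of harmonic extension into $M\setminus\overline\Omega$.
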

\begin{remark}
In a recent preprint~\cite{GLS}, the authors use homogenisation techniques to show that by making many small holes in $M$ one can find a sequence of domains $\Omega_n\subset M$ such that 
$$
\lim_{n\to\infty}\sigma_k(\Omega_n,g)\length(\partial \Omega_n,g) = \bar\lambda_k(M,g).
$$
This means that inequality~\eqref{Steklov:ineq} is in fact sharp.
\end{remark}
\begin{remark}
In fact, Theorem~\ref{Steklovthm:intro} holds under much weaker assumptions on $\Omega$--namely, we show that \eqref{Steklov:ineq} holds whenever the trace map $W^{1,2}(M,g)\to L^2(\partial \Omega)$ is compact.
\end{remark}
\begin{remark}
Several months after the present paper was posted, a non-strict version of~\eqref{Steklov:ineq} was reproved in~\cite{GKL} using a direct approximation argument. In particular, it was observed in~\cite{GKL} that the same argument yields an analogous inequality for any index of the eigenvalue, i.e. one has
$$
\sigma_k(\Omega,g)\length(\partial \Omega,g)\leqslant\Lambda_k(M,[g])
$$
for all $k\geqslant 1$.\end{remark}


In Section~\ref{Steklov_applications:sec} we discuss some applications of Theorem~\ref{Steklovthm:intro} and results of~\cite{GLS} to optimization of Steklov eigenvalues. In particular, we obtain the following result (see~\cite{FS} for related results).

\begin{theorem}
Let $\Omega_{\gamma,b}$ be an orientable surface of genus $\gamma$ with $b$ boundary components. Define
$$
\Sigma_1(\gamma,b) = \sup_g\sigma_1(\Omega_{\gamma,b},g)\length(\partial \Omega_{\gamma,b},g).
$$
Then there are infinitely many $\gamma\geqslant 0$ such that for each such $\gamma$ there are infinitely many $b\geqslant 1$ for which the quantity
$\Sigma_1(\gamma,b)$ is achieved by a smooth metric. In particular, for such $(\gamma,b)$ there exists a free boundary minimal branched immersion $f\colon \Omega_{\gamma,b}\to B^{n_{\gamma,b}}$ by the first Steklov eigenfunctions.
\end{theorem}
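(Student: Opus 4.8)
The plan is to combine the strict upper bound of Theorem~\ref{Steklovthm:intro} with the matching lower bound coming from the homogenisation construction of~\cite{GLS}, controlling $\sigma_1\cdot\length$--maximising sequences by a concentration--compactness analysis modelled on the work of Fraser--Schoen. Write $M_\gamma$ for the closed orientable surface of genus $\gamma$ and set
$$
\Lambda_1^{*}(\gamma):=\sup_{c}\Lambda_1(M_\gamma,c)=\sup_{g}\bar\lambda_1(M_\gamma,g),
$$
which is finite by Korevaar's bound; we will use the known fact that $\Lambda_1^{*}(\gamma)$ strictly exceeds both $8\pi$ and $\Lambda_1^{*}(\gamma-1)$ for every $\gamma\geqslant 1$. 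The case $\gamma=0$ is already settled by Fraser--Schoen, so assume $\gamma\geqslant 1$. I would first record three structural properties of $b\mapsto\Sigma_1(\gamma,b)$. (i) \emph{Upper bound}: capping the $b$ boundary circles of $\Omega_{\gamma,b}$ with disks realises it as a smooth domain in a closed genus-$\gamma$ Riemann surface, so Theorem~\ref{Steklovthm:intro} gives $\sigma_1(\Omega_{\gamma,b},g)\length(\partial\Omega_{\gamma,b},g)<\Lambda_1(M_\gamma,[\hat g])\leqslant\Lambda_1^{*}(\gamma)$ for every metric $g$, hence $\Sigma_1(\gamma,b)\leqslant\Lambda_1^{*}(\gamma)$. (ii) \emph{Monotonicity}: removing a vanishingly small disk from a near-optimal surface shows $\Sigma_1(\gamma,b)\leqslant\Sigma_1(\gamma,b+1)$. (iii) \emph{Lower bound}: applying~\cite{GLS} to a metric $g$ on $M_\gamma$ with $\bar\lambda_1(M_\gamma,g)$ arbitrarily close to $\Lambda_1^{*}(\gamma)$ produces genus-$\gamma$ domains $\Omega_n\subset M_\gamma$ with $b_n\to\infty$ and $\sigma_1(\Omega_n,g)\length(\partial\Omega_n,g)\to\bar\lambda_1(M_\gamma,g)$, so together with (i) one gets $\lim_{b\to\infty}\Sigma_1(\gamma,b)=\Lambda_1^{*}(\gamma)$.

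The technical core, and the step I expect to be the main obstacle, is a \emph{gap lemma} for maximising sequences: for each $b$, either $\Sigma_1(\gamma,b)$ is attained by a smooth metric on $\Omega_{\gamma,b}$, or $\Sigma_1(\gamma,b)$ equals $\bar\lambda_1(\mathbb{S}^2)=8\pi$, or $\Sigma_1(\gamma,b)=\Sigma_1(\gamma,b-1)$, or $\Sigma_1(\gamma,b)$ equals the value $\Sigma_1$ of a surface of genus strictly smaller than $\gamma$ (hence a quantity $\leqslant\Lambda_1^{*}(\gamma-1)$). I would prove it via the shape-optimisation scheme of Fraser--Schoen: take metrics $g_j$ of unit boundary length with $\sigma_1(\Omega_{\gamma,b},g_j)\length(\partial\Omega_{\gamma,b},g_j)\to\Sigma_1(\gamma,b)$, pass to a bubble-tree limit of the $g_j$ together with their first Steklov eigenfunctions, and track the conformally invariant Dirichlet energy that escapes into sphere bubbles (each of value $\leqslant 8\pi$), boundary half-disk bubbles, collapsing boundary components, and handle collapses; a useful simplification is that for the \emph{first} eigenvalue a genuine neck degeneration between two substantial pieces drives the eigenvalue to zero and so cannot occur along a maximising sequence. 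When no energy escapes, the $g_j$ converge to a limiting metric whose first eigenfunctions satisfy the Steklov Euler--Lagrange equations, and the regularity theory of Fraser--Schoen upgrades this to a smooth maximiser for which a suitable basis of the first eigenspace defines a free boundary minimal branched immersion $f\colon\Omega_{\gamma,b}\to B^{n_{\gamma,b}}$ --- the final assertion of the theorem.

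It remains to assemble these inputs. First, $\Sigma_1(\gamma,b)<\Lambda_1^{*}(\gamma)$ for every finite $b$: if not, choose the smallest $b_1$ with $\Sigma_1(\gamma,b_1)=\Lambda_1^{*}(\gamma)$; by the gap lemma this value is either attained --- impossible by the strict inequality in Theorem~\ref{Steklovthm:intro} --- or equals $8\pi$, $\Sigma_1(\gamma,b_1-1)$, or a genus-$(\gamma-1)$ quantity $\leqslant\Lambda_1^{*}(\gamma-1)$ (the boundary-collapse alternative at $b_1=1$ being excluded, since it would force the eigenvalue to zero rather than to the positive number $\Lambda_1^{*}(\gamma)$), and each of these is $<\Lambda_1^{*}(\gamma)$ by (i), minimality of $b_1$, and the choice of $M_\gamma$ --- a contradiction. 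Consequently $b\mapsto\Sigma_1(\gamma,b)$ is non-decreasing by (ii), converges to $\Lambda_1^{*}(\gamma)$ by (iii), and lies strictly below this limit at every $b$; in particular it is not eventually constant, so $\Sigma_1(\gamma,b)>\Sigma_1(\gamma,b-1)$ for infinitely many $b$. For any such $b$ that is in addition large enough that $\Sigma_1(\gamma,b)>\max\{8\pi,\Lambda_1^{*}(\gamma-1)\}$ --- which holds for all large $b$ --- the gap lemma excludes the $8\pi$, the $\Sigma_1(\gamma,b-1)$, and the lower-genus alternatives, leaving attainment as the only possibility. Hence $\Sigma_1(\gamma,b)$ is achieved by a smooth metric for infinitely many $b$, and for each such $b$ the asserted free boundary minimal branched immersion by first Steklov eigenfunctions is produced as in the previous paragraph.
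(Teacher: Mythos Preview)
Your approach is essentially the same as the paper's, and the logical skeleton is correct: both combine the strict upper bound $\Sigma_1(\gamma,b)<\Lambda_1(\gamma)$ from Theorem~\ref{Steklovthm:intro}, the limit $\Sigma_1(\gamma,b)\to\Lambda_1(\gamma)$ from~\cite{GLS}, monotonicity in $b$, and the strict genus gap $\Lambda_1(\gamma)>\Lambda_1(\gamma-1)$ of Matthiesen--Siffert~\cite{MS} to conclude that the existence criterion is satisfied for infinitely many $b$.

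The one substantive difference is in your ``gap lemma.'' The paper does not prove this; it invokes Petrides' theorem~\cite{PetridesSteklov}, which states that $\Sigma_1(\gamma,b)$ is attained whenever $\Sigma_1(\gamma,b)>\max\{\Sigma_1(\gamma,b-1),\Sigma_1(\gamma-1,b+1)\}$. Your gap lemma is a mild reformulation of the contrapositive of this (plus monotonicity), and your proposed proof via bubble-tree analysis of a maximising sequence is precisely the content of Petrides' paper --- a substantial piece of work in its own right, which your sketch does not come close to reproducing. Your list of degeneration modes is also slightly off for the Steklov problem: interior sphere bubbles carry no boundary length and so do not produce an ``$8\pi$'' alternative in the way you suggest; the genuine alternatives, as in Petrides, are boundary collapse and handle degeneration. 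This does not break your final argument (the spurious $8\pi$ alternative is easily excluded for large $b$ anyway), but you should be aware that the gap lemma is not something you can prove in a paragraph --- the honest move is to cite~\cite{PetridesSteklov}, exactly as the paper does.
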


\begin{remark}
We remark that Theorem \ref{Steklovthm:intro}, together with the results of \cite{GLS}, provide precise asymptotic description of the areas $\frac{1}{2}\Sigma_1(\gamma,b)$ of the associated free boundary minimal surfaces as $b\to\infty$, showing that they approach the supremum $\Lambda_1(\gamma)$ of $\bar{\lambda}_1(M,g)$ over all metrics on the closed surface of genus $\gamma$. Indeed, roughly a year after the appearance of the present paper and \cite{GLS}, we proved in \cite{KS21} that the free boundary minimal surfaces in the Euclidean ball realizing $\Sigma_1(\gamma,b)$ converge (e.g., as varifolds) to closed minimal surfaces in the sphere realizing $\Lambda_1(\gamma)$ as $b\to\infty$, with areas converging at the rate $\Lambda_1(\gamma)-\Sigma_1(\gamma,b)\sim \frac{\log b}{b}$.
\end{remark}

%

\subsection{Min-max characterization of $\Lambda_2(M,c)$ and applications.} Using similar techniques, we are also able to give a min-max characterization of the maximal second eigenvalue $\Lambda_2(M,c)$. Inspired by Nadirashvili's computation of $\Lambda_2(\mathbb{S}^2)$~\cite{NadirashviliS2} and the subsequent works~\cite{PetridesS2, GNP, GL}, we introduce a $2(n+1)$-parameter min-max construction for harmonic maps to $\mathbb{S}^n$, whose associated min-max energy $\mathcal{E}_{n,2}$ satisfies
\begin{equation}\label{en2.ineq}
{\mathcal E_{n,2}}\geqslant \frac{1}{2}\Lambda_2(M,c).
\end{equation}
This energy is achieved by a harmonic map to $\mathbb{S}^n$, possibly together with a bubble, as described in the following theorem.

\begin{theorem}
\label{MainTh3:intro}
Let $n\geqslant 9$. Then one of the following two situations occurs
\begin{itemize}
\item[(a)] There exists a harmonic map $\Phi_{n,2}$ such that $\mathcal E_{n,2} = E(\Phi_{n,2})$ and $\ind_E(\Phi_{n,2})\leqslant 2(n+1)$.
\item[(b)] There exists a harmonic map $\Phi_{n,2}$ such that $\mathcal E_{n,2} = E(\Phi_{n,2})+4\pi$ and  
$\ind_E(\Phi_{n,2})\leqslant n+4$.
\end{itemize}
\end{theorem}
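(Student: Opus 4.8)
The plan is to realize $\mathcal{E}_{n,2}$ via the Ginzburg--Landau min-max scheme underlying Theorem~\ref{MainTh1:intro}, now applied to the $2(n+1)$-parameter families defining $\mathcal{E}_{n,2}$. First I would relax the constraint $|u|\equiv 1$, replacing the energy by $E_\epsilon(u)=\int_M\bigl(\tfrac12|du|_g^2+\tfrac{1}{4\epsilon^2}(1-|u|^2)^2\bigr)\,dv_g$ on $W^{1,2}(M,\mathbb{R}^{n+1})$, transplant the admissible families to this setting, and set $\mathcal{E}_{n,2}^\epsilon$ to be the associated min-max value. Since $E_\epsilon$ is smooth and satisfies the Palais--Smale condition, deformation-theoretic min-max produces, for each small $\epsilon>0$, a critical point $u_\epsilon$ with $E_\epsilon(u_\epsilon)=\mathcal{E}_{n,2}^\epsilon$ and Morse index $\ind_{E_\epsilon}(u_\epsilon)\le 2(n+1)$, the index bound coming from the dimension of the parameter domain. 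A comparison argument --- projecting near-optimal relaxed families back onto $\mathbb{S}^n$ off small sets, and conversely inserting $\mathbb{S}^n$-valued competitors --- gives $\mathcal{E}_{n,2}^\epsilon\to\mathcal{E}_{n,2}$ as $\epsilon\to0$, with $n$ large keeping the projection and cutoff errors controlled.

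Next I would let $\epsilon\to0$. Using $\epsilon$-regularity and bubble-tree compactness for critical points of $E_\epsilon$ of bounded energy (from the earlier sections), along a subsequence $u_\epsilon$ converges weakly in $W^{1,2}$, and strongly away from finitely many points, to a harmonic map $\Phi_\infty\colon(M,g)\to\mathbb{S}^n$, together with finitely many nonconstant harmonic bubbles $\omega_1,\dots,\omega_m\colon\mathbb{S}^2\to\mathbb{S}^n$, obeying the no-neck energy identity $\mathcal{E}_{n,2}=E(\Phi_\infty)+\sum_{i=1}^m E(\omega_i)$. I would also need upper semicontinuity of the index along this degeneration,
$$\ind_E(\Phi_\infty)+\sum_{i=1}^m\ind_E(\omega_i)\ \le\ \liminf_{\epsilon\to0}\ind_{E_\epsilon}(u_\epsilon)\ \le\ 2(n+1),$$
obtained by comparing the second variation of $E_\epsilon$ at $u_\epsilon$ with the direct sum of the second variations of $E$ at $\Phi_\infty$ and at the bubbles, transplanting Jacobi fields through the necks via bubble-scale cutoffs.

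It remains to identify the bubbling alternative. Every nonconstant harmonic sphere $\omega\colon\mathbb{S}^2\to\mathbb{S}^n$ satisfies $E(\omega)\ge 4\pi$ and $\ind_E(\omega)\ge n-2$, with equality in both exactly when $\omega$ is a degree $\pm1$ parametrization of a totally geodesic $\mathbb{S}^2\subset\mathbb{S}^n$; for $n\ge 9$, every harmonic sphere of energy $\ge 8\pi$ already has $\ind_E>2(n+1)$ (already a degree-two holomorphic sphere does), so any bubble surviving the index bound must be a geodesic $\mathbb{S}^2$ of energy $4\pi$ and index $n-2$. Combining this with the energy identity, the lower bound $\mathcal{E}_{n,2}\ge\tfrac12\Lambda_2(M,c)$ of~\eqref{en2.ineq}, upper bounds of the form $\mathcal{E}_{n,2}\le\tfrac12\Lambda_1(M,c)+4\pi$ from explicit competitors, and a counterpart of Theorem~\ref{MainTh2:intro} identifying the components of $\Phi_\infty$ as first eigenfunctions of $g_{\Phi_\infty}$ whenever $\Phi_\infty$ is nonconstant, one concludes $m\le 1$. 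If $m=0$ we obtain case (a), with $\mathcal{E}_{n,2}=E(\Phi_\infty)$ and $\ind_E(\Phi_\infty)\le 2(n+1)$; if $m=1$, then $\omega_1$ is a geodesic $\mathbb{S}^2$, so $\mathcal{E}_{n,2}=E(\Phi_\infty)+4\pi$ and $\ind_E(\Phi_\infty)\le 2(n+1)-(n-2)=n+4$, which is case (b) with $\Phi_{n,2}:=\Phi_\infty$ (or, if $\Phi_\infty$ is constant, with $\Phi_{n,2}:=\omega_1$).

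I expect the main obstacle to be this last step --- in particular ruling out limits with two or more bubbles, or with a bubble on a nonconstant body, which does not follow from the energy and index bounds alone. Crude energy accounting leaves open configurations of the form ``nonconstant body plus two geodesic spheres''; excluding them should require exploiting the precise structure of the $2(n+1)$-parameter families --- e.g.\ a bubble-merging or homotopy argument showing that such degenerate limits would force $\mathcal{E}_{n,2}$ strictly below $\tfrac12\Lambda_2(M,c)$, contradicting~\eqref{en2.ineq}. Two further delicate ingredients are the precise energy and index gaps for harmonic two-spheres into $\mathbb{S}^n$, together with the verification that the threshold $n\ge 9$ (rather than $n>5$ as in Theorem~\ref{MainTh1:intro}) is exactly what makes the bound $2(n+1)$ exclude the degree-two sphere; and the sharp index semicontinuity under neck degeneration, which is where the additional room afforded by $n\ge 9$ enters the analysis of the larger sweepout. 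The preliminary steps --- $\epsilon$-regularity for $E_\epsilon$, Palais--Smale and deformation theory, and the construction of admissible families --- reuse the machinery built for Theorems~\ref{MainTh1:intro}--\ref{MainTh2:intro}; the genuinely new phenomenon is the interplay between the doubled parameter count $2(n+1)$ and the $4\pi$ quantum of bubble energy, which is exactly what produces the dichotomy.
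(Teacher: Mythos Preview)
Your overall strategy---Ginzburg--Landau relaxation, min-max with Morse index bound $2(n+1)$, bubble-tree compactness with energy identity and index lower semicontinuity---matches the paper's precisely, and your identification of the index $n-2$ for a totally geodesic $\mathbb{S}^2\hookrightarrow\mathbb{S}^n$ is what yields the bound $\ind_E(\Phi_{n,2})\le 2(n+1)-(n-2)=n+4$ in case~(b). The first two paragraphs are essentially the paper's argument.

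The gap is in your final step. You invoke two ingredients that are not available: an upper bound of the form $\mathcal{E}_{n,2}\le\tfrac12\Lambda_1(M,c)+4\pi$ (the paper only proves $\mathcal{E}_{n,2}\le 2V_c$ at this stage; the sharper relation is a \emph{consequence} of Theorem~\ref{MainTh4:intro}, not an input), and a ``counterpart of Theorem~\ref{MainTh2:intro}'' identifying $\Phi_\infty$ as a first-eigenfunction map, which is again Theorem~\ref{MainTh4:intro} and hence circular here. You then worry that ``nonconstant body plus two geodesic spheres'' survives crude accounting and would require a delicate homotopy argument. It does not: El~Soufi's lower bound $\ind_E(\Phi)\ge n-2$ (Proposition~\ref{indEM:prop}) applies to \emph{any} nonconstant harmonic map into $\mathbb{S}^n$, in particular to the body $\Phi_\infty$, not only to the spherical bubbles. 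Hence any configuration with three or more nonconstant pieces has total index at least $3(n-2)$, and $3(n-2)>2(n+1)$ precisely for $n\ge 9$. This single inequality is the reason for the threshold and immediately forces: either $k=0$; or $k=1$ with an equatorial bubble; or $k=2$ with both bubbles equatorial and $\Psi_n$ constant.

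The only case that genuinely needs an extra argument is the last one, where $\mathcal{E}_{n,2}=8\pi$. This is excluded not by any structural homotopy argument but by the eigenvalue lower bound you already quoted together with Petrides' rigidity (Theorem~\ref{rigidity:thm}): for $M$ of positive genus, $\Lambda_1(M,c)>8\pi$, so $\Lambda_2(M,c)\ge\Lambda_1(M,c)+8\pi>16\pi$, whence $\mathcal{E}_{n,2}\ge\tfrac12\Lambda_2(M,c)>8\pi$, a contradiction. (Your parenthetical about taking $\Phi_{n,2}:=\omega_1$ when the body is constant and $m=1$ is likewise moot: that would give $\mathcal{E}_{n,2}=4\pi$, already excluded by the same bound.) In short, replace your third and fourth paragraphs by the two-line index count $3(n-2)>2(n+1)$ plus the energy contradiction $8\pi<\tfrac12\Lambda_2\le\mathcal{E}_{n,2}=8\pi$, and the proof is complete.
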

Moreover, for $n$ sufficiently large, we show that equality holds in \eqref{en2.ineq}.

\begin{theorem}[Min-max characterization of $\Lambda_2(M,c)$]
\label{MainTh4:intro}
Given $(M,c)$ there exists $N=N(M,c)$ such that for all $n\geqslant N$ one has
$$
\mathcal E_{n,2} =\frac{1}{2}\Lambda_2(M,c).
$$
\end{theorem}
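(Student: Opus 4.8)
Since the inequality $\mathcal E_{n,2}\ge\tfrac12\Lambda_2(M,c)$ is already recorded in~\eqref{en2.ineq}, the content of the theorem is the matching upper bound $\mathcal E_{n,2}\le\tfrac12\Lambda_2(M,c)$ for all $n$ large. The plan is to run the harmonic map supplied by Theorem~\ref{MainTh3:intro} through a second variation argument: using the smallness of its energy index, I will locate the slot occupied by the eigenvalue $1$ of the induced metric $g_\Phi=|d\Phi|_g^2\,g$, and then the identity $\area(M,g_\Phi)=2E(\Phi)$ will upgrade to the desired estimate.

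The main tool is an \emph{index--spectrum inequality} for harmonic maps $\Phi\colon M\to\mathbb S^n$. Recall that for non-constant $\Phi$ the components $\Phi^j$ are eigenfunctions of $\Delta_{g_\Phi}$ with eigenvalue $1$ and that $\area(M,g_\Phi)=2E(\Phi)$. Given an $L^2(g_\Phi)$-orthonormal family $\phi_0\equiv\mathrm{const},\phi_1,\dots,\phi_{q-1}$ spanning the eigenspaces of $\Delta_{g_\Phi}$ below $1$, and the coordinate directions $e_0,\dots,e_n$ of $\mathbb R^{n+1}$, one forms the admissible variations $V_{i,j}=\phi_j\,\big(e_i-\langle e_i,\Phi\rangle\Phi\big)$ of $\Phi$. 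Using the conformal invariance of the Dirichlet integral on surfaces, a direct computation shows that the leading part of the second variation form on these $(n+1)q$ sections is
$$
\delta^2E(\Phi)[V_{i,j},V_{i',j'}]\ =\ \langle e_i,e_{i'}\rangle\,(\lambda_j-1)\,\delta_{jj'}\ +\ (\text{lower order terms in }\langle e_i,\Phi\rangle),
$$
whose principal term is negative definite of full rank since $\lambda_j<1$. Estimating the remaining terms and discarding the $O(1)$ linear relations among the $V_{i,j}$ should then yield a bound of the form
$$
\ind_E(\Phi)\ \ge\ (n+1)\cdot\#\{\,j:\lambda_j(M,g_\Phi)<1\,\}\ -\ C
$$
for $n$ large, with $C$ depending only on the topology of $M$. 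This is the two-eigenfunction analogue of the estimate behind Theorems~\ref{MainTh1:intro} and~\ref{MainTh3:intro}.

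With this lemma in hand, apply Theorem~\ref{MainTh3:intro}. In case~(a), $\mathcal E_{n,2}=E(\Phi_{n,2})$ with $\ind_E(\Phi_{n,2})\le 2(n+1)$, and $\Phi_{n,2}$ is non-constant since $E(\Phi_{n,2})=\mathcal E_{n,2}\ge\tfrac12\Lambda_2(M,c)>0$. If $\lambda_2(M,g_{\Phi_{n,2}})<1$ then $\#\{j:\lambda_j(g_{\Phi_{n,2}})<1\}\ge 3$, so the lemma forces $\ind_E(\Phi_{n,2})\ge 3(n+1)-C$, impossible once $n>C-1$. Hence $\lambda_2(M,g_{\Phi_{n,2}})\ge 1$, and therefore
$$
2\mathcal E_{n,2}\ =\ \area(M,g_{\Phi_{n,2}})\ \le\ \lambda_2(M,g_{\Phi_{n,2}})\,\area(M,g_{\Phi_{n,2}})\ =\ \bar\lambda_2(M,g_{\Phi_{n,2}})\ \le\ \Lambda_2(M,c).
$$
In case~(b), $\mathcal E_{n,2}=E(\Phi_{n,2})+4\pi$ with $\ind_E(\Phi_{n,2})\le n+4$; if $\Phi_{n,2}$ is constant then $\mathcal E_{n,2}=4\pi\le\tfrac12\Lambda_2(M,c)$ since $\Lambda_2(M,c)\ge 8\pi$, while if $\Phi_{n,2}$ is non-constant the same argument with threshold $\#\{j:\lambda_j<1\}\ge 2$ forces $\lambda_1(M,g_{\Phi_{n,2}})=1$, so the components of $\Phi_{n,2}$ are \emph{first} eigenfunctions, $2E(\Phi_{n,2})=\bar\lambda_1(M,g_{\Phi_{n,2}})\le\Lambda_1(M,c)$, and
$$
2\mathcal E_{n,2}\ =\ 2E(\Phi_{n,2})+8\pi\ \le\ \Lambda_1(M,c)+8\pi\ \le\ \Lambda_2(M,c),
$$
the last step being the standard superadditivity $\Lambda_2(M,c)\ge\Lambda_1(M,c)+8\pi$. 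Taking $N$ larger than $9$ and than all the thresholds above completes the proof.

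The step I expect to be the main obstacle is the proof of the index--spectrum inequality, namely showing that $\delta^2E(\Phi)$ is negative definite, up to an error of dimension bounded independently of $n$, on the span of the test sections $V_{i,j}$. The principal term above is manifestly negative definite, but its smallest eigenvalue degenerates like $1-\lambda_{q-1}$, which a priori could decay with $n$; controlling the lower order terms — and upgrading the threshold from ``$\lambda_j<1-\tfrac2n$'' to ``$\lambda_j<1$'' — therefore also requires an a priori energy bound $\mathcal E_{n,2}\le\tfrac12\Lambda_2(M,c)+o(1)$ (so that $\area(M,g_{\Phi_{n,2}})$ stays bounded) together with the resulting bubble-tree compactness of $\{\Phi_{n,2}\}$, which prevents the relevant eigenvalues from accumulating at $1$ from below. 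These are the points where the detailed analysis underlying Theorems~\ref{MainTh1:intro} and~\ref{MainTh3:intro} must be invoked.
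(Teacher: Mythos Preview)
Your overall architecture is correct and matches the paper's: reduce to an index-versus-spectral-index bound for the harmonic maps $\Phi_{n,2}$ supplied by Theorem~\ref{MainTh3:intro}, then read off the eigenvalue inequality in each of cases (a) and (b). The endgame you wrote out is fine. The gap is in your proposed proof of the key lemma. Your test sections $V_{i,j}=\phi_j(e_i-\Phi^i\Phi)$ do not decouple the second variation cleanly when $\Phi$ is linearly full in $\mathbb{S}^n$: the terms carrying $\Phi^i$ are not ``lower order'' against a principal part whose smallest eigenvalue is $1-\lambda_{q-1}$, which you yourself note may degenerate with $n$. Your suggested remedy---bubble-tree compactness for $\{\Phi_{n,2}\}$---runs into the obstacle that the \emph{targets} $\mathbb{S}^n$ vary with $n$, so no standard harmonic-map compactness theorem applies. (Also, the a priori bound you ask for, $\mathcal{E}_{n,2}\le\tfrac12\Lambda_2+o(1)$, is the conclusion itself; what is actually available, and what suffices as an energy bound, is $\mathcal{E}_{n,2}\le 2V_c(n,M,[g])$ from Proposition~\ref{e2.upper}.)

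The paper resolves this not by estimating $Q_E$ on ad hoc test sections but by first proving a \emph{stabilization result} (Proposition~\ref{MainProp:intro}, equivalently Theorem~\ref{stabilization:thm}): any family of harmonic maps $M\to\mathbb{S}^{N_n}$ with uniformly bounded energy has uniformly bounded spectral nullity $\nul_S$. The proof runs a bubble-tree argument on the \emph{energy densities} $|d\Phi_n|^2$ (which make sense independently of the target dimension), showing they converge in $L^p$ after conformal rescalings to a limit density on a nodal surface whose associated operator has discrete spectrum; unbounded nullity would then force an eigenvalue of infinite multiplicity in the limit. Once $\nul_S$ is bounded by some $N_0+1$, each $\Phi_n$ factors through a totally geodesic $\mathbb{S}^{N_0}\subset\mathbb{S}^n$, and the \emph{exact} identity of Proposition~\ref{indEindS:prop},
\[
\ind_E(\Phi;\mathbb{S}^n)=\ind_E(\Phi;\mathbb{S}^{N_0})+(n-N_0)\,\ind_S(\Phi),
\]
immediately forces $\ind_S\le 2$ in case~(a) and $\ind_S\le 1$ in case~(b) once $n$ is large, with no error terms to control. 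The inequality you are after is thus a corollary of Proposition~\ref{MainProp:intro}, and establishing that proposition---via the density-level bubble analysis of Section~\ref{stabilization_proof:sec}---is the paper's main technical work.
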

As an application, one obtains a new proof of the existence of $\bar\lambda_2$-maximal metrics and the analogs of Theorems~\ref{intro_regularity:thm} and~\ref{Steklovthm:intro} for $k=2$. 

\begin{theorem}[Regularity theorem for $\bar\lambda_2$-maximal measures]
\label{intro_regularity2:thm}
Let $\mu$ be a Radon measure on $(M,g)$ such that the map
$$
T\colon W^{1,2}(M,g)\to L^2(\mu)
$$
is well-defined and compact. 
Then one has 
\begin{equation}
\label{intro_measures2:ineq}
\bar\lambda_2(M,[g],\mu)\leqslant \Lambda_2(M,[g]).
\end{equation}
Suppose that $\mu$ is $\bar\lambda_2$-maximal, i.e. that inequality~\eqref{intro_measures2:ineq} is an equality. Then there exists a harmonic map $\Phi\colon (M,g)\to\mathbb{S}^n$ such that
$$
d\mu = \frac{1}{\lambda_2(M,[g],\mu)}|d\Phi|_g^2\,dv_g = \frac{1}{\lambda_2(M,[g],\mu)}\,dv_{g_\Phi},
$$
where $g_\Phi = |d\Phi|_g^2\,dv_g$,  and the components of $\Phi$ are the second eigenfunctions of the Laplacian $\Delta_{g_{\Phi}}$. In particular, $d\mu$ is smooth.
\end{theorem}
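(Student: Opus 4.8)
The plan is to follow the proof of Theorem~\ref{intro_regularity:thm} almost verbatim, with $\Lambda_1(M,[g])$, $\mathcal E_n$ and Theorem~\ref{MainTh2:intro} replaced by $\Lambda_2(M,[g])$, $\mathcal E_{n,2}$ and Theorem~\ref{MainTh4:intro}; the one genuinely new point is the exclusion of alternative (b) of Theorem~\ref{MainTh3:intro} in the equality case. Two consequences of admissibility are used throughout. First, since point evaluation is not a bounded functional on $W^{1,2}(M,g)$ in dimension two, mere well-definedness of the trace $T\colon W^{1,2}(M,g)\to L^2(\mu)$ already forces $\mu$ to have no atoms. Second, compactness of $T$ makes the Rayleigh quotient $u\mapsto\int_M|\nabla u|_g^2\,dv_g/\int_M u^2\,d\mu$ have discrete spectrum $0=\lambda_0<\lambda_1(M,[g],\mu)\le\lambda_2(M,[g],\mu)\le\cdots$ with eigenfunctions $f_0\equiv\mathrm{const},f_1,f_2,\dots\in W^{1,2}(M,g)$. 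One also uses that the Dirichlet energy $E$ is conformally invariant in two dimensions, so $E(F)$ is unambiguous for $F\in W^{1,2}(M,g)$.

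For the inequality~\eqref{intro_measures2:ineq} it suffices to prove $\mathcal E_{n,2}\ge\frac12\bar\lambda_2(M,[g],\mu)$ for every admissible $\mu$, since Theorem~\ref{MainTh4:intro} then gives $\frac12\Lambda_2(M,[g])=\mathcal E_{n,2}\ge\frac12\bar\lambda_2(M,[g],\mu)$ for $n$ large. Fix any competitor in the $2(n+1)$-parameter family defining $\mathcal E_{n,2}$. The Nadirashvili-type topological argument underlying the construction (in the refined form of~\cite{PetridesS2,GNP,GL}), with the barycentre/renormalisation maps formed with respect to $\mu$ instead of $dv_g$ — legitimate precisely because $T$ is bounded and compact and $\mu$ is non-atomic — produces a map $F$ in the family whose $\mathbb R^{n+1}$-components are $L^2(\mu)$-orthogonal both to the constants and to the first eigenspace of $\lambda_1(M,[g],\mu)$. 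For such $F$ one has $2E(F)=\int_M|dF|_g^2\,dv_g\ge\lambda_2(M,[g],\mu)\int_M|F|^2\,d\mu=\lambda_2(M,[g],\mu)\mu(M)=\bar\lambda_2(M,[g],\mu)$, the Ginzburg--Landau relaxation error being absorbed as in the proof of Theorem~\ref{MainTh1:intro}. Hence the supremal energy of the family is $\ge\frac12\bar\lambda_2(M,[g],\mu)$, and taking the infimum over families proves the claim.

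Now suppose equality holds in~\eqref{intro_measures2:ineq}, so that $\mathcal E_{n,2}=\frac12\bar\lambda_2(M,[g],\mu)$ for $n$ large. Out of the $\lambda_2(M,[g],\mu)$-eigenfunctions $f_j$ together with their target-Möbius deformations I would assemble an admissible competitor for $\mathcal E_{n,2}$ whose supremal energy equals $\frac12\bar\lambda_2(M,[g],\mu)$, i.e.\ a minimizing family. Running the min--max existence scheme of Theorem~\ref{MainTh3:intro} on this family produces the harmonic map $\Phi_{n,2}$ as a limit of maps $u_j$ in the family with $\int_M u_j\varphi\,d\mu=0$ for $\varphi$ in the first two eigenspaces and $E(u_j)\to\frac12\bar\lambda_2(M,[g],\mu)$. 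Such $u_j$ are bounded in $W^{1,2}(M,g)$ and form a minimizing sequence for the $\lambda_2(M,[g],\mu)$-Rayleigh quotient, so by compactness of $T$ they converge in $L^2(\mu)$, and weakly in $W^{1,2}(M,g)$ along a subsequence, to an eigenfunction map $\Phi$ for $\lambda_2(M,[g],\mu)$ with $|\Phi|\equiv1$ $\mu$-a.e. Compactness of $T$ also forces the $L^2(\mu)$-limit of $u_j$ to coincide with its weak $W^{1,2}(M,g)$-limit $\Phi_{n,2}$ — the bubble of alternative (b), if present, being concentrated at a point and hence invisible to the non-atomic $\mu$ — so $\Phi=\Phi_{n,2}$ $\mu$-a.e.; since $\Phi_{n,2}$ is a smooth harmonic map into $\mathbb S^n$ one gets $|\Phi_{n,2}|\equiv1$ everywhere and $2E(\Phi_{n,2})=\int_M|d\Phi_{n,2}|_g^2\,dv_g=\lambda_2(M,[g],\mu)\mu(M)=\bar\lambda_2(M,[g],\mu)=2\mathcal E_{n,2}$. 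This identity $E(\Phi_{n,2})=\mathcal E_{n,2}$ is incompatible with alternative (b), in which $\mathcal E_{n,2}=E(\Phi_{n,2})+4\pi$; hence we are in alternative (a), with $\Phi:=\Phi_{n,2}$ a genuine harmonic map $(M,g)\to\mathbb S^n$.

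It remains to identify $\mu$. The components $\Phi_i$ satisfy $\Delta_g\Phi_i\,dv_g=\lambda_2(M,[g],\mu)\,\Phi_i\,d\mu$ (the eigenfunction equation) and $\Delta_g\Phi_i\,dv_g=|d\Phi|_g^2\,\Phi_i\,dv_g$ (the harmonic map equation) as distributions on $M$; since the $\Phi_i$ never vanish simultaneously, subtracting gives $\lambda_2(M,[g],\mu)\,d\mu=|d\Phi|_g^2\,dv_g=dv_{g_\Phi}$, so $d\mu$ is smooth because $\Phi$ is. Under the conformal change $g_\Phi=|d\Phi|_g^2 g$ one has $\Delta_{g_\Phi}=|d\Phi|_g^{-2}\Delta_g$, so the harmonic map equation reads $\Delta_{g_\Phi}\Phi_i=\Phi_i$; as $\int_M\Phi_i\,dv_{g_\Phi}=\lambda_2(M,[g],\mu)\int_M\Phi_i\,d\mu=0$ and $\bar\lambda_2(M,g_\Phi)\le\Lambda_2(M,[g])=\bar\lambda_2(M,[g],\mu)=\mathrm{Area}(M,g_\Phi)$, the eigenvalue $1$ is $\lambda_2(M,g_\Phi)$ and the $\Phi_i$ are its eigenfunctions, i.e.\ second eigenfunctions of $\Delta_{g_\Phi}$. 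The main obstacle, as for Theorem~\ref{intro_regularity:thm}, lies in the equality case: one must make the min--max scheme of Theorem~\ref{MainTh3:intro} work for a family attached to the merely measurable datum $\mu$, and carry out the bubble analysis needed to identify the $L^2(\mu)$-limit of the near-critical maps with the eigenfunction map (thereby excluding alternative (b)); the inequality part and the final algebraic identities are routine transcriptions of the $k=1$ case with the orthogonality constraint enlarged from constants to constants together with first eigenfunctions.
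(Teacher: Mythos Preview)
Your treatment of the inequality \eqref{intro_measures2:ineq} is essentially correct and matches the paper: one takes near-optimal families $F^j\in\Gamma_{n,2}(M)$, applies the topological Lemma~\ref{degree.lem.2} to the $\mu$-averaging map $(a,b)\mapsto(\int F_{a,b}^j\,d\mu,\int\phi F_{a,b}^j\,d\mu)$ to extract $u_j=F^j_{a_j,b_j}$ orthogonal in $L^2(\mu)$ to constants and to the first eigenfunction $\phi$, and then bounds $2E_{\epsilon_j}(u_j)$ below by $\lambda_2(M,c,\mu)$ modulo the Ginzburg--Landau error.

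The equality case, however, contains a genuine gap, and the paper's route is both different and simpler. You attempt to identify the weak $W^{1,2}$-limit of the test maps $u_j$ with the min-max harmonic map $\Phi_{n,2}$ produced by Theorem~\ref{MainTh3:intro}, in order to conclude $E(\Phi_{n,2})=\mathcal E_{n,2}$ and thereby exclude alternative~(b). But $\Phi_{n,2}$ arises as a limit of \emph{critical points} $\Psi_\epsilon$ of $E_\epsilon$ (via Proposition~\ref{2nd.ex.thm} and Lemma~\ref{bubble.business}), not of the test maps $u_j$ selected from a given family; nothing connects these two sequences. Moreover, your proposal to ``assemble an admissible competitor'' from the $\lambda_2$-eigenfunctions $f_j$ presupposes that some finite collection satisfies $\sum f_j^2\equiv 1$, which is precisely the conclusion you are trying to establish.

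The paper (Proposition~\ref{en.meas.bd}) bypasses Theorem~\ref{MainTh3:intro} and alternative~(b) entirely in the equality case. One simply takes the weak $W^{1,2}$-limit $u$ of the $u_j$; equality forces each link in the chain $\lambda_2(\mu,c)\le\int|du|^2\le\liminf\int|du_j|^2\le 2\mathcal E_{n,2}$ to be an equality, so $u_j\to u$ strongly in $W^{1,2}$ and the components of $u$ realise the $\lambda_2$-Rayleigh quotient. Thus $u$ satisfies the weak eigenfunction equation $\int\langle du,dv\rangle\,dv_g=\lambda_2(\mu,c)\int\langle T(u),T(v)\rangle\,d\mu$ for all $v$. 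Testing against $w=v-\langle v,u\rangle u$ (for which $\langle T(u),T(w)\rangle=T(\langle u,w\rangle)=0$) gives the weak harmonic map equation $\int\langle du,dv\rangle=\int|du|^2\langle u,v\rangle$ directly; testing against $v=\varphi u$ gives $\lambda_2(\mu,c)\,d\mu=|du|^2\,dv_g$. No bubble analysis, no exclusion of case~(b), and no construction of a minimizing family is needed.
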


\begin{remark}
In the subsequent paper \cite{KNPS} with M.~Nahon and I. Polterovich, we were also able to use the min-max characterization to obtain stability results for metrics maximizing the second eigenvalue $\bar{\lambda}_2$.
\end{remark}

\begin{theorem}
\label{Steklovthm2:intro}
Let $\Omega\subset M$ be a Lipschitz domain. Then one has
\begin{equation}
\label{Steklov2:ineq}
\sigma_2(\Omega,g)\length(\partial \Omega,g)< \Lambda_2(M,[g]).
\end{equation}
\end{theorem}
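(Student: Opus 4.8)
The plan is to deduce Theorem~\ref{Steklovthm2:intro} from the regularity theorem for $\bar\lambda_2$-maximal measures (Theorem~\ref{intro_regularity2:thm}), in precisely the way that Theorem~\ref{Steklovthm:intro} is deduced from Theorem~\ref{intro_regularity:thm}. Given a Lipschitz domain $\Omega\subset M$, let $\mu$ be the arc-length measure of $\partial\Omega$, viewed as a Radon measure on $M$, so that $\mu(M)=\length(\partial\Omega,g)$. Since $\Omega$ is Lipschitz, the trace map $T\colon W^{1,2}(M,g)\to L^2(M,\mu)=L^2(\partial\Omega)$ factors through the compact embedding $H^{1/2}(\partial\Omega)\hookrightarrow L^2(\partial\Omega)$ and is therefore well-defined and compact, so Theorem~\ref{intro_regularity2:thm} applies to $\mu$. (In fact the argument below uses only the compactness of this trace map, which yields the strengthening mentioned in the remark after Theorem~\ref{Steklovthm:intro}.)

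The first step is the eigenvalue comparison $\sigma_2(\Omega,g)\leqslant\lambda_2(M,[g],\mu)$. Both sides admit variational characterizations as min-max values of Rayleigh quotients over $3$-dimensional subspaces: $\sigma_2(\Omega,g)=\inf_{\dim V=3}\sup_{0\neq u\in V}\int_\Omega|\nabla u|_g^2\,dv_g\big/\int_{\partial\Omega}u^2\,ds$, with $V$ ranging over subspaces of $W^{1,2}(\Omega)$ — using that the harmonic extension of a given boundary trace minimizes the Dirichlet energy — while $\lambda_2(M,[g],\mu)$ is the analogous infimum over $3$-dimensional subspaces $V\subset W^{1,2}(M)$ with $\int_{\partial\Omega}u^2\,ds$ replaced by $\int_M u^2\,d\mu$ in the denominator. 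Given an almost-optimal subspace $V\subset W^{1,2}(M)$ for $\lambda_2(M,[g],\mu)$, each nonzero $u\in V$ has $\int_M u^2\,d\mu>0$, hence nonvanishing trace on $\partial\Omega$, so the restriction $V\to W^{1,2}(\Omega)$ is injective and dimension-preserving; since $\int_\Omega|\nabla u|_g^2\leqslant\int_M|\nabla u|_g^2$ while the boundary integrals coincide, the restricted subspace shows that $\sigma_2(\Omega,g)$ is at most the $\lambda_2$-Rayleigh supremum over $V$, and taking the infimum over $V$ proves the comparison. Multiplying by $\mu(M)$ and invoking Theorem~\ref{intro_regularity2:thm},
$$
\sigma_2(\Omega,g)\length(\partial\Omega,g)\leqslant\lambda_2(M,[g],\mu)\,\mu(M)=\bar\lambda_2(M,[g],\mu)\leqslant\Lambda_2(M,[g]).
$$

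It remains to rule out equality. If $\sigma_2(\Omega,g)\length(\partial\Omega,g)=\Lambda_2(M,[g])$, then the displayed chain forces $\bar\lambda_2(M,[g],\mu)=\Lambda_2(M,[g])$, i.e.\ $\mu$ is $\bar\lambda_2$-maximal. By Theorem~\ref{intro_regularity2:thm} there is then a harmonic map $\Phi\colon(M,g)\to\mathbb S^n$ with $d\mu=\lambda_2(M,[g],\mu)^{-1}|d\Phi|_g^2\,dv_g$; in particular $\mu\ll dv_g$. But $\mu$ is supported on $\partial\Omega$, a set of Hausdorff dimension one and hence of zero $dv_g$-measure, whereas $\mu(M)=\length(\partial\Omega,g)>0$ — a contradiction. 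Therefore the inequality is strict.

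I do not anticipate any essential difficulty here: the argument is a transcription of the proof of Theorem~\ref{Steklovthm:intro}, now using Theorem~\ref{intro_regularity2:thm}, and unlike Theorem~\ref{MainTh3:intro} the measure-regularity statement for $k=2$ involves no bubble, so no additional case analysis is needed. The only points requiring some care are the bookkeeping in the min-max comparison — keeping the restriction map injective on almost-optimal subspaces so that dimensions are not lost — and, in the strictness argument, the elementary observation that a rectifiable curve has vanishing area.
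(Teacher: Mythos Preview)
Your proof is correct and follows essentially the same route as the paper: take $\mu$ to be the boundary length measure, compare Rayleigh quotients to get $\sigma_2(\Omega,g)\leqslant\lambda_2(M,[g],\mu)$, apply the measure inequality/rigidity (Theorem~\ref{intro_regularity2:thm}, equivalently Theorem~\ref{measures:thm}), and rule out equality by noting that the rigidity conclusion would force $\mu\ll dv_g$ while $\mu$ is supported on a $1$-rectifiable set. The paper compresses the Rayleigh-quotient comparison into a single sentence, whereas you spell out the dimension-preservation step carefully; both arguments are the same in substance.
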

\begin{remark}
Once again the results of~\cite{GLS} imply that the inequality~\eqref{Steklov2:ineq} is sharp.
\end{remark}

\subsection{Ideas of the proofs}

Theorem~\ref{MainTh1:intro} is proved using variational techniques. Rather than applying variational methods directly to the Dirichlet energy on the space $W^{1,2}(M,\mathbb{S}^n)$, we introduce a min-max procedure for a family of relaxed functionals $E_{\epsilon}$ of Ginzburg-Landau type on the space $W^{1,2}(M,\mathbb{R}^{n+1})$ (formed by combining the Dirichlet energy with a nonlinear potential penalizing deviation from $\mathbb{S}^n\subset \mathbb{R}^{n+1}$). Since these perturbed functionals are $C^2$ functions on the Hilbert space $W^{1,2}(M,\mathbb{R}^{n+1})$ satisfying a Palais-Smale condition, it is easy to produce critical points via standard min-max methods, which (by the results of \cite{LW}) converge as $\epsilon\to 0$ to a harmonic $\mathbb{S}^n$-valued map, possibly with some bubbles. Moreover, for the maps achieving the first min-max energy $\mathcal{E}_n(M)$, the sum of energy indices of the map and the bubbles is at most $n+1$. We then use index bounds~\cite{KarRP2} in conjunction with the eigenvalue rigidity estimate of Petrides~\cite{Petrides} to show that, in this case, there are no bubbles.

Theorem~\ref{MainTh3:intro} is proved in essentially the same way. The only difference is that, in the last step, one can not rule out the possibility that $\mathcal E_{n,2}$ is achieved by a harmonic map together with a single totally geodesic bubble. Note that such bubbles can indeed occur for a $\bar\lambda_2$-maximal metric (see e.g.~\cite{PetridesS2, KNPP}) so one can not expect to rule out bubbling behavior for maps realizing $\mathcal E_{n,2}$.

Theorems~\ref{MainTh2:intro} and~\ref{MainTh4:intro} are proved using the following proposition, which could be of independent interest. We say that the map $\Psi_n\colon M\to\mathbb{S}^n$ is \emph{linearly full} if its image linearly spans $\mathbb{R}^{n+1}$.
\begin{proposition}
\label{MainProp:intro}
For any closed surface $(M,g)$ and any $E_0<\infty$, there exists an integer $N=N([g],E_0)\in \mathbb{N}$ such that if $\Psi:M\to \mathbb{S}^n$ is a linearly full harmonic map with $E(\Psi)\leqslant E_0$, then $n\leqslant N$.
\end{proposition}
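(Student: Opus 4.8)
The plan is to bound $n$ in terms of the energy $E_0$ by controlling the geometry of the pulled-back metric $g_\Psi = |d\Psi|_g^2 g$ and exploiting the fact that the components of a harmonic map $\Psi:M\to\mathbb{S}^n$ are eigenfunctions of $\Delta_{g_\Psi}$ with eigenvalue $2$ (in the appropriate normalization), so that linear fullness forces the eigenspace of $\Delta_{g_\Psi}$ with that eigenvalue to have dimension at least $n+1$. Thus it suffices to show that the multiplicity of any fixed eigenvalue of the Laplacian of a metric conformal to $g$, with total area bounded by $2E_0$, is bounded by a constant depending only on $[g]$ and $E_0$. If $g_\Psi$ were smooth this would follow from known multiplicity bounds for Laplace eigenvalues on surfaces (Cheng, Besson, Nadirashvili, Sévennec, Hoffmann-Ostenhof--Michor--Nadirashvili), which give a bound on the multiplicity of the $k$-th eigenvalue depending only on $k$ and the genus of $M$; the issue is that $g_\Psi$ may be singular where $d\Psi$ vanishes.

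The key steps, in order, would be: (1) Recall that $\Delta_{g_\Psi}\Psi^i = 2\Psi^i$ weakly, equivalently $-\Delta_g \Psi^i = |d\Psi|_g^2 \Psi^i$, so that each component $\Psi^i$ lies in the $W^{1,2}$-eigenspace of the conformal Laplacian problem associated to the measure $d v_{g_\Psi}$ at eigenvalue $2$; linear fullness means $1,\Psi^1,\dots,\Psi^{n+1}$ are linearly independent, so the eigenspace at level $2$ (together with the constants at level $0$) has dimension at least $n+2$. (2) Observe that the zeros of $d\Psi$ are isolated with finite total multiplicity bounded in terms of $E_0$ and the genus — this is the classical fact that a harmonic map from a surface has isolated branch points, with the sum of branching orders bounded via the Gauss--Bonnet/Riemann--Hurwitz type identity by a linear function of the energy $E(\Psi)$ and $\chi(M)$. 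Hence $g_\Psi$ is a smooth metric away from finitely many conical-type singularities whose number and cone angles are controlled. (3) Apply a multiplicity bound for Laplace eigenvalues that is robust to such controlled conical singularities: either invoke an existing statement (e.g. the Hoffmann-Ostenhof--Michor--Nadirashvili / Nadirashvili nodal-domain argument, which is purely topological and only uses that eigenfunctions vanish on curves cutting $M$ into nodal domains, applied to the smooth locus), or reduce to the smooth case by a conformal change absorbing the singularities at the cost of a further controlled constant. Conclude $n+2 \leq \mathrm{mult}(2; g_\Psi) + 1 \leq C(\mathrm{genus}(M), E_0)$, hence $n \leq N([g], E_0)$.

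The main obstacle I anticipate is Step (3): making the multiplicity bound genuinely uniform over the family of singular metrics $g_\Psi$. The cleanest route is probably the nodal-domain / Courant-type argument à la Nadirashvili: an eigenfunction for the $k$-th eigenvalue has at most $k+1$ nodal domains, and combining this with Euler-characteristic bookkeeping on the graph formed by the nodal sets of a basis of the eigenspace yields a multiplicity bound depending only on $k$ and the genus. This argument uses almost no regularity of the metric — only that eigenfunctions are $C^1$ on the smooth locus and that the area is finite — so it should go through directly for $g_\Psi$ once one notes that $k$ here is bounded: indeed, $\bar\lambda_k(M, g_\Psi) = k \cdot \mathrm{Area}(M,g_\Psi)\cdot(\text{const})$ forces the eigenvalue $2$ to be the $k$-th eigenvalue only for $k \lesssim \mathrm{Area}(M,g_\Psi) = 2E(\Psi) \leq 2E_0$ (using the Korevaar bound $\bar\lambda_k \leq C(M)k$). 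So the whole argument is: energy bound $\Rightarrow$ bound on which eigenvalue "$2$" can be $\Rightarrow$ bound on its multiplicity $\Rightarrow$ bound on $n$. The delicate point to verify carefully is that Korevaar's universal bound, and the nodal-domain multiplicity estimates, apply to the possibly-singular metric $g_\Psi$ (or to an equivalent Radon measure), which is exactly the kind of robustness the rest of the paper is set up to provide.
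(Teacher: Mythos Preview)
Your approach has a genuine gap at the final step. You invoke Korevaar's bound $\bar\lambda_k(M,g)\leq C(M)k$ to conclude that if $2$ is the $k$-th eigenvalue of $g_\Psi$, then $k\lesssim E_0$. But the inequality runs the wrong way: from $\lambda_k(g_\Psi)=2$ and Korevaar one obtains $2\cdot\mathrm{Area}(g_\Psi)\leq C(M)k$, hence $k\gtrsim E(\Psi)/C(M)$ --- a \emph{lower} bound on $k$, not an upper bound. To bound $k$ from above you would need a uniform \emph{lower} bound on normalized eigenvalues, of the form $\bar\lambda_k\geq c(M)k$, and no such bound holds across a conformal class: the densities $|d\Psi|^2$ can concentrate, pushing many eigenvalues of $g_\Psi$ toward zero and making the spectral index $\ind_S(\Psi)$ arbitrarily large. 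Since the Cheng--Besson--Nadirashvili multiplicity estimates grow linearly in $k$, an unbounded $\ind_S$ gives no control on $\nul_S$. This degeneration is precisely what makes the proposition non-trivial, and your outline does not address it.

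The paper confronts the degeneration directly rather than attempting to bound the spectral index. Arguing by contradiction with a sequence $\Psi_n$ satisfying $\nul_S(\Psi_n)\to\infty$, it performs a bubble-tree conformal rescaling onto a disjoint union $\mathcal{N}_m=M\sqcup\mathbb{S}^2_1\sqcup\cdots\sqcup\mathbb{S}^2_m$ and shows that the rescaled energy densities converge in $L^p(\mathcal{N}_m)$ for some $p>1$ to a limit $\rho$. This step requires $\varepsilon$-regularity and an isoperimetric neck-energy decay estimate, both with constants \emph{independent of the target dimension} --- the latter obtained via Parker's suspension trick and the Hoffman--Spruck isoperimetric inequality. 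One then proves lower semicontinuity $\liminf_n\lambda_k(M,|d\Psi_n|^2)\geq\lambda_k(\mathcal{N}_m,\rho)$ for every fixed $k$; since $\rho\in L^p$ with $p>1$ makes $\rho\,dv$ an admissible measure with $\lambda_k(\mathcal{N}_m,\rho)\to\infty$, this contradicts $\lambda_k(M,|d\Psi_n|^2)\leq 1$ for all $k$. The essential point is that $L^p$ convergence of the densities --- not mere weak-$*$ convergence of measures --- is what allows eigenvalue information to pass to the limit, and obtaining $L^p$ convergence requires unfolding the bubbles.
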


In particular, given a family of harmonic maps $\Psi_n: M\to \mathbb{S}^n$ into spheres of increasing dimension satisfying a uniform energy bound, Proposition~\ref{MainProp:intro} tells us that $\Psi_n$ must take values in a totally geodesic subsphere of $\mathbb{S}^n$ for $n$ sufficiently large. The proposition is proved using a variation on the bubble convergence argument for harmonic maps. Namely, we show that (along a subsequence) the Schr\"odinger operators $\Delta_g-|d\Psi_n|_g^2$ associated to $\Psi_n$ converge in some sense to an operator with discrete spectrum. In particular, if the space of coordinate functions $\langle \Psi_n,v\rangle$ ($v\in \mathbb{R}^{n+1}$) were of unbounded dimension, then the limiting operator would have an eigenvalue of infinite multiplicity, which would contradict the discreteness of the spectrum.

\subsection{Discussion}
Recall that in~\cite{Petrides, Petridesk, KNPP2} the authors prove the existence of a metric realizing $\Lambda_k(M,c)$. In both proofs a sequence of metrics $g_m$ such that $\bar\lambda_k(M,g_m)\to\Lambda_k(M,c)$ is carefully chosen, and the convergence of the metrics $g_m$ as $m\to\infty$ is studied. The key tool is the lower bound on $\bar\lambda_k(M,g_m)$, which gives control on how the sequence $g_m$ can degenerate. The min-max characterization provides a different approach, where the sequence of metrics is replaced by a sequence of harmonic maps $\Psi_n$, and the lower bound on the eigenvalue is replaced by the upper bound on the energy index $\ind_E(\Psi_n)$; i.e. we use the index bound to control possible degenerations of the sequence $\Psi_n$. In the context of optimal eigenvalue inequalities, the index bounds were first used in~\cite{FS}. The method was further developed by the first author in~\cite{KarRP2}, where the index bounds are used to compute $\Lambda_k(\mathbb{RP}^2)$ for all $k$. The guiding principle behind~\cite{KarRP2} and the present article is that the problem of optimal eigenvalue inequalities is essentially equivalent to the problem of sharp upper bounds for the energy index of harmonic maps. Indeed, the results of the present article show how index bounds lead to the existence of maximisers for optimal eigenvalue inequalities (with the index bound $\ind_E(\Psi_n)\leq n+1\ll2n$ forcing the coordinate functions of $\Psi_n$ to be \emph{first} eigenfunctions for a suitable Laplacian), whereas in~\cite{KarRP2} this existence is combined with (almost) sharp energy index bounds in order to characterize the exact maximisers. 

A natural question is whether the min-max characterization can be proved for $\Lambda_k(M,c)$ with $k>2$. The answer is yes, provided one can produce a reasonably natural (non-empty) collection $\widetilde\Gamma_{n,k}$ of weakly continuous $k\cdot(n+1)$-dimensional families $X^{(n+1)k}\ni \alpha\mapsto F_{\alpha}\in W^{1,2}(M,\mathbb{S}^{n})$ such that
$$
\sup_{\alpha}E(F_{\alpha})\geqslant \frac{1}{2}\Lambda_k(M,c)
$$
for any $F\in \widetilde \Gamma_{n,k}$. Having that, the rest of the argument leading to the min-max characterization carries over without significant changes. 
The applications would follow immediately, including the regularity theorem for measures realizing $\Lambda_k(M,c)$ and the analogs of Theorems~\ref{Steklovthm:intro},~\ref{Steklovthm2:intro} for $k>2$. 

\begin{remark} In practice, answering this question is equivalent to the problem of finding a natural ``nonlinear energy spectrum" for the Ginzburg-Landau functionals $E_{\epsilon}:W^{1,2}(M^2,\mathbb{R}^{n+1})\to \mathbb{R}$ for $n\geq 3$. Note that in the scalar-valued case $n=0$, there is a natural definition of nonlinear energy spectrum arising from the $\mathbb{Z}_2$ symmetry of the functionals, which has recently been studied in detail by Gaspar and Guaraco in \cite{GaspGu1, GaspGu2} in connection with the volume spectrum for minimal hypersurfaces. 
\end{remark}

One should also note that any explicit construction of elements in $\widetilde \Gamma_{n,k}$ yields explicit upper bounds for $\Lambda_k(M,c)$, analogous in some sense to the classical Li--Yau bound $\Lambda_1(M,[g])\leq 2V_c(M,[g])$ for $\Lambda_1$ by the conformal volume. For example, in the course of proving Theorem~\ref{MainTh3:intro}, we obtain the following upper bound for $\Lambda_2(M,c)$.
\begin{proposition}
For any conformal class $[g]$ on any surface $M$ one has
\begin{equation}
\label{L2Vc:ineq}
\Lambda_2(M,[g])\leqslant 4V_c(M,[g]),
\end{equation}
where $V_c(M,[g])$ is the conformal volume of $(M,[g])$.
\end{proposition}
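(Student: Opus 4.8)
The plan is to deduce the estimate from the lower bound $\mathcal E_{n,2}\geqslant\frac12\Lambda_2(M,c)$ in \eqref{en2.ineq} by feeding into the min-max an explicit competitor family whose energy is controlled by the conformal volume. This is the $\Lambda_2$-analogue of the way the classical Li--Yau bound $\Lambda_1(M,[g])\leqslant 2V_c(M,[g])$ follows from $\widetilde{\mathcal E}_n\geqslant\frac12\Lambda_1(M,[g])$: there one feeds the family $a\mapsto\gamma_a\circ\phi$ into the min-max, where $\phi\colon M\to\mathbb S^m$ is a weakly conformal branched immersion nearly realizing $V_c(M,[g])$ and $\{\gamma_a\}_{a\in\overline B^{m+1}}$ are the standard conformal dilations of $\mathbb S^m$, and one uses $E(\gamma_a\circ\phi)=\area(\gamma_a\circ\phi)\leqslant V_c(M,[g])+\epsilon$. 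For $\Lambda_2$ the competitor should be a \emph{doubled} version of this sweepout, built from two copies of such a $\phi$ that are allowed to dilate and to collapse independently, so that the worst map in the family looks like two independently dilated copies of $\phi$, of total energy at most $2V_c(M,[g])+2\epsilon$.

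The energy mechanism behind this is elementary. Given two maps $u,v\colon M\to\mathbb S^m$ and a fixed function $\eta\colon M\to[0,\tfrac\pi2]$, consider $F=(\cos\eta\cdot u,\ \sin\eta\cdot v)\colon M\to\mathbb S^{2m+1}$. Since $|u|\equiv|v|\equiv 1$, the cross terms $\langle d\eta\otimes u,\,du\rangle=\tfrac12\langle d\eta,\,d|u|^2\rangle$ and its analogue for $v$ vanish, so
$$
|dF|_g^2=|d\eta|_g^2+\cos^2\eta\,|du|_g^2+\sin^2\eta\,|dv|_g^2,
$$
whence $E(F)\leqslant E(\eta)+E(u)+E(v)$. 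Taking $u=\gamma_a\circ\phi$ and $v=\gamma_b\circ\phi$ for independent parameters $a,b$, this gives $E(F)\leqslant E(\eta)+2V_c(M,[g])+2\epsilon$ uniformly in $(a,b)$. Choosing $\eta$ to equal $0$ on a small coordinate disk, $\tfrac\pi2$ off a slightly larger disk, and the harmonic radial interpolant on the annular neck in between, and making that neck conformally long (e.g.\ $\{\delta^2<|z|<\delta\}$ with $\delta\to0$), one arranges $E(\eta)<\epsilon$, so $\sup E(F)\leqslant 2V_c(M,[g])+3\epsilon$ over the whole family.

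Once it is known that this doubled family is an admissible element of $\widetilde\Gamma_{n,2}$ for the appropriate $n$ (after the Ginzburg--Landau relaxation used to define $\mathcal E_{n,2}$), one gets $\mathcal E_{n,2}\leqslant\sup E(F)\leqslant 2V_c(M,[g])+3\epsilon$; combined with \eqref{en2.ineq} this yields $\tfrac12\Lambda_2(M,[g])\leqslant 2V_c(M,[g])+3\epsilon$, and letting $\epsilon\to0$ gives $\Lambda_2(M,[g])\leqslant 4V_c(M,[g])$.

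The step I expect to be the main obstacle is precisely the bookkeeping just alluded to: one must check that the doubled sweepout, suitably normalized, genuinely belongs to the min-max class $\widetilde\Gamma_{n,2}$ -- matching the required parameter dimension $2(n+1)$, the prescribed boundary data, and the admissibility and relaxation conventions. The point that the two bubbles $\gamma_a\circ\phi$ and $\gamma_b\circ\phi$ must be collapsible \emph{independently} and \emph{at no energy cost} forces $\eta$ to be locally constant away from the thin neck, and reconciling the parameter count may require, e.g., an additional overall dilation parameter on $\mathbb S^{2m+1}$ or a slight increase of the target dimension; once this is settled, the remaining estimates are just the Li--Yau computation applied separately to the two orthogonal coordinate blocks.
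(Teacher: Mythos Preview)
Your high-level strategy is exactly right and matches the paper: combine the lower bound $\Lambda_2(M,[g])\leq 2\mathcal E_{n,2}(M,[g])$ with an upper bound on $\mathcal E_{n,2}$ coming from an explicit competitor, then let $n\to\infty$. The gap you flag, however, is not mere bookkeeping---your ``join'' family cannot be made to fit $\Gamma_{n,2}(M)$ as defined.

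Your maps $F=(\cos\eta\cdot\gamma_a\circ\phi,\ \sin\eta\cdot\gamma_b\circ\phi)$ land in $\mathbb S^{2m+1}$, so one would need $n=2m+1$ and hence a parameter domain $[\overline B^{2m+2}]^2$, twice what your $(a,b)\in[\overline B^{m+1}]^2$ provides. More seriously, the boundary condition \eqref{2nd.bdry.1}, namely $F_{a,b}\equiv a$ for $|a|=1$, fails outright: on the region where $\eta=\tfrac\pi2$ your map equals $(0,\gamma_b\circ\phi)$, which is neither constant nor dependent on $a$. And the defining symmetry \eqref{2nd.bdry.2}, $F_{a,b}=\tau_b\circ F_{\tau_b(a),-b}$ for $|b|=1$, has no analogue in your construction, since your $a$- and $b$-parameters act on \emph{orthogonal} coordinate blocks rather than on a single copy of $\mathbb S^n$ via $O(n+1)$. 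Adding an overall dilation on $\mathbb S^{2m+1}$ or enlarging the target only tweaks the dimension count; it does not produce either boundary behaviour.

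The paper's competitor (Proposition~\ref{e2.upper}) keeps the target fixed at $\mathbb S^n$ and introduces the second $(n{+}1)$-dimensional parameter via a \emph{fold map} $T_b\colon\mathbb S^n\to\mathbb S^n$: for $b\in\overline B^{n+1}$, $T_b$ is the identity on the cap $C_b=\{x:\langle x,b\rangle\leq|b|-|b|^2\}$ and the conformal reflection $Rf_b$ of the complementary cap onto $C_b$; one sets $F_{a,b}:=G_a\circ T_b\circ\phi$. The boundary conditions are then automatic ($G_a\equiv a$ when $|a|=1$; when $|b|=1$ one has $G_{\tau_b(a)}\circ T_{-b}=\tau_b\circ G_a\circ T_b$), and the energy bound is immediate because on each cap the map is a conformal automorphism of $\mathbb S^n$ composed with $\phi$:
\[
E(F_{a,b})\leq E(G_a\circ\phi)+E(G_a\circ Rf_b\circ\phi)\leq 2V_c(n,\phi).
\]
After heat-kernel mollification to restore strong $W^{1,2}$-continuity this yields $\mathcal E_{n,2}(M,[g])\leq 2V_c(n,M,[g])$, and the Proposition follows.
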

\begin{remark}
The fact that the higher conformal eigenvalues $\Lambda_k(M[g])$ are bounded purely in terms of the conformal volume was proved in~\cite{KokarevVc}, but the constants in~\cite{KokarevVc} are not explicit. At the same time, combining~\eqref{L2Vc:ineq} with the bounds for the conformal volume in~\cite{LiYau,KarNonOrientable,KokarevVc} yields an explicit bound for $\Lambda_2(M,[g])$ in terms of the topology of $M$. Similar bounds for $\Lambda_k(M,[g])$ were recently proved in~\cite[Theorem 1.6]{KNPP2}.
\end{remark}

Finally, it is worth mentioning that our take on Nadirashvili's construction~\cite{NadirashviliS2} for $\Lambda_2$ differs from the ones in~\cite{PetridesS2, GNP, GL}. We combine ideas from all four papers and present a version of the argument which appears to be simpler and completely avoids the issue of uniqueness of a renormalizing point (see~\cite{L} for some recent results on renormalization). Note that this issue recently came up in~\cite{GL}, where the authors extended Nadirashvili's construction to the Robin problem. The uniqueness of the renormalizing point turned out to be a rather delicate issue in that context, and the authors were not able to complete the proof for a certain range of Robin parameter. We believe that our version of the argument allows one to extend the range of Robin parameters for which the results of~\cite{GL} hold, see the discussion after Theorem 1 in~\cite{GL}.

\subsection{Plan of the paper} Section~\ref{prelim:sec} contains some preliminary material on eigenvalues of the Laplacian and their connection to harmonic maps. 

Section~\ref{first.min-max} is devoted to the proof of Theorem~\ref{MainTh2:intro}, the min-max characterization of $\Lambda_1(M,c)$. In Sections~\ref{first.min-max:def} we define the min-max energies $\mathcal E_n$. Theorem~\ref{MainTh1:intro} is proved in Section~\ref{first.min-max:properties}. We then prove Theorem~\ref{MainTh2:intro} using Proposition~\ref{MainProp:intro}. Finally, Section~\ref{stabilization_proof:sec} contains the proof of the most technical result of the paper, Proposition~\ref{MainProp:intro}.

In Section~\ref{second.min-max} we follow the same steps in order to show min-max characterisation of $\Lambda_2(M,c)$. In particular, Theorems~\ref{MainTh3:intro} and~\ref{MainTh4:intro} are proved in Section~\ref{second.min-max:properties} and~\ref{second.min-max:stabilization} respectively. 

Section~\ref{app.sec} contains various applications of the min-max characterization, including Theorems~\ref{Steklovthm:intro},~\ref{intro_regularity2:thm},~\ref{Steklovthm2:intro} and others. 

\subsection*{Notation convention} In the following, we are primarily working on a fixed Riemannian surface $(M,g)$; as a result, the mention of the metric $g$ is often suppressed in the notation. For example, integration over $M$ is always with respect to the volume measure $dv_g$ unless stated otherwise, the functional spaces $L^2(M)$ and $W^{1,2}(M)$ refer to $L^2(M,g)$ and $W^{1,2}(M,g)$ respectively, etc.

\subsection*{Acknowledgements} The authors would like to thank Iosif Polterovich and Jean Lagac\'e for remarks on the preliminary version of the manuscript. 

This project originated during the CRG workshop on Geometric Analysis held at the University of British Columbia in May 2019. The hospitality of the University of British Columbia is gratefully acknowledged.

\section{Preliminaries}
\label{prelim:sec}
\subsection{Harmonic maps to $\mathbb{S}^n$}
\label{intro_harm:sec}
Recall that a map $\Phi\colon (M,g) \to (N,h)$ between Riemannian manifolds is said to be harmonic if it is a critical point of the energy functional
$$
E_g(\Phi) = \frac{1}{2}\int_M|d\Phi|^2_{g,h}\,dv_g.
$$
When the domain is a surface $(M,g)$, the energy $E_g(\Phi)$ is conformally invariant with respect to the metric $g$, and it follows that a map $\Phi:M\to N$ which is harmonic for $g$ is also harmonic for any conformal metric $\tilde{g}\in [g]$. In the following we fix a conformal class $c=[g]$ on a surface $M$ and often suppress the metric in the notation of any conformally invariant object.

When the target $(N,h)$ is the unit sphere $\mathbb{S}^n\subset\mathbb{R}^{n+1}$, a standard computation shows that a map $\Phi\colon (M,g) \to \mathbb{S}^n$ is harmonic if and only if it satisfies the equation
\begin{equation}
\label{harmonic:eq}
\Delta_g\Phi = |d\Phi|_g^2\Phi.
\end{equation}
In particular, letting $g_\Phi = \frac{1}{2}|d\Phi|_g^2g$, then using the conformal covariance of $\Delta_g$, the equation~\eqref{harmonic:eq} becomes
\begin{equation}
\label{harmonic:eq2}
\Delta_{g_\Phi}\Phi = 2\Phi;
\end{equation}
i.e. the components of $\Phi$ are eigenfunctions of $\Delta_{g_\Phi}$ with eigenvalue $2$. 
\begin{remark}
\label{conical:remark}
Note that $|d\Phi|_g^2$ can vanish at isolated points. At such points it is said that $g_\Phi$ has an isolated conical singularity. These are fairly mild singularities, and the eigenvalues can be defined in the same way using the Rayleigh quotient, see Remark~\ref{conical:remark2} below or~\cite{CKM}.
\end{remark}


\begin{definition}
For a harmonic map $\Phi\colon M\to\mathbb{S}^n$, the {\em spectral index}  $\ind_S(\Phi)$ is defined to be the minimal $k\in \mathbb{N}$ such that $\lambda_k(M,g_\Phi)=2$. Equivalently, $\ind_S(\Phi)$ is the index of the quadratic form
$$
Q_S(u) = \int|du|_g^2 - |d\Phi|_g^2u^2\,dv_g
$$
over $u\in W^{1,2}(M,\mathbb{R})$ for some (any) metric $g\in c$.
\end{definition}
\begin{definition}
Likewise, the {\em spectral nullity}  $\nul_S(\Phi)$ is the multiplicity of eigenvalue $2$ for the metric $g_\Phi$. Alternatively, $\nul_S(\Phi)$ is the nullity of the quadratic form $Q_S$
for some (any) metric $g\in c$.
\end{definition}

Note that with this definition one always has 
$$\bar\lambda_{\ind_S(\Phi)}(M,g_\Phi) = 2E(\Phi).$$
With this notation in place, we can now give a more precise statement of Theorem~\ref{extremalThm:intro}

\begin{theorem}[Nadirashvili~\cite{NadirashviliTorus}, El Soufi-Ilias~\cite{ESI}, see also~\cite{FS2}]
Suppose that $h\in c$ is such that $\bar\lambda_k(M,h) = \Lambda_k(M,c)$. Then there exists a harmonic map $\Phi\colon M\to\mathbb{S}^n$ such that $\ind_S(\Phi) = k$ and $h=\alpha g_\Phi$, where $\alpha>0$ is a constant.
\end{theorem}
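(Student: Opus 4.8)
The plan is to establish the result in two stages, following the structure already sketched in Theorem~\ref{extremalThm:intro}. Suppose $h\in c$ achieves $\bar\lambda_k(M,h)=\Lambda_k(M,c)$. First I would set up the eigenspace of $\lambda_k(M,h)$: let $u_1,\dots,u_m$ be an orthonormal (in $L^2(M,h)$) basis of the eigenspace $V_k$ associated to $\lambda_k:=\lambda_k(M,h)$, where $m=\nul$ of this eigenvalue counted from the appropriate index. The key classical fact is a first-variation computation: for any conformal perturbation $h_t=(1+t\phi)h$ of $h$ (with $\phi\in C^\infty(M)$), the function $t\mapsto \bar\lambda_k(M,h_t)$ has a well-defined derivative in the sense of one-sided directional derivatives of a max of eigenvalue branches, and maximality of $h$ forces $0$ to lie in the convex hull of the set of quantities $\{\int_M \phi\,(|\nabla u_i|_h^2 - \lambda_k u_i^2)\,dv_h\}$ as $(u_i)$ ranges over the unit sphere of $V_k$ — otherwise one could pick a $\phi$ increasing all eigenvalue branches simultaneously, contradicting optimality. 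This is the content of the El~Soufi--Ilias/Nadirashvili argument and I would cite or reproduce it.

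From the previous paragraph one extracts, by a separation/Hahn--Banach argument in the finite-dimensional space of quadratic forms on $V_k$, a finite collection $\Phi=(\phi_1,\dots,\phi_{n+1})$ of $\lambda_k$-eigenfunctions (linear combinations of the $u_i$) and weights $c_\alpha>0$ with $\sum_\alpha c_\alpha\phi_\alpha^2 \equiv 1$ pointwise and $\sum_\alpha c_\alpha (|\nabla \phi_\alpha|_h^2 - \lambda_k \phi_\alpha^2)=0$ pointwise; after absorbing the $c_\alpha$ into the $\phi_\alpha$ this says exactly that $\Phi\colon M\to\mathbb{S}^n$ and, from $\Delta_h\phi_\alpha=\lambda_k\phi_\alpha$ together with $|\Phi|^2=1$, that $\Delta_h\Phi = \lambda_k\,\Phi = |d\Phi|_h^2\,\Phi$ once one checks $|d\Phi|_h^2=\lambda_k$ (which follows by differentiating $|\Phi|^2=1$ twice and using the eigenvalue equation). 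Hence $\Phi$ is harmonic by~\eqref{harmonic:eq}, and $g_\Phi = \tfrac12|d\Phi|_h^2 h = \tfrac{\lambda_k}{2}h$, so $h=\alpha g_\Phi$ with $\alpha=2/\lambda_k>0$. It remains to verify $\ind_S(\Phi)=k$: by construction $\lambda_k(M,g_\Phi)=\lambda_k(M,h)\cdot\tfrac{2}{\lambda_k}=2$, and since scaling $h$ by a constant scales all eigenvalues inversely without changing their order, the metric $g_\Phi$ has $2$ as an eigenvalue with the same index $k$ at which $h$ had $\lambda_k$; invoking the definition of $\ind_S$ as the least index with eigenvalue $2$, together with the fact that the $\phi_\alpha$ are genuinely $\lambda_k$- (not lower-) eigenfunctions of $h$, gives $\ind_S(\Phi)=k$.

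The main obstacle is the first step — making rigorous the ``maximality implies a balanced family of eigenfunctions'' assertion. The difficulty is that $\bar\lambda_k$ is only Lipschitz, not differentiable, along conformal paths, because eigenvalue branches cross; one must work with the Clarke subdifferential or with the standard device of one-sided derivatives $\partial_t^\pm \lambda_k(M,h_t)\big|_{t=0} = \{\text{max or min over the eigenspace of}\ \int\phi(|\nabla u|^2-\lambda_k u^2)\}$, and then argue that if $0$ were not in the convex hull of these linear functionals on $C^\infty(M)$ there would be a common ascent direction. One must also ensure the balancing measure is actually absolutely continuous (so that the $\phi_\alpha^2$-weights are functions, giving a genuine map to a round sphere rather than to an ellipsoid or a measure-valued object) — here the regularity comes for free because all $u_i$ are smooth eigenfunctions on the smooth manifold $(M,h)$. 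Finally one should note the subtlety flagged in Remark~\ref{conical:remark}: $|d\Phi|_h^2$ may vanish at isolated points, so $g_\Phi$ has conical singularities and the identification $h=\alpha g_\Phi$ holds on the complement of a finite set; this does not affect the eigenvalue count since such singularities are removable for the Rayleigh quotient, as recalled in the excerpt.
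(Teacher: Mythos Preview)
The paper does not prove this theorem; it is quoted in Section~\ref{prelim:sec} as a known result of Nadirashvili and El~Soufi--Ilias (see also Fraser--Schoen), serving purely as background. So there is no proof in the paper to compare against, and your sketch is essentially an outline of the classical argument from those references.

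Your overall strategy is right, but two points need correction. First, the first-variation formula you give is not the one for conformal deformations of a surface: since the Dirichlet integral is conformally invariant in dimension two, along $h_t=(1+t\phi)h$ the one-sided derivatives of $\bar\lambda_k$ at a normalized $\lambda_k$-eigenfunction $u$ are proportional to $\int_M\phi\,(1-u^2)\,dv_h$, not to $\int_M\phi\,(|\nabla u|_h^2-\lambda_k u^2)\,dv_h$. The separation argument then yields directly the pointwise identity $\sum_\alpha c_\alpha\phi_\alpha^2\equiv\mathrm{const}$, and this is the crucial step that puts $\Phi$ on the sphere; the gradient identity $|d\Phi|_h^2=\lambda_k$ is a \emph{consequence} (as you yourself later observe, by applying $\Delta_h$ to $|\Phi|^2=1$). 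With your stated variation functionals the Hahn--Banach step would only give the gradient identity, and $\sum_\alpha c_\alpha\phi_\alpha^2\equiv 1$ would remain unproved, so the map into $\mathbb{S}^n$ would not be constructed.

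Second, your argument that $\ind_S(\Phi)=k$ is incomplete: knowing the $\phi_\alpha$ lie in the $\lambda_k$-eigenspace does not by itself rule out $\lambda_{k-1}(M,h)=\lambda_k(M,h)$, in which case $\ind_S(\Phi)\leq k-1$. For a \emph{maximal} metric this cannot happen, since $\lambda_{k-1}(M,h)=\lambda_k(M,h)$ would force $\bar\lambda_{k-1}(M,h)=\Lambda_k(M,c)\geq\Lambda_{k-1}(M,c)+8\pi>\Lambda_{k-1}(M,c)$ by~\eqref{Lambdak:condition}, a contradiction; but this step should be made explicit.
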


The existence of a metric $h\in c$ realizing the supremum $\bar\lambda_k(M,h) = \Lambda_k(M,c)$ is not always guaranteed. For example, there is no maximal metric for the second eigenvalue on the sphere, see~\cite{PetridesS2, NadirashviliS2, KNPP}. One obvious obstruction is provided in~\cite{CES}, where it is shown that given $k$ one can glue a sphere to any metric $g$ on $M$, without changing the conformal class, to obtain a metric $g'\in[g]$ satisfying $\bar\lambda_k(M,g') = \bar\lambda_{k-1}(M,g)+8\pi$. In particular, setting $\Lambda_0(M,c)=0$, one has 
\begin{equation}
\label{Lambdak:condition}
\Lambda_k(M,c)\geqslant \Lambda_{k-1}(M,c) + 8\pi,
\end{equation}
where the case of equality suggests the appearance of spherical `bubbles' along a maximizing sequence. Fortunately, it turns out that equality in \ref{Lambdak:condition} is the only obstruction to the existence of a maximizing metric.
\begin{theorem}
[Petrides~\cite{Petrides, Petridesk}, K.-Nadirashvili-Penskoi-Polterovich~\cite{KNPP, KNPP2}]
\label{existence:thm}\hspace{4mm}
If the inequality~\eqref{Lambdak:condition} is strict, then there exists $h\in c$ such that $\bar\lambda_k(M,h) = \Lambda_k(M,c)$. In particular, $h=\alpha g_\Phi$ for some harmonic map $\Phi\colon (M,g)\to\mathbb{S}^n$ of spectral index $k$.
\end{theorem}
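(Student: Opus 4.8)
The plan is to deduce the theorem from the min-max characterisation of the conformal eigenvalues established later in the paper, at least in the cases $k=1,2$ that concern us directly; for general $k$ one must still invoke \cite{Petrides, Petridesk, KNPP, KNPP2}, since a min-max proof in that generality would first require producing a suitable family of weakly continuous $k\cdot(n+1)$-dimensional sweepouts of $W^{1,2}(M,\mathbb{S}^n)$, which I do not attempt here.

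For $k=1$ the statement follows at once from Theorems~\ref{MainTh1:intro} and~\ref{MainTh2:intro}, and in fact does not even use the hypothesis $\Lambda_1(M,c)>\Lambda_0(M,c)+8\pi$. Fix $n\geq N(M,c)$ as in Theorem~\ref{MainTh2:intro}: Theorem~\ref{MainTh1:intro} provides a harmonic map $\Psi:=\Psi_n\colon M\to\mathbb{S}^n$ with $E(\Psi)=\mathcal E_n=\tfrac12\Lambda_1(M,c)$, and Theorem~\ref{MainTh2:intro} asserts that the components of $\Psi$ are first eigenfunctions of $\Delta_{g_\Psi}$, so that $\ind_S(\Psi)=1$ and
$$
\bar\lambda_1(M,g_\Psi)=\bar\lambda_{\ind_S(\Psi)}(M,g_\Psi)=2E(\Psi)=\Lambda_1(M,c).
$$
Thus $h:=g_\Psi$, rescaled to unit area (the isolated conical singularities of Remark~\ref{conical:remark} being harmless for the eigenvalue problem), realises the supremum, and it has the form $\alpha g_\Phi$ for the harmonic map $\Phi=\Psi$ of spectral index $1$.

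For $k=2$ I would argue as follows. Fix $n$ large enough that both Theorem~\ref{MainTh3:intro} and Theorem~\ref{MainTh4:intro} apply, so $\mathcal E_{n,2}=\tfrac12\Lambda_2(M,c)$ and $\mathcal E_{n,2}$ is attained in one of the two alternatives of Theorem~\ref{MainTh3:intro}. First I would rule out alternative (b): there $\mathcal E_{n,2}=E(\Phi_{n,2})+4\pi$, where $4\pi$ is precisely the energy of a totally geodesic $\mathbb{S}^2$-bubble; one checks that the residual harmonic map $\Phi_{n,2}$, whose energy index is at most $n+4$, must have spectral index $1$ once $n$ is large (alternatively, one compares its energy directly with the first-eigenvalue min-max $\mathcal E_n$), so $2E(\Phi_{n,2})\leq\Lambda_1(M,c)$ and hence $\Lambda_2(M,c)=2\mathcal E_{n,2}\leq\Lambda_1(M,c)+8\pi$, contradicting the strict inequality~\eqref{Lambdak:condition}. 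Therefore alternative (a) holds, $\Phi:=\Phi_{n,2}$ is a genuine harmonic map with $2E(\Phi)=\Lambda_2(M,c)$, and it remains to identify $\ind_S(\Phi)=2$: since $\bar\lambda_{\ind_S(\Phi)}(M,g_\Phi)=2E(\Phi)=\Lambda_2(M,c)>\Lambda_1(M,c)\geq\bar\lambda_1(M,g_\Phi)$ one gets $\ind_S(\Phi)\geq2$, while $\ind_S(\Phi)\geq3$ is excluded using the topology of the $2(n+1)$-parameter min-max family, which is built precisely so as to detect the second eigenvalue. Then $h:=g_\Phi$, normalised to unit area, realises $\bar\lambda_2(M,h)=\Lambda_2(M,c)$.

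The hardest point is the elimination of alternative (b) for $k=2$: one must verify that the ``$4\pi$ of bubble energy'' in Theorem~\ref{MainTh3:intro}(b) accounts for exactly the $8\pi$ gap in~\eqref{Lambdak:condition}, and in particular that the leftover harmonic map cannot carry more than $\tfrac12\Lambda_1(M,c)$ of energy --- this is the only place where the strictness hypothesis genuinely enters. A secondary difficulty is pinning down $\ind_S(\Phi)=2$ in case (a), which requires the structure of the two-parameter family rather than merely the index bound $\ind_E(\Phi)\leq 2(n+1)$. For $k>2$ the real obstacle is the absence, in this paper, of any analogue of the families $\widetilde\Gamma_n$ and $\widetilde\Gamma_{n,2}$, which is why one falls back on \cite{Petridesk, KNPP2}.
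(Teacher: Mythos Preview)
The paper does not prove this theorem directly; it is cited as a known result from \cite{Petrides, Petridesk, KNPP, KNPP2}, and the paper merely remarks that its min-max characterisation yields an alternative proof for $k=1,2$ as a byproduct (see the end of Section~\ref{intro_harm:sec} and Sections~\ref{stabilization:sec}, \ref{second.min-max:stabilization}). Your proposal follows exactly this byproduct route, and for $k=1$ your argument is correct and coincides with the paper's.

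For $k=2$ your overall strategy matches Section~\ref{second.min-max:stabilization}, but there is a genuine gap in how you bound the spectral index. You write that $\ind_S(\Phi)\geq 3$ in alternative~(a) is ``excluded using the topology of the $2(n+1)$-parameter min-max family'', and later that pinning down $\ind_S(\Phi)=2$ ``requires the structure of the two-parameter family rather than merely the index bound $\ind_E(\Phi)\leq 2(n+1)$''. This is backwards: the family contributes nothing beyond the energy-index bound. What actually forces $\ind_S\leq 2$ is Proposition~\ref{MainProp:intro} (equivalently Theorem~\ref{stabilization:thm}): for the sequence of maps $\Phi_{n,2}$ with uniformly bounded energy, $\nul_S$ is bounded, so for large $n$ the map lands in a totally geodesic $\mathbb{S}^N$ with $N$ independent of $n$; then Proposition~\ref{indEindS:prop} gives $\ind_E(\Phi_{n,2})\geq (n-N)\ind_S(\Phi_{n,2})$, and combining with $\ind_E\leq 2(n+1)$ yields $\ind_S\leq 2$ once $n$ is large. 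The same mechanism is what gives $\ind_S=1$ in alternative~(b) from $\ind_E\leq n+4$. Note also the quantifier issue: one does not fix a single $n$, but observes that one of the two alternatives holds for infinitely many $n$ and applies stabilization along that subsequence.

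Finally, your suggested ``alternative'' in case~(b) of comparing $E(\Phi_{n,2})$ directly with $\mathcal E_n$ is circular: for large $n$ one has $E(\Phi_{n,2})=\tfrac12\Lambda_2(M,c)-4\pi$ and $\mathcal E_n=\tfrac12\Lambda_1(M,c)$, so the desired inequality $E(\Phi_{n,2})\leq \mathcal E_n$ is equivalent to $\Lambda_2\leq \Lambda_1+8\pi$, which is precisely what you are trying to establish.
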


In fact, for $k=1$ one can say more.
\begin{theorem}[Petrides~\cite{Petrides}]
\label{rigidity:thm}
Suppose that $M$ is not a $2$-dimensional sphere $\mathbb{S}^2$. Then for any conformal class $c$ on $M$ one has
$$
\Lambda_1(M,c)>8\pi,
$$
i.e. the inequality~\eqref{Lambdak:condition} is strict.
\end{theorem}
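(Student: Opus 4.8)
The plan is to obtain the lower bound $\Lambda_1(M,c)\geqslant 8\pi$ for free, and then to exclude equality by a spherical-bubble degeneration argument. Indeed, \eqref{Lambdak:condition} with $k=1$ and the convention $\Lambda_0(M,c)=0$ already gives $\Lambda_1(M,c)\geqslant 8\pi$ for every surface and every conformal class, so it remains to rule out $\Lambda_1(M,c)=8\pi$ when $M\not\cong\mathbb{S}^2$. Suppose toward a contradiction that $\Lambda_1(M,c)=8\pi$. On the round sphere $\bar\lambda_1=8\pi$ with multiplicity $3$ (this is also the content of Hersch's classical theorem, which shows $\Lambda_1(\mathbb{S}^2,c)=8\pi$ for every $c$), and the strategy is to exhibit a \emph{genuine} metric $g\in c$ with $\bar\lambda_1(M,g)>8\pi$ — contradicting the hypothesis — by perturbing a degenerate configuration in which $(M,g)$ bubbles off a round sphere.

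Concretely, fix a point $p\in M$, a background metric $g_0\in c$, and a conformal chart centered at $p$. For a small parameter $\epsilon>0$ consider $g_\epsilon=u_\epsilon^2\,g_0\in c$, where $u_\epsilon$ is a conformal factor concentrating near $p$ so that, in the chart, $g_\epsilon$ on a disk of conformal radius $\asymp 1$ about $p$ is conformal to a large spherical cap — a round $\mathbb{S}^2$ with a disk of conformal radius $\asymp\epsilon$ removed — glued along the boundary circle to a rescaled, small-area copy of $(M\setminus\{p\},g_0)$ with a matching disk removed. Then $g_\epsilon\in c$, its total area tends to $4\pi$, and $(M,g_\epsilon)$ converges to the round sphere bubble as $\epsilon\to 0$, so $\bar\lambda_1(M,g_\epsilon)\to 8\pi$.

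The heart of the matter is a careful expansion of $\bar\lambda_1(M,g_\epsilon)$ as $\epsilon\to 0$. In the limit the relevant operator is the Laplacian of the round $\mathbb{S}^2$, whose eigenvalue $2$ is $3$-fold degenerate with eigenspace spanned by the ambient coordinate functions, so $\lambda_1(M,g_\epsilon)$ is governed by degenerate first-order perturbation theory on this $3$-dimensional space, combined with the expansion $\area(M,g_\epsilon)=4\pi+o(1)$. The crucial claim is that the leading correction to $\bar\lambda_1(M,g_\epsilon)$ is strictly positive — equivalently, that the minimum over the $3$-dimensional bubble eigenspace of the perturbed Rayleigh quotient, times the perturbed area, exceeds $8\pi$. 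This is exactly where $M\not\cong\mathbb{S}^2$ is used: the contribution of the glued-in collapsing copy of $(M,c)$ enters the correction with a definite sign, which is cancelled only in the rigidity case $(M,c)\cong(\mathbb{S}^2,g_{round})$ — consistently with Hersch's theorem forbidding $\bar\lambda_1>8\pi$ on the sphere. One expects this gain to be encoded by a strict conformal inequality (e.g.\ $V_c(M,[g])>4\pi$ unless $(M,[g])$ is the round sphere) feeding into a Hersch-type balancing argument that forces one of the three perturbed eigenvalues up; once the correction is shown positive one gets $\bar\lambda_1(M,g_\epsilon)>8\pi$ for small $\epsilon$, the desired contradiction.

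The main obstacle is precisely this eigenvalue perturbation analysis for the degenerating family $g_\epsilon$: one must track $\lambda_1$ to the exact order at which the global geometry of $(M,c)$ becomes visible, control the first eigenfunctions across the collapsing gluing region (in particular tuning the relative scales of the spherical cap and the collapsing $M$-part so as to sit in the resonant regime, where the bubble modes — rather than a spurious low ``dumbbell'' mode concentrated on the neck — determine $\lambda_1$), and pin down the sign of the leading correction. Isolating this sign, and thereby seeing exactly how $M\not\cong\mathbb{S}^2$ produces a gain, is the technical core; this is essentially Petrides' argument in \cite{Petrides}. An equivalent route is to start from a maximizing sequence $g_m\in c$ with $\bar\lambda_1(M,g_m)\to 8\pi$ and run the bubble-tree compactness: equality would force all of the $\bar\lambda_1$-mass to concentrate in a single round-sphere bubble with nothing surviving on the base, from which one either deduces $M\cong\mathbb{S}^2$ directly or perturbs the degenerate configuration to build the competitor $g_\epsilon$ above — either way reducing to the same delicate estimate.
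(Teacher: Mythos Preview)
The paper does not prove this theorem; it is stated in the preliminaries (Section~\ref{prelim:sec}) as a cited result of Petrides~\cite{Petrides} and is used as a black box elsewhere (notably in the proofs of Theorems~\ref{eigenvalue:thm} and~\ref{En2:thm}). There is therefore no ``paper's own proof'' to compare against.

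As for your proposal itself: you correctly identify the overall architecture of Petrides' argument --- attach a round-sphere bubble to $(M,c)$ near a point and show by a perturbation expansion that $\bar\lambda_1$ of the resulting conformal metric exceeds $8\pi$ when $M\not\cong\mathbb{S}^2$. However, what you have written is a strategy outline rather than a proof. The entire technical content sits in the sentence ``The crucial claim is that the leading correction to $\bar\lambda_1(M,g_\epsilon)$ is strictly positive,'' which you do not establish; you yourself concede that isolating this sign ``is essentially Petrides' argument in~\cite{Petrides}.'' That sign is precisely the nontrivial part: it does not follow from soft considerations, and the vague appeal to ``a strict conformal inequality (e.g.\ $V_c(M,[g])>4\pi$)'' does not by itself feed into the Rayleigh-quotient expansion in the way you suggest. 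Without an explicit computation of the leading-order behavior of $\lambda_1$ on the glued family --- including the matching of eigenfunctions across the neck, control of the competing ``dumbbell'' mode you allude to, and identification of which term in the expansion actually sees the genus of $M$ --- the argument remains a plan, not a proof.
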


One of the byproducts of the min-max characterisation is an alternative proof of Theorem~\ref{existence:thm} for $k=1,2$. 

\subsection{Energy index} Given a harmonic map $\Phi$, the energy index $\ind_E(\Phi)$ refers to the Morse index of $\Phi$ as a critical point of energy functional. More concretely, we have the following definition.

\begin{definition}
Let $\Phi\colon M\to\mathbb{S}^n\subset \mathbb{R}^{n+1}$ be a harmonic map. Then for any metric $g\in c$, the {\em energy index} $\ind_E(\Phi)$ is given by the index of the quadratic form
$$
Q_E(V) =  \int|dV|_g^2 - |d\Phi|_g^2|V|^2\,dv_g
$$
over sections of the pullback bundle
$$\Gamma(\Phi^*(T\mathbb{S}^n))\cong\{V:M\to \mathbb{R}^{n+1}\mid V(x)\perp \Phi(x)\text{ for each }x\in M\}.$$
\end{definition}

The first author has shown in~\cite{KarRP2} that the energy index and spectral index are closely related. For the purposes of the present article, we only need the following two results from~\cite{KarRP2}.

\begin{proposition}
\label{indEindS:prop}
Let $\Phi\colon M\to\mathbb{S}^n$ be a harmonic map and let $i_{n,m}\colon \mathbb{S}^n\to\mathbb{S}^m$ be a totally geodesic embedding, $m\geqslant n$. Then one has
$$
\ind_E(i_{n,m}\circ\Phi) = \ind_E(\Phi) + (m-n)\ind_S(\Phi).
$$
\end{proposition}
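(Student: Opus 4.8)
The plan is to make the totally geodesic embedding explicit and to exploit the resulting orthogonal splitting of the pullback bundle. Up to an isometry of $\mathbb{S}^m$, which changes neither side of the claimed identity, I would choose coordinates on $\mathbb{R}^{m+1}=\mathbb{R}^{n+1}\times\mathbb{R}^{m-n}$ so that $i_{n,m}$ identifies $\mathbb{S}^n$ with $\mathbb{S}^m\cap(\mathbb{R}^{n+1}\times\{0\})$, and set $\tilde\Phi:=i_{n,m}\circ\Phi=(\Phi,0)$. Since $i_{n,m}$ is an isometric immersion, $|d\tilde\Phi|_g^2=|d\Phi|_g^2$; in particular $g_{\tilde\Phi}=g_\Phi$, so the potential function occurring in all the index forms below is literally the same. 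At a point $x\in M$ one has $T_{\tilde\Phi(x)}\mathbb{S}^m=\{u\in\mathbb{R}^{m+1}:u\perp(\Phi(x),0)\}=T_{\Phi(x)}\mathbb{S}^n\oplus\mathbb{R}^{m-n}$, so a section $\tilde V$ of $\tilde\Phi^*(T\mathbb{S}^m)$ is a pair $\tilde V=(V,W)$ with $V$ a section of $\Phi^*(T\mathbb{S}^n)$ and $W\colon M\to\mathbb{R}^{m-n}$ an arbitrary $W^{1,2}$ map.

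Using the ambient-derivative form of the index form from the definition of $\ind_E$, this splitting is $Q_E$-orthogonal: pointwise $|d\tilde V|_g^2=|dV|_g^2+|dW|_g^2$ and $|\tilde V|^2=|V|^2+|W|^2$, with no cross terms, so
\begin{equation*}
Q_E^{(m)}(\tilde V)=\int_M\bigl(|dV|_g^2-|d\Phi|_g^2|V|^2\bigr)\,dv_g+\sum_{j=1}^{m-n}\int_M\bigl(|dW_j|_g^2-|d\Phi|_g^2W_j^2\bigr)\,dv_g=Q_E^{(n)}(V)+\sum_{j=1}^{m-n}Q_S(W_j).
\end{equation*}
Thus, on the $W^{1,2}$-completion, the index form of $\tilde\Phi$ is the orthogonal direct sum of the index form of $\Phi$ and $m-n$ copies of the scalar form $Q_S$, whose index is $\ind_S(\Phi)$ by the equivalent description recorded above.

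It then remains to note that the Morse index is additive under such orthogonal direct sums. Each of $Q_E^{(n)}$ and $Q_S$ is a compact perturbation of the Dirichlet form — $|d\Phi|_g^2$ is bounded away from its isolated conical zeros and the associated Schr\"odinger operators $\Delta_g-|d\Phi|_g^2$ have discrete spectrum — so each admits a decomposition of its domain into negative, null and positive subspaces with the negative part finite dimensional. Taking the direct sum of these decompositions over the two factors produces the analogous decomposition for $Q_E^{(m)}$, whence $\ind_E(i_{n,m}\circ\Phi)=\ind_E(\Phi)+(m-n)\,\ind_S(\Phi)$.

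The argument is essentially bookkeeping once the splitting is in place. The two points requiring genuine care are: first, the absence of cross terms in $Q_E^{(m)}$ — this is exactly where the totally geodesic hypothesis enters, via the product structure $\mathbb{R}^{m+1}=\mathbb{R}^{n+1}\times\mathbb{R}^{m-n}$ with $i_{n,m}(\mathbb{S}^n)=\mathbb{S}^m\cap(\mathbb{R}^{n+1}\times\{0\})$, which simultaneously forces $g_{\tilde\Phi}=g_\Phi$ and decouples the $\mathbb{R}^{n+1}$- and $\mathbb{R}^{m-n}$-components of both the gradient and potential terms; second, the additivity of the index, which relies on finiteness of the indices (equivalently, discreteness of the relevant Schr\"odinger spectra). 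I expect the first of these to be the part most in need of careful writing.
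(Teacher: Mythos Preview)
Your argument is correct and is essentially the natural one: split $\tilde\Phi^*(T\mathbb{S}^m)$ orthogonally as $\Phi^*(T\mathbb{S}^n)\oplus\underline{\mathbb{R}}^{m-n}$, observe that the second-variation form decouples as $Q_E^{(n)}$ on the first summand and $m-n$ copies of the scalar form $Q_S$ on the second, and add indices. The paper does not actually supply a proof of this proposition; it is quoted from \cite{KarRP2}, so there is no in-paper argument to compare against. Your approach is the standard one and matches what one finds in that reference.

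One small remark on the write-up: your parenthetical about $|d\Phi|_g^2$ being ``bounded away from its isolated conical zeros'' is slightly misaimed. What you need for discreteness of the spectrum of $\Delta_g-|d\Phi|_g^2$ (and hence finiteness of each index summand and additivity of the total index) is that $|d\Phi|_g^2\in L^\infty(M)$, which holds because $\Phi$ is a smooth harmonic map on a closed surface; the zeros of the density are irrelevant here. With that clarified, the ``genuine care'' you flag reduces to routine functional analysis, and the only substantive point is indeed the orthogonal splitting, which you have handled correctly.
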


\begin{proposition}
\label{indES2:prop}
Let $\Phi\colon\mathbb{S}^2\to\mathbb{S}^n$ be a nonconstant harmonic map. Suppose that $\Phi$ is not a totally geodesic embedding, i.e. it is not an embedding into an equatorial $\mathbb{S}^2\subset\mathbb{S}^n$. Then one has
$$
\ind_E(\Phi)>n+1
$$
if $n>4$ and
$$
\ind_E(\Phi)>2(n+1)
$$
for $n>8$.
\end{proposition}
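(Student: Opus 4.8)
\medskip
\noindent\emph{Sketch of proof.} The plan is to reduce, via Proposition~\ref{indEindS:prop}, to the case of a linearly full harmonic map, and then to play a lower bound for the spectral index of a harmonic $2$-sphere against a lower bound for the energy index of a linearly full one. First write $n_0+1=\dim\big(\mathrm{span}\,\Phi(\mathbb{S}^2)\big)$ and factor $\Phi=i_{n_0,n}\circ\Phi_0$ with $\Phi_0\colon\mathbb{S}^2\to\mathbb{S}^{n_0}$ linearly full. Since $\Phi$ is not a totally geodesic embedding, neither is $\Phi_0$; a nonconstant harmonic map from $\mathbb{S}^2$ cannot take values in a circle, so $n_0\geq 2$, and $\Phi_0$ is automatically non-totally-geodesic when $n_0\geq3$, while for $n_0=2$ it has degree $\geq2$. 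By Proposition~\ref{indEindS:prop},
$$
\ind_E(\Phi)=\ind_E(\Phi_0)+(n-n_0)\,\ind_S(\Phi_0),
$$
so everything reduces to lower bounds for $\ind_S(\Phi_0)$ and $\ind_E(\Phi_0)$.

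For $\ind_S(\Phi_0)$ one uses that a nonconstant harmonic map from $\mathbb{S}^2$ is weakly conformal---there is no nonzero holomorphic quadratic differential on $\mathbb{S}^2$---hence a branched minimal immersion whose image is a minimal $2$-sphere of area $E(\Phi_0)$; by energy quantization $E(\Phi_0)=4\pi e$ for an integer $e\geq1$, with $e=1$ only for a totally geodesic embedding, so here $e\geq2$. Since the components of $\Phi_0$ are eigenfunctions of $\Delta_{g_{\Phi_0}}$ with eigenvalue $2$ by~\eqref{harmonic:eq2}, one has $\bar\lambda_{\ind_S(\Phi_0)}(\mathbb{S}^2,g_{\Phi_0})=2E(\Phi_0)=8\pi e$, so the sharp bound $\bar\lambda_k(\mathbb{S}^2,\cdot)\leq8\pi k$ yields $\ind_S(\Phi_0)\geq e$; and since $\Lambda_k(\mathbb{S}^2)=8\pi k$ is never attained for $k\geq2$, equality $\ind_S(\Phi_0)=e$ cannot hold, so in fact $\ind_S(\Phi_0)\geq e+1=\tfrac{1}{4\pi}E(\Phi_0)+1\geq3$.

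For $\ind_E(\Phi_0)$ one tests the second variation on the sections $V_A:=A-\langle A,\Phi_0\rangle\Phi_0$ of $\Phi_0^*(T\mathbb{S}^{n_0})$, $A\in\mathbb{R}^{n_0+1}$. Using~\eqref{harmonic:eq} and $\langle\Phi_0,d\Phi_0\rangle=0$, a short computation gives
$$
Q_E(V_A)=\int_M|d\Phi_0|^2\big(3\langle A,\Phi_0\rangle^2-|A|^2\big)\,dv_g=A^{\top}\big(3B_{\Phi_0}-2E(\Phi_0)\,\mathrm{Id}\big)A,\qquad B_{\Phi_0}:=\int_M|d\Phi_0|^2\,\Phi_0\Phi_0^{\top}\,dv_g.
$$
As $\Phi_0$ is linearly full, $B_{\Phi_0}$ is positive definite with trace $2E(\Phi_0)$, so $3B_{\Phi_0}-2E(\Phi_0)\mathrm{Id}$ has at least $n_0-1$ nonpositive eigenvalues; for a standard degree-$m$ eigenmap $\mathbb{S}^2\to\mathbb{S}^{2m}$---where $B_{\Phi_0}$ is a multiple of the identity by homogeneity---all $n_0+1$ eigenvalues are strictly negative, giving $\ind_E(\Phi_0)\geq n_0+1$. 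For a general linearly full harmonic $2$-sphere one enlarges the family $\{V_A\}$ by second-order test fields built from the holomorphic directrix curve of $\Phi_0$ and its $\partial$- and $\bar\partial$-transforms (the harmonic sequence of $\Phi_0$). Combined with Calabi's theorem---linearly full harmonic maps $\mathbb{S}^2\to\mathbb{S}^{n_0}$ occur only for even $n_0$, with $E(\Phi_0)\geq2\pi\tfrac{n_0}{2}\big(\tfrac{n_0}{2}+1\big)$---this makes both $\ind_S(\Phi_0)$ and $\ind_E(\Phi_0)$ grow at least quadratically in $n_0$.

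Assembling, one substitutes these bounds into $\ind_E(\Phi)=\ind_E(\Phi_0)+(n-n_0)\ind_S(\Phi_0)$, using $\ind_S(\Phi_0)\geq\tfrac{1}{4\pi}E(\Phi_0)+1\geq3$ together with the quadratic growth of $\ind_E(\Phi_0)$, and checks that $\ind_E(\Phi)>n+1$ for $n>4$ and $\ind_E(\Phi)>2(n+1)$ for $n>8$; the finitely many configurations not covered by the growth estimate---essentially $n_0=2$, where $\ind_E(\Phi)=(n-2)\ind_S(\Phi_0)\geq3(n-2)$, and $n_0\in\{4,6,8\}$ with $\Phi$ linearly full---are dispatched by the explicit bounds above. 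The main obstacle, I expect, is the last part of the third step: proving index lower bounds of order $2n_0$ for \emph{arbitrary} linearly full harmonic maps $\mathbb{S}^2\to\mathbb{S}^{n_0}$---in particular the branched and non-standard ones---which is exactly where one must invoke the structure theory of harmonic maps of $\mathbb{S}^2$ into spheres and the degree bookkeeping of the harmonic sequence, and which is the technical heart of~\cite{KarRP2}.
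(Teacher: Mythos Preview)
Your strategy is sound in outline, but the paper's proof is dramatically shorter and more direct: it simply quotes from~\cite{KarRP2} the single inequality
\[
\ind_E(\Phi)\;\geq\;(n-2)\bigl(2d-[\sqrt{8d+1}]_{\mathrm{odd}}+2\bigr),\qquad d=\tfrac{E(\Phi)}{4\pi}\in\mathbb{N},
\]
checks that the bracketed factor is $\geq 3$ whenever $d\geq 2$ (i.e.\ whenever $\Phi$ is not totally geodesic), hence $\ind_E(\Phi)\geq 3(n-2)$, and then verifies $3(n-2)>n+1$ for $n>4$ and $3(n-2)>2(n+1)$ for $n>8$. No reduction to the linearly full case, no spectral index, no case analysis.

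Your route decomposes via Proposition~\ref{indEindS:prop} and bounds $\ind_S(\Phi_0)$ and $\ind_E(\Phi_0)$ separately. The bound $\ind_S(\Phi_0)\geq e+1$ via non-attainment of $\Lambda_k(\mathbb{S}^2)=8\pi k$ is correct and interesting (and is indeed close to how~\cite{KarRP2} proceeds), though it imports the full strength of~\cite{KNPP}. For $n_0=2$ this already yields $\ind_E(\Phi)=(n-2)\ind_S(\Phi_0)\geq 3(n-2)$, the same bound the paper gets. The difficulty is the remaining cases $n_0\geq 4$: your observation that $3B_{\Phi_0}-2E(\Phi_0)\,\mathrm{Id}$ has at least $n_0-1$ \emph{nonpositive} eigenvalues does not by itself produce $n_0-1$ \emph{negative} directions for $Q_E$, and your closing case check is looser than advertised---for instance with $n_0=4$, $n=5$, combining only El~Soufi's $\ind_E(\Phi_0)\geq n_0-2=2$ with $\ind_S(\Phi_0)\geq 4$ gives $\ind_E(\Phi)\geq 6$, which is not strictly greater than $n+1=6$. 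To close these gaps you must, as you acknowledge, feed in the sharper energy-index bounds from the harmonic-sequence analysis of~\cite{KarRP2}; but that is precisely the result the paper quotes directly, so your argument ends up being a longer detour to the same citation.
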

\begin{proof}
The proof is an easy application of results in~\cite{KarRP2}. Namely, in~\cite{KarRP2} it is shown that
$$
\ind_E(\Phi)\geqslant (n-2)(2d - [\sqrt{8d+1}]_{\mathrm{odd}}+2),
$$
where $d=\frac{E(\Phi)}{4\pi}$ is the degree of $\Phi$ and $[x]_{\mathrm{odd}}$ denotes the smallest odd number not exceeding $x$. By a result of Barbosa~\cite{Barbosa}, $d\in\mathbb{N}$. We claim that for $d>1$ one has $(2d - [\sqrt{8d+1}]_{\mathrm{odd}}+2)\geqslant 3$. Indeed, If $d\geqslant 4$ then $8d+1<16(d-3/2)$ so that
$$
(2d - [\sqrt{8d+1}]_{\mathrm{odd}}-1)\geqslant (2d - 4\sqrt{d-3/2}-1) =2\left(\sqrt{d-3/2} - 1\right)^2\geqslant 0.
$$
If $d=2,3$, then an explicit computation shows that $2d - [\sqrt{8d+1}]_{\mathrm{odd}}+2$ equals $3$.

At the same time, $d=1$ iff $\Phi$ is a totally geodesic embedding $\mathbb{S}^2\to\mathbb{S}^n$.
Therefore, if $\Phi$ is not totally geodesic, then
$$
\ind_E(\Phi)\geqslant 3(n-2).
$$
The proof is completed by noting that $3(n-2)>n+1$ for $n>4$ and $3(n-2)>2(n+1)$ for $n>8$.
\end{proof}

Finally, we recall the following result of El Soufi.
\begin{proposition}[El Soufi~\cite{ESindex}]
\label{indEM:prop}
Let $\Phi\colon M\to\mathbb{S}^n$ be a non-constant harmonic map. Then 
$$
\ind_E(\Phi)\geqslant n-2.
$$
\end{proposition}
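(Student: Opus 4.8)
The plan is to test the energy second variation form $Q_E$ against the conformal vector fields of the \emph{target} sphere, which already exhibits an $(n-2)$-dimensional negative subspace. Write $\Phi=(\Phi^1,\dots,\Phi^{n+1})\colon M\to\mathbb S^n\subset\mathbb R^{n+1}$, and for $v\in\mathbb R^{n+1}$ set
$$
V_v:=v-\langle v,\Phi\rangle\Phi,
$$
the tangential component along $\Phi$ of the constant field $v$; equivalently $V_v=\xi_v\circ\Phi$, where $\xi_v$ is the gradient on $\mathbb S^n$ of the linear function $p\mapsto\langle v,p\rangle$. Each $V_v$ is a smooth section of $\Phi^*(T\mathbb S^n)$, and since $\Phi$ is non-constant the map $v\mapsto V_v$ is injective, so $W:=\{V_v:v\in\mathbb R^{n+1}\}$ is an $(n+1)$-dimensional space of admissible variations.

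First I would compute $Q_E$ on $W$. Using $\langle\Phi,d\Phi\rangle=\tfrac12 d|\Phi|^2=0$ one gets $|V_v|^2=|v|^2-\langle v,\Phi\rangle^2$ and $|dV_v|_g^2=|\langle v,d\Phi\rangle|_g^2+\langle v,\Phi\rangle^2|d\Phi|_g^2$, hence
$$
Q_E(V_v)=\big\langle(B+2C-2E(\Phi)I)v,v\big\rangle,
$$
where $B_{ab}=\int_M\langle d\Phi^a,d\Phi^b\rangle_g\,dv_g$ and $C_{ab}=\int_M\Phi^a\Phi^b|d\Phi|_g^2\,dv_g$. The one nontrivial ingredient is the identity $B=C$: pairing the harmonic map equation $\Delta_g\Phi^a=|d\Phi|_g^2\Phi^a$ with $\Phi^b$ and integrating by parts gives $B_{ab}=\int_M\Phi^b\Delta_g\Phi^a\,dv_g=\int_M|d\Phi|_g^2\Phi^a\Phi^b\,dv_g=C_{ab}$. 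Thus $Q_E|_W$ is represented by the symmetric matrix $3B-2E(\Phi)I$, with $B$ positive semidefinite and $\operatorname{tr}(B)=\int_M|d\Phi|_g^2\,dv_g=2E(\Phi)$.

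It then remains to carry out an elementary eigenvalue count. Let $\mu_1\ge\cdots\ge\mu_{n+1}\ge0$ be the eigenvalues of $B$; they sum to $2E(\Phi)$, so at most three of them can be $\ge\tfrac{2}{3}E(\Phi)$ (if $k$ of them were, then $2E(\Phi)=\sum_i\mu_i\ge k\tfrac{2}{3}E(\Phi)$, forcing $k\le3$ since $E(\Phi)>0$ for non-constant $\Phi$). Hence at least $n-2$ of the eigenvalues $3\mu_i-2E(\Phi)$ are strictly negative, so $Q_E$ is negative definite on an $(n-2)$-dimensional subspace of $W$, giving $\ind_E(\Phi)\ge n-2$. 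I do not anticipate a genuine obstacle here: the estimate is completely elementary once one uses the fields $V_v$, and the only step beyond a routine computation is the identity $B=C$ — without it, the same test fields yield merely $\ind_E(\Phi)\ge1$. (Note that $B$ need not have full rank when $\Phi$ fails to be linearly full, but this only helps, as every zero eigenvalue of $B$ contributes a further negative eigenvalue $-2E(\Phi)$ of $Q_E|_W$.)
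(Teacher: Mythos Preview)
The paper does not supply its own proof of this proposition; it is simply recalled as a result of El Soufi and cited. Your argument is correct and is essentially El Soufi's original proof: test $Q_E$ against the pullbacks $V_v=v-\langle v,\Phi\rangle\Phi$ of the conformal Killing fields of the target sphere, use the harmonic map equation to reduce $Q_E|_W$ to the matrix $3B-2E(\Phi)I$ with $\operatorname{tr}B=2E(\Phi)$, and count negative eigenvalues via the trace constraint.
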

\subsection{Eigenvalues of Radon measures}\label{meas.eigenvals} In the paper~\cite{Kokarev}, Kokarev defines a natural analog $\lambda_k(M,c,\mu)$ of Laplacian eigenvalues associated with a surface $M$, a conformal class $c$, and a Radon measure $\mu$ on $M$. If $g\in c$, then the ``eigenvalue" $\lambda_k(M,c,\mu)$ is defined via the Rayleigh quotient, by
\begin{equation}
\label{MeasureRayleigh:quotient}
\lambda_k(M,c,\mu) := \inf_{F_{k+1}}\sup_{u\in F_{k+1}\setminus\{0\}}\frac{\displaystyle\int_M|\nabla u|^2_g\,dv_g}{\displaystyle\int_M u^2\,d\mu},
\end{equation}
where the infimum is taken over all $(k+1)$-dimensional subspaces $F_{k+1}\subset C^\infty(M)$ that remain $(k+1)$-dimensional in $L^2(M,\mu)$. 

Note that for the standard volume measure $\mu=dv_g$, the associated eigenvalues $\lambda_k(M,[g],dv_g)=\lambda_k(M,g)$ coincide with the classical Laplacian eigenvalues. However, there are several other classes of measures $\mu$ whose associated eigenvalues $\lambda_k(M,c,\mu)$ are of geometric interest; in Section \ref{steklov.sec}, for example, we will be particularly interested in the case when $\mu$ is the length measure $\mu=\mathcal{H}^1|_\Gamma$ associated to a closed curve $\Gamma$ in $M$. This definition also makes sense if $M$ is replaced by a surface $\Omega$ with non-empty boundary. In this case $\lambda_k(\Omega,[g],dv_g)$ are the {\em Neumann} eigenvalues of $\Omega$ and $\lambda_k(\Omega,[g],\mathcal H^1|_{\partial\Omega})$ are the Steklov eigenvalues.

\begin{remark}
\label{conical:remark2} Consider a degenerate conformal metric $h=fg$, where $f\in C^{\infty}(M)$ satisfies $f>0$ outside of possibly finitely many isolated zeroes, as in Remark~\ref{conical:remark}. Then we can take the quantities $\lambda_k(M,[g],f\,dv_g)$ given by \eqref{MeasureRayleigh:quotient} as the definition of the Laplacian eigenvalues for the degenerate metric $h$; see~\cite[Example 1.1]{Kokarev}.
\end{remark}

In~\cite{Kokarev} Kokarev studied extremal properties of eigenvalues $\lambda_k(M,c,\mu)$ over the space of Radon probability measures, and was able to prove partial regularity results for maximizers under a certain mild regularity assumption on the measures $\mu$. Below we follow the exposition in~\cite[Section 3]{GKL}. Let $\mathcal L$ be a completion of $C^\infty(M)$ with respect to the norm 
$$
||u||^2_{\mathcal L} = \int u^2\,d\mu + \int |du|_g^2\,dv_g = ||u||^2_{L^2(M,\mu)} + ||\nabla u||^2_{L^2(M,g)}.
$$ 

\begin{definition}
We call a Radon measure $\mu$ {\em admissible} if the identity map on $C^\infty(M)$ extends to a compact map $T\colon W^{1,2}(M,g) \to L^2(M,\mu)$. 
\end{definition}

\begin{proposition}
\label{ef:prop}
Let $\mu$ be an admissible measure. Then the identity map on $C^\infty(M)$ extends to a bounded isomorphism between $\mathcal L$ and $W^{1,2}(M,g)$. Furthermore, one has 
$$
0=\lambda_0(M,[g],\mu)<\lambda_1(M,[g],\mu)\leqslant \lambda_2(M,[g],\mu)\leqslant\ldots\nearrow\infty;
$$
i.e. the first eigenvalue is positive, the multiplicity of each eigenvalue is finite, and the eigenvalues tend to $+\infty$. Moreover, each eigenvalue $\lambda_i(M,[g],\mu)$ has an associated eigenfunction $\phi_i\in \mathcal L$ satisfying
\begin{equation}
\label{eigenfunctions_measures:def}
\int\langle \nabla\phi_i,\nabla u\rangle\,dv_g = \lambda_i(M,[g],\mu)\int \phi_i u\,d\mu
\end{equation}
for all $u\in \mathcal L$.
\end{proposition}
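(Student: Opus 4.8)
The plan is to first upgrade the identity on $C^\infty(M)$ to a bounded isomorphism $\mathcal L\cong W^{1,2}(M,g)$, and then run the standard spectral theory of compact self-adjoint operators. One of the two norm inequalities is immediate: boundedness of $T$ gives $\int u^2\,d\mu=\|Tu\|_{L^2(M,\mu)}^2\le \|T\|^2\|u\|_{W^{1,2}(M,g)}^2$, so $\|u\|_{\mathcal L}\le (1+\|T\|^2)^{1/2}\|u\|_{W^{1,2}(M,g)}$ on $C^\infty(M)$. The reverse inequality reduces to the Poincar\'e-type estimate
\[
\|u\|_{L^2(M,g)}^2\le C\bigl(\|u\|_{L^2(M,\mu)}^2+\|\nabla u\|_{L^2(M,g)}^2\bigr),
\]
which I would prove by contradiction: a violating sequence $u_k\in C^\infty(M)$ with $\|u_k\|_{L^2(M,g)}=1$ and $\|u_k\|_{L^2(M,\mu)}^2+\|\nabla u_k\|_{L^2(M,g)}^2\to 0$ is bounded in $W^{1,2}(M,g)$, hence after passing to a subsequence converges weakly in $W^{1,2}$ and, by Rellich--Kondrachov, strongly in $L^2(M,g)$ to a limit $u$ with $\|u\|_{L^2(M,g)}=1$; since $\nabla u_k\to 0$ in $L^2$, the limit $u$ is a nonzero constant, while compactness of $T$ forces $Tu_k\to Tu$ in $L^2(M,\mu)$, whence $\|Tu\|_{L^2(M,\mu)}=\lim_k\|u_k\|_{L^2(M,\mu)}=0$ --- impossible for a nonzero constant once $\mu(M)>0$. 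As $C^\infty(M)$ is dense in both $\mathcal L$ and $W^{1,2}(M,g)$, the resulting norm equivalence yields the isomorphism, and $T$ extends to a compact map $\mathcal L\to L^2(M,\mu)$, still denoted $T$.

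Next I would equip $H:=\mathcal L$ with the inner product $a(u,v):=\int\langle\nabla u,\nabla v\rangle\,dv_g+\int (Tu)(Tv)\,d\mu$, equivalent to the $W^{1,2}$ one by the above, and set $b(u,v):=\int (Tu)(Tv)\,d\mu$. Since $|b(u,v)|\le \|Tu\|_{L^2(M,\mu)}\|v\|_a$, Riesz representation produces a bounded operator $K\colon H\to H$ with $a(Ku,v)=b(u,v)$ for all $v$; it is self-adjoint and nonnegative with respect to $a$, and compact, because if $u_k\rightharpoonup u$ weakly in $H$ then $Tu_k\to Tu$ in $L^2(M,\mu)$ and
\[
\|K(u_k-u)\|_a^2=b\bigl(u_k-u,\,K(u_k-u)\bigr)\le \|T(u_k-u)\|_{L^2(M,\mu)}\,\|TK(u_k-u)\|_{L^2(M,\mu)}\longrightarrow 0,
\]
the last factor staying bounded. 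The spectral theorem then provides an $a$-orthonormal eigenbasis $\{\phi_i\}\subset\mathcal L$ of $K$ with eigenvalues $\theta_i\in[0,1]$ (as $0\le b\le a$); the eigenvalue $0$, if it occurs, has eigenspace $\ker T$, while each positive eigenvalue has finite multiplicity and the positive eigenvalues can only accumulate at $0$.

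To extract the stated conclusions: the identity $b(u,u)=a(u,u)$ forces $\nabla u\equiv 0$, so the eigenvalue $\theta=1$ has eigenspace exactly the constants --- one-dimensional since $\mu(M)>0$ --- which I label $\lambda_0=0$. For any other positive eigenvalue, rewriting $a(K\phi_i,v)=\theta_i\,a(\phi_i,v)$ gives $\int\langle\nabla\phi_i,\nabla v\rangle\,dv_g=\lambda_i\int (T\phi_i)(Tv)\,d\mu$ for all $v\in\mathcal L$, i.e.\ \eqref{eigenfunctions_measures:def}, with $\lambda_i:=\theta_i^{-1}-1\in(0,\infty)$; listing the $\lambda_i$ increasingly with multiplicity, the Courant--Fischer description of the $\theta_i$ transforms, through $\theta=(1+\lambda)^{-1}$, into exactly the min--max formula \eqref{MeasureRayleigh:quotient} (using density of $C^\infty(M)$ in $\mathcal L$ and restricting to subspaces that stay $(k+1)$-dimensional in $L^2(M,\mu)$). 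In particular $\lambda_1>0$, since $\theta=1$ is attained only on constants. Finally, to see that the spectrum is infinite (so that $\lambda_i\nearrow\infty$, and in particular $\lambda_1<\infty$), I would note that an admissible $\mu$ is not a finite combination of point masses --- point evaluation is unbounded on $W^{1,2}(M,g)$ --- so the support of $\mu$ is infinite, and for each $m$ one can pick $m$ smooth bumps with pairwise disjoint supports of positive $\mu$-measure, whose $T$-images are orthogonal and nonzero in $L^2(M,\mu)$; hence $T(H)$ is infinite-dimensional and $K$ has infinitely many positive eigenvalues.

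The only genuinely nontrivial step in this scheme is the Poincar\'e inequality in the first paragraph; granted that --- and the compactness of $T\colon\mathcal L\to L^2(M,\mu)$ it furnishes --- everything else is textbook compact-operator spectral theory, requiring only a little bookkeeping to handle a possibly nontrivial $\ker T$ and to confirm that the spectrum is genuinely infinite.
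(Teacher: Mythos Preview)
Your argument is correct and self-contained. Note, however, that the paper does not actually give a proof of this proposition: it simply refers the reader to \cite[Section 2]{GKL}. Your write-up supplies precisely the standard ingredients one would expect to find there---the Poincar\'e-type inequality (proved by contradiction via compactness of $T$ and Rellich) to get the norm equivalence $\mathcal L\cong W^{1,2}$, followed by the spectral theorem for the compact self-adjoint operator $K$ determined by $a(Ku,v)=b(u,v)$, with $\lambda_i=\theta_i^{-1}-1$. The small extra care you take to rule out a finite spectrum (observing that in two dimensions point masses are not admissible, so $\operatorname{supp}\mu$ is infinite and $T(\mathcal L)$ is infinite-dimensional) is a genuine point that a careless treatment might omit. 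In short: nothing to compare against in the paper itself, and your proof stands on its own.
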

\begin{proof}
See~\cite[Section 3]{GKL} for the proof.
\end{proof}

\begin{proposition}
\label{ef_Lp:prop}
Let $\mu$ be an absolutely continuous Radon measure $d\mu = f\,dv_g$, where $f\in L^p(M,g)$, $f\geqslant 0$, $p>1$. Then $\mu$ is admissible.
\end{proposition}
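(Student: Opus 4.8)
The statement is a routine consequence of the Sobolev embedding theorem on surfaces combined with Hölder's inequality, so I only sketch the three steps. First I would recall that, since $\dim M = 2$, the embedding $W^{1,2}(M,g)\hookrightarrow L^q(M,dv_g)$ is continuous for every $q<\infty$ and, by the Rellich--Kondrachov theorem, compact; I fix the conjugate exponent $p'=p/(p-1)$ of $p$ and set $q=2p'<\infty$ (here the hypothesis $p>1$ is exactly what makes $q<\infty$).

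Next, to see that $T$ is well-defined and bounded, note that for $u\in C^\infty(M)$ Hölder's inequality with exponents $p$ and $p'$ gives
$$
\int_M u^2\,d\mu = \int_M u^2 f\,dv_g \leqslant \|f\|_{L^p(M,g)}\,\|u\|_{L^{2p'}(M,g)}^2 \leqslant C\,\|f\|_{L^p(M,g)}\,\|u\|_{W^{1,2}(M,g)}^2 ,
$$
the last step being the Sobolev inequality from the first step. Since $C^\infty(M)$ is dense in $W^{1,2}(M,g)$, the identity map on $C^\infty(M)$ extends uniquely to a bounded operator $T\colon W^{1,2}(M,g)\to L^2(M,\mu)$; as $\mu$ is absolutely continuous with respect to $dv_g$, passing to an a.e.-convergent subsequence identifies $Tu$ with the measurable representative of $u$, so $T$ is just the natural restriction map.

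Finally, for compactness, let $u_k$ be bounded in $W^{1,2}(M,g)$. By Rellich--Kondrachov a subsequence converges strongly in $L^{2p'}(M,dv_g)$ to some $u$, and applying the Hölder estimate above to $u_k-u$ yields
$$
\|Tu_k - Tu\|_{L^2(M,\mu)}^2 \leqslant \|f\|_{L^p(M,g)}\,\|u_k-u\|_{L^{2p'}(M,g)}^2\longrightarrow 0 ,
$$
so $Tu_k\to Tu$ in $L^2(M,\mu)$, whence $T$ is compact and $\mu$ is admissible. There is no genuine obstacle in this argument; the one point that merits a moment's care is the identification of the extension $Tu$ with $u$ in the second step, which is immediate from the absolute continuity of $\mu$. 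The same proof applies verbatim when $M$ is replaced by a Lipschitz surface with boundary, using the corresponding Sobolev and Rellich theorems.
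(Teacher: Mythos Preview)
Your proof is correct and follows essentially the same route as the paper: factor $T$ as $W^{1,2}(M,g)\hookrightarrow L^{2p'}(M,g)\to L^2(M,\mu)$, where the first map is compact by Rellich--Kondrachov and the second is bounded by H\"older's inequality with exponents $p$ and $p'$. The paper's version is slightly terser, but the argument is the same.
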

\begin{proof}
One can find a proof in~\cite[Example 2.1]{Kokarev}. We provide a simpler proof for completeness. For any $u\in C^\infty(M)$, an easy application of H\"older's inequality gives
$$
||u||^2_{L^2(M,\mu)} = \int u^2 f\,dv_g\leqslant ||u||^2_{L^{2q}(M,g)} ||f||_{L^p(M,g)},
$$
where $q$ is the H\"older conjugate of $p$. Hence, the identity map on $C^\infty(M)$ extends to a bounded map $L^{2q}(M,g)\to L^2(M,\mu)$, and since the embedding $W^{1,2}(M,g)\to L^{2q}(M,g)$ is compact by Rellich's theorem, the admissibility of $\mu$ follows.
\end{proof}




\subsection{Conformal volume}\label{vc.def} Our min-max construction is inspired in large part by the notion of conformal volume introduced by P. Li and S.-T. Yau in~\cite{LiYau}, which we briefly review. Recall that the group of conformal automorphisms of $\mathbb{S}^n$ modulo $O(n+1)$ is homeomorphic to the open ball $B^{n+1}$. To each $a\in B^{n+1}$ there corresponds 
a conformal automorphism $G_a$ given by
$$
G_a(x)=\frac{(1-|a|^2)}{|x+a|^2}(x+a)+a,
$$
Let $\phi\colon M\to\mathbb{S}^n$ be a conformal immersion, then one successively defines
$$
V_c(n,\phi) = \sup_aE(G_a\circ \phi) = \sup_a \area(G_a\circ \phi)
$$   
$$
V_c(n,M,[g])  = \inf_\phi V_c(n,\phi)
$$
$$
V_c(M,[g]) = \lim_{n\to\infty} V_c(n,M,[g]) = \inf_n V_c(n,M,[g]).
$$
One of the applications of conformal volume obtained in~\cite{LiYau} is the following.
\begin{proposition}[Li, Yau~\cite{LiYau}]
\label{Vc:prop}
One has 
$$
\Lambda_1(M,[g])\leqslant 2V_c(M,[g])<+\infty.
$$
\end{proposition}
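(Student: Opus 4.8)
The plan is to reprise the argument of Li and Yau \cite{LiYau}, whose two ingredients are a conformal balancing trick and the conformal invariance of the Dirichlet energy on surfaces. It suffices to fix an arbitrary $g\in c$ and prove that $\bar\lambda_1(M,g)\leqslant 2V_c(M,[g])$; the conclusion then follows by taking the supremum over $g\in c$.

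First I would fix $n$ and a conformal immersion $\phi\colon M\to\mathbb{S}^n$ and show $\bar\lambda_1(M,g)\leqslant 2V_c(n,\phi)$. Pushing the normalized volume measure forward by $\phi$ gives a probability measure on $\mathbb{S}^n$; since $dv_g$ is non-atomic and $\phi$ has finite fibres, this measure is non-atomic, so Hersch's balancing lemma produces a parameter $a\in B^{n+1}$ for which the renormalized map $\psi:=G_a\circ\phi\colon M\to\mathbb{S}^n$ satisfies $\int_M\psi\,dv_g=0\in\mathbb{R}^{n+1}$. Each of the $n+1$ coordinate functions $\psi_j\in W^{1,2}(M,g)$ is then $L^2(M,dv_g)$-orthogonal to the constants, hence admissible in the variational characterization of $\lambda_1(M,g)$; summing the Rayleigh-quotient inequalities over $j$ and using $|\psi|\equiv 1$ gives
$$
\lambda_1(M,g)\,\area(M,g)=\lambda_1(M,g)\int_M|\psi|^2\,dv_g\leqslant\int_M|d\psi|_g^2\,dv_g=2E_g(\psi).
$$
Because $M$ is a surface, $E_g(\psi)$ depends only on $[g]$, and because $G_a$ is a conformal automorphism of $\mathbb{S}^n$ the map $\psi=G_a\circ\phi$ is again weakly conformal, so $E_g(\psi)=\area(G_a\circ\phi)\leqslant\sup_{b\in B^{n+1}}\area(G_b\circ\phi)=V_c(n,\phi)$. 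This yields $\bar\lambda_1(M,g)\leqslant 2V_c(n,\phi)$; taking the infimum over all $\phi$ and all $n$ gives $\bar\lambda_1(M,g)\leqslant 2V_c(M,[g])$, and then the supremum over $g\in c$ gives $\Lambda_1(M,[g])\leqslant 2V_c(M,[g])$.

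For the finiteness $V_c(M,[g])<\infty$ it is enough to exhibit one conformal immersion with finite conformal volume. Fixing any $g\in c$, compose a Nash isometric embedding $(M,g)\hookrightarrow\mathbb{R}^K$ with inverse stereographic projection $\mathbb{R}^K\to\mathbb{S}^K$, a conformal diffeomorphism; the result is a conformal embedding $\phi_0\colon M\to\mathbb{S}^K$ in the class $c$. As in \cite{LiYau}, the function $a\mapsto\area(G_a\circ\phi_0)$ stays bounded on the closed ball $\overline{B}^{K+1}$: as $|a|\to 1$ the automorphism $G_a$ contracts $\mathbb{S}^K$ toward a single point while expanding a shrinking cap about the antipode, and the contribution of the expanding cap to the area of the image surface remains uniformly bounded. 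Hence $V_c(K,\phi_0)<\infty$, so $V_c(M,[g])\leqslant V_c(K,\phi_0)<\infty$.

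The main obstacle is the balancing step. Packaging it through Hersch's lemma reduces it to the non-atomicity of $\phi_*(dv_g)$, but proving Hersch's lemma itself—equivalently, controlling the degeneration of the family $\{G_a\}$ as $|a|\to 1$ well enough to run the Brouwer-degree argument on the renormalization map $a\mapsto\frac{1}{\area(M,g)}\int_M G_a\circ\phi\,dv_g$—is the one genuinely delicate point; everything else follows directly from the variational characterization of $\lambda_1$ and the conformal invariance of $E_g$ on surfaces.
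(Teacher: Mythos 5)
The paper states this proposition as a citation to Li--Yau \cite{LiYau} and does not reprove it, so there is no proof of the paper's own to compare against. Your argument is a correct reconstruction of the classical Li--Yau proof: after fixing a conformal immersion $\phi\colon M\to\mathbb{S}^n$, Hersch's balancing lemma (applicable because $\phi_*(dv_g)$ is non-atomic) produces $a\in B^{n+1}$ with $\int_M G_a\circ\phi\,dv_g=0$; summing the Rayleigh quotients of the components and invoking conformal invariance of the energy plus weak conformality of $G_a\circ\phi$ yields $\bar\lambda_1(M,g)\leqslant 2\,\mathrm{Area}(G_a\circ\phi)\leqslant 2V_c(n,\phi)$, and infimizing gives the result. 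Your treatment of finiteness is the right idea but remains a sketch at the key estimate: the precise statement is that for a (branched) conformal immersion $\phi_0$, $\limsup_{|a|\to 1}\mathrm{Area}(G_a\circ\phi_0)\leqslant 4\pi\cdot\max_p\#\phi_0^{-1}(p)$, which controls the blow-up region (the expanding cap carries at most this many sheets, each of area at most $4\pi$), and combined with continuity on the open ball gives $V_c(K,\phi_0)<\infty$; this is the delicate part of Li--Yau's Fact 2 and deserves more than a heuristic.

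It is worth noting that the paper's own development gives an independent route to the same inequality. Proposition~\ref{lambda.below} shows $\frac{1}{2}\Lambda_1(M,[g])\leqslant \mathcal{E}_n(M,[g])$, and Proposition~\ref{vc.above} shows $\mathcal{E}_n(M,[g])\leqslant V_c(n,M,[g])$; letting $n\to\infty$ recovers Proposition~\ref{Vc:prop}. The paper's version replaces the weakly continuous family $a\mapsto G_a\circ\phi$ by a heat-flow mollification $F_a^t$ so as to land in the strongly continuous class $\Gamma_n(M)$ over which the Ginzburg--Landau min-max $\mathcal{E}_{n,\epsilon}$ is defined, then removes both the mollification parameter $t$ and the relaxation parameter $\epsilon$. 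Your direct argument is simpler precisely because the balancing step and the Rayleigh-quotient inequality require only weak continuity; the paper pays the price of mollification because it needs strong $W^{1,2}$-continuity to run Palais--Smale and obtain actual critical points, not merely eigenvalue bounds.
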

\begin{remark}
Li and Yau obtained an upper bound for the conformal volume for orientable surfaces, see~\cite{KarNonOrientable, KokarevVc} for the non-orientable case. Notably, the quantity $V_c(n,M,[g])$ is also bounded above by the Willmore energy of conformal immersions $(M,g)\to \mathbb{S}^n$, making the conformal volumes an important tool in the study of the Willmore functional, in addition to their role as a source of eigenvalue estimates.
\end{remark}
Our definition of the min-max energy $\mathcal E_n(M,[g])$ below may be regarded as a maximal possible relaxation of the notion of $V_c(n,M,[g])$ so that the proof of Proposition~\ref{Vc:prop} still holds.

\section{The min-max construction for the first eigenvalue}\label{first.min-max}


When producing harmonic maps $M\to N$ via variational methods, instead of working directly with the Dirichlet energy on the space $W^{1,2}(M,N)$, it is often simpler to first produce critical points for a sequence of perturbed functionals on different function spaces (with better regularity and compactness properties), which then limit to a harmonic map up to bubbling phenomena. The first example of this approach comes from the work of Sacks and Uhlenbeck \cite{SU}, who produced harmonic maps from closed surfaces into higher-dimensional targets by applying variational methods to a family of perturbed functionals (essentially the $L^p$ norm of the gradient) on the spaces $W^{1,p}(M,N)\subset C^0(M,N)$ for $p>2$. 

Since the families of maps to which we wish to apply min-max methods are not continuous in the $C^0$ or $W^{1,2}$ topologies, the classical Sacks-Uhlenbeck perturbation--which penalizes bubbling behavior with infinite energy--is not quite suitable for our needs. Instead, we employ a \emph{relaxation} of the harmonic map problem via functionals of ``Ginzburg-Landau" type, building on the analysis of \cite{CS,CL,LW}.

More precisely, for small positive $\epsilon>0$, we consider the functionals
$$E_{\epsilon}: W^{1,2}(M,\mathbb{R}^{n+1})\to \mathbb{R}$$
defined on vector-valued maps $u: M\to \mathbb{R}^{n+1}$ by 
\begin{equation*}
E_{\epsilon}(u):=\int_M\frac{1}{2}|du|^2+\frac{1}{4\epsilon^2}(1-|u|^2)^2.
\end{equation*}
Note that for a map $u: M\to \mathbb{S}^n\subset \mathbb{R}^{n+1}$ taking values in the unit sphere, the functional $E_{\epsilon}$ recovers the Dirichlet energy $E_{\epsilon}(u)=E(u)=\frac{1}{2}\int_M |du|^2$, while for general maps to $\mathbb{R}^{n+1}$, the nonlinear potential term $\frac{(1-|u|^2)^2}{4\epsilon^2}$ penalizes deviation from $\mathbb{S}^n$, with increasing severity as $\epsilon\to 0$.

In the following proposition, we note that the functionals $E_{\epsilon}$ satisfy all requisite properties for the construction of critical points via classical min-max methods.

\begin{proposition}\label{ps.fred.prop} The functionals $E_{\epsilon}$ defined above are $C^2$ functionals on $W^{1,2}(M,\mathbb{R}^{n+1})$, with first and second derivatives given by
$$\langle E_{\epsilon}'(u),v\rangle=\int_M\langle du,dv\rangle-\epsilon^{-2}(1-|u|^2)\langle u,v\rangle$$
and
$$\langle E_{\epsilon}''(u),v\rangle=\int_M\Delta_gv+2\epsilon^{-2}\langle u,v\rangle u-\epsilon^{-2}(1-|u|^2)v.$$
Moreover, the second derivative $E_{\epsilon}''(u)$ defines a Fredholm operator at critical points $u$ of $E_{\epsilon}$, and the functionals $E_{\epsilon}$ satisfy the standard Palais-Smale compactness condition: for any sequence $u_j\in W^{1,2}(M,\mathbb{R}^{n+1})$ such that
$$\sup_j E_{\epsilon}(u_j)<\infty\text{ and }\lim_{j\to\infty}\|E_{\epsilon}'(u_j)\|_{(W^{1,2})^*}=0,$$
there exists a subsequence $u_{j_k}$ that converges strongly in $W^{1,2}(M,\mathbb{R}^{n+1})$.
\end{proposition}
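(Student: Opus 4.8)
The plan is to establish the three assertions separately, the one structural fact doing all the work being that on a closed surface $M$ the Sobolev embedding $W^{1,2}(M)\hookrightarrow L^p(M)$ holds for every finite $p$ and is \emph{compact}; in particular $W^{1,2}(M,\mathbb{R}^{n+1})\hookrightarrow L^4(M,\mathbb{R}^{n+1})$ compactly. For the $C^2$ regularity and the formulas for the derivatives, I would split $E_\epsilon(u)=\tfrac12\int_M|du|^2+\tfrac1{4\epsilon^2}\int_M(1-|u|^2)^2$. The first term is a bounded quadratic form on $W^{1,2}$, hence smooth, contributing $v\mapsto\int_M\langle du,dv\rangle$ to the first variation and nothing beyond a constant operator to the second. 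For the second term I would expand $(1-|u|^2)^2$ as a polynomial of degree $4$ in the components of $u$: for $0\le k\le4$ the $k$-th derivative of a degree-$4$ monomial, evaluated on $k$ test fields, is a sum of integrals of products of $4-k$ factors $u$ and $k$ factors $v_i$, each estimated by Hölder's inequality with all four factors in $L^4$. Hence every such multilinear form is bounded on $W^{1,2}$ and depends polynomially (in particular continuously) on $u$, so $u\mapsto\int_M(1-|u|^2)^2$ is actually $C^\infty$ on $W^{1,2}$; differentiation then yields the claimed expressions for $E_\epsilon'$ and, after identifying the second variation with its $L^2$-gradient, for $E_\epsilon''$.

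For the Fredholm property I would fix $u\in W^{1,2}$ and view the second variation as the bounded symmetric bilinear form
$$B_u(v,w)=\int_M\langle dv,dw\rangle+2\epsilon^{-2}\int_M\langle u,v\rangle\langle u,w\rangle-\epsilon^{-2}\int_M(1-|u|^2)\langle v,w\rangle$$
on $W^{1,2}(M,\mathbb{R}^{n+1})$, and write $B_u=B_0+K_u$ where $B_0(v,w)=\int_M\langle dv,dw\rangle+\langle v,w\rangle$ induces a topological isomorphism $W^{1,2}\to(W^{1,2})^*$. The remainder $K_u$ collects three zeroth-order terms whose coefficients are either $1-|u|^2\in L^2(M)$ (since $u\in L^4$) or are obtained by pairing against $u\in L^4$; using the compactness of $W^{1,2}\hookrightarrow L^4$ together with Hölder's inequality, one checks that the operator $W^{1,2}\to(W^{1,2})^*$ associated with $K_u$ is compact, as it factors through strong $L^4$-convergence. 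Therefore $E_\epsilon''(u)$ is a compact perturbation of an isomorphism, hence Fredholm of index zero, in particular at critical points.

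For the Palais--Smale condition I would argue as in the classical Ginzburg--Landau setting. Given $u_j$ with $\sup_jE_\epsilon(u_j)\le C$ and $\|E_\epsilon'(u_j)\|_{(W^{1,2})^*}\to0$, the energy bound gives $\int_M|du_j|^2\le2C$ and $\|1-|u_j|^2\|_{L^2}^2\le4\epsilon^2C$, whence $\int_M|u_j|^2\le\|1-|u_j|^2\|_{L^2}\,\area(M)^{1/2}+\area(M)$ is bounded, so $(u_j)$ is bounded in $W^{1,2}$; pass to a subsequence with $u_j\rightharpoonup u$ in $W^{1,2}$ and $u_j\to u$ strongly in $L^4$ by Rellich--Kondrachov. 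Since $u_j-u$ is bounded in $W^{1,2}$ we have $\langle E_\epsilon'(u_j),u_j-u\rangle\to0$, and its potential part $\epsilon^{-2}\int_M(1-|u_j|^2)\langle u_j,u_j-u\rangle$ tends to $0$ because the factors $1-|u_j|^2$ and $u_j$ stay bounded in $L^2$ and $L^4$ respectively while $u_j-u\to0$ in $L^4$; hence $\int_M\langle du_j,d(u_j-u)\rangle\to0$, and since $\int_M\langle du,d(u_j-u)\rangle\to0$ by weak convergence, $\int_M|d(u_j-u)|^2\to0$. Together with $u_j\to u$ in $L^2$ this gives strong convergence in $W^{1,2}$. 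I do not anticipate a genuine obstacle: everything here is soft functional analysis. The only point that truly requires the surface hypothesis --- and is therefore the conceptual content of the statement --- is that the quartic potential term is finite, smooth, and ``lower order'' on $W^{1,2}$, all of which rest on the compact embedding $W^{1,2}(M)\hookrightarrow L^4(M)$ available only in dimension two.
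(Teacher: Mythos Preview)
Your proposal is correct and follows exactly the standard functional-analytic approach that the paper has in mind: the paper's own ``proof'' simply declares these properties a standard exercise and refers to Section~4 of \cite{Gu} and Section~7 of \cite{St.pharm}, so you have supplied precisely the details that were omitted. Your treatment of each part---polynomial structure of the potential term combined with the compact embedding $W^{1,2}\hookrightarrow L^4$ for the $C^2$ regularity, the decomposition of the Hessian as an isomorphism plus a compact perturbation for the Fredholm property, and the usual weak-to-strong upgrade via $\langle E_\epsilon'(u_j),u_j-u\rangle\to 0$ for Palais--Smale---is the standard route and is carried out cleanly.
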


The proof of these properties is a standard exercise; for details, the reader may consult, e.g., Section 4 of \cite{Gu} and Section 7 of \cite{St.pharm}, and references therein.

\subsection{Definition and estimates for the first min-max energies} \hspace{40mm}
\label{first.min-max:def}

Given a closed Riemannian surface $(M,g)$ and $n\geq 2$, we will denote by $\Gamma_n(M)$ the collection of all families
$$F\in C^0(\overline{B}^{n+1},W^{1,2}(M,\mathbb{R}^{n+1}))\text{ such that }F_a\equiv a\text{ for }a\in \mathbb{S}^n.$$
Then, for $\epsilon>0$, we define the min-max energy
\begin{equation}
\label{GL:example}
\mathcal{E}_{n,\epsilon}(M,g):=\inf_{F\in \Gamma_n(M)}\max_{a\in \overline{B}^{n+1}}E_{\epsilon}(F_a).
\end{equation}
Noting that the energies $\mathcal{E}_{n,\epsilon}$ are decreasing functions of $\epsilon$, we also define the limit
\begin{equation}
\mathcal{E}_n(M,g):=\sup_{\epsilon>0}\mathcal{E}_{n,\epsilon}(M)=\lim_{\epsilon\to 0}\mathcal{E}_{n,\epsilon}(M).
\end{equation}

Observe that, while the perturbed min-max energies $\mathcal{E}_{n,\epsilon}$ are not conformally invariant, the limiting energy $\mathcal{E}_n(M,g)$ is independent of the conformal representative $g\in [g]$. This follows from the simple observation that, for any fixed metrics $g,\tilde{g}\in [g]$, we have $\,dv_g\leq C^2dv_{\tilde{g}}$ for some positive constant $C=C(g,\tilde{g})$, and since the Dirichlet energy is conformally invariant, it follows from the definition of the functionals $E_{\epsilon}$ that
$$E_{\epsilon}(u,g)\leq E_{\epsilon/C}(u,\tilde{g})\text{ for all }u\in W^{1,2}(M,\mathbb{R}^{n+1}).$$
In particular, we have $\mathcal{E}_{n,\epsilon}(M,g)\leq \mathcal{E}_{n,\epsilon/C}(M,\tilde{g})$, and taking the limit as $\epsilon\to 0$ gives 
$$\mathcal{E}_n(M,g)\leq \mathcal{E}_n(M,\tilde{g})$$
for arbitrary conformal metrics $g,\tilde{g}\in [g]$. Henceforth we write 
$$\mathcal{E}_n(M,[g]):=\mathcal{E}_n(M,g).$$

\begin{remark} In the scalar-valued ($n=0$) and complex-valued ($n=1$) cases, the min-max energies $\mathcal{E}_{n,\epsilon}$ and the associated critical points have previously been studied in \cite{Gu} and \cite{Stern}. Though formally identical, these constructions are qualitatively quite different from the case $n\geq 2$ considered here, with energy blowing up as $\epsilon\to 0$, and associated critical points exhibiting energy concentration along (generalized) minimal submanifolds in $M$ of codimension one and two, respectively.
\end{remark}

In the following proposition, we show that the limiting min-max energies $\mathcal{E}_n(M,[g])$ are finite (for $n\geq 2$), and in particular are bounded above by the Li--Yau conformal volume $V_c(n,M,[g])$.

\begin{proposition}\label{vc.above} For each $n\geq 2$, we have
$$\mathcal{E}_n(M,[g])\leq V_c(n,M,[g])<\infty.$$
\end{proposition}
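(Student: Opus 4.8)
\emph{Proof idea.} The finiteness $V_c(n,M,[g])<\infty$ is immediate from Li--Yau (Proposition~\ref{Vc:prop}) together with the monotonicity of $V_c(n',M,[g])$ in $n'$ under totally geodesic inclusions $\mathbb{S}^n\hookrightarrow\mathbb{S}^{n'}$, so the content is the inequality $\mathcal{E}_n(M,[g])\le V_c(n,M,[g])$. Since $\mathcal{E}_n=\sup_{\epsilon>0}\mathcal{E}_{n,\epsilon}$, the plan is to show that for every conformal immersion $\phi\colon M\to\mathbb{S}^n$, every $\epsilon>0$, and every $\delta>0$, one can build a family $F\in\Gamma_n(M)$ with $\max_{a\in\overline B^{n+1}}E_\epsilon(F_a)\le V_c(n,\phi)+\delta$; taking the infimum over $F$, then $\epsilon$, then $\phi$ yields the claim.

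The natural candidate is $a\mapsto G_a\circ\phi$ for $a\in B^{n+1}$: each $G_a\circ\phi$ is conformal, so $E_\epsilon(G_a\circ\phi)=E(G_a\circ\phi)=\area(G_a\circ\phi)\le V_c(n,\phi)$. The obstacle --- and the reason for passing to the relaxed functionals $E_\epsilon$ in the first place --- is that $a\mapsto G_a\circ\phi$ is \emph{not} continuous into $W^{1,2}$ up to $\partial B^{n+1}$: as $a\to\omega\in\mathbb{S}^n$ the maps $G_a\circ\phi$ converge pointwise to the constant $\omega$ away from $\phi^{-1}(-\omega)$, but a spherical bubble concentrates near $\phi^{-1}(-\omega)$, so the energy does not vanish. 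I would repair this by interpolating to the constant map near the boundary, exploiting that $E_\epsilon$ also accepts maps into $\mathbb{R}^{n+1}$. Concretely, fix a radius $1-\delta_0$ and a continuous $s\colon[1-\delta_0,1]\to[0,1]$ with $s(1-\delta_0)=0$, $s(1)=1$, and set
\begin{equation*}
F_a=\begin{cases} G_a\circ\phi, & |a|\le 1-\delta_0,\\ (1-s(r))\,(G_{(1-\delta_0)\omega}\circ\phi)+s(r)\,\omega, & a=r\omega,\ 1-\delta_0\le r\le 1. \end{cases}
\end{equation*}
This $F$ is continuous into $W^{1,2}(M,\mathbb{R}^{n+1})$ --- on $|a|\le 1-\delta_0$ because $G_a$ depends smoothly on $a$ away from $\partial B^{n+1}$ and $\phi$ is smooth; on the shell by inspection, matching $G_{(1-\delta_0)\omega}\circ\phi$ at $r=1-\delta_0$ and giving $F_\omega\equiv\omega$ at $r=1$ --- so $F\in\Gamma_n(M)$.

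It then remains to estimate $E_\epsilon(F_a)$ on the shell. The Dirichlet part is $(1-s(r))^2\area(G_{(1-\delta_0)\omega}\circ\phi)\le V_c(n,\phi)$. Writing $v=G_{(1-\delta_0)\omega}\circ\phi\colon M\to\mathbb{S}^n$, a direct computation gives $1-|F_a|^2=2s(r)(1-s(r))(1-\langle v,\omega\rangle)$, hence
$$\frac{1}{4\epsilon^2}\int_M(1-|F_a|^2)^2\le\frac{1}{4\epsilon^2}\int_M(1-\langle v,\omega\rangle)^2.$$
Since $G_{(1-\delta_0)\omega}(y)\to\omega$ as $\delta_0\to0$, uniformly for $y$ in $\{\langle y,\omega\rangle\ge-1+\eta\}$ and uniformly in $\omega$ (by $O(n+1)$-equivariance of the $G_a$), for small $\eta>0$ and $\delta_0$ small we have $1-\langle v,\omega\rangle<\eta^2$ off the set $\phi^{-1}(\{\langle\cdot,\omega\rangle<-1+\eta\})$, whose $g$-area tends to $0$ as $\eta\to0$ uniformly in $\omega$ (the spherical cap around $-\omega$ has $\mathbb{S}^n$-area independent of $\omega$, and $\phi$ is a fixed immersion), while $1-\langle v,\omega\rangle\le2$ everywhere. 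Thus $\int_M(1-\langle v,\omega\rangle)^2\to0$ as $\eta\to0$, uniformly in $\omega$ and $r$; for the \emph{fixed} $\epsilon$ I would choose $\eta$, and then $\delta_0$, so that this potential term is $<\delta$. This gives $\max_a E_\epsilon(F_a)\le V_c(n,\phi)+\delta$.

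The main point to be careful about is precisely the potential term: it carries a factor $1/\epsilon^2$, so the transition thickness $\delta_0$ must be shrunk depending on $\epsilon$ --- harmless, since $\epsilon$ is fixed before the $\sup_\epsilon$ is taken --- and one needs the $C^0$-collapse $G_{(1-\delta_0)\omega}\circ\phi\to\omega$ to be uniform away from a controllably small set, which follows from compactness of $\phi$ together with rotational symmetry. (If $\phi$ is only a branched conformal immersion, the area bound for $\phi^{-1}$ of a small cap near $-\omega$ requires a minor additional argument at the branch points, but is still valid.)
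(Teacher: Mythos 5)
Your proof is correct and reaches the same conclusion, but by a genuinely different route than the paper. Both arguments start from the family $a\mapsto G_a\circ\phi$ and confront the same obstruction: this family is only weakly continuous in $W^{1,2}$ as $|a|\to 1$, because a bubble concentrates near $\phi^{-1}(-\omega)$. The paper resolves this \emph{globally} by applying the heat semigroup $\Phi^t$ for small $t>0$, which converts the $L^1$-continuous family into a $C^1$-continuous (hence $W^{1,2}$-continuous) one; the heat flow automatically decreases Dirichlet energy, and the potential term $\max_a\int_M(1-|\Phi^tF_a|^2)^2$ is shown to vanish as $t\to0$ by a compactness/contradiction argument (if not, a subsequence $a_j\to a$ would give $F_{a_j}^{t_j}\to F_a$ in $L^p$, forcing the potential to vanish after all). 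You instead resolve it \emph{locally} by interpolating linearly to the constant map $\omega$ on a thin shell $1-\delta_0\leq|a|\leq 1$; the Dirichlet energy scales by $(1-s)^2\leq1$, and the potential term is controlled via the pointwise identity $1-|F_a|^2 = 2s(1-s)(1-\langle v,\omega\rangle)$, together with the fact that $G_{(1-\delta_0)\omega}\circ\phi$ collapses uniformly to $\omega$ outside the small preimage of a shrinking cap. In both cases the regularization parameter ($t$, resp.\ $\delta_0$) must be shrunk after $\epsilon$ is fixed, as you correctly flag.

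The trade-off: the heat-kernel mollification gets the continuity of $a\mapsto F_a^t$ and the energy monotonicity for free, and the uniform potential bound follows from a soft compactness argument without ever estimating preimages. Your interpolation is more hands-on and self-contained (no reference to heat kernel estimates or to $L^1\to C^1$ continuity), but it requires the slightly delicate uniform-in-$\omega$ control of $\mathrm{Area}_g(\phi^{-1}(\{\langle\cdot,\omega\rangle<-1+\eta\}))$. That step is correct but is where the argument is least explicit; it follows from the fact that, along any sequence $\delta_j\to0$, $\omega_j\to\omega_\infty$, the maps $v_j=G_{(1-\delta_j)\omega_j}\circ\phi$ converge to the constant $\omega_\infty$ pointwise a.e.\ (since $\phi^{-1}(-\omega_\infty)$ is finite) with integrand bounded by $4$, so dominated convergence kills the integral. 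Stating it that way (rather than via the cap-area estimate) both tightens the argument and makes clear that it is essentially the same compactness observation the paper uses to show $\delta(t)\to0$. Two very minor points: (i) continuity of $F$ at $a=0$, where polar coordinates degenerate, is automatic since $0$ lies in the interior region; (ii) $V_c(n,M,[g])<\infty$ for each $n\geq2$ is best seen directly from the existence of a branched conformal cover $M\to\mathbb{S}^2\subset\mathbb{S}^n$ with bounded degree, rather than by appeal to monotonicity in $n$ (which is not obvious, since $V_c(n,\phi)$ \emph{increases} with $n$ for a fixed $\phi$ into a subsphere, while $V_c(n,M,[g])$ involves an infimum over a larger class of $\phi$).
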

\begin{proof} As in Section \ref{vc.def}, let $\phi: (M^2,g)\to \mathbb{S}^n$ be a branched conformal immersion, and for $a\in \overline{B}^{n+1}$, let $G_a: \mathbb{S}^n\to \mathbb{S}^n$ be the conformal map
$$G_a(x)=\frac{(1-|a|^2)}{|x+a|^2}(x+a)+a.$$
Denote by $F_a: M\to \mathbb{S}^n$ by composition
\begin{equation}\label{canon.def}
F_a:=G_a\circ \phi.
\end{equation}
The maximum Dirichlet energy of $F_a$ over $a\in \overline{B}^{n+1}$ is then given by
\begin{equation}
V_c(n,\phi)=\sup_{|a|\leq 1}E(F_a)=\sup_{|a|\leq 1}\area(F_a(M)).
\end{equation}
The family $a\mapsto F_a$ is only continuous with respect to the weak topology on $W^{1,2}(M,\mathbb{S}^n)$ as $|a|\to 1$, but we can mollify it to produce a continuous family in the strong topology on $W^{1,2}(M,\mathbb{R}^{n+1})$.

To this end, denote by $K_t(x,y)$ the heat kernel on $M$, and for $t>0$, consider the mollifying map
$$\Phi^t: L^1(M,\mathbb{R}^{n+1})\to C^{\infty}(M,\mathbb{R}^{n+1})$$
given by
$$(\Phi^tF)(x):=\int_M F(y)K_t(x,y)dy.$$
Note that $\Phi^t$ fixes the constant maps, and by the smoothness of $K_t$, $\Phi^t$ is continuous as a map from $L^1$ to $C^1$, since for any maps $F_1,F_2\in L^1(M,\mathbb{R}^{n+1})$,
\begin{eqnarray*}
|d(\Phi^tF_1)(x)-d(\Phi^tF_2)(x)|&=&|\int_M (F_1(y)-F_2(y)) d_xK_t(x,y) dy|\\
&\leq &|K_t|_{C^1}\|F_1-F_2\|_{L^1}.
\end{eqnarray*}

In particular, since the family $B^{n+1}\ni a\mapsto F_a$ given by \eqref{canon.def} is continuous as a map into $L^1(M,\mathbb{R}^{n+1})$, it follows that the mollified family
$$F^t_a(x):=(\Phi^tF_a)(x)=\int_M F_a(y)K_t(x,y)dy$$
defines a continuous family in $W^{1,2}(M,\mathbb{R}^{n+1})$, which belongs moreover to $\Gamma_n(M)$, since
$$(\Phi^tF_a)\equiv F_a\equiv a\text{ for }a\in \mathbb{S}^n.$$
Moreover, since $t\mapsto F^t_a$ solves the heat equation $\frac{\partial F^t_a}{\partial t}=\Delta F^t_a$ with initial data $F_a^0=F_a$, it follows immediately that
\begin{equation}\label{dir.bd}
\int_M \frac{1}{2}|dF_a^t|^2\leq \int_M \frac{1}{2}|dF_a|^2\leq V_c(n,\phi)
\end{equation}
for all $t\geq 0$. 

Next, we claim that
\begin{equation}\label{pot.van}
\delta(t):=\max_{a\in \overline{B}^{n+1}}\int_M (1-|F_a^t|^2)^2\to 0\text{ as }t\to 0.
\end{equation}
Indeed, if this were false, then we could find a sequence $t_j\to 0$ and $a_j\in \overline{B}^{n+1}$ such that
$$\lim_{j\to\infty}\int_M (1-|F_{a_j}^{t_j}|^2)^2>0.$$
But, passing to a subsequence, we also have $a_j\to a$ for some $a\in \overline{B}^{n+1}$, and it follows readily from the definition of the families $F_a^t$ that $F_{a_j}^{t_j}\to F_a$ in $L^p$ as $j\to\infty$ for any $p\in [1,\infty)$. Since $|F_a|\equiv 1$ pointwise, it then follows that 
$$\lim_{j\to\infty}\int_M(1-|F_{a_j}^{t_j}|^2)^2=0$$
after all, confirming the claim \eqref{pot.van}.

For any fixed $\epsilon>0$, it now follows from the observations above that
\begin{equation}
\mathcal{E}_{n,\epsilon}(M,g)\leq \lim_{t\to 0}\max_{|a|\leq 1}E_{\epsilon}(F_a^t)\leq V_c(n,\phi),
\end{equation}
as desired. Taking the infimum over all branched conformal immersions $\phi:M\to \mathbb{S}^n$, we obtain the upper bound
$$\mathcal{E}_{n,\epsilon}(M,g)\leq V_c(n,M,[g]).$$
Finally, taking the supremum over all $\epsilon>0$ gives
$$\mathcal{E}_n(M,[g])\leq V_c(n,M,[g]),$$
as desired.
\end{proof}

Next, we come to the key lower bound for the min-max energies $\mathcal{E}_n(M,c)$, showing that they dominate the normalized first Laplacian eigenvalue of any conformal metric $g\in c$. Later, in Theorem \ref{mu.eigen.bd}, we obtain a strengthened version of the following inequality, showing that $2\mathcal{E}_n$ provides an upper bound for the first eigenvalue for a more general class of probability measures.

\begin{proposition}\label{lambda.below} If $Area(M,g)=1$, then 
\begin{equation}
2\mathcal{E}_{n,\epsilon}(M,g)\geq (1-2\epsilon\mathcal{E}_{n,\epsilon}^{1/2})\lambda_1(M,g).
\end{equation}
In particular, 
$$2\mathcal{E}_n(M,[g])\geq \Lambda_1(M,[g]).$$
\end{proposition}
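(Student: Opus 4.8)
The plan is to produce, for every small $\epsilon>0$ and every family $F\in\Gamma_n(M)$, a single parameter value $a=a(F,\epsilon)\in\overline{B}^{n+1}$ for which $E_\epsilon(F_a)$ is bounded below in terms of $\lambda_1(M,g)$. The natural candidate is an $a$ for which $F_a$ is orthogonal to the constants in $L^2(M,dv_g)$, i.e. $\int_M F_a\,dv_g=0\in\mathbb{R}^{n+1}$. First I would justify the existence of such an $a$ by a degree-theoretic argument: the map $\overline{B}^{n+1}\ni a\mapsto \int_M F_a\,dv_g\in\mathbb{R}^{n+1}$ is continuous (since $F\in C^0(\overline{B}^{n+1},W^{1,2})\hookrightarrow C^0(\overline{B}^{n+1},L^2)$ and $\mathrm{Area}(M,g)=1$), and on the boundary $\mathbb{S}^n$ it equals $a\mapsto a$, the identity map of the sphere, which has degree one. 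Hence the vector field $a\mapsto \int_M F_a$ must vanish somewhere in the interior, giving the desired $a$ with $\int_M F_a\,dv_g=0$.

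Next, for this $a$ I would run the variational characterization of $\lambda_1$. Writing $u:=F_a$, each of the $n+1$ scalar components $u^i$ lies in $W^{1,2}(M,g)$ and has zero mean, so $\int_M|\nabla u^i|^2\,dv_g\geq \lambda_1(M,g)\int_M (u^i)^2\,dv_g$; summing over $i$ gives
\begin{equation*}
\int_M|du|^2\,dv_g\geq \lambda_1(M,g)\int_M|u|^2\,dv_g.
\end{equation*}
The remaining point is to compare $\int_M|u|^2$ with $\mathrm{Area}(M,g)=1$, i.e. to show $\int_M|u|^2$ is close to $1$ when the potential term is small. Indeed, with $\Theta:=\int_M(1-|u|^2)^2\,dv_g$, Cauchy--Schwarz gives $\left|\int_M(1-|u|^2)\,dv_g\right|\leq \Theta^{1/2}$ (using $\mathrm{Area}=1$), hence $\int_M|u|^2\geq 1-\Theta^{1/2}$. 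By definition of $E_\epsilon$ and the fact that $a$ was chosen inside a near-optimal family, $\frac{1}{4\epsilon^2}\Theta\leq E_\epsilon(u)\leq \max_b E_\epsilon(F_b)$, which we may take $\leq \mathcal{E}_{n,\epsilon}(M,g)+o(1)$; so $\Theta\leq 4\epsilon^2\mathcal{E}_{n,\epsilon}$ (up to the $o(1)$), and $\Theta^{1/2}\leq 2\epsilon\,\mathcal{E}_{n,\epsilon}^{1/2}$. Combining,
\begin{equation*}
2\max_b E_\epsilon(F_b)\geq 2E_\epsilon(u)\geq \int_M|du|^2\geq \lambda_1(M,g)\left(1-2\epsilon\,\mathcal{E}_{n,\epsilon}^{1/2}\right),
\end{equation*}
and taking the infimum over $F\in\Gamma_n(M)$ yields $2\mathcal{E}_{n,\epsilon}(M,g)\geq (1-2\epsilon\,\mathcal{E}_{n,\epsilon}^{1/2})\lambda_1(M,g)$, which is exactly the claimed inequality (the slightly cleaner way is to choose, for each $F$, a near-minimizer and carry an arbitrarily small slack that disappears upon taking the infimum). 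Finally, letting $\epsilon\to 0$: since $\mathcal{E}_{n,\epsilon}\nearrow\mathcal{E}_n(M,[g])<\infty$ by Proposition~\ref{vc.above}, the factor $1-2\epsilon\,\mathcal{E}_{n,\epsilon}^{1/2}\to 1$, so $2\mathcal{E}_n(M,[g])\geq \lambda_1(M,g)$; since the left side is conformally invariant while the right side may be taken over any $g\in c$ of unit area, we conclude $2\mathcal{E}_n(M,[g])\geq \Lambda_1(M,[g])$.

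The only genuinely delicate point is the topological step producing a zero-mean map in the family. One must be careful that the family is merely $W^{1,2}$-valued, not continuous into $C^0$, but continuity into $L^2$ (hence into $\mathbb{R}^{n+1}$ after integrating) is all that is needed, and the boundary condition $F_a\equiv a$ pins down the degree. A secondary technical nuisance is the bookkeeping of the near-infimum slack in $\max_b E_\epsilon(F_b)$, which is handled by working with a minimizing sequence of families and passing to the infimum at the end; this does not affect the final inequality. Everything else is a direct application of the Rayleigh quotient definition of $\lambda_1$ together with the elementary estimate relating $\int|u|^2$ to the Ginzburg--Landau potential.
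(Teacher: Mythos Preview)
Your proposal is correct and follows essentially the same route as the paper: the degree-one boundary map forces a zero of $a\mapsto\int_M F_a$, the Rayleigh quotient for $\lambda_1$ is applied componentwise to the resulting mean-zero map, and the $L^2$-mass is compared to $1$ via Cauchy--Schwarz on the Ginzburg--Landau potential. The paper handles the near-infimum slack in exactly the way you describe, by working with families whose max energy approaches $\mathcal{E}_{n,\epsilon}$.
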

\begin{proof} The proof follows from a standard trick, essentially equivalent to that used by Li--Yau in \cite{LiYau}. Given $F\in \Gamma_n(M)$, consider the continuous map $f:\overline{B}^{n+1}\to \mathbb{R}^{n+1}$ given by taking the average
$$f(a)=\int_MF_a.$$
By definition of $\Gamma_n(M)$, we then see that $f|_{\partial B^{n+1}}=\mathrm{Id}\colon \mathbb{S}^n\to \mathbb{S}^n$ is homotopically nontrivial on the boundary sphere $\partial B^{n+1}$, and it follows that
\begin{equation}\label{0.avg}
\int_MF_a=0\in \mathbb{R}^{n+1}
\end{equation}
for some $a\in B^{n+1}$. By the variational characterization of the first eigenvalue $\lambda_1(M)$, we then have
\begin{equation}\label{poincare}
\lambda_1(M)\int_M|F_a|^2\leq \int_M|dF_a|^2\leq 2\max_{a\in \overline{B}^{n+1}}E_{\epsilon}(F_a)
\end{equation}
for $a\in B^{n+1}$ satisfying \eqref{0.avg} holds. Moreover, it follows from the definition of $E_{\epsilon}$ that
\begin{eqnarray*}
\int_M |F_a|^2&\geq & 1-\int_M|1-|F_a|^2|\\
&\geq & 1-2\epsilon E_{\epsilon}(F_a)^{1/2}\\
&\geq & 1-2\epsilon \max_{a\in \overline{B}^{n+1}}E_{\epsilon}(F_a)^{1/2}.
\end{eqnarray*}
Putting this together with \eqref{poincare}, we arrive at the desired estimate by choosing families $F\in \Gamma_n(M)$ such that $\max_a E_{\epsilon}(F_a)$ is arbitrarily close to $\mathcal{E}_{n,\epsilon}$. The estimate 
$$2\mathcal{E}_n(M,[g])\geq \Lambda_1(M,[g])$$
then follows by taking $\epsilon\to 0$, and invoking the conformal invariance of $\mathcal{E}_n(M,[g])$.
\end{proof}

\subsection{Existence and properties of the min-max harmonic maps} \hspace{10mm}
\label{first.min-max:properties}

Since the functionals $E_{\epsilon}$ satisfy the technical requirements laid out in Proposition \ref{ps.fred.prop}, and the collection $\Gamma_n(M)$ of $(n+1)$-parameter families in $W^{1,2}(M,\mathbb{R}^{n+1})$ is evidently preserved by the gradient flow of $E_{\epsilon}$, we can appeal to standard results in critical point theory (see, e.g., Chapter 10 of \cite{Ghou}, in particular Corollary 10.16) to arrive at the following existence result for each $\epsilon>0$.

\begin{proposition}\label{min.max} There exists a critical point $\Psi_{\epsilon}:(M,g)\to \mathbb{R}^{n+1}$ for $E_{\epsilon}$ of energy
\begin{equation}
E_{\epsilon}(\Psi_{\epsilon})=\mathcal{E}_{n,\epsilon},
\end{equation}
satisfying the Morse index bound
\begin{equation}
\ind_{E_{\epsilon}}(\Psi_{\epsilon})\leq n+1.
\end{equation}
\end{proposition}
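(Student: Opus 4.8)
The plan is to obtain $\Psi_{\epsilon}$ as a min-max critical point for the $C^2$ functional $E_{\epsilon}$ on the Hilbert space $W^{1,2}(M,\mathbb{R}^{n+1})$, with the index bound coming from the dimension $n+1$ of the parameter ball $\overline{B}^{n+1}$. First I would record that the min-max setup is admissible. The collection $\Gamma_n(M)$ is nonempty (the family $a\mapsto(x\mapsto a)$ of constant maps already lies in it, so $\mathcal{E}_{n,\epsilon}<\infty$); one has $\mathcal{E}_{n,\epsilon}>0$, since otherwise Proposition~\ref{lambda.below}, applied to a conformal representative of unit area, would force $0\geq \lambda_1(M,g)>0$; and $\Gamma_n(M)$ is homotopy-stable with boundary in the sense needed for the min-max principle, because the constant unit vectors $a\in\mathbb{S}^n$ are critical points of $E_{\epsilon}$ (both the Dirichlet term and the potential $\tfrac{1}{4\epsilon^2}(1-|a|^2)^2$ vanish there), so the negative pseudo-gradient flow of $E_{\epsilon}$, cut off near its critical set, fixes the boundary maps $F_a\equiv a$ and hence preserves $\Gamma_n(M)$, while $\sup_{a\in\partial B^{n+1}}E_{\epsilon}(F_a)=0<\mathcal{E}_{n,\epsilon}$. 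Granting Proposition~\ref{ps.fred.prop} (so that $E_{\epsilon}$ is $C^2$, has Fredholm Hessian at critical points, and satisfies the Palais-Smale condition), the standard deformation theory for such functionals — see Chapter~10 of \cite{Ghou} — then yields a critical point $\Psi_{\epsilon}$ at level $\mathcal{E}_{n,\epsilon}$: if there were none, a deformation lemma would push a near-optimal family $F\in\Gamma_n(M)$ to one with $\max_a E_{\epsilon}(F_a)<\mathcal{E}_{n,\epsilon}$, contradicting the definition of $\mathcal{E}_{n,\epsilon}$.

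For the bound $\ind_{E_{\epsilon}}(\Psi_{\epsilon})\leq n+1$ I would invoke the refined form of the min-max principle that records the Morse index, this being where the $(n+1)$-dimensionality of $\overline{B}^{n+1}$ enters. When every critical point at level $\mathcal{E}_{n,\epsilon}$ is nondegenerate the argument is classical: if all of them had index $\geq n+2$, then a neighborhood of the critical set together with the sublevel set $\{E_{\epsilon}<\mathcal{E}_{n,\epsilon}\}$ would absorb any $(n+1)$-dimensional family with the prescribed boundary, so one could again deform a near-optimal $F\in\Gamma_n(M)$ below level $\mathcal{E}_{n,\epsilon}$. In general I would first apply a Marino--Prodi perturbation to replace $E_{\epsilon}$ near its critical set at level $\mathcal{E}_{n,\epsilon}$ (compact by Palais-Smale) with a $C^2$-close functional that is Morse there and still Palais-Smale; its min-max value is attained at a nondegenerate critical point of index $\leq n+1$, and sending the size of the perturbation to zero, Palais-Smale extracts a subsequential limit, a critical point of $E_{\epsilon}$ at level $\mathcal{E}_{n,\epsilon}$ still of index $\leq n+1$ by lower semicontinuity of the Morse index under $C^2$-convergence of Hessians. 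This $\Psi_{\epsilon}$ is the desired critical point.

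I expect the index bound of the second paragraph to be the only substantive point. The bare existence of a min-max critical point is entirely routine once the hypotheses of Proposition~\ref{ps.fred.prop} are verified, whereas pinning down $\ind_{E_{\epsilon}}(\Psi_{\epsilon})\leq n+1$ requires the deformation-of-superlevel-sets argument in the nondegenerate case together with the Marino--Prodi approximation and index semicontinuity in the degenerate case. Since $\Gamma_n(M)$ is genuinely an $(n+1)$-parameter sweepout with fixed boundary data, the number $n+1$ is exactly what this machinery produces, so the proof amounts to citing it and checking that $E_{\epsilon}$ and $\Gamma_n(M)$ meet its hypotheses.
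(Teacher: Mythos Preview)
Your proposal is correct and follows essentially the same approach as the paper: both observe that $\Gamma_n(M)$ is preserved by the gradient flow of $E_{\epsilon}$ (since the boundary maps are critical points with $E_{\epsilon}=0<\mathcal{E}_{n,\epsilon}$) and then appeal to the standard min-max theory in Chapter~10 of \cite{Ghou} for $C^2$ Palais--Smale functionals on a Hilbert space. The paper simply cites this reference without further detail, whereas you have helpfully spelled out the verification of the hypotheses and the Marino--Prodi perturbation underlying the index bound.
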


Our goal now is to deduce the existence of a harmonic map $\Psi:M\to \mathbb{S}^n$, of energy-index $\ind_E(\Psi)\leq n+1$, given as the strong $W^{1,2}$-limit of the critical points constructed in Proposition \ref{min.max}. To this end, we introduce the following technical lemma, combining the bubbling analysis of \cite{LW} with a lower semi-continuity result for the Morse index, modeled on analogous results (cf. \cite{MooreReam}) for the Sacks-Uhlenbeck perturbation.

\begin{lemma}\label{bubble.business} Let $\{\Psi_{\epsilon}\}$ be a family of critical points $\Psi_{\epsilon}:M\to \mathbb{R}^{n+1}$ for the energy $E_{\epsilon}$, satisfying 
\begin{equation}
\label{ener.bd}
\Lambda:=\lim_{\epsilon \to 0}E_{\epsilon}(\Psi_{\epsilon})<\infty
\end{equation}
and the Morse index bound
\begin{equation}
\label{index.bd}
\ind_{E_{\epsilon}}(\Psi_{\epsilon})\leq m.
\end{equation}
Then for a subsequence $\epsilon_j\to 0$, there exists a collection of points $\{a_1,\ldots,a_{\ell}\}\subset M$, a harmonic map $\Psi: M\to \mathbb{S}^n$, and harmonic maps $\phi_1,\ldots,\phi_k:\mathbb{S}^2\to \mathbb{S}^n$ such that
$$\Psi_{\epsilon_j}\to \Psi\text{ in }C^2_{loc}(M\setminus \{a_1,\ldots,a_{\ell}\})\text{ and weakly in }W^{1,2}(M,\mathbb{R}^{n+1}),$$
for which we have the energy identity
\begin{equation}\label{ener.id}
\Lambda=E(\Psi)+\sum_{j=1}^kE(\phi_j)
\end{equation}
and the energy-index bound
\begin{equation}\label{index.semicont}
\ind_E(\Psi)+\sum_{j=1}^k\ind_E(\phi_j)\leq m.
\end{equation}
\end{lemma}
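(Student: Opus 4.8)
The plan is to combine the Ginzburg--Landau bubbling analysis of \cite{LW} with an index lower-semicontinuity argument modeled on the treatment of the Sacks--Uhlenbeck perturbation in \cite{MooreReam}. Throughout, critical points of $E_\epsilon$ satisfy the Euler--Lagrange equation $\Delta_g\Psi_\epsilon=\epsilon^{-2}(1-|\Psi_\epsilon|^2)\Psi_\epsilon$, and the maximum principle gives $|\Psi_\epsilon|\leq 1$.

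First I would set up the bubble convergence. From the $\epsilon$-regularity estimate of \cite{LW} (small energy of $\Psi_\epsilon$ on $B_{2r}(x)$ forces $C^k(B_r(x))$ bounds independent of $\epsilon$), together with the energy bound \eqref{ener.bd}, one extracts a subsequence $\epsilon_j\to0$ for which the energy densities concentrate on a finite set $\{a_1,\dots,a_\ell\}\subset M$ — finite because each concentration point absorbs at least a fixed amount $\epsilon_0>0$ of energy. Away from this set, interior elliptic estimates give $\Psi_{\epsilon_j}\to\Psi$ in $C^2_{loc}(M\setminus\{a_1,\dots,a_\ell\})$ and weakly in $W^{1,2}(M,\mathbb{R}^{n+1})$, where $\Psi$ solves $\Delta_g\Psi=|d\Psi|_g^2\Psi$ with $|\Psi|\equiv1$; by the Sacks--Uhlenbeck removable singularity theorem \cite{SU} this extends to a smooth harmonic map $\Psi\colon M\to\mathbb{S}^n$. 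Blow-up at each $a_i$ at the concentration scales (which for $n\geq 2$ satisfy $\lambda_j/\epsilon_j\to\infty$, so that the rescaled potential term is negligible) produces finitely many nonconstant harmonic spheres $\phi_1,\dots,\phi_k\colon\mathbb{S}^2\to\mathbb{S}^n$, and the no-neck energy identity of \cite{LW} yields \eqref{ener.id}.

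The new point is the index bound \eqref{index.semicont}, which I would prove by contradiction. Suppose $\ind_E(\Psi)+\sum_j\ind_E(\phi_j)\geq m+1$. Choose $L^2$-orthonormal negative directions $V_1,\dots,V_p$ ($p=\ind_E(\Psi)$) for the Jacobi form $Q_E(\Psi)$; since finite sets have zero $W^{1,2}$-capacity in dimension two, these may be taken smooth and compactly supported in $M\setminus\{a_1,\dots,a_\ell\}$. Similarly pick smooth compactly supported negative directions $W^j_1,\dots,W^j_{q_j}$ ($q_j=\ind_E(\phi_j)$) for $Q_E(\phi_j)$ on $\mathbb{R}^2\cong\mathbb{S}^2\setminus\{\infty\}$, and transplant each into a small ball around the corresponding point $a_i$ via the bubble rescaling. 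Using the bubble-tree structure — bubbles at a common point live at separated scales or separated rescaled centers — one arranges, for $j$ large, that all $p+\sum_j q_j$ transplanted $\mathbb{R}^{n+1}$-valued fields have pairwise disjoint supports. Replacing each such field $Z$ by its projection $\tilde Z_{\epsilon_j}:=Z-|\Psi_{\epsilon_j}|^{-2}\langle\Psi_{\epsilon_j},Z\rangle\,\Psi_{\epsilon_j}$ onto $\Psi_{\epsilon_j}^\perp$ kills the cross term $2\epsilon_j^{-2}\langle\Psi_{\epsilon_j},\cdot\rangle^2$ in the second variation, leaving
$$\langle E_{\epsilon_j}''(\Psi_{\epsilon_j})\tilde Z_{\epsilon_j},\tilde Z_{\epsilon_j}\rangle=\int_M|d\tilde Z_{\epsilon_j}|_g^2-\epsilon_j^{-2}(1-|\Psi_{\epsilon_j}|^2)|\tilde Z_{\epsilon_j}|^2\,dv_g.$$
Testing the Euler--Lagrange equation against $\Psi_{\epsilon_j}$ and using $|\Psi_{\epsilon_j}|^2\to1$ in $C^2_{loc}$ gives $\epsilon_j^{-2}(1-|\Psi_{\epsilon_j}|^2)\to|d\Psi|_g^2$ locally uniformly away from the $a_i$, and, after the corresponding rescaling, $(\lambda_j/\epsilon_j)^2(1-|\tilde\Psi_j|^2)\to|d\phi_j|^2$ near each bubble; meanwhile $\tilde Z_{\epsilon_j}\to Z$ in $W^{1,2}$ on the (rescaled) good regions, since the projection is a small perturbation there. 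Hence each diagonal quadratic form converges to the corresponding value $Q_E(\Psi)[V_i,V_i]<0$ or $Q_E(\phi_j)[W^j_\beta,W^j_\beta]<0$ — bounded away from zero — while off-diagonal terms vanish for $j$ large by disjointness of supports. Thus $E_{\epsilon_j}''(\Psi_{\epsilon_j})$ is negative definite on an $(m+1)$-dimensional subspace, contradicting $\ind_{E_{\epsilon_j}}(\Psi_{\epsilon_j})\leq m$. Finiteness of all indices is automatic, since the Jacobi operators of smooth harmonic maps from closed surfaces are Schr\"odinger operators with bounded potential.

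The main obstacle, apart from the bubble-tree bookkeeping needed to separate the supports of the transplanted fields, is the diagonal-term convergence: one must justify that the potential factors $\epsilon_j^{-2}(1-|\Psi_{\epsilon_j}|^2)$ and their rescaled counterparts converge (at least in $L^1_{loc}$, which suffices since the test fields are bounded) to the Dirichlet densities $|d\Psi|_g^2$ and $|d\phi_j|^2$ on the regions carrying the test fields. This is exactly where the $C^2_{loc}$ convergence away from the concentration set, the precise structure of the concentration scales from \cite{LW}, and interior elliptic estimates must be combined, and it is the step requiring the most care.
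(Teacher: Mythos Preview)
Your proposal is correct and follows essentially the same approach as the paper: cite \cite{LW} for the bubbling and energy identity, then prove index lower-semicontinuity by localizing negative directions via logarithmic cutoffs (capacity zero), projecting onto $\Psi_{\epsilon_j}^\perp$ to kill the $2\epsilon^{-2}\langle\Psi_\epsilon,\cdot\rangle^2$ term, and using the Euler--Lagrange equation together with $C^2_{loc}$ convergence to convert the potential $\epsilon^{-2}(1-|\Psi_\epsilon|^2)=|\Psi_\epsilon|^{-2}\langle\Psi_\epsilon,\Delta\Psi_\epsilon\rangle$ into $|d\Psi|^2$ (and similarly at the bubble scales). The only organizational difference is that the paper packages the argument as a single claim---comparing $\ind_{E_\epsilon}$ on $M\setminus\bigcup D_{r_\epsilon}(a_j)$ with $\ind_E(\Psi)$---and then invokes \cite{MooreReam} to apply it at each node of the bubble tree, whereas you assemble all transplanted fields simultaneously with disjoint supports; these are equivalent presentations of the same idea.
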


\begin{proof} The existence of the limiting harmonic map $\Psi:M\to \mathbb{S}^n$ and bubbles $\phi_j:\mathbb{S}^2\to \mathbb{S}^n$ satisfying the energy identity \eqref{ener.id} is contained already in the work of Lin and Wang (see \cite{LW}, Theorem A), so the only point that requires comment is the statement \eqref{index.semicont} concerning lower semi-continuity of the index along the bubble tree.

Most of the details of the proof of \eqref{index.semicont} can be borrowed directly from the proof of the identical statement for the Sacks-Uhlenbeck perturbation of the harmonic mapping problem (see, e.g., \cite{MooreReam}, Theorem 6.2). In the interest of completeness, we review the main technical ingredient (restricting variational vector fields to the complement of bubbling regions) in our setting, proving the following claim.

\begin{claim} Given a family of maps $\Psi_{\epsilon}:M^2\to \mathbb{R}^{n+1}$, a collection of points $\{a_1,\ldots,a_k\}\subset M$, and a vanishing sequence of radii $r_{\epsilon}\to 0$ such that 
\begin{equation}\label{gl.eq}
\epsilon^2\Delta \Psi_{\epsilon}=(1-|\Psi_{\epsilon}|^2)\Psi_{\epsilon}\text{\hspace{2mm} on }M\setminus \bigcup_{j=1}^kD_{r_{\epsilon}}(a_j),
\end{equation}
suppose that there exists a harmonic map $\Psi:M\to \mathbb{S}^n$ such that $\Psi_{\epsilon}\to \Psi$ in $C^2_{loc}(M\setminus \{a_1,\ldots,a_k\})$ as $\epsilon\to 0$. Then the $E_{\epsilon}$-index $ind_{E_{\epsilon}}(\Psi_{\epsilon})$ of $\Psi_{\epsilon}$ with respect to variations supported in $M\setminus \bigcup_{j=1}^kD_{r_{\epsilon}}(a_j)$ is at least as large as the energy-index $ind_E(\Psi)$ of $\Psi$ on $M$, for $\epsilon>0$ sufficiently small.
\end{claim}

Once this claim is in place, we can argue exactly as in \cite{MooreReam}, applying the claim at each node in the bubble tree for the family $\{\Psi_{\epsilon}\}$, to complete the proof of \eqref{index.semicont}.

To prove the claim, recall that the second variation $Q_E(\Psi)$ of energy about the harmonic map $\Psi$ is given by
$$Q_E(\Psi)(V,V):=\int_M |dV|^2-|d\Psi|^2|V|^2$$
for maps $V: M\to \mathbb{R}^{n+1}$ with $\langle \Psi,V\rangle \equiv 0$ on $M$. Likewise, as we've seen in Proposition \ref{ps.fred.prop}, for an $\mathbb{R}^{n+1}$-valued map $V$ supported in the domain of $\Psi_{\epsilon}$, the second variation $Q_{E_{\epsilon}}(\Psi_{\epsilon})$ of $E_{\epsilon}$ at $\Psi_{\epsilon}$ is given by
$$Q_{E_{\epsilon}}(\Psi_{\epsilon})(V,V)=\int_M(|dV|^2+2\epsilon^{-2}\langle \Psi_{\epsilon},V\rangle^2-\epsilon^{-2}(1-|\Psi_{\epsilon}|^2)|V|^2.$$
Let $p=\ind_E(\Psi)$; then there exists a $p$-dimensional subspace $\mathcal{V}\subset \Gamma(\Psi^*(T\mathbb{S}^n))$ and $\beta>0$ such that
\begin{equation}
Q_E(\Psi)(V,V)<-\beta\|V\|_{L^2}^2\text{ for every }0\neq V\in \mathcal{V}.
\end{equation}
As in \cite{MooreReam}, we employ logarithmic cutoff functions to perturb this subpace $\mathcal{V}$ to a new subspace $\widetilde{\mathcal{V}}$ of variations vanishing on the disks $D_r(a_1)\cup\cdots D_r(a_k)$ for $r>0$ sufficiently small, such that
\begin{equation}\label{pert.ind.bd}
Q_E(\Psi)(V,V)<-\frac{\beta}{2}\|V\|_{L^2}^2\text{ for every }0\neq V\in \widetilde{\mathcal{V}}.
\end{equation}

Specifically, for $\delta>0$, define $\phi_{\delta}:\mathbb{R}\to\mathbb{R}$ by
$$\phi_{\delta}(t)=2-\frac{\log(t)}{\log(\delta)}\text{ for }t\in [\delta^2,\delta],$$
while $\phi_{\delta}(t)=0$ for $t\leq \delta^2$ and $\phi_{\delta}(t)=1$ for $t\geq \delta$. Then define the cutoff functions $\psi_{\delta}\in Lip_c(M\setminus \bigcup_{j=1}^kD_{\delta^2}(a_j))$ by
$$\psi_{\delta}(x):=\min_{1\leq j\leq k}\phi_{\delta}(dist(x,a_j)),$$
and observe as in \cite{MooreReam} (or \cite{ChoiSchoen}) that
$$\int |d\psi_{\delta}|^2\leq \frac{C}{|\log\delta|}\to 0$$
as $\delta \to 0$. As a consequence, it's not hard to see that
$$\max\{Q_E(\Psi)(\psi_{\delta}V,\psi_{\delta}V)\mid V\in \mathcal{V},\text{ }\|V\|_{L^2}=1\}\to \max_{0\neq V\in \mathcal{V}}\frac{Q_E(\Psi)(V,V)}{\|V\|_{L^2}^2}<-\beta$$
as $\delta\to 0$. In particular, since linear independence is an open condition, we conclude that the space $\widetilde{\mathcal{V}}=\{\psi_{\delta} V\mid V\in \mathcal{V}\}$ is a $p$-dimensional subspace of $\Gamma(\Psi^*(T\mathbb{S}^n))$, supported away from $\{a_1,\ldots,a_k\}$, and satisfying \eqref{pert.ind.bd}, for $\delta>0$ sufficiently small.

We've now shown that there exists $r_0>0$ and a $p$-dimensional space $\mathcal{V}\subset \Gamma(\Psi^*(T\mathbb{S}^n))$ of Lipschitz variation fields such that
\begin{equation}
\max_{0\neq v\in V}\frac{Q_E(\Psi)(V,V)}{\|V\|_{L^2}^2}<-\beta/2<0
\end{equation}
and
\begin{equation}
\mathrm{supp}(V)\subset M\setminus \bigcup_{j=1}^kD_{r_0}(a_j)
\end{equation}
for every $V\in \mathcal{V}$. Now, by assumption, we know that $\Psi_{\epsilon}\to \Psi$ in $C^2(M\setminus \bigcup_{j=1}^kD_{r_0}(a_j),\mathbb{R}^{n+1})$ as $\epsilon\to 0$. For $\epsilon>0$ and $V\in \mathcal{V}$, we can therefore define
$$V_{\epsilon}:=V-|\Psi_{\epsilon}|^{-2}\langle V,\Psi_{\epsilon}\rangle \Psi_{\epsilon},$$
and observe that $V_{\epsilon}\to V$ in $\mathrm{Lip}(M,\mathbb{R}^{n+1})$ as $\epsilon\to 0$. 
In particular, the space $\mathcal{V}_{\epsilon}:=\{V_{\epsilon}\mid V\in \mathcal{V}\}$ remains $p$-dimensional for $\epsilon>0$ sufficiently small, and since $V_{\epsilon}\perp \Psi_{\epsilon}$ pointwise, direct computation gives
\begin{eqnarray*}
Q_{E_{\epsilon}}(\Psi_{\epsilon})(V_{\epsilon},V_{\epsilon})&=&\int_M(|dV_{\epsilon}|^2-\epsilon^{-2}(1-|\Psi_{\epsilon}|^2)|V_{\epsilon}|^2)\\
\text{(by \eqref{gl.eq})}&=&\int_M(|dV_{\epsilon}|^2-|\Psi_{\epsilon}|^{-2}\langle \Psi_{\epsilon},\Delta \Psi_{\epsilon}\rangle |V_{\epsilon}|^2)\\
&\to &Q_E(\Psi)(V,V)
\end{eqnarray*}
as $\epsilon\to 0$, where in the last line we have used the Lipschitz convergence $V_{\epsilon}\to V$, the $C^2$ convergence $\Psi_{\epsilon}\to \Psi$ away from $\bigcup_{j=1}^kD_{r_0}(a_j)$, and the harmonic map equation 
$$\Delta \Psi=|d\Psi|^2\Psi.$$

Since the convergence $Q_{E_{\epsilon}}(\Psi_{\epsilon})(V_{\epsilon},V_{\epsilon})\to Q_E(\Psi)(V,V)$ is uniform on the compact set $\{V\in \mathcal{V}\mid \|V\|_{L^2}=1\}$, it follows that, for $\epsilon>0$ sufficiently small, $\mathcal{V}_{\epsilon}$ defines a $p$-dimensional space of variations, supported away from $\bigcup_{j=1}^kD_{r_0}(a_j)$, on which $Q_{E_{\epsilon}}(\Psi_{\epsilon})$ is negative definite. This completes the proof of the claim, and therefore of Lemma \ref{bubble.business}.
\end{proof}

By combining the existence result of Proposition \ref{min.max} with the compactness analysis of Lemma \ref{bubble.business}, we can deduce the existence of a harmonic map and a collection of bubbles which together realize the min-max energy $\mathcal{E}_n$. For general min-max constructions of this type, we cannot improve on this conclusion; however, for this special $(n+1)$-parameter construction associated to the first eigenvalue, we can appeal to geometric information to \emph{rule out the occurrence of bubbles}, arriving at the following existence theorem.

\begin{theorem} 
\label{eigenvalue:thm}
Let $(M,g)$ be a surface of positive genus, and let $n>5$. Then there exists a harmonic map $\Psi_n\colon M\to \mathbb{S}^n$ of energy
\begin{equation}
\label{ener.char}
\frac{1}{2}\Lambda_1(M,[g])\leq E(\Psi_n)=\mathcal{E}_n(M,g)\leq V_c(n,M,[g])
\end{equation}
and index
\begin{equation}\label{index.char}
\ind_E(\Psi_n)\leq n+1.
\end{equation}
\end{theorem}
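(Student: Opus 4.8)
The plan is to combine the existence statement of Proposition~\ref{min.max} with the bubbling–compactness result of Lemma~\ref{bubble.business}, and then to exploit the sharp energy-index bound $\ind_{E_\epsilon}(\Psi_\epsilon)\le n+1$ together with the index lower bounds of El Soufi (Proposition~\ref{indEM:prop}) and of Proposition~\ref{indES2:prop}, and Petrides' rigidity estimate (Theorem~\ref{rigidity:thm}), to rule out bubbling entirely. Concretely, for each small $\epsilon>0$ Proposition~\ref{min.max} yields a critical point $\Psi_\epsilon$ of $E_\epsilon$ with $E_\epsilon(\Psi_\epsilon)=\mathcal{E}_{n,\epsilon}$ and $\ind_{E_\epsilon}(\Psi_\epsilon)\le n+1$. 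By Proposition~\ref{vc.above} the energies $\mathcal{E}_{n,\epsilon}$ are bounded above by $V_c(n,M,[g])<\infty$, so the family $\{\Psi_\epsilon\}$ satisfies the hypotheses of Lemma~\ref{bubble.business} with $m=n+1$ and $\Lambda=\mathcal{E}_n(M,g)$. Passing to a subsequence, one obtains a harmonic map $\Psi\colon M\to\mathbb{S}^n$ and bubbles $\phi_1,\dots,\phi_k\colon\mathbb{S}^2\to\mathbb{S}^n$ with $\mathcal{E}_n=E(\Psi)+\sum_j E(\phi_j)$ and $\ind_E(\Psi)+\sum_j\ind_E(\phi_j)\le n+1$. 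The chain $\frac12\Lambda_1(M,[g])\le\mathcal{E}_n\le V_c(n,M,[g])$ is already supplied by Propositions~\ref{lambda.below} and~\ref{vc.above}, so the whole theorem reduces to showing $k=0$; then $\Psi_n:=\Psi$ is the desired map.

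To rule out bubbles, suppose $k\ge1$. Proposition~\ref{indEM:prop} gives $\ind_E(\phi_j)\ge n-2$ for each bubble, so if $k\ge2$ the index bound would force $2(n-2)\le n+1$, which fails for $n>5$; hence $k=1$ and $\ind_E(\Psi)\le(n+1)-(n-2)=3$. If $\Psi$ were nonconstant, Proposition~\ref{indEM:prop} would instead give $\ind_E(\Psi)\ge n-2\ge4$, a contradiction; so $\Psi$ is constant and $\mathcal{E}_n=E(\phi_1)$. If $\phi_1$ were not a totally geodesic embedding, Proposition~\ref{indES2:prop} (applicable since $n>4$) would give $\ind_E(\phi_1)>n+1$, contradicting the index bound; hence $\phi_1$ is a totally geodesic $\mathbb{S}^2\hookrightarrow\mathbb{S}^n$, so $E(\phi_1)=\area(\mathbb{S}^2)=4\pi$ and $\mathcal{E}_n=4\pi$. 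But $M$ has positive genus, so Theorem~\ref{rigidity:thm} together with Proposition~\ref{lambda.below} yields $4\pi=\mathcal{E}_n\ge\frac12\Lambda_1(M,[g])>4\pi$, a contradiction. Therefore $k=0$, $\mathcal{E}_n=E(\Psi)$, and the proof is complete.

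The main obstacle is not this counting argument but the construction of its inputs: Proposition~\ref{min.max} rests on standard critical point theory for the $C^2$ Palais--Smale functionals $E_\epsilon$, but Lemma~\ref{bubble.business} requires, beyond the Lin--Wang bubbling analysis, the lower semicontinuity~\eqref{index.semicont} of the Morse index along the bubble tree — the delicate step being to produce, via logarithmic cutoff functions, a negative subspace for $Q_{E_\epsilon}(\Psi_\epsilon)$ of the full dimension $\ind_E(\Psi)$ supported away from the bubbling scales, and to iterate this over the nodes of the bubble tree. Granting that lemma, the roles of the hypotheses become transparent: the index budget $n+1$ is just large enough to house a single totally geodesic bubble of index $n-2$, but too small to tolerate a nonconstant body alongside it, a second bubble, or a non-totally-geodesic bubble; and the one surviving possibility — a constant body together with one totally geodesic bubble of energy $4\pi$ — is excluded precisely by Petrides' strict inequality $\Lambda_1(M,[g])>8\pi$, which is where the positive genus of $M$ enters.
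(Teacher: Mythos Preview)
Your proof is correct and follows essentially the same approach as the paper: you invoke Proposition~\ref{min.max} and Lemma~\ref{bubble.business} to obtain the bubble decomposition with index bound, then use Proposition~\ref{indEM:prop} and the inequality $2(n-2)>n+1$ for $n>5$ to reduce to the case of a constant body with a single bubble, apply Proposition~\ref{indES2:prop} to force that bubble to be totally geodesic of energy $4\pi$, and finally reach a contradiction via Theorem~\ref{rigidity:thm}. Your case analysis is slightly more explicit than the paper's (you separately argue that $\Psi$ must be constant when $k=1$, whereas the paper absorbs this into the single observation that at most one of $\Psi,\phi_1,\dots,\phi_k$ can be nonconstant), but the logic is identical.
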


\begin{proof} Combining the results of Proposition \ref{min.max} and Lemma \ref{bubble.business}, for $n\geq 2$, we know that there exist harmonic maps $\Psi_n\colon M\to \mathbb{S}^n$ and $\phi_1,\ldots,\phi_k: \mathbb{S}^2\to \mathbb{S}^n$ such that
\begin{equation}
\label{ener.id.2}
\mathcal{E}_n=E(\Psi_n)+\sum_{j=1}^kE(\phi_j)
\end{equation}
and
\begin{equation}\label{index.bd.2}
n+1\geq \ind_E(\Psi_n)+\sum_{j=1}^k\ind_E(\phi_j).
\end{equation}
The lower and upper bounds on $\mathcal{E}_n$ in \eqref{ener.char} are an immediate consequence of Propositions \ref{lambda.below} and \ref{vc.above}, respectively.

Since $n+1<2(n-2)$ for $n>5$, Proposition~\ref{indEM:prop} implies that  one of the following two possibilities must hold: either there are no nontrivial bubbles $\phi_j$, or there is exactly one bubble $\phi_1$, and the map $\Psi$ is constant.

Assume the latter. Then, by Propostion~\ref{indES2:prop}, $\phi_1$ has to be an equatorial bubble, and by~\eqref{ener.id.2}, one has $\mathcal E_n = 4\pi$. At the same time, combining~\eqref{ener.char} with Theorem~\ref{rigidity:thm} one has
$$
4\pi<\frac{1}{2}\Lambda_1(M,[g])\leq \mathcal{E}_n(M,[g])=4\pi,
$$
which yields a contradiction. Therefore, there are no bubbles and the Theorem is proved.
\end{proof}

\begin{remark} In lower dimensions $3\leq n\leq 5$, we may also rule out bubbles to arrive at the same conclusion whenever we have the energy bound $\mathcal{E}_n(M,[g])<8\pi$.
\end{remark}

\subsection{Stabilization}
\label{stabilization:sec}
To complete the proof of Theorem~\ref{MainTh2:intro}, our goal now is to show that the inequality $\mathcal{E}_n(M,c)\geq \frac{1}{2}\Lambda_1(M,c)$ becomes equality for $n$ sufficiently large; in particular, we wish to show that the maps produced by Theorem \ref{eigenvalue:thm} stabilize in an appropriate sense as $n\to\infty$. As a first step, we observe in the following proposition that the energies $\mathcal{E}_n$ are nonincreasing in $n$.

\begin{proposition}
\label{monotoneEn:prop}
For every $n\geq 2$, $\mathcal E_n(M,[g])\geq \mathcal{E}_{n+1}(M,[g])$.
\end{proposition}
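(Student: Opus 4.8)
The plan is to show that every family $F\in\Gamma_n(M)$ can be ``suspended'' to a family $\tilde F\in\Gamma_{n+1}(M)$ without increasing the maximal perturbed energy, which yields $\mathcal E_{n+1,\epsilon}(M,g)\le\mathcal E_{n,\epsilon}(M,g)$ for every $\epsilon>0$ and hence the asserted monotonicity after letting $\epsilon\to 0$. Identify $\mathbb R^{n+2}=\mathbb R^{n+1}\times\mathbb R$ and write points of $\overline B^{n+2}$ as $(y,s)$ with $y\in\mathbb R^{n+1}$, $s\in[-1,1]$, $|y|^2+s^2\le 1$. Given $F\in\Gamma_n(M)$, I would set
$$\tilde F_{(y,s)}:=\left(\sqrt{1-s^2}\;F_{y/\sqrt{1-s^2}},\ s\right)\qquad\text{for }|s|<1,$$
and $\tilde F_{(0,\pm1)}\equiv(0,\pm1)$; the formula is meaningful because $|y/\sqrt{1-s^2}|\le 1$ precisely when $(y,s)\in\overline B^{n+2}$.

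Next I would verify that $\tilde F\in\Gamma_{n+1}(M)$. If $|y|^2+s^2=1$ with $|s|<1$, then $a:=y/\sqrt{1-s^2}\in\mathbb S^n$, so $F_a\equiv a$ and $\tilde F_{(y,s)}\equiv(\sqrt{1-s^2}\,a,s)=(y,s)$; together with $\tilde F_{(0,\pm1)}\equiv(0,\pm1)$, this gives the required boundary condition $\tilde F_b\equiv b$ on $\mathbb S^{n+1}$. Continuity on $\{|s|<1\}$ is clear from continuity of $F$ and of the scalar maps $(y,s)\mapsto y/\sqrt{1-s^2}$ and $s\mapsto\sqrt{1-s^2}$; continuity at the two poles uses the uniform bound $C:=\sup_{a\in\overline B^{n+1}}\|F_a\|_{W^{1,2}}<\infty$, valid because $F$ is continuous on the compact set $\overline B^{n+1}$, so that
$$\|\tilde F_{(y,s)}-(0,\pm1)\|_{W^{1,2}}^2=(1-s^2)\,\|F_{y/\sqrt{1-s^2}}\|_{W^{1,2}}^2+(s\mp1)^2\,\area(M,g)\longrightarrow 0$$
as $(y,s)\to(0,\pm1)$, the first term being bounded by $(1-s^2)C^2$.

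Finally I would estimate the energy. Since $s$ is constant on $M$, for $a=y/\sqrt{1-s^2}$ one has $|d\tilde F_{(y,s)}|^2=(1-s^2)|dF_a|^2$ and $1-|\tilde F_{(y,s)}|^2=(1-s^2)(1-|F_a|^2)$ pointwise, so
$$E_\epsilon(\tilde F_{(y,s)})=(1-s^2)\cdot\tfrac12\!\int_M|dF_a|^2+(1-s^2)^2\cdot\tfrac1{4\epsilon^2}\!\int_M(1-|F_a|^2)^2\le(1-s^2)\,E_\epsilon(F_a)\le E_\epsilon(F_a),$$
using $0\le(1-s^2)^2\le 1-s^2\le 1$ and the non‑negativity of the two terms; moreover $E_\epsilon(\tilde F_{(0,\pm1)})=0$. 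Hence $\max_{b\in\overline B^{n+2}}E_\epsilon(\tilde F_b)\le\max_{a\in\overline B^{n+1}}E_\epsilon(F_a)$, and taking the infimum over $F\in\Gamma_n(M)$ gives $\mathcal E_{n+1,\epsilon}(M,g)\le\mathcal E_{n,\epsilon}(M,g)$ for every $\epsilon>0$; passing to the supremum over $\epsilon>0$ yields $\mathcal E_{n+1}(M,[g])\le\mathcal E_n(M,[g])$. The construction is elementary; the only mildly delicate point is the continuity of $\tilde F$ at the poles $(0,\pm1)$, which is exactly where the uniform $W^{1,2}$ bound on the original family is used.
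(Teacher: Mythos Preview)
Your proof is correct and follows exactly the same suspension construction as the paper's proof: the paper defines the identical family $\bar F_{(x',x_{n+2})}=\bigl(\sqrt{1-x_{n+2}^2}\,F_{x'/\sqrt{1-x_{n+2}^2}},\,x_{n+2}\bigr)$ and asserts the same energy inequality $E_\epsilon(\bar F_{(x',x_{n+2})})\le E_\epsilon(F_{x'/\sqrt{1-x_{n+2}^2}})$. Your write-up is in fact more complete than the paper's, since you spell out the continuity at the poles and the pointwise energy identities that the paper leaves to the reader.
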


\begin{proof}
Let $\mathbb{B}^{n+1}\subset\mathbb{B}^{n+2}$ be defined by $x_{n+2}=0$. Then 
$$
\mathbb{B}^{n+2}=\{(x',x_{n+2}),\, |x'|^2+|x_{n+2}|^2\leqslant 1 \}.
$$
If $F\in\Gamma_n(M)$, then one constructs $\bar F\in\Gamma_{n+1}(M)$ by the following formula
$$
\bar F_{(x',x_{n+2})} = \left(\sqrt{1-x_{n+2}^2}F_{\frac{x'}{\sqrt{1-x_{n+2}^2}}},x_{n+2}\right)
$$
for $x_{n+2}\ne\pm 1$, and $\bar F_{(0,\pm 1)}=(0,\pm 1)$. Let $\alpha = \sqrt{1-x^2_{n+2}}<1$. Then it easy to see that $\bar F\in \Gamma_{n+1}(M)$ and 
$$
E_{\epsilon}\left(\bar F_{(x',x_{n+2})}\right) = \int_M\frac{\alpha^2}{2}\left|dF_{\frac{x'}{\alpha}}\right|^2 + \frac{\alpha^4}{4\epsilon^2}\left(1 - \left|F_{\frac{x'}{\alpha}}\right|^2\right)^2\leqslant \alpha^2E_{\epsilon}\left(F_{\frac{x'}{\alpha}}\right).
$$
\end{proof}

Now, let $\mathcal C_n$ be the set of all harmonic maps $\Psi_n\colon (M,g)\to \mathbb{S}^n$ satisfying 
$$\ind_E(\Psi_n)\leqslant n+1$$ and 
$$E(\Psi_n) = \mathcal E_n(M,[g]).$$
Theorem \ref{eigenvalue:thm} tells us that $\mathcal{C}_n\neq \varnothing$ for $n>5$.
To prove Theorem~\ref{MainTh2:intro}, our first observation is that if there exists $\Psi_n\in \mathcal C_n$ such that $\ind_S(\Psi_n)=1$, then the inequality 
\begin{equation}
\label{eigenvalue:ineq}
\Lambda_1(M,[g])\leqslant 2\mathcal E_n(M,[g])
\end{equation}
becomes an equality. Indeed, if $\ind_S(\Psi_n)=1$, then 
$$
\Lambda_1(M,[g])\leqslant 2\mathcal E_n(M,[g]) = \lambda_1(M,g_{\Psi_n})\area(M,g_{\Psi_n})\leqslant \Lambda_1(M,[g]).
$$
 In particular, $\mathcal E_m(M,[g]) = \mathcal E_n(M,[g])$ for all $m\geqslant n$. Thus Theorem~\ref{MainTh2:intro} follows from the following proposition.
\begin{proposition}
\label{indS1:prop}
There exists $n\in \mathbb{N}$ and $\Psi\in \mathcal C_n$ such that $\ind_S(\Psi) = 1$.
\end{proposition}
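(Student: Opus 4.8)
The plan is to argue by contradiction: suppose that for every $n \geq 2$ and every $\Psi \in \mathcal{C}_n$ we have $\ind_S(\Psi) \geq 2$, and derive a contradiction with the uniform energy bound via Proposition~\ref{MainProp:intro}. The starting point is Proposition~\ref{monotoneEn:prop}, which tells us that the sequence $\mathcal{E}_n(M,[g])$ is nonincreasing in $n$, hence converges to some limit $\mathcal{E}_\infty \in [\tfrac{1}{2}\Lambda_1(M,[g]), \infty)$; in particular $\sup_n \mathcal{E}_n =: E_0 < \infty$. For each $n > 5$, Theorem~\ref{eigenvalue:thm} furnishes $\Psi_n \in \mathcal{C}_n$, a harmonic map with $E(\Psi_n) = \mathcal{E}_n \leq E_0$ and $\ind_E(\Psi_n) \leq n+1$. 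After replacing $\Psi_n$ by the harmonic map into the smallest totally geodesic subsphere containing its image, we may assume $\Psi_n$ is linearly full as a map into $\mathbb{S}^{m(n)}$ for some $m(n) \leq n$ (note this replacement does not change $E$ or $\ind_S$). Proposition~\ref{MainProp:intro}, applied with the bound $E_0$, then forces $m(n) \leq N([g], E_0)$ for all $n$. So, along a subsequence, the $\Psi_n$ are linearly full harmonic maps into a \emph{fixed} sphere $\mathbb{S}^m$ with $m \leq N$.

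Now I exploit the structure more carefully. The key quantity to track is the energy index. If $\Psi_n$ is genuinely a map into $\mathbb{S}^m \subset \mathbb{S}^n$ via the totally geodesic inclusion $i_{m,n}$, then Proposition~\ref{indEindS:prop} gives
$$
\ind_E(\Psi_n) = \ind_E(\Psi_n^{full}) + (n - m)\ind_S(\Psi_n^{full}),
$$
where $\Psi_n^{full}: M \to \mathbb{S}^m$ is the linearly full reduction. Combined with the min-max index bound $\ind_E(\Psi_n) \leq n+1$, this yields
$$
\ind_E(\Psi_n^{full}) + (n-m)\ind_S(\Psi_n^{full}) \leq n+1.
$$
Since $m \leq N$ is bounded and $\ind_E(\Psi_n^{full}) \geq m - 2 \geq 0$ by Proposition~\ref{indEM:prop} (or is trivially $\geq 0$), for $n$ large the factor $(n-m)$ grows without bound; hence we must have $\ind_S(\Psi_n^{full}) \leq 1$, and in fact $\ind_S(\Psi_n^{full}) = 1$ for $n$ sufficiently large since $\ind_S \geq 1$ always holds for a nonconstant harmonic map. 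This contradicts our standing assumption and proves the proposition. (One should treat the degenerate case where $\Psi_n$ is constant separately, but a constant map has $\mathcal{E}_n = 0$, contradicting $\mathcal{E}_n \geq \tfrac{1}{2}\Lambda_1(M,[g]) > 0$; when $M = \mathbb{S}^2$ one invokes a direct argument or the known value of $\Lambda_1(\mathbb{S}^2) = 8\pi$.)

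The main obstacle, and the step that genuinely requires the machinery of the paper, is the reduction to a fixed target sphere — that is, the application of Proposition~\ref{MainProp:intro} to conclude that linearly full harmonic maps with uniformly bounded energy take values in spheres of uniformly bounded dimension. Everything after that is bookkeeping with the index formula of Proposition~\ref{indEindS:prop} and the crude lower bound of Proposition~\ref{indEM:prop}: once the target dimension $m$ is fixed while the ambient min-max dimension $n$ is free to grow, the linear-in-$n$ growth of the left side of the index identity against the linear-in-$n$ growth of the right side pins down $\ind_S$ to be exactly $1$. A secondary subtlety to handle cleanly is verifying that passing to the linearly full reduction $\Psi_n^{full}$ preserves membership in the relevant class — specifically that $E(\Psi_n^{full}) = \mathcal{E}_n$ still, which holds because the inclusion $i_{m,n}$ is an isometry onto its image and thus energy-preserving, and that the reduction is itself a nonconstant harmonic map so that Proposition~\ref{indEM:prop} applies.
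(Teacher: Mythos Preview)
Your proof is correct and follows essentially the same route as the paper's: both use Proposition~\ref{MainProp:intro} (equivalently, Theorem~\ref{stabilization:thm}) to bound the dimension of the smallest totally geodesic subsphere containing the image of $\Psi_n$, and then invoke the index identity of Proposition~\ref{indEindS:prop} to force $\ind_S(\Psi_n)=1$ once $n$ is large enough, contradicting the assumption. The paper phrases the dimension bound via $\nul_S(\Psi)\leq N+1$ rather than via linear fullness, but this is the same statement, and your bookkeeping with $\Psi_n^{full}$ is a harmless repackaging.
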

\begin{proof} 

We need the following theorem, which is a slightly stronger version of Propostion~\ref{MainProp:intro} in the introduction.
\begin{theorem}
\label{stabilization:thm}
Let $\Psi_n\colon (M,g)\to\mathbb{S}^{N_n}$ be a collection of harmonic maps to spheres of varying dimensions. If $E(\Psi_n)$ is uniformly bounded, then $\nul_S(\Psi_n)$ is uniformly bounded as well.
\end{theorem}
Indeed, since the components of any harmonic map $\Psi$ are eigenfunctions of $\Delta_{g_\Phi}$ with eigenvalue $2$, a uniform bound on $\nul_S(\Psi)$ implies a uniform bound on the number of linearly independent components of $\Psi$. Assuming $\Psi$ is linearly full, this results in an upper bound on the dimension of the target sphere as in Proposition~\ref{MainProp:intro}.  

We postpone the proof of Theorem~\ref{stabilization:thm} until Section~\ref{stabilization_proof:sec}.  To prove Proposition \ref{indS1:prop}, we apply Theorem~\ref{stabilization:thm} to sequences $\{\Psi_n\}$, $\Psi_n\in \mathcal{C}_n$ to conclude that there exists $N(M,[g])\in \mathbb{N}$ such that 
$$\nul_S(\Psi)\leqslant N+1\text{\hspace{2mm} for all\hspace{2mm} }\Psi\in\bigcup_{n\in \mathbb{N}}\mathcal C_n.$$
It follows that for any $m>N$ and any $\Psi_m\in\mathcal C_m$ the image $\Psi_m$ lies in the $N$-dimensional totally geodesic subsphere of 
$\mathbb{S}^m$. To obtain a contradiction, assume now that the conclusion of Proposition~\ref{indS1:prop} is not valid, i.e. $\ind_S(\Psi_m)>1$. Then by Proposition~\ref{indEindS:prop} one has
$$
\ind_E(\Psi_m)\geqslant (m-N)\ind_S(\Psi_m)\geqslant 2(m-N)>m+1.
$$ 
once $m>2N+1$. As a result, for $m>2N+1$ the space $\mathcal C_m$ is empty, which contradicts Theorem~\ref{eigenvalue:thm}.
\end{proof}

\subsection{Proof of Theorem~\ref{stabilization:thm}}
\label{stabilization_proof:sec}

The proof is based on an analysis of the limiting behavior of the energy densities $|d\Psi_n|_g^2$ of the maps $\Psi_n$, modeled on the bubble tree convergence for harmonic maps to a fixed target (cf. ~\cite{Parker}). The key difference in our case is that the target spaces $\mathbb{S}^{N_n}$ of $\Psi_n$ vary with $n$, so one can not, a priori, expect a compactness result for the maps themselves. Nevertheless, we are able to establish convergence of energy densities in an appropriate "bubble" sense, described in Lemma \ref{bub.lem} below. In what follows, we let
$$\mathcal N_m:=M\sqcup \mathbb{S}^2_1 \sqcup \cdots \sqcup \mathbb{S}^2_m$$
denote the disjoint union of $M$ with $m$ copies of the unit sphere. We endow $\mathcal N_m$ with a metric equal to $g$ on $M$ and the standard metric $g_{\mathbb{S}^2}$ on each sphere component. 

\begin{lemma}\label{bub.lem} Let $\Psi_n\colon (M,g)\to \mathbb{S}^{N_n}$ be a sequence of harmonic maps with $E(\Psi_n)\leq K$. After passing to a subsequence, there exists $m\in \{0\}\cup \mathbb{N}$, a finite collection of points $p_1,\ldots,p_k\in \mathcal N_m$, and a sequence of neighborhoods
$$\{p_1,\ldots,p_k\}\subset \mathcal{B}_n\subset \mathcal N_m$$
converging in the Hausdorff sense to $\{p_1,\ldots,p_k\}$, such that on the complement of $\mathcal{B}_n$, there exist surjective conformal maps
$$\Phi_n:\mathcal N_m\setminus \mathcal{B}_n\to M$$
whose restriction to each component $M\setminus \mathcal{B}_n, \mathbb{S}^2_1\setminus \mathcal{B}_n, \ldots, \mathbb{S}^2_m\setminus \mathcal{B}_n$ is a diffeomorphism onto its image. Moreover, these images have disjoint interiors, and there exists $p>1$ and $\rho \in L^p(\mathcal N_m)$ such that
$$\rho_n:=|d(\Psi_n\circ \Phi_n)|^2\boldsymbol{1}_{\mathcal N_m\setminus \mathcal{B}_n}\to \rho\text{ in }L^p(\mathcal N_m).$$
\end{lemma}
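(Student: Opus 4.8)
\emph{Proof strategy.} The approach I would take is to reproduce the classical bubble-tree convergence argument for harmonic maps from surfaces (Sacks--Uhlenbeck; Parker~\cite{Parker}; Lin--Wang~\cite{LW}), but carried out purely at the level of the energy densities $e_n:=|d\Psi_n|^2_g$ rather than the maps $\Psi_n$ themselves --- this is forced on us, since the targets $\mathbb S^{N_n}$ vary with $n$ and there is no fixed space in which to extract a weak limit of the maps. The key structural point that makes this possible is that every analytic ingredient of the bubble-tree construction has constants \emph{independent of $N_n$}: the second fundamental form of $\mathbb S^N\subset\mathbb R^{N+1}$ is $A(v,w)=-\langle v,w\rangle\nu$, so $|A|\equiv 1$ for all $N$, and the harmonic map equation is the single equation $\Delta_g\Psi_n=e_n\Psi_n$. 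Consequently the $\epsilon$-regularity theorem, the interior elliptic estimates obtained by bootstrapping this equation, the removable-singularity theorem, and the Pohozaev identity on annuli all hold with constants depending only on $(M,g)$. In particular I would first record the uniform $\epsilon$-regularity: there exist $\epsilon_0,C>0$ independent of $N$ such that any harmonic $\Psi\colon D_{2r}\to\mathbb S^N$ with $\int_{D_{2r}}|d\Psi|^2\le\epsilon_0$ satisfies $r^2\sup_{D_r}|d\Psi|^2\le C\int_{D_{2r}}|d\Psi|^2$, together with all higher $C^j$ bounds on $|d\Psi|^2$ on $D_{r/2}$.

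With this in hand I would set up the bubble tree. Define the concentration set $\Sigma\subset M$ as the set of $x$ with $\limsup_n\int_{D_r(x)}e_n\ge\epsilon_0$ for every $r>0$; passing to a subsequence, the energy bound $E(\Psi_n)\le K$ and a Vitali covering argument give $\#\Sigma\le 2K/\epsilon_0$, and on $M\setminus\Sigma$ the densities $e_n$ are locally uniformly bounded in every $C^j$, hence (after a further subsequence) converge in $C^\infty_{\mathrm{loc}}(M\setminus\Sigma)$ to a smooth function $\rho^{(0)}$. At each point of $\Sigma$ I would perform the usual rescaling --- selecting scales $s_n\to 0$ so that a definite amount of energy sits at scale $s_n$ --- obtaining harmonic maps on larger and larger disks whose energy densities, by the \emph{same} uniform $\epsilon$-regularity, are again locally uniformly bounded away from a finite set and subconverge in $C^\infty_{\mathrm{loc}}$. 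Since each such rescaling isolates at least $\epsilon_0/2$ of energy not accounted for elsewhere, the induction terminates after at most $\lceil 4K/\epsilon_0\rceil$ steps; this bounds the number $m$ of sphere components and the number $k$ of points, and produces --- in the standard way --- the data $\mathcal N_m$, $\{p_1,\dots,p_k\}$, the shrinking neighborhoods $\mathcal B_n\to\{p_1,\dots,p_k\}$, and the conformal reparametrizations $\Phi_n\colon\mathcal N_m\setminus\mathcal B_n\to M$, each a diffeomorphism of a component onto its image, with images of pairwise disjoint interiors covering $M$ minus the neck annuli. By construction, on each component away from $\{p_i\}$ the functions $\rho_n:=|d(\Psi_n\circ\Phi_n)|^2\boldsymbol{1}_{\mathcal N_m\setminus\mathcal B_n}$ converge in $C^\infty_{\mathrm{loc}}$ to some limit $\rho$.

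The last step --- upgrading this to strong convergence in $L^p(\mathcal N_m)$ for some $p>1$ --- is where the genuine work lies, and it reduces entirely to showing that \emph{no energy is lost in the necks}. By conformal invariance of the Dirichlet integral, $\int_{\mathcal N_m\setminus\mathcal B_n}|d(\Psi_n\circ\Phi_n)|^2=2E(\Psi_n)-\int_{\mathrm{necks}}e_n$, so what is needed is exactly the energy identity for harmonic maps from surfaces: the total energy on the shrinking neck annuli tends to $0$. This is the classical no-neck-energy theorem (Jost; \cite{Parker}; see also \cite{LW}), and the point I would need to check carefully is that its proof --- via a Pohozaev identity on each neck annulus, the uniform $\epsilon$-regularity forcing the angular part of the energy to decay, and a Hardy- or three-circles-type estimate bounding the remaining radial part by the small energy at the two ends of the neck --- is local in the target and therefore valid with all constants independent of $N_n$. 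Granting this, on one hand the total masses converge, and on the other hand near each $p_i$ the local energy of $\rho_n$ is eventually below $\epsilon_0$, so $\epsilon$-regularity gives a pointwise bound of the form $\rho_n(x)\le C\,\overline{\rho_n}(\mathrm{dist}(x,p_i))$ by an annular average; a dyadic sum then yields uniform $L^p$ bounds near each $p_i$ for any $p>1$, which together with the mass convergence promotes $\rho_n\to\rho$ to strong convergence in $L^p(\mathcal N_m)$. Thus I expect the main obstacle to be the uniform-in-$N$ neck analysis; the remaining bookkeeping --- choosing the scales so that $\mathcal B_n$ shrinks to $\{p_1,\dots,p_k\}$ in the Hausdorff sense and the $\Phi_n$ are genuinely conformal with the stated properties --- is a routine adaptation of the fixed-target construction.
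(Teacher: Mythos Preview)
Your overall strategy---running the bubble-tree construction at the level of the energy densities, with all analytic constants uniform in the target dimension via the Bochner identity---is correct and matches the paper's approach through the first two thirds of the argument (uniform $\epsilon$-regularity, finite concentration set, rescaling, termination after boundedly many steps).

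The gap is in the final step. You write that the $L^p$ convergence ``reduces entirely to showing that no energy is lost in the necks,'' and then invoke only the energy identity (total neck energy $\to 0$) together with the pointwise bound $\rho_n(x)\le C\,\mathrm{dist}(x,p_i)^{-2}$ from $\epsilon$-regularity. This is not enough. The function $d^{-2}$ lies in no $L^p$ on a two-disk, so your dyadic sum $\sum_j (2^j)^{2(q-1)}\int_{j\text{-th annulus}}\rho_n$ diverges unless you also know a \emph{quantitative} decay $\int_{B_r(c_n)\setminus\mathcal B_n}\rho_n\le C r^{\sigma}$ for some fixed $\sigma>0$ independent of $n$; and even then the sum converges only for $q<1+\sigma/2$, so your claim ``for any $p>1$'' is incorrect in any case. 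Mere $L^1$ convergence plus the $d^{-2}$ pointwise bound does not rule out, for instance, a limit density $\rho\sim d^{-2}|\log d|^{-2}$ near $p_i$, which is in $L^1$ but in no $L^p$ for $p>1$.

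The paper obtains exactly this $r^\sigma$ decay (its Lemma~\ref{limit_measure:lemma}), and does so by a route different from the one you sketch: rather than Pohozaev, it uses Parker's \emph{suspension trick}, augmenting $\Psi_n$ by a holomorphic function to a \emph{conformal} harmonic map $\Upsilon_n\colon B_{r_1}(p)\to\mathbb S^{N_n}\times\mathbb C$, whose image is therefore a minimal surface; it then applies the Hoffman--Spruck isoperimetric inequality for small-area minimal surfaces (constants depending only on ambient curvature and injectivity radius, hence uniform in $N_n$) to obtain a doubling inequality $E(\Psi_n|_{A_{r,n}})\le\theta\bigl(E(\Psi_n|_{A_{2r,n}})+r^2\bigr)$ with $\theta<1$, which iterates to $E(\Psi_n|_{A_{r,n}})\le Cr^\sigma$. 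Your proposed Pohozaev/three-circles route can very likely be pushed to yield the same $r^\sigma$ decay with uniform constants---the Pohozaev identity is target-independent and the Hopf differential is controlled by energy alone---but you must actually extract that decay rate from it, not merely the weaker no-neck-energy conclusion.
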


The key point in Lemma \ref{bub.lem} is the $L^p$ convergence of the conformally rescaled energy densities to a limit density $\rho\in L^p(\mathcal N_m)$. In what follows, we will often write $\lambda_k(M,\rho):=\lambda_k(M,[g],\rho dv_g)$ for $\rho\in L^1(M)$.
With Lemma~\ref{bub.lem} in place, we can establish the following lower-semicontinuity result for the eigenvalues $\lambda_k(M,|d\Psi_n|^2)$ of the energy density measures, from which Theorem \ref{stabilization:thm} will follow. 

\begin{proposition}\label{eigen.drop} In the setting of the Lemma \ref{bub.lem}, we have
$$
\liminf_{n\to\infty}\lambda_k(M,|d\Psi_n|^2)\geq \lambda_k(\mathcal N_m,\rho).
$$
\end{proposition}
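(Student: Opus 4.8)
The plan is to establish the eigenvalue lower-semicontinuity by constructing, for each $k$, a $(k+1)$-dimensional space of test functions on $\mathcal N_m$ out of quasi-optimal test functions for $\lambda_k(M,|d\Psi_n|^2)$, transported via the conformal maps $\Phi_n$. First I would fix $k$ and, for each $n$, choose a $(k+1)$-dimensional subspace $E_n\subset C^\infty(M)$ achieving (up to $o(1)$) the infimum in the Rayleigh characterization of $\lambda_k(M,|d\Psi_n|^2)$, normalized so that, say, an orthonormal basis $u_n^0,\dots,u_n^k$ with respect to $|d\Psi_n|^2\,dv_g$ is fixed. Pulling back by $\Phi_n$ and using conformal invariance of the Dirichlet integral on surfaces, the functions $v_n^i:=u_n^i\circ\Phi_n$ (extended suitably across $\mathcal B_n$) satisfy
$$
\int_{\mathcal N_m}|dv_n^i|^2\,dv_{\mathcal N_m}=\int_{M\setminus\Phi_n(\mathcal B_n)}|du_n^i|^2\,dv_g+o(1)\le \int_M|du_n^i|^2\,dv_g,
$$
while $\int_{\mathcal N_m}v_n^i v_n^j\,\rho_n\,dv_{\mathcal N_m}$ differs from $\int_M u_n^i u_n^j\,|d\Psi_n|^2\,dv_g$ by a term controlled by the mass of $\rho_n$ on $\mathcal B_n$, which tends to $0$. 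The main issue here is controlling the functions near the shrinking neck regions $\mathcal B_n$: I would handle this with a capacity/logarithmic-cutoff argument (exactly as in Lemma~\ref{bubble.business}), modifying the $v_n^i$ to vanish near $\{p_1,\dots,p_k\}$ at the cost of an $o(1)$ change in all the integrals, using that the energy density $\rho_n$ has no atoms in the limit and that the annular regions being cut out have vanishing $2$-capacity.

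Next I would pass to the limit. After the cutoff modification, $\{v_n^i\}$ is bounded in $W^{1,2}(\mathcal N_m)$, so along a subsequence $v_n^i\rightharpoonup v^i$ weakly in $W^{1,2}$ and strongly in $L^q$ for all $q<\infty$. Combined with $\rho_n\to\rho$ in $L^p(\mathcal N_m)$ from Lemma~\ref{bub.lem}, H\"older's inequality gives $\int v_n^i v_n^j\rho_n\to\int v^i v^j\rho$, so the limiting functions $v^0,\dots,v^k$ remain linearly independent in $L^2(\mathcal N_m,\rho)$ — this uses that the Gram matrix $(\int v_n^iv_n^j\rho_n)$ is (asymptotically) the identity. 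By weak lower semicontinuity of the Dirichlet energy, $\int_{\mathcal N_m}|dv^i|^2\le\liminf\int_{\mathcal N_m}|dv_n^i|^2\le\liminf\int_M|du_n^i|^2$. Evaluating the Rayleigh quotient of $\lambda_k(\mathcal N_m,\rho)$ on $\mathrm{span}\{v^0,\dots,v^k\}$ and using that $\max$ over a subspace of a ratio is lower-semicontinuous under this kind of convergence (one can diagonalize simultaneously, or just estimate directly), we get
$$
\lambda_k(\mathcal N_m,\rho)\le \limsup_{n\to\infty}\lambda_k(M,|d\Psi_n|^2),
$$
and since the subsequence was arbitrary this yields the claimed $\liminf$ bound.

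I expect the main obstacle to be the neck analysis: one must ensure that cutting off near the bubble points $p_1,\dots,p_k$ — which on $\mathcal N_m$ correspond to the concentration points on $M$ and the "south poles" where the bubble spheres are attached — does not destroy the linear independence of the test functions in $L^2(\rho)$ and does not increase the Dirichlet energy by more than $o(1)$. The delicate sub-point is that $\Phi_n$ is only a diffeomorphism off $\mathcal B_n$, and the functions $v_n^i$ need to be defined on all of $\mathcal N_m$; the natural fix is to define them as $0$ on $\mathcal B_n$ after the logarithmic cutoff, which is legitimate precisely because of the vanishing-capacity estimate $\int|d\psi_\delta|^2\le C/|\log\delta|$. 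One should also check that the limit density $\rho$ on each spherical component is itself an energy density of a (bubble) harmonic map or at worst an $L^p$ limit thereof, but for the statement of Proposition~\ref{eigen.drop} as written only the $L^p$ convergence $\rho_n\to\rho$ and the absence of atoms are needed, both supplied by Lemma~\ref{bub.lem}. A secondary technical point is the bookkeeping when $m=0$ (no bubbles) versus $m\ge1$: the argument is identical, with $\mathcal N_m=M$ in the degenerate case.
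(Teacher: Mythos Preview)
Your overall strategy---pull back (quasi-)eigenfunctions via $\Phi_n$, extend across $\mathcal{B}_n$, establish $W^{1,2}$ compactness on $\mathcal{N}_m$, and pass to the limit using $\rho_n\to\rho$ in $L^p$---is exactly the paper's. The execution, however, differs at two points, and in both your sketch has a gap.

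First, the assertion that the $v_n^i$ are bounded in $W^{1,2}(\mathcal{N}_m)$ is not free: you only know $\int(v_n^i)^2\rho_n=1$ and a Dirichlet bound, which does not directly give an $L^2(\mathcal{N}_m,dv)$ bound since the weight $\rho_n$ varies with $n$. The paper handles this with an Adams--Hedberg Poincar\'e inequality (Theorem~\ref{Poincare:thm}): one controls $\|w-L_n(w)\|_{L^2}$ by $\|dw\|_{L^2}$ with constant depending only on $\|L_n\|_{W^{-1,2}}$, applied to $L_n(w)=\|\rho_n\|_{L^1}^{-1}\int w\rho_n$; the constant is uniform because $\rho_n$ is bounded in $L^p\hookrightarrow W^{-1,2}$.

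Second, and more importantly, the claim that the logarithmic cutoff costs only $o(1)$ in Dirichlet energy ``exactly as in Lemma~\ref{bubble.business}'' does not transfer. In that lemma the cutoff is applied to a \emph{fixed} finite-dimensional space of smooth vector fields, so the error $\int|d\psi_\delta|^2|V|^2\le \|V\|_\infty^2\cdot C/|\log\delta|$ is immediate. Here the $v_n^i$ depend on $n$ and are only controlled in $W^{1,2}$; in two dimensions this gives no $L^\infty$ bound, and the cross term $\int|d\psi_{\delta}|^2(v_n^i)^2$ is a priori only $O(\|v_n^i\|_{W^{1,2}}^2)$. The paper sidesteps this entirely by a different mechanism: it takes the $\phi_{n,i}$ to be \emph{exact} eigenfunctions, extends harmonically (accepting a merely bounded, not small, energy cost on $\mathcal{B}_n$), and then recovers the sharp bound in the limit via the weak equation. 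For any $\eta\in C_c^\infty(\mathcal{N}_m\setminus\{p_1,\dots,p_k\})$ one eventually has $\mathrm{supp}\,\eta\subset\mathcal{N}_m\setminus\mathcal{B}_n$, so $\int\langle d\overline\psi_{n,i},d\eta\rangle=\lambda_i(M,|d\Psi_n|^2)\int\overline\psi_{n,i}\,\eta\,\rho_n$ holds exactly and passes to the limit; since finite point sets have zero capacity this extends to all $\eta\in W^{1,2}$, and testing with $\eta=\psi_i$ yields $\int|d\psi_i|^2=(\lim\lambda_i)\cdot 1$ with no lower-semicontinuity loss. Your lower-semicontinuity route can in fact be repaired without any cutoff---apply it on fixed compacta $K\Subset\mathcal{N}_m\setminus\{p_j\}$, where for large $n$ the extension is irrelevant and $\int_K|dv_n^i|^2\le\int_{\mathcal{N}_m\setminus\mathcal{B}_n}|dv_n^i|^2=\int_M|du_n^i|^2$, then let $K$ exhaust---but as written the argument is incomplete.
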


\begin{proof} To begin, consider a normalized collection $\frac{1}{\sqrt{2E(\Psi_n)}}=\phi_{n,0},\ldots,\phi_{n,k}\in C^{\infty}(M)$ of first $k+1$ eigenfunctions for the measure $|d\Psi_n|_g^2dv_g$ on $M$, satisfying
\begin{eqnarray*}
\int\limits_M \langle d\phi_{n,i},d\phi_{n,j}\rangle dv_g&=&\lambda_i(M,|d\Psi_n|^2)\int\limits_M\phi_{n,i}\phi_{n,j}|d\Psi_n|_g^2dv_g\\
&=&\lambda_i(M,|d\Psi_n|_g^2)\cdot \delta_{ij},
\end{eqnarray*} 
see~\eqref{eigenfunctions_measures:def}.
Now, consider the functions $\psi_{n,i}:=\phi_{n,i}\circ \Phi_n\in C^{\infty}(\mathcal N_m\setminus \mathcal{B}_n)$ on $\mathcal N_m\setminus \mathcal{B}_n$ given by composition with the conformal maps $\Phi_n$. Since $\Phi_n$ is a conformal diffeomorphism away from the boundary $\partial (\mathcal N_m\setminus \mathcal{B}_n)$, it is then clear that $\psi_{n,i}\in W^{1,2}(\mathcal N_m\setminus \mathcal{B}_n)$, with
$$\int\limits_{\mathcal N_m\setminus \mathcal{B}_n}|d\psi_{n,i}|^2=\int\limits_M|d\phi_{n,i}|^2\leq \lambda_k(M,|d\Psi_n|^2)$$
and (since $supp(\rho_n)\subset \mathcal{N}_m\setminus \mathcal{B}_n$)
\begin{equation}
\label{ref:ineq1}
\int\limits_{\mathcal N_m}\psi_{n,i}\psi_{n,j}\rho_n =\int\limits_{\mathcal{N}_m\setminus \mathcal{B}_n}[(\phi_{n,i}\phi_{n,j})\circ \Phi_n ]|d(\Psi_n\circ \Phi_n)|^2=\int\limits_M\phi_{n,i}\phi_{n,j}|d\Psi_n|^2=\delta_{ij}.
\end{equation}

Next, extend the functions $\psi_{n,i}\in C^{\infty}(\mathcal N_m\setminus \mathcal{B}_n)$ \emph{harmonically} to $\mathcal{B}_n$, to obtain functions
$$\overline{\psi}_{n,i}\in W^{1,2}(\mathcal N_m)$$
agreeing with $\psi_{n,i}$ on $\mathcal N_m\setminus \mathcal{B}_n$. By~\cite[Example 1, p. 40]{RT} these extensions satisfy
\begin{equation}\label{psi.ener.bd}
\int\limits_{\mathcal N_m}|d\overline{\psi}_{n,i}|^2\leq C \int\limits_{\mathcal N_m\setminus \mathcal{B}_n}|d\psi_{n,i}|^2.
\end{equation}

Now, while \eqref{psi.ener.bd} provides a uniform bound on the Dirichlet energies of $\overline{\psi}_{n,i}\in W^{1,2}(\mathcal N_m)$, it remains to show that these functions are bounded in $L^2(\mathcal N_m)$ as well, to extract a subsequence. But this follows in a straightforward way from the following theorem.

\begin{theorem}[\cite{AH} Lemma 8.3.1] 
\label{Poincare:thm}
Let $(M,g)$ be a Riemannian manifold. Then there exists a constant $C>0$ such that for all $L\in W^{-1,2}(M)$ with $L(1) = 1$ one has 
\begin{equation}
||u- L(u)||_{L^2(M)} \leqslant C||L||_{W^{-1,2}(M)} \left(\,\int\limits_M |\nabla u|^2_g\,dv_g\right)^{1/2}
\end{equation}
for all $u\in W^{1,2}(M)$.
\end{theorem}

We apply the theorem for $L_n(\psi) = \frac{1}{\|\rho_n\|_{L^1}}\int\limits_{\mathcal N_m} \psi\rho_n\,dv_g$. Since $L^p(\mathcal N_m)$ embeds into $W^{-1,2}(\mathcal N_m)$ for $p>1$ (by the dual form of the Sobolev embedding theorem), we know that
$$\|\rho_n\|_{W^{-1,2}(\mathcal N_m)}\leq C_p\|\rho_n\|_{L^p(\mathcal N_m)}\leq C$$
for some constant $C$ independent of $n$, and by Theorem~\ref{Poincare:thm} one has
$$
\int\limits_{\mathcal N_m} \left(\overline{\psi}_{n.i} - \frac{1}{||\rho_n||_{L^1}}\int\overline{\psi}_{n,i}\rho_n\,dv_g\right)^2\,dv_g\leqslant C\int\limits_M |\nabla \overline{\psi}_{n,i}|^2\leqslant C \lambda_k(M,|d\Psi_n|^2).
$$
In particular, since $i>0$, by \eqref{ref:ineq1} one has
$$
\int\limits_{\mathcal N_m}\overline{\psi}_{n,i}\rho_n\,dv_g = \sqrt{2E(\Psi_n)}\int\limits_{\mathcal N_m}\overline{\psi}_{n,i}\overline{\psi}_{n,0}\rho_n\,dv_g=0.
$$
It follows that the functions $\overline{\psi}_{n,1},\ldots, \overline{\psi}_{n,k}$ are bounded in $W^{1,2}(\mathcal N_m)$ by $C\lambda_k(M,|d\Psi_n|^2)$. If $\liminf_{n\to\infty}\lambda_k(M,|d\Psi_n|^2) = \infty$, then the statement of Proposition~\ref{eigen.drop} is obvious.
Otherwise, for each $i=1,\ldots,k$ we can extract a subsequence (unrelabelled) such that 
$$\overline{\psi}_{n,i}\to \psi_i\in W^{1,2}(\mathcal N_m)$$
weakly in $W^{1,2}(\mathcal N_m)$ and strongly in $L^s(\mathcal N_m)$ for every $s\in [1,\infty)$. We can assume the same for $i=0$ since $\overline{\psi}_{n,0}$ are constant functions. In particular, since $\rho_n\to \rho$ in $L^p(\mathcal N_m)$ and $\overline{\psi}_{n,i}\to \psi_i$ strongly in $L^{2p'}(\mathcal N_m)$, where $p'$ is the H\"{o}lder conjugate of $p$, it's clear that
$$\int\limits_{\mathcal N_m}\psi_i\psi_j \rho=\lim_{n\to\infty}\int\limits_{\mathcal N_m}\psi_{n,i}\psi_{n,j}\rho_n=\delta_{ij}.$$
Moreover, since $\Delta_g\overline\psi_{n,i} = \lambda_i(M,|d\Psi_n|^2)\psi_{n,i}\rho_n$ on $\mathcal N_m\setminus \mathcal B_n$, then for any $\eta\in C^{\infty}_c(\mathcal N_m\setminus \{p_1,\ldots,p_k\})$ supported away from the points $\{p_1,\ldots,p_k\}$, the weak convergence $\overline{\psi}_{n,i}\to \psi_i$ of the eigenfunctions in $W^{1,2}(\mathcal N_m)$ easily gives
\begin{equation}\label{eigen.lim}
\int\limits_{\mathcal N_m}\langle d\psi_i,d\eta\rangle=[\lim_{n\to\infty}\lambda_i(M,|d\Psi_n|^2)]\int\limits_{\mathcal N_m}\psi_i\eta \rho,
\end{equation}
and since the set $\{p_1,\ldots, p_k\}$ has capacity zero, it follows that \eqref{eigen.lim} holds for any $\eta\in W^{1,2}(\mathcal N_m)$. In particular, taking $\eta=\psi_i$, we see that $\psi_0,\ldots,\psi_k$ define an orthonormal collection of functions in $L^2(\mathcal N_m,\rho)$ with
$$\int\limits_{\mathcal N_m}|d\psi_i|^2\leq \lim_{n\to\infty}\lambda_k(M,|d\Psi_n|^2),$$
and it follows from the definition of $\lambda_k(M,\rho)$ that
$$\lambda_k(M,\rho)\leq \lim_{n\to\infty}\lambda_k(M,|d\Psi_n|^2),$$
as desired.
\end{proof}

We can now complete the proof of Theorem \ref{stabilization:thm} in a few lines.

\begin{proof}[Proof of Theorem \ref{stabilization:thm}]

Let $\Psi_n: (M^2,g)\to \mathbb{S}^{N_n}$ be a sequence of harmonic maps, and suppose that $E(\Psi_n)$ is uniformly bounded. To obtain a contradiction, suppose that
$$\nul_S(\Psi_n)\to \infty\text{ as }n\to\infty.$$
By definition of $\nul_S$, there exist $\nul_S(\Psi_n)$ linearly independent eigenfunctions for $|d\Psi_n|_g^2$ corresponding to the eigenvalue $1$, so in particular,
$$\lambda_{\nul_S(\Psi_n)}(M,|d\Psi_n|_g^2)\leq 1.$$
Thus, if $\nul_S(\Psi_n)\to \infty$, then we have
$$\lim_{n\to\infty}\lambda_k(M,|d\Psi_n|_g^2)\leq 1$$
for every $k\in \mathbb{N}$. 

Passing to a further subsequence if necessary, it follows from Lemma \ref{bub.lem} and Proposition \ref{eigen.drop} that there exists a function $\rho\in L^p(\mathcal N_m)$ for some $m\in\{0\}\cup \mathbb{N}$ and $p>1$ such that 
$$\int_{\mathcal N_m}\rho=\lim_{n\to\infty}E(\Psi_n)>0$$
and
$$\lambda_k(\mathcal N_m,\rho)\leq \lim_{n\to\infty}\lambda_k(M,|d\Psi_n|_g^2)\leq 1$$
for every $k\in \mathbb{N}$. On the other hand, it follows from Propositions~\ref{ef:prop} and~\ref{ef_Lp:prop} that the sequence 
$$\lambda_k(\mathcal N_m,\rho)\to \infty$$
as $k\to\infty$, giving us the desired contradiction.
\end{proof}

In the following subsection, we complete the argument by proving Lemma \ref{bub.lem}, establishing the $L^p$-compactness of the energy densities after conformal rescalings.  

\subsection{Proof of Lemma \ref{bub.lem}}

The starting point for our bubbling analysis is the following lemma, in which we observe that the constants in the standard small-energy regularity theorem for harmonic maps to $\mathbb{S}^n$ are independent of the dimension $n$ of the target sphere.

\begin{lemma}
\label{regularity:lemma}
Let $\Psi: M\to \mathbb{S}^n$ be a harmonic map. There exists $\varepsilon_0>0$ and $r_0>0$ independent of $n$ such that, for all $x\in M$ and $r<r_0$, if 
$$
\int\limits_{B_{2r}(x)}|d\Psi|^2_g\,dv_g<\varepsilon_0,
$$
then on $B_r(x)$ one has 
\begin{equation}\label{c0.est}
r^2\sup_{B_r(x)}|d\Psi|^2_g\leqslant C\varepsilon_0.
\end{equation}

\end{lemma}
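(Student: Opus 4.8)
The plan is to reduce \eqref{c0.est} to a scale-invariant model problem on a Euclidean disk and then apply the classical small-energy regularity theorem for harmonic maps from surfaces, the only new point being to verify that the constants in that theorem can be chosen independent of the dimension $n$ of the target sphere. The reason this is possible is structural: in the harmonic map equation \eqref{harmonic:eq} the sphere $\mathbb{S}^n\subset\mathbb{R}^{n+1}$ enters only through the norm of its second fundamental form in the ambient space, which equals $1$ for every $n$, together with the divergence-free structure recorded below.

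First I would carry out the reduction. The hypothesis $\int_{B_{2r}(x)}|d\Psi|_g^2<\varepsilon_0$ and the conclusion \eqref{c0.est} are both stable under conformal changes of the metric and under rescalings: the Dirichlet energy of a map defined on a surface is a conformal invariant, equation \eqref{harmonic:eq} is conformally covariant, and $r^2\sup_{B_r}|d\Psi|_g^2$ is scale invariant. Passing to isothermal coordinates on a geodesic ball flattens $g$ and, provided $r_0$ is chosen small, replaces $B_{2r}(x)$ by a comparable Euclidean disk; after rescaling it therefore suffices to produce universal $\varepsilon_0>0$ and $C<\infty$ such that any solution $\Psi\colon B_2\subset\mathbb{R}^2\to\mathbb{S}^n$ of $\Delta\Psi=|\nabla\Psi|^2\Psi$ with $\int_{B_2}|\nabla\Psi|^2<\varepsilon_0$ satisfies $\sup_{B_1}|\nabla\Psi|^2\leqslant C\int_{B_2}|\nabla\Psi|^2$. (Harmonic maps from surfaces into $\mathbb{S}^n$ are automatically smooth, with estimates independent of $n$ for the same reasons discussed below, so only this quantitative bound is at issue.)

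For the model problem I would invoke the standard $\varepsilon$-regularity theorem (Sacks--Uhlenbeck \cite{SU}; see also \cite{LW}): small energy on $B_2$ forces $\|\nabla\Psi\|_{L^4(B_{3/2})}\leqslant C\|\nabla\Psi\|_{L^2(B_2)}$ and, bootstrapping the equation, $\|\Psi\|_{C^1(B_1)}\leqslant C\int_{B_2}|\nabla\Psi|^2$, which is \eqref{c0.est}. The only point to check is the $n$-independence of $\varepsilon_0$ and $C$, and this holds because the target contributes to the proof in exactly two ways: through a pointwise bound on its second fundamental form (for $\mathbb{S}^n$ this is $\|A\|_\infty=1$, so $|\nabla\Psi|^2\Psi$ is dominated by $|\nabla\Psi|^2$ with an $n$-free constant), and through compensated-compactness/Wente-type estimates, which for sphere-valued maps rest on the divergence identity
$$
\mathrm{div}\left(\Psi^i\nabla\Psi^j-\Psi^j\nabla\Psi^i\right)=\Psi^j\Delta\Psi^i-\Psi^i\Delta\Psi^j=0
$$
(a consequence of $|\Psi|\equiv 1$, since then $\sum_k\Psi^k\nabla\Psi^k=0$) together with the bound $|\Psi^i\nabla\Psi^j-\Psi^j\nabla\Psi^i|\leqslant 2|\nabla\Psi|$ — and the Wente constant, like all the elliptic constants on $\mathbb{R}^2$, is universal. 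As a self-contained alternative for the last step, once $\Psi\in W^{2,2}_{loc}$ one may bypass the bootstrap via the Bochner identity
$$
\frac{1}{2}\Delta|\nabla\Psi|^2=|\nabla\Psi|^4-|\nabla^2\Psi|^2\leqslant|\nabla\Psi|^4,
$$
valid for any $\mathbb{S}^n$-valued map (it uses only $\Psi\cdot\nabla\Psi=0$) with coefficient $1$ in all dimensions: the nonnegative function $e:=|\nabla\Psi|^2$ is then a weak subsolution of $\Delta e\leqslant 2e\cdot e$ with $\|e\|_{L^2(B_{3/2})}$ small, so Moser iteration gives $\sup_{B_1}e\leqslant C\int_{B_2}e$.

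The hard part is not to prove a new estimate but to certify the bookkeeping: one must go through each ingredient of the classical proof — Wente's inequality, the Calder\'on--Zygmund and Sobolev inequalities on $\mathbb{R}^2$, the bootstrap, and (in the alternative route) the Bochner inequality and Moser iteration — and check that the sphere $\mathbb{S}^n$ enters only through $\|A_{\mathbb{S}^n}\|_\infty=1$ and the $n$-free identity $\mathrm{div}(\Psi^i\nabla\Psi^j-\Psi^j\nabla\Psi^i)=0$. Once this is done, $\varepsilon_0$, $r_0$ and $C$ may be fixed uniformly in $n$, and undoing the reduction (restoring $g$ and the scale $r$) yields \eqref{c0.est}.
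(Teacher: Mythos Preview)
Your proposal is correct. The paper's own proof is considerably terser: it simply observes that the energy density of a sphere-valued harmonic map satisfies the Bochner identity
\[
\Delta_g\tfrac{1}{2}|d\Psi|_g^2=-|\mathrm{Hess}(\Psi)|^2-K_M|d\Psi|^2+|d\Psi|^4
\]
with $n$-independent coefficients (because the sectional curvature of $\mathbb{S}^n$ is identically $1$), and then refers to the standard argument in Colding--Minicozzi, which is exactly your ``self-contained alternative'' via Moser iteration. Your primary route through the divergence-free structure $\mathrm{div}(\Psi^i\nabla\Psi^j-\Psi^j\nabla\Psi^i)=0$ and Wente-type estimates is a genuinely different (H\'elein/Rivi\`ere-style) path to the same conclusion; it has the advantage of yielding $W^{2,2}$ regularity directly from $W^{1,2}$ without any a priori smoothness, whereas the Bochner route presupposes enough regularity to make sense of the identity. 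Both approaches work, and both make the $n$-independence transparent for the same underlying reason: the only geometric input from the target is the uniform bound on its second fundamental form in $\mathbb{R}^{n+1}$.
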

\begin{proof}
For a fixed target manifold, this is simply the classical $\epsilon$-regularity theorem--see e.g.~\cite{SU}. The key claim we make here is that for sphere-valued harmonic maps, the constants $\varepsilon_0$, $r_0$, and $C$ will not depend on the dimension $n$ of the target sphere $\mathbb{S}^n$. To see that, one could, for example, examine the proof in~\cite[pp. 149 -- 151]{CM}, noting that the energy density $|d\Psi|^2$ of a sphere-valued harmonic map $\Psi:M\to \mathbb{S}^n$ satisfies the same Bochner identity
\begin{equation}\label{boch}
\Delta_g \frac{1}{2}|d\Psi|_g^2=-|\mathrm{Hess}(\Psi)|^2-K_M|d\Psi|^2+|d\Psi|^4
\end{equation}
for any $n\in \mathbb{N}$.
\end{proof}

Lemma~\ref{regularity:lemma} yields the following preliminary compactness result for the energy densities.
\begin{lemma}
\label{convergence:lemma}
Let $\Psi_n:M\to \mathbb{S}^{N_n}$ be a sequence of harmonic maps with $E(\Psi_n)\leq K$. After passing to a subsequence, there exists a non-negative function $e_\infty\in L^1(M)$, a collection of points $P=\{p_1,\ldots,p_l\}\in M$ and weights $w_i\geqslant \varepsilon_0$, $i=1,\ldots l$ such that
\begin{equation}
\label{convergence:formula1}
|d\Psi_n|^2_g\,dv_g\rightharpoonup^*e_\infty\,dv_g + \sum_{i=1}^lw_i\delta_{p_i},
\end{equation}
and 
\begin{equation}
\label{convergence:formula2}
|d\Psi_n|^2\to e_{\infty}\text{ strongly in }L^q(\Omega)
\end{equation}
for any $q\in [1,\infty)$ and any domain $\Omega \Subset M\setminus P$ supported away from $P$. 
\end{lemma}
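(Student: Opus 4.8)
The plan is to follow the standard concentration–compactness dichotomy for the energy density measures, using the uniform $\epsilon$-regularity of Lemma~\ref{regularity:lemma} to upgrade weak-$*$ convergence to strong $L^q$ convergence away from a finite bad set. First I would set $\mu_n := |d\Psi_n|^2_g\,dv_g$. Since $\mu_n(M) = 2E(\Psi_n) \leq 2K$ is uniformly bounded, the measures $\mu_n$ form a bounded sequence in the space of Radon measures on $M$, so after passing to a subsequence $\mu_n \rightharpoonup^* \mu$ weakly-$*$ for some finite nonnegative Radon measure $\mu$ on $M$. Define the concentration set
$$P := \Bigl\{ x \in M \;:\; \liminf_{r\to 0}\liminf_{n\to\infty} \int_{B_{2r}(x)} |d\Psi_n|^2_g\,dv_g \geq \varepsilon_0 \Bigr\},$$
where $\varepsilon_0$ is the constant from Lemma~\ref{regularity:lemma}. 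A standard covering argument shows $P$ is finite: if $p_1,\dots,p_N$ were distinct points of $P$, choosing $r$ small enough that the balls $B_{2r}(p_i)$ are disjoint would force $\mu_n(M) \geq N\varepsilon_0/2$ for $n$ large, so $N \leq 4K/\varepsilon_0$. Write $P = \{p_1,\dots,p_l\}$ and $w_i := \mu(\{p_i\})$; the lower bound $w_i \geq \varepsilon_0$ follows from the definition of $P$ together with weak-$*$ convergence.

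Next I would prove the strong convergence \eqref{convergence:formula2} on compact subsets of $M\setminus P$. Fix a domain $\Omega \Subset M\setminus P$. By definition of $P$ and compactness of $\overline\Omega$, there is $r < r_0$ and $\delta < \varepsilon_0$ such that for every $x \in \overline\Omega$ one has $\int_{B_{2r}(x)}|d\Psi_n|^2_g\,dv_g < \delta < \varepsilon_0$ for all $n$ large (after shrinking the subsequence). Lemma~\ref{regularity:lemma} then gives the uniform bound $\sup_{B_r(x)}|d\Psi_n|^2_g \leq Cr^{-2}\varepsilon_0$, so $\|d\Psi_n\|_{L^\infty(\Omega')} \leq C(\Omega')$ uniformly in $n$ for any slightly smaller $\Omega' \Subset M\setminus P$. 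With this uniform $C^0$ bound on $|d\Psi_n|_g$, elliptic estimates applied to the harmonic map equation $\Delta_g\Psi_n = |d\Psi_n|^2_g\Psi_n$ — whose right-hand side is uniformly bounded, and whose constants again do not see the target dimension — give uniform $C^{1,\alpha}_{loc}$ bounds for $\Psi_n$ on $M\setminus P$, and then bootstrapping yields uniform $C^2_{loc}$ bounds for the energy densities $e_n := |d\Psi_n|^2_g$ via the Bochner identity \eqref{boch} (treating $-|\mathrm{Hess}(\Psi_n)|^2 - K_M e_n + e_n^2$ as a bounded source). By Arzelà–Ascoli and a diagonal argument over an exhaustion of $M\setminus P$, we extract a further subsequence with $e_n \to e_\infty$ in $C^0_{loc}(M\setminus P)$, hence in $L^q_{loc}(M\setminus P)$ for every $q < \infty$; this is \eqref{convergence:formula2}. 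The limit $e_\infty$ is nonnegative and, by Fatou applied on $M\setminus P$, lies in $L^1(M)$ with $\int_M e_\infty \leq \liminf_n 2E(\Psi_n) \leq 2K$.

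Finally, \eqref{convergence:formula1} follows by reconciling the two pieces: for any $\varphi \in C^0(M)$, split $\varphi = \varphi\chi_\rho + \varphi(1-\chi_\rho)$ where $\chi_\rho$ is a cutoff vanishing near $P$ and supported in $M\setminus P$. On $\mathrm{supp}(\chi_\rho)$ the strong $L^1$ convergence just established gives $\int \varphi\chi_\rho\,d\mu_n \to \int \varphi\chi_\rho\, e_\infty\,dv_g$, while the contribution near $P$ is controlled, as $\rho\to 0$, by $\mu(B_\rho(P))$, which converges to $\sum_i w_i$; comparing with the weak-$*$ limit $\mu$ identifies $\mu = e_\infty\,dv_g + \sum_i w_i\delta_{p_i}$ (in particular $\mu$ has no singular part away from $P$, since its restriction to $M\setminus P$ is the $L^1$ function $e_\infty$). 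The main obstacle — and the one point genuinely special to the present setting — is ensuring that all the regularity constants (in the $\epsilon$-regularity estimate, the elliptic bootstrap, and the Bochner-based $C^2$ bound) are \emph{independent of the target dimension} $N_n$; this is exactly what Lemma~\ref{regularity:lemma} and the dimension-free Bochner identity \eqref{boch} are designed to supply, and everything else is a routine adaptation of the classical harmonic-map bubbling analysis.
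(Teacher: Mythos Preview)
Your argument is correct and runs parallel to the paper's: weak-$*$ compactness of the energy measures, a finite concentration set via the dimension-free $\epsilon$-regularity of Lemma~\ref{regularity:lemma}, and an upgrade to strong convergence away from $P$. The paper dispatches the last step more cheaply than you do: once the uniform bound $|d\Psi_n|^2_g\le C_\Omega$ on $\Omega\Subset M\setminus P$ and $L^1(\Omega)$-convergence are in hand, it simply interpolates
\[
\int_\Omega\bigl||d\Psi_n|^2_g-e_\infty\bigr|^q\,dv_g\le (2C_\Omega)^{q-1}\int_\Omega\bigl||d\Psi_n|^2_g-e_\infty\bigr|\,dv_g\to 0,
\]
so no $C^2$ bound or Arzel\`a--Ascoli is needed.

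One step in your bootstrap does need repair. You claim $C^2_{loc}$ bounds on $e_n$ by treating the Bochner right-hand side $-|\mathrm{Hess}(\Psi_n)|^2-K_Me_n+e_n^2$ as a bounded source, but $|\mathrm{Hess}(\Psi_n)|^2=\sum_i|\mathrm{Hess}(\Psi_n^i)|^2$ sums over $N_n+1$ components, and the per-component $W^{2,p}$ estimates for $\Delta\Psi_n^i=e_n\Psi_n^i$ do not control this sum uniformly as $N_n\to\infty$; so the source is not known to be bounded. What the Bochner identity \emph{does} give cheaply, after testing against a cutoff $\phi^2$ and absorbing, is $\int_{\Omega'}|\mathrm{Hess}(\Psi_n)|^2\le C(\Omega')$ uniformly in $n$. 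Since $|\nabla e_n|\le 2|d\Psi_n|\,|\mathrm{Hess}(\Psi_n)|$ and $|d\Psi_n|$ is bounded on $\Omega'$, this yields $e_n$ bounded in $W^{1,2}_{loc}(M\setminus P)$, hence precompact in every $L^q_{loc}$ by Rellich---which already suffices for \eqref{convergence:formula2} without ever reaching $C^0$ convergence.
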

\begin{proof}
With Lemma \ref{regularity:lemma} in place, the proof of~\eqref{convergence:formula1} is identical to that of analogous energy concentration results for harmonic maps to a fixed target (cf., e.g., Lemma 1.2 in~\cite{Parker}). 

To prove~\eqref{convergence:formula2} let $\Omega \Subset M\setminus P$, then by Lemma~\ref{regularity:lemma} there exists $C_\Omega$ such that $|d\Psi_n|^2_g\leqslant C_\Omega$. Moreover, formula~\eqref{convergence:formula1} implies that $|d\Psi_n|^2_g\to e_\infty$ in $L^1(\Omega)$ and, in particular, $|e_\infty|\leqslant C_\Omega$, $dv_g$-a.e. Therefore, for any $q\in [1,\infty)$ one has
$$
\int\limits_\Omega \left||d\Psi_n|^2_g - e_\infty\right|^q\,dv_g\leqslant (2C_\Omega)^{q-1}\int\limits_\Omega \left||d\Psi_n|^2_g - e_\infty\right|\,dv_g\to 0.
$$

\end{proof}

Suppose now that $\Psi_n: M\to \mathbb{S}^{N_n}$ is a sequence of harmonic maps with $E(\Psi_n)\leq K$ satisfying the conclusions of Lemma \ref{convergence:lemma}. In order to better understand the behavior of energy densities in the neighbourhood of bubble points $p_i$, we rescale the measures and repeat the procedure. To this end we fix a bubble point $p_i$ and omit the subscript $i$ for convenience.

 In the following we use the notation $\delta_n\ll \varepsilon_n$ whenever $\frac{\delta_n}{\varepsilon_n}\to 0$ as 
$n\to\infty$. In particular $\delta_n\ll 1$ means that $\delta_n\to 0$.  

There exists a neighborhood $U$ of the point $p$ such that the metric $g$ on $U$ is conformally flat, $g = f_pg_p$, where $g_p$ is flat. In the following, when we are working in the neighborhood of $p$, the neighborhood is always a subset of $U$ and the distance is measured with respect to $g_p$. Let $r_1>0$ be such that $B_{2r_1}(p)\subset U$ contains no bubble points other than $p$.

We define now a sequence of scales $\varepsilon_n>0$ by setting
\begin{equation}\label{cut.scale}
\varepsilon_n^2:=\inf\{r>0\mid \||d\Psi_n|^2-e_{\infty}\|_{L^1(B_{2r_1}(p)\setminus B_r(p))}<r\}.
\end{equation}
Since $|d\Psi_n|^2\to e_{\infty}$ in $L^1(B_{2r_1}(p)\setminus B_r(p))$ for any fixed $r>0$ by Lemma \ref{convergence:lemma}, it follows that
$$\varepsilon_n \ll 1.$$
In particular, by the definition of $\varepsilon_n$, we have
 \begin{equation}
 \label{tree:eq0}
 \||d\Psi_n|^2_g-e_\infty\|_{L^1(B_{2r_1}(p)\setminus B_{\varepsilon_n^2}(p))}\leq \varepsilon_n^2 \ll 1,
 \end{equation}
In what follows, without loss of generality, we identify $B_{2\varepsilon_n}(p)$ with a ball $B_{2\varepsilon_n}(0)$ in $\mathbb{R}^2$. 

Now, fix a normalization constant 
$$0<C_R<\varepsilon_0,$$
and define a function
$$\alpha: B_{\varepsilon_n}(p)\to [0,2\varepsilon_n]$$
implicitly by requiring that
$$\int\limits_{B_{\varepsilon_n}(p)\setminus B_{\alpha(x)}(x)}|d\Psi_n|_g^2dv_g=C_R.$$
It's easy to see that $\alpha$ is continuous, and therefore achieves a minimum in $B_{\varepsilon_n}(p)$; we then define
\begin{equation}\label{alpha.def}
\alpha_n:=\min_{x\in B_{\varepsilon_n}(p)}\alpha(x),
\end{equation}
and let $c_n\in B_{\varepsilon_n}(p)$ be a point such that
\begin{equation}\label{cn.def}
\alpha(c_n)=\alpha_n>0,
\end{equation}
where the inequality follows from the fact that $C_R<\varepsilon_0\leqslant w_p$.
A similar choice of $\alpha_n$, $c_n$ in the context of bubbling construction appears in~\cite[p.188]{CM}.
We record the following useful properties of $\alpha_n$ and $c_n$, necessary for the rescaling procedure.

\begin{lemma}
\label{tree:lemma1}
As $n\to\infty$, for a sequence of points $c_n\in B_{\varepsilon_n}(p)$ satisfying \eqref{cn.def}, we have
$$\lim_{n\to\infty}\frac{dist(c_n,p)}{\varepsilon_n}=\lim_{n\to\infty}\frac{\alpha_n}{\varepsilon_n}=0,$$
and
 \begin{equation}
 \label{tree:eq2}
  \int\limits_{B_{\varepsilon_n(c_n)}}|d\Psi_n|^2_g\,dv_g = w_p+o(1)
 \end{equation}
as $n\to\infty$.
 We also have
 \begin{equation}
 \label{tree:eq1}
 |d\Psi_n|^2\boldsymbol{1}_{B_{r_1}(p)\setminus B_{\varepsilon_n}(c_n)}\to e_\infty \text{ in }L^1(M)
 \end{equation}
as $n\to\infty$,
 and
 $$
 B_{2^{-1}\varepsilon_n}(p)\subset B_{\varepsilon_n}(c_n)\subset B_{2\varepsilon_n}(p).
 $$
\end{lemma}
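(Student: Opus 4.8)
The plan is to extract the stated properties of $\alpha_n$ and $c_n$ step by step from the defining conditions \eqref{cut.scale}, \eqref{alpha.def}, \eqref{cn.def} and the concentration picture in Lemma \ref{convergence:lemma}. First I would establish $\alpha_n \ll \varepsilon_n$. By the choice of scale \eqref{cut.scale} we have $\||d\Psi_n|^2 - e_\infty\|_{L^1(B_{2r_1}(p)\setminus B_{\varepsilon_n^2}(p))}\leq \varepsilon_n^2$, which means essentially all of the concentrating energy $w_p$ of the bubble at $p$ is captured inside the tiny ball $B_{\varepsilon_n^2}(p)$ (and $e_\infty$ is integrable, so $\int_{B_{\varepsilon_n}(p)} e_\infty \to 0$). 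Consequently, for $x$ near the ``center of mass'' of the concentrating energy, the annular region $B_{\varepsilon_n}(p)\setminus B_r(x)$ contains less than $C_R < \varepsilon_0 \leq w_p$ of energy already once $r$ is of order $\varepsilon_n^2$ — more carefully, one picks $x$ to be a point maximizing the density-on-small-balls and uses the $\varepsilon$-regularity Lemma \ref{regularity:lemma} to see that $\alpha(x)$ must be much smaller than $\varepsilon_n$ for at least one choice of $x$; hence the minimum $\alpha_n$ is $\ll \varepsilon_n$.

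Next, for $|c_n|\ll\varepsilon_n$: since $\alpha(c_n)=\alpha_n\ll\varepsilon_n$, the ball $B_{\alpha_n}(c_n)$ carries energy $\geq w_p - C_R - o(1) > 0$, so it must be essentially where the bubble concentrates; but the bubble concentrates at $p$ (all energy outside $B_{\varepsilon_n^2}(p)$ converges to $e_\infty$ by \eqref{tree:eq0}), forcing $B_{\alpha_n}(c_n)$ to meet $B_{2\varepsilon_n^2}(p)$, and since $\alpha_n\ll\varepsilon_n$ this gives $|c_n|\leq \alpha_n + 2\varepsilon_n^2 \ll \varepsilon_n$. The inclusions $B_{\varepsilon_n/2}(p)\subset B_{\varepsilon_n}(c_n)\subset B_{2\varepsilon_n}(p)$ then follow immediately from $|c_n|\ll\varepsilon_n$ and the triangle inequality for $n$ large. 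For \eqref{tree:eq2}, write $\int_{B_{\varepsilon_n}(c_n)}|d\Psi_n|^2 = \int_{B_{\varepsilon_n}(c_n)}(|d\Psi_n|^2 - e_\infty) + \int_{B_{\varepsilon_n}(c_n)}e_\infty$; the second term is $o(1)$ since $e_\infty\in L^1$ and $|B_{\varepsilon_n}(c_n)|\to 0$, while the first term equals $\int_{B_{2\varepsilon_n}(p)}(|d\Psi_n|^2-e_\infty)+o(1)$ by the inclusions, and this converges to $w_p$ by \eqref{convergence:formula1} applied on the shrinking balls $B_{2\varepsilon_n}(p)$ (together with $\varepsilon_n\ll 1$ so that no other bubble point is involved). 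Finally \eqref{tree:eq1} follows by combining \eqref{tree:eq0} with the fact that $B_{\varepsilon_n^2}(p)\setminus B_{\varepsilon_n}(c_n)=\varnothing$ for large $n$ (as $B_{\varepsilon_n^2}(p)\subset B_{\varepsilon_n/2}(p)\subset B_{\varepsilon_n}(c_n)$), so on $B_{r_1}(p)\setminus B_{\varepsilon_n}(c_n)$ we have $B_{r_1}(p)\setminus B_{\varepsilon_n}(c_n)\subset B_{2r_1}(p)\setminus B_{\varepsilon_n^2}(p)$ where \eqref{tree:eq0} already gives $L^1$ convergence to $e_\infty$ (and the contribution of $e_\infty$ on $B_{\varepsilon_n}(c_n)$ vanishes in $L^1$).

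The main obstacle I anticipate is the first step — proving $\alpha_n\ll\varepsilon_n$ rigorously. The subtlety is that a priori the concentrating energy near $p$ could itself spread over a range of scales (there could be an entire bubble tree forming at $p$), and one needs to argue that \emph{some} point $c_n$ sees almost all of the mass $w_p$ within a radius $\ll\varepsilon_n$. The clean way is: among all scales, the bubble mass $w_p$ is by definition concentrated in any fixed-rate shrinking ball around $p$, so the function $r\mapsto \int_{B_{\varepsilon_n}(p)\setminus B_r(p)}|d\Psi_n|^2$ already drops below $C_R$ once $r$ is a suitable $o(\varepsilon_n)$ scale (because outside $B_{\varepsilon_n^2}(p)$ the integrand is $L^1$-close to $e_\infty$ which has small integral on $B_{\varepsilon_n}(p)$); centering at $p$ itself gives $\alpha(p)=o(\varepsilon_n)$, hence $\alpha_n\leq\alpha(p)\ll\varepsilon_n$. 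This also shows one may essentially take $c_n=p$ up to the error already controlled, which streamlines the remaining parts.
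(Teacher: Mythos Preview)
Your proposal is correct and follows essentially the same route as the paper. In particular, the clean argument you arrive at in your final paragraph --- evaluate $\alpha$ at $p$ itself, observe that $\int_{B_{\varepsilon_n}(p)\setminus B_{\varepsilon_n^2}(p)}|d\Psi_n|^2 \leq \varepsilon_n^2 + \int_{B_{\varepsilon_n}(p)}e_\infty \to 0 < C_R$, hence $\alpha_n\leq\alpha(p)\leq\varepsilon_n^2\ll\varepsilon_n$ --- is exactly what the paper does, and it renders the $\varepsilon$-regularity detour in your first paragraph unnecessary; the remaining steps (intersection of $B_{\alpha_n}(c_n)$ with $B_{\varepsilon_n^2}(p)$ to bound $|c_n|$, then the inclusions, \eqref{tree:eq2}, and \eqref{tree:eq1} from \eqref{tree:eq0}) match the paper's reasoning.
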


\begin{proof} First, note that by definition \eqref{cut.scale} of $\varepsilon_n$, and since $e_{\infty}\in L^1(M)$, we have
\begin{eqnarray*}
\lim_{n\to\infty}\int\limits_{B_{\varepsilon_n}(p)\setminus B_{\varepsilon_n^2}(p)}|d\Psi_n|_g^2dv_g&\leq & \lim_{n\to\infty}[\varepsilon_n^2+\int\limits_{B_{\varepsilon_n}(p)}e_{\infty}]\\
&=&0;
\end{eqnarray*}
in particular,
\begin{equation}\label{off.e2.small}
\int\limits_{B_{\varepsilon_n}(p)\setminus B_{\varepsilon_n^2}(p)}|d\Psi_n|_g^2dv_g<C_R
\end{equation}
for $n$ sufficiently large, so it follows that
$$\alpha_n\leq \alpha(p)\leq \varepsilon_n^2\ll \varepsilon_n$$
as $n\to\infty$.

Next, it also follows from \eqref{off.e2.small} and the definition of $\alpha_n=\alpha(c_n)$ that
$$B_{\alpha_n}(c_n)\cap B_{\varepsilon_n^2}(p)\neq \varnothing$$
as $n\to\infty$. In particular, we see that
$$dist(c_n,p)\leq \varepsilon_n^2+\alpha_n\leq 2\varepsilon_n^2\ll \varepsilon_n$$
as $n\to\infty$.

Having shown that $\alpha_n\ll\varepsilon_n$ and $dist(c_n,p)\ll \varepsilon_n$ as $n\to\infty$, the remaining statements follow easily from \eqref{tree:eq0}.

\end{proof}

In addition to the properties outlined in Lemma \ref{tree:lemma1}, it will be useful to note the following: for any sequence $x_n\in B_{2^{-1}\varepsilon_n}(p)$, we claim that
\begin{equation}\label{bubble.drop}
\lim_{n\to\infty}\int\limits_{B_{\alpha_n}(x_n)}|d\Psi_n|^2_gdv_g\leq w_p-C_R.
\end{equation}
Indeed, this follows easily from the fact that $\alpha_n\leq \alpha(x_n)$ by definition, since
\begin{eqnarray*}
\int\limits_{B_{\alpha_n}(x_n)}|d\Psi_n|_g^2dv_g&\leq &\int\limits_{B_{\alpha(x_n)}(x_n)}|d\Psi_n|_g^2dv_g\\
&=&\int\limits_{B_{\varepsilon_n}(p)}|d\Psi_n|_g^2dv_g-C_R\\
&\to &w_p-C_R\text{ as }n\to\infty.
\end{eqnarray*}

With these preparations in place, Lemma~\ref{tree:lemma1} allows us to do the following rescaling. Let $\pi\colon\mathbb{R}^2\to\mathbb{S}^2$ be the inverse stereographic projection. We consider the conformal map 
$$R_n \colon B_{\varepsilon_n}(p)\to\mathbb{S}^2,\text{\hspace{3mm}} R_n(x)=\pi(\alpha_n^{-1}(x-c_n)),$$ 
and denote the image by $\Omega_n(p)\subset \mathbb{S}^2$. 
By Lemma~\ref{tree:lemma1} $\alpha_n\ll\varepsilon_n$, and it follows that domains $\Omega_n(p)$ exhaust $\mathbb{S}^2\setminus\{S\}$, where $S$ is the south pole. 
Thus, for any compact $K\Subset \mathbb{S}^2\setminus\{S\}$ one has $K\Subset \Omega_n(p)$ for large enough $n$. 

Denote by $\widetilde\Psi_n: \Omega_n(p)\subset \mathbb{S}^2\to \mathbb{S}^{N_n}$ the compositions
$$\widetilde\Psi_n:=\Psi_n\circ R_n^{-1}.$$
By the conformal invariance of harmonic maps, $\widetilde \Psi_n$ are harmonic, with energies 
$$E(\widetilde\Psi_n|_{\Omega_n})=w_p+o(1).$$
Thus, we can apply Lemma~\ref{convergence:lemma} to restrictions  $\widetilde \Psi_n|_K$ and pass to a diagonal subsequence over a compact exhaustion $\{K_j\}$ of $\mathbb{S}^2\setminus \{S\}$ to arrive at 
\begin{equation}
\label{2ndarybubble:limit}
|d\widetilde \Psi_n|^2_g\,dv_g\boldsymbol{1}_{\Omega_n}\rightharpoonup^*\widetilde \nu = \widetilde e_\infty\,dv_g + \sum_{i=1}^{\widetilde l}w_{\widetilde p_i}\delta_{\widetilde p_i} + \tau_S\delta_S,
\end{equation}
where $g$ is the round metric on $\mathbb{S}^2$, $\tau_S\leqslant C_R$ and $\varepsilon_0\leqslant w_{\widetilde p_i}$. 
The points $\widetilde p_i\in \mathbb{S}^2\setminus S$ are called secondary bubbles. 

We can repeat this rescaling process at secondary bubbles $\widetilde p_i$ to arrive at new bubbles, and iterate. We observe now that this process terminates after finitely many steps.
\begin{lemma}
The mass $w_{\widetilde p_i}$ of each secondary bubble $\widetilde p_i$ is at most $w_p-C_R$. As a result there are no more bubbles after $\left\lfloor\frac{w}{C_R}\right\rfloor$ steps.
\end{lemma}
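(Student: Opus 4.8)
The plan is to show that the mass dropping inequality (\ref{bubble.drop}), established just before the rescaling procedure, passes to the limit to control the mass of each secondary bubble, and then to extract a uniform lower bound $C_R$ on the amount of energy ``lost'' at each step of the bubbling iteration. First I would fix a secondary bubble point $\widetilde p_i\in \mathbb{S}^2\setminus\{S\}$ arising in (\ref{2ndarybubble:limit}). Recall that the weak-$*$ limit $\widetilde\nu$ records a point mass $w_{\widetilde p_i}\delta_{\widetilde p_i}$, which by the standard characterization equals $\lim_{\sigma\to 0}\lim_{n\to\infty}\int_{B_\sigma(\widetilde p_i)}|d\widetilde\Psi_n|_g^2\,dv_g$. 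Pulling back under the conformal diffeomorphisms $R_n$, the ball $B_\sigma(\widetilde p_i)\subset \mathbb{S}^2$ corresponds to a Euclidean ball $B_{\sigma'_n}(x_n)$ inside $B_{\varepsilon_n}(p)$ with $x_n:=c_n+\alpha_n\pi^{-1}(\widetilde p_i)$ and radius $\sigma'_n\asymp \alpha_n\sigma$; in particular $\sigma'_n\ll \varepsilon_n$ and, since $|c_n|\ll\varepsilon_n$ and $\widetilde p_i\neq S$, we have $x_n\in B_{2^{-1}\varepsilon_n}(p)$ for $n$ large and any sufficiently small $\sigma$. By the conformal invariance of the Dirichlet energy, $\int_{B_\sigma(\widetilde p_i)}|d\widetilde\Psi_n|_g^2 = \int_{B_{\sigma'_n}(x_n)}|d\Psi_n|_g^2$.

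Next I would invoke the mass-drop bound (\ref{bubble.drop}), but with the radius $\alpha_n$ there replaced by the smaller radius $\sigma'_n$. The proof of (\ref{bubble.drop}) only used that $\alpha_n\le \alpha(x_n)$, i.e. that $\int_{B_{\varepsilon_n}(p)\setminus B_{\alpha_n}(x_n)}|d\Psi_n|_g^2=C_R$ follows from the defining property of $\alpha(\cdot)$; the same monotonicity gives, for any radius $r\le \alpha_n$,
$$
\int\limits_{B_r(x_n)}|d\Psi_n|_g^2\,dv_g\le \int\limits_{B_{\alpha(x_n)}(x_n)}|d\Psi_n|_g^2\,dv_g = \int\limits_{B_{\varepsilon_n}(p)}|d\Psi_n|_g^2\,dv_g - C_R \to w_p - C_R.
$$
Since $\sigma'_n\ll\varepsilon_n$ and, crucially, $\sigma'_n\asymp\alpha_n\sigma\le\alpha_n$ for $\sigma\le 1$, this applies with $r=\sigma'_n$, yielding $\limsup_{n\to\infty}\int_{B_\sigma(\widetilde p_i)}|d\widetilde\Psi_n|_g^2\le w_p-C_R$ for every small $\sigma>0$. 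Letting $\sigma\to 0$ gives $w_{\widetilde p_i}\le w_p-C_R$, the first assertion.

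For the termination statement, I would argue by induction on the bubble-tree depth: a bubble at depth $j$ has mass at most $w-jC_R$, where $w=w_p$ is the mass of the root bubble, because each rescaling step strictly decreases the recorded mass by at least $C_R$. Since every bubble mass is bounded below by $\varepsilon_0>0$ (or more simply is nonnegative), and $w-jC_R$ must remain $\ge 0$ to admit any bubble at depth $j$, no bubbles can exist at depth $j$ once $j>\lfloor w/C_R\rfloor$; combined with the finiteness of bubbles at each fixed step (Lemma \ref{convergence:lemma}), the whole bubble tree is finite. The main obstacle is bookkeeping the conformal change of variables carefully enough to verify that the ball $B_\sigma(\widetilde p_i)$ in $\mathbb{S}^2$ really does pull back to a Euclidean ball $B_{\sigma'_n}(x_n)$ with center in $B_{2^{-1}\varepsilon_n}(p)$ and radius $\ll\varepsilon_n$ (and in particular $\le\alpha_n$), so that (\ref{bubble.drop}) — and its small-radius refinement — genuinely applies; the quantitative behavior of the stereographic projection near and away from the south pole is what makes this go through, and is exactly where the hypothesis $\widetilde p_i\neq S$ is used.
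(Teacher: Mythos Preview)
Your proof is correct and follows essentially the same approach as the paper: pull back a neighborhood of $\widetilde p_i$ via $R_n^{-1}$ to the original coordinates and apply the mass-drop bound \eqref{bubble.drop}. The paper streamlines this slightly by working directly with the image $K_a=\pi(B_1(a))$ of a \emph{unit} ball in $\mathbb{R}^2$, whose $R_n^{-1}$-preimage is exactly $B_{\alpha_n}(c_n+\alpha_n a)$, so that \eqref{bubble.drop} applies verbatim and one simply bounds $w_{\widetilde p_i}\le \widetilde\nu(K_a)\le w_p-C_R$ without any small-radius refinement or $\sigma\to 0$ limit.
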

\begin{proof}

For any domain $K_a\subset \mathbb{S}^2$ given by the image under stereographic projection
$$K_a=\pi(B_1(a))$$
of a unit disk $B_1(a)$ in $\mathbb{R}^2$, it follows from \eqref{bubble.drop} and the definition of the maps $\tilde{\Psi}_n$ that
\begin{eqnarray*}
\widetilde\nu(K_a)&=&\lim_{n\to\infty}\int\limits_{\pi(B_1(a))}|d\widetilde\Psi_n|_g^2dv_g\\
&=&\lim_{n\to\infty}\int\limits_{R_n(B_{\alpha_n}(x_n))}|d\widetilde\Psi_n|_g^2dv_g\\
&\leq & w_p-C_R.
\end{eqnarray*}

In particular, every secondary bubble $\tilde{p}_i\in \mathbb{S}^2\setminus \{S\}$ has an open neighborhood $U$ on which $\tilde{\nu}(U)\leq w_p-C_R$, and the lemma follows.

\end{proof}

Let $m\in \mathbb{N}$ denote the total number of bubbles arising from this process, and recall that $\mathcal N_m$ denotes the disjoint union
$$\mathcal N_m:=M\sqcup \mathbb{S}^2_1\sqcup \cdots \sqcup \mathbb{S}^2_m$$
of $M$ with $m$ copies of the unit $2$-sphere. We are now in a position to define the sets $\mathcal{B}_n\subset \mathcal N_m$ and maps $\Phi_n: \mathcal N_m\setminus \mathcal{B}_n\to M$ of Lemma \ref{bub.lem} as follows. For each bubble point $p_1,\ldots, p_l\in M$, we define the scales $\varepsilon_n^i=\varepsilon_n(p_i)$ and $\alpha_n^i=\alpha_n(p_i)$ by \eqref{cut.scale} and \eqref{alpha.def} as before, and choose $c_n^i=c_n(p_i)\in B_{\varepsilon_n(p_i)}(p_i)$ satisfying \eqref{cn.def}; we then set
$$\mathcal{B}_n\cap M:=\bigcup_{i=1}^{\ell}B_{\sqrt{\alpha_n^i}}(c_n^i),$$
while defining
$$\Phi_n|_{M\setminus \mathcal{B}_n}:=\mathrm{Id}:M\setminus \mathcal{B}_n\to M.$$
Next, on the sphere $\mathbb{S}^2_{p_i}\subset \mathcal N_m$ associated to the bubble at $p_i$, consider the sets
$$\Xi_n(p_i)=R_n(B_{\sqrt{\alpha_n^i}}(c_n(p_i)))$$
exhausting $\mathbb{S}^2\setminus \{S\}$, and at each secondary bubble $\tilde{p}_1,\ldots, \tilde{p}_{\tilde{l}}\in \mathbb{S}^2\setminus \{S\}$, let $\alpha_n(\tilde{p}_i)$ and $c_n(\tilde{p_i})$ be given by \eqref{cut.scale} and \eqref{cn.def}. We then define
$$\mathcal{B}_n\cap \mathbb{S}^2_{p_i}:=\left(\mathbb{S}^2\setminus\Xi_n(p_i)\right)\cup \bigcup_{j=1}^{\tilde{l}}B_{\sqrt{\alpha_n(\tilde{p_j})}}(c_n(\tilde{p_j})),$$
and set
$$\Phi_n|_{\mathbb{S}^2_{p_i}\setminus \mathcal{B}_n}:=R_n^{-1}:\mathbb{S}^2_{p_i}\setminus \mathcal{B}_n\to M.$$
The definition of $\mathcal{B}_n$ and $\Phi_n$ on the spheres associated to secondary bubbles is analogous, with $\Phi_n$ now given by the composition
$$\Phi_n:=R_{n,p_i}^{-1}\circ R_{n,\widetilde p_i}^{-1},$$
and we carry on this way to extend the definition of $\mathcal{B}_n$ and $\Phi_n$ to all of $\mathcal N_m$.

Now, we define the limiting density function $\rho\in L^1(\mathcal N_m)$ by setting
$$\rho|_M:=e_{\infty}\text{ and }\rho|_{\mathbb{S}^2_i}=\widetilde{e}_{\infty},$$
where $e_{\infty}$ and $\widetilde{e}_{\infty}$ are the absolutely continuous parts of the limiting energy measures in Lemma \ref{convergence:lemma} and \eqref{2ndarybubble:limit}. To prove Lemma \ref{bub.lem}, we need to check that the restricted energy densities
$$\rho_n:=|d(\Psi_n\circ \Phi_n)|^2\boldsymbol{1}_{\mathcal N_m\setminus \mathcal{B}_n}$$
converge strongly to $\rho$ in $L^q(\mathcal N_m)$ for some $q>1$. Thus, it remains to check the following proposition.

\begin{proposition}\label{lq.cvg} In the settings of Lemma \ref{convergence:lemma} and \eqref{2ndarybubble:limit}, there exists $q>1$ for which we have the strong convergence
$$|d\Psi_n|_g^2\boldsymbol{1}_{M\setminus \bigcup_{i=1}^lB_{\sqrt{\alpha_n^i}}(c_n^i)}\to e_{\infty}\text{ in }L^q(M)$$
and
$$|d\widetilde{\Psi}_n|_g^2\boldsymbol{1}_{\Xi_n\setminus \bigcup_{j=1}^{\tilde{l}}B_{\sqrt{\alpha_n(\widetilde p_j)}}(c_n(\widetilde{p_j}))}\to \widetilde{e}_{\infty}\text{ in }L^q(\mathbb{S}^2).$$
\end{proposition}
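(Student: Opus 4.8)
The plan is to combine the dimension-independent $\epsilon$-regularity of Lemma~\ref{regularity:lemma} with a careful accounting of where energy can concentrate, exploiting the fact that we are cutting out disks of the \emph{larger} radius $\sqrt{\alpha_n}\gg \alpha_n$ rather than the bubble scale $\alpha_n$ itself. The key point is that on the annular region $B_{r_1}(p)\setminus B_{\sqrt{\alpha_n}}(c_n)$ the energy density of $\Psi_n$ has no concentration: by the monotonicity in the definition of $\alpha(x)$ and the estimate \eqref{bubble.drop}, the energy of $\Psi_n$ on each ball $B_{\sqrt{\alpha_n}}(x)$ with $x\notin B_{\sqrt{\alpha_n}}(c_n)$ is strictly below the threshold $\varepsilon_0$ for $n$ large; here one uses that $\sqrt{\alpha_n}$ sits at an intermediate scale between $\alpha_n$ (where the bubble mass $w_p$ lives) and $\varepsilon_n$ (where we transition back to the base). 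Consequently Lemma~\ref{regularity:lemma} applies at scale $\sqrt{\alpha_n}$ and gives a \emph{uniform} pointwise bound
$$
\alpha_n\sup_{B_{\frac12\sqrt{\alpha_n}}(x)}|d\Psi_n|_g^2\leq C\varepsilon_0
$$
for all such $x$; after rescaling by the conformal factor this translates into the statement that, on the region $M\setminus \bigcup_i B_{\sqrt{\alpha_n^i}}(c_n^i)$, the densities $|d\Psi_n|_g^2$ are \emph{uniformly bounded in $L^\infty_{loc}$ away from $P$} and, more importantly, satisfy a uniform higher-integrability bound $\||d\Psi_n|_g^2\|_{L^{p}(M\setminus \mathcal B_n)}\leq C$ for some $p>1$ that does not depend on $n$.

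The second step upgrades the weak-* convergence \eqref{convergence:formula1} to strong $L^q$ convergence for some $q\in(1,p)$. Once we know the truncated densities $|d\Psi_n|_g^2\mathbf{1}_{M\setminus\mathcal B_n}$ are bounded in $L^p(M)$ uniformly, interpolation between the $L^1$ convergence on any fixed compact subset of $M\setminus P$ (which is \eqref{convergence:formula2}) and the uniform $L^p$ bound gives strong $L^q$ convergence on such compacta; it remains to control the contribution near the points $p_i$. For this one uses that the removed disks $B_{\sqrt{\alpha_n^i}}(c_n^i)$ shrink to the points $p_i$ and that, by \eqref{tree:eq1} together with the uniform $L^p$ bound, the $L^1$-mass of $|d\Psi_n|_g^2\mathbf{1}_{M\setminus\mathcal B_n}$ on a small ball $B_\delta(p_i)\setminus B_{\sqrt{\alpha_n^i}}(c_n^i)$ is at most $e_\infty(B_\delta(p_i))+o(1)$, which is small uniformly in $n$ as $\delta\to 0$; combining this with the uniform $L^p$ bound via Hölder kills the $L^q$-mass near the bubble points uniformly, so no mass escapes into the removed disks in the $L^q$ sense. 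The identical argument, applied to the rescaled harmonic maps $\widetilde\Psi_n$ on the exhausting domains $\Xi_n\subset\mathbb{S}^2$ and the secondary bubble scales $\widetilde\alpha_n^j$, $\widetilde c_n^j$, yields the statement on $\mathbb{S}^2$; the only additional point is to control the behavior near the south pole $S$, where one uses that $\tau_S\leq C_R<\varepsilon_0$ so that the same $\epsilon$-regularity applies on the complement of $\Xi_n$ as well, and that $\widetilde e_\infty\in L^1(\mathbb{S}^2)$.

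The main obstacle is establishing the uniform-in-$n$ \emph{higher} integrability $\||d\Psi_n|_g^2\|_{L^p}\leq C$ on the truncated domain. Pointwise $L^\infty$ control from Lemma~\ref{regularity:lemma} only holds at scale $\sqrt{\alpha_n}$, which degenerates; one must instead run a covering argument at the family of scales interpolating between $\sqrt{\alpha_n}$ and a fixed radius, showing that $|d\Psi_n|_g^2(x)$ is controlled by a maximal-function-type average of $|d\Psi_n|_g^2$ that is itself bounded in $L^p$ because the underlying measures $|d\Psi_n|_g^2\,dv_g\mathbf{1}_{M\setminus\mathcal B_n}$ have uniformly bounded total mass and no atoms in the limit. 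This is where the choice of the intermediate cutoff radius $\sqrt{\alpha_n}$ (rather than $\alpha_n$) is essential: it guarantees that all the concentrated bubble mass is safely inside the removed disks, so that on the truncated domain the rescaled densities genuinely obey the small-energy regularity theorem at a definite scale. Once this uniform $L^p$ bound is in hand, the rest is a routine interpolation and a standard diagonal argument over the bubble tree.
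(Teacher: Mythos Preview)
Your outline correctly identifies the overall shape of the argument---reduce to a neighborhood of each bubble point, get a pointwise bound from $\epsilon$-regularity, combine with an integral estimate, and then interpolate---but the step you flag as the ``main obstacle'' is in fact a genuine gap in your proposal. The assertion that the truncated densities are uniformly bounded in $L^p$ ``because the underlying measures have uniformly bounded total mass and no atoms in the limit'' is not correct: the Hardy--Littlewood maximal function of an $L^1$ measure is only in weak $L^1$, and the absence of atoms buys you nothing further. Concretely, the $\epsilon$-regularity applied at scale $d_{c_n}(x)$ only gives $|d\Psi_n|^2(x)\leq C d_{c_n}(x)^{-2}$ on the annulus, and $|x|^{-2}$ is not in $L^q$ near the origin in two dimensions for any $q\geq 1$. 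To make the dyadic sum $\sum_j (2^j)^{2(q-1)}\int_{B_{2^{-j}}\setminus B_{2^{-j-1}}}e_n$ converge you need a \emph{quantitative} decay $\int_{B_r(c_n)}e_n\leq Cr^\sigma$ for some $\sigma>0$, uniformly in $n$, and this does not follow from $L^1$-boundedness plus non-atomicity of the limit.

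The paper obtains exactly this decay as a separate lemma (Lemma~\ref{limit_measure:lemma}), and its proof is the real content: one uses Parker's suspension trick to replace $\Psi_n$ on $B_{r_1}(p)$ by a \emph{conformal} harmonic map $\Upsilon_n$ into $\mathbb{S}^{N_n}\times\mathbb{C}$, applies the Hoffman--Spruck isoperimetric inequality for small minimal surfaces (with constants independent of the ambient dimension) to get $E(\Psi_n|_{A_{r,n}})\leq C\,\mathrm{Length}^2(\partial\Upsilon_n(A_{r,n}))$, bounds the boundary length in terms of the energy on the adjacent dyadic annulus, and then iterates the resulting inequality $E(\Psi_n|_{A_{r,n}})\leq \theta(E(\Psi_n|_{A_{2r,n}})+r^2)$ with $\theta<1$ to produce the power $r^\sigma$. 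This is the analogue of a ``no-neck'' energy estimate, and nothing in your covering/maximal-function sketch substitutes for it. The same estimate, read through the rescaling $R_n$, is also what controls the density near the south pole on $\mathbb{S}^2$; the bound $\tau_S\leq C_R$ alone is not enough for higher integrability there either.
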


The proof rests largely on the following estimate, whose proof we postpone to the end of the section. 

\begin{lemma}
\label{limit_measure:lemma}
At a bubble point $p\in M$ (or secondary bubble $\tilde{p}\in \mathbb{S}^2$), for every $r>\sqrt{\alpha_n(p)}$, denote by $A_{r,n}(c_n(p))$ the annulus
$$A_{r,n}(c_n):=B_r(c_n)\setminus B_{r^{-1}\alpha_n}(c_n).$$
For an appropriate choice of normalization constant $C_R\in (0,\varepsilon_0)$ in the definition of $\alpha_n$, there exist $\rho_0>0$, $\sigma>0$, and $C<\infty$ such that
$$
\lim_{n\to\infty}\sup_{\rho_0>r>\sqrt{\alpha_n}}r^{-\sigma}\int\limits_{A_{r,n}}|d\Psi_n|_g^2\,dv_g< C.
$$
\end{lemma}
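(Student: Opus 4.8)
The plan is to prove Lemma~\ref{limit_measure:lemma} via a standard annular energy decay argument for harmonic maps, carefully tracking that all constants are independent of the target dimension. The key structural input is the energy quantization captured by Lemma~\ref{tree:lemma1} together with the choice of cutoff balls $B_{\sqrt{\alpha_n}}(c_n)$ rather than $B_{\alpha_n}(c_n)$: on the ``neck'' region $A_{r,n}(c_n)=B_r(c_n)\setminus B_{r^{-1}\alpha_n}(c_n)$ with $\sqrt{\alpha_n}<r<\rho_0$, the total energy is small (bounded by $C_R$ plus an $o(1)$ term coming from \eqref{tree:eq0} and \eqref{bubble.drop}), and this small-energy regime is exactly where one expects exponential-type decay in the logarithmic neck variable. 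The first step is therefore to pass to the conformal coordinates in which $g=f_p g_p$ with $g_p$ flat, recenter at $c_n$, and introduce the cylindrical variable $t=\log|x-c_n|$ so that the neck $A_{r,n}$ becomes a long flat cylinder $[\,-\log r - \log\alpha_n^{-1},\ \log r\,]\times \mathbb{S}^1$; by conformal invariance of the Dirichlet energy this reduces the problem to energy decay for harmonic maps on cylinders.

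The second step is to establish the annular decay estimate. On each sub-cylinder of unit length $Q_T=[T,T+1]\times\mathbb{S}^1$ sitting inside the neck, if the energy $\int_{Q_{T-1}\cup Q_T\cup Q_{T+1}}|d\Psi_n|^2$ is below the dimension-independent threshold $\varepsilon_0$ from Lemma~\ref{regularity:lemma}, then a Pohozaev-type identity (exploiting that the Hopf differential of a harmonic map from a surface is holomorphic, hence the angular-average flux is essentially constant) combined with the $\varepsilon$-regularity $C^0$ bound \eqref{c0.est} gives a three-annulus inequality of the form $\mathcal F(T)\le \frac12(\mathcal F(T-1)+\mathcal F(T+1)) + (\text{exponentially small})$ for the energy $\mathcal F(T)$ on $Q_T$; iterating this yields
$$\int_{[T_1,T_2]\times \mathbb{S}^1}|d\Psi_n|^2 \le C\big(e^{-\sigma(T-T_1)} + e^{-\sigma(T_2-T)}\big)\cdot(\text{total neck energy})$$
for some universal $\sigma>0$. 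Translated back to the annuli $A_{r,n}(c_n)$, taking $T_1$ to correspond to scale $\sqrt{\alpha_n}$ and $T_2$ to scale $\rho_0$, this is precisely the claimed bound $\int_{A_{r,n}}|d\Psi_n|^2\le C r^\sigma$ (after absorbing the contribution of the absolutely continuous part $e_\infty$, which is controlled since $e_\infty\in L^1$ and the annulus $A_{r,n}$ shrinks). The choice of normalization constant $C_R\in(0,\varepsilon_0)$ is used to guarantee the sub-$\varepsilon_0$ hypothesis holds uniformly along the neck, using \eqref{tree:eq2} and \eqref{bubble.drop}.

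The main obstacle — and the point requiring the most care — is ensuring the decay rate $\sigma$ and the constant $C$ are genuinely independent of $N_n$, the dimension of the target sphere. This is where Lemma~\ref{regularity:lemma} is essential: the $\varepsilon$-regularity constants for $\mathbb{S}^N$-valued harmonic maps do not degenerate as $N\to\infty$ because the Bochner identity \eqref{boch} is dimension-free. One must check that the Pohozaev/Hopf-differential argument likewise only uses scalar quantities (the energy density $|d\Psi_n|^2$ and the stress-energy tensor), not anything growing with $N$; this is automatic since the Hopf differential $\langle \partial_z\Psi_n,\partial_z\Psi_n\rangle$ is a single complex-valued function whose holomorphicity follows from the harmonic map equation regardless of $N$. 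A secondary technical point is that $\sqrt{\alpha_n}$ (rather than $\alpha_n$) must be used as the inner cutoff scale so that the removed balls have small energy — of order $o(1)$ by \eqref{bubble.drop}, since $B_{\sqrt{\alpha_n}}(c_n)$ still sits well inside $B_{\varepsilon_n}(p)$ — while simultaneously $\log(\sqrt{\alpha_n}^{-1})\to\infty$ gives a neck of diverging length, so the exponential decay actually produces a vanishing contribution; balancing these two requirements is the reason for the square-root and is the genuinely delicate bookkeeping in the proof.
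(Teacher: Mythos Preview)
Your proposal outlines a genuinely different route from the paper's proof. The paper does not work directly on the cylinder: instead it uses Parker's \emph{suspension} trick, replacing $\Psi_n$ by the conformal harmonic map $\Upsilon_n=(\Psi_n,\bar z+\xi_n)\colon B_{r_1}(p)\to \mathbb{S}^{N_n}\times\mathbb{C}$ (where $\xi_n$ is chosen so that $\mathcal H(\Upsilon_n)\equiv 0$), and then applies the Hoffman--Spruck isoperimetric inequality for minimal surfaces, $\area(\Sigma)\le c_0\,\length^2(\partial\Sigma)$, with constants depending only on the ambient curvature and hence independent of $N_n$. This yields $E(\Psi_n|_{A_{r,n}})\le c_0\,\length^2(\partial\Upsilon_n(A_{r,n}))$; a mean-value choice of radii controls the boundary lengths by the energy on $A_{2r,n}\setminus A_{r,n}$, giving the doubling inequality $E(\Psi_n|_{A_{r,n}})\le \theta\bigl(E(\Psi_n|_{A_{2r,n}})+r^2\bigr)$ with $\theta<1$, which iterates to $Cr^\sigma$. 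The normalization $C_R$ is chosen below $\varepsilon_1/3$ (the isoperimetric threshold), not merely below $\varepsilon_0$. Your cylindrical/Pohozaev approach is a legitimate alternative---both the Hopf differential control and the Bochner-based $\varepsilon$-regularity are indeed dimension-free---and is closer in spirit to Ding--Tian or Lin--Wang style neck analysis. The paper's route packages dimension-independence into a single geometric inequality; yours avoids the suspension machinery but must track constants through the PDE estimates.

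One point does need correction. The inequality you wrote, $\mathcal F(T)\le \tfrac12(\mathcal F(T-1)+\mathcal F(T+1))+(\text{small})$, asserts approximate midpoint \emph{convexity} of the sliced energy, and convexity alone does not yield exponential decay from the ends---a convex function on $[T_1,T_2]$ with small boundary values is only bounded by the linear interpolant. The mechanism you need is the opposite sign: from the equation and Wirtinger's inequality on $\mathbb{S}^1$ one obtains a differential inequality of the form $\mathcal F''\ge c^2\mathcal F - C$ (equivalently, in discrete form, $\mathcal F(T-1)+\mathcal F(T+1)\ge (2+c^2)\mathcal F(T)-C$), and it is this that iterates to the exponential decay $\mathcal F(T)\le C(e^{-\sigma(T-T_1)}+e^{-\sigma(T_2-T)})$. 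Since the surrounding discussion (constancy of $\int_{\mathbb S^1}\mathcal H\,d\theta$, $\varepsilon$-regularity on unit subcylinders) is pointed in the right direction, this looks like a misstatement rather than a structural gap, but as written the iteration step would not close.
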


With this estimate in place, we prove Proposition \eqref{lq.cvg} as follows.

\begin{proof}[Proof of Proposition \ref{lq.cvg}]

By Lemma \ref{convergence:lemma}, we know already that $\rho_n\to \rho$ strongly in $L^q(K)$ for any $q<\infty$ if $K\Subset \mathcal N_m\setminus \{p_1,\ldots,p_k\}$ is a compact set away from the singular points $\{p_i\}$. Thus, to complete the proof of the lemma, it is enough to show that for any bubble point $p\in M$ (or secondary bubble $\tilde{p}\in \mathbb{S}^2$), there exists a neighborhood $V_p\subset M$ of $p$ and a neighborhood $W_S\subset \mathbb{S}^2_p$ of the south pole in $\mathbb{S}^2$ such that
\begin{equation}\label{nearbub.cvg}
e_n:=|d\Psi_n|_g^2\boldsymbol{1}_{M\setminus B_{\sqrt{\alpha_n}}(c_n)}\to e_{\infty}\text{ in }L^q(V_p)
\end{equation}
and
\begin{equation}\label{nearneck.cvg}
\widetilde{e}_n:=|d\widetilde\Psi_n|_g^2\boldsymbol{1}_{\Xi_n}\to \widetilde e_{\infty}\text{ in }L^q(W_S)
\end{equation}
for some $q>1$. 

We begin with \eqref{nearbub.cvg}. Recall that, by definition \eqref{alpha.def} of the scales $\alpha_n$, we have
$$\int\limits_{B_{\varepsilon_n}(p)\setminus B_{\alpha_n}(c_n)}|d\Psi_n|_g^2dv_g\leq C_R<\varepsilon_0,$$
while it follows from \eqref{tree:eq0} that
\begin{equation}\label{l1.obv}
\lim_{n\to\infty}\int\limits_{B_r(p)\setminus B_{\varepsilon_n}(p)}|d\Psi_n|_g^2dv_g=\int\limits_{B_r(p)}e_{\infty}dv_g
\end{equation}
for any fixed $0<r<2r_1$. In particular, since we can make the right hand side of \eqref{l1.obv} as small as we like by taking $r$ sufficiently small, it follows that we can choose some $r_2\in (0,r_1)$ such that
\begin{equation}\label{small.e.ann}
\int\limits_{B_{2r_2}(p)\setminus B_{\alpha_n}(c_n)}|d\Psi_n|_g^2dv_g<\varepsilon_0.
\end{equation}
As a consequence, for $n$ sufficiently large, and any $x\in B_{r_2}(c_n)\setminus B_{\sqrt{\alpha_n}}(c_n)$, writing
$$d_{c_n}(x):=\mathrm{dist}(x,c_n),$$
we see that $B_{d_{c_n}(x)/2}(x)\subset B_{2r_2}(p)\setminus B_{\alpha_n}(c_n),$ so in particular,
$$\int\limits_{B_{d_{c_n}(x)/2}(x)}|d\Psi_n|_g^2dv_g<\varepsilon_0.$$
Thus, we can apply Lemma \ref{regularity:lemma} in the balls $B_{d_{c_n}(x)/2}(x)$ to conclude that
$$d_{c_n}(x)^2|d\Psi_n|_g^2(x)\leq C\text{ for all }x\in B_{r_2}(p)\setminus B_{\sqrt{\alpha_n}}(c_n);$$
i.e.,
\begin{equation}\label{ann.linfty.1}
d_{c_n}(x)^2e_n(x)\leq C\text{ on }B_{r_2}(c_n).
\end{equation}

Next, note that by Lemma \ref{limit_measure:lemma}, for every $r>\sqrt{\alpha_n}$ we have the estimate
\begin{eqnarray*}
\int\limits_{B_r(c_n)}e_n&=&\int\limits_{B_r(c_n)\setminus B_{\sqrt{\alpha_n}(c_n)}}|d\Psi_n|_g^2dv_g\\
&\leq &\int\limits_{B_r(c_n)\setminus B_{r^{-1}\alpha_n}(c_n)}|d\Psi_n|_g^2dv_g\\
&<& Cr^{\sigma}
\end{eqnarray*}
for some fixed $C<\infty$ and $\sigma>0$. In particular since $\int\limits_{B_{\sqrt{\alpha_n}(c_n)}}e_n=0$ by definition of $e_n$, it follows that
\begin{equation}\label{sigma.dens.bd}
\int\limits_{B_r(c_n)}e_n< Cr^{\sigma}
\end{equation}
for all $r>0$. 

We claim now that \eqref{ann.linfty.1} and \eqref{sigma.dens.bd} together imply uniform bounds for $\|e_n\|_{L^q(B_{r_2/2}(p))}$ for every $q<1+\frac{\sigma}{2}$. To see this, simply note that $B_{r_2/2}(p)\subset B_{r_2}(c_n)$ for $n$ sufficiently large, and estimate
\begin{eqnarray*}
\int\limits_{B_{r_2}(c_n)}e_n^q&=&\sum_{j=0}^{\infty}\int\limits_{[B_{2^{-j}r_2}\setminus B_{2^{-j-1}r_2}]}e_n^{q-1}\cdot e_n\\
\text{ by \eqref{ann.linfty.1} }&\leq &\sum_{j=0}^{\infty}(Cr_2^{-1}2^{j+1})^{2(q-1)}\int\limits_{[B_{2^{-j}r_2}\setminus B_{2^{-j-1}r_2}]}e_n\\
\text{ by \eqref{sigma.dens.bd} }&\leq &\sum_{j=0}^{\infty}(Cr_2^{-1}2^{j+1})^{2(q-1)}\cdot C(2^{-j}r_2)^{\sigma}\\
&=&C'\sum^\infty_{j=0}(2^j)^{2(q-1)-\sigma}.
\end{eqnarray*}
Since $q<1+\frac{\sigma}{2}$, the geometric series converges, and we see that
$$\|e_n\|_{L^q(B_{r_2/2}(p))}\leq C_q<\infty,$$
as claimed. It follows immediately that the limit density $e_{\infty}\in L^q(B_{r_2/2}(p))$ as well, and since we already know from Lemma \ref{convergence:lemma} that $e_n\to e_{\infty}$ in $L^q_{loc}(B_{r_2}(p)\setminus \{p\})$, we easily conclude that the desired convergence \eqref{nearbub.cvg} holds for every $1\leq q<1+\frac{\sigma}{2}$.

The argument for $L^q$ convergence \eqref{nearneck.cvg} near the south pole of $\mathbb{S}^2$ is similar, once we express our estimates in an appropriate coordinate system near $S$. Denoting by $\pi_S: \mathbb{R}^2\to \mathbb{S}^2\setminus \{N\}$ the stereographic projection based at the south pole $S\in \mathbb{S}^2$, we write
$$B_r(S):=\pi_S(B_r(0)),$$
so that, e.g., $B_1(S)$ denotes the southern hemisphere in $\mathbb{S}^2$. In this notation, note that the image under $R_n: B_{r_1}(p)\to \mathbb{S}^2$ of any annulus about $c_n$ in $M$ is given by
$$R_n(B_s(c_n)\setminus B_t(c_n))=\pi(B_{\alpha_n^{-1}s}(0)\setminus B_{\alpha_n^{-1}t}(0))=B_{t^{-1}\alpha_n}(S)\setminus B_{s^{-1}\alpha_n}(S).$$
In particular, note that
$$\Xi_n=R_n(B_{\sqrt{\alpha_n}}(c_n))=\mathbb{S}^2\setminus B_{\sqrt{\alpha_n}}(S),$$
so that 
$$\widetilde e_n=|d\widetilde\Psi_n|_g^2\boldsymbol{1}_{\mathbb{S}^2\setminus B_{\sqrt{\alpha_n}}(S)}\text{ on }B_{1/2}(S).$$

Moreover, we note that for $r>\sqrt{\alpha_n}$, the image under $R_n$ of the annulus $A_{r,n}=B_r(c_n)\setminus B_{r^{-1}\alpha_n}(c_n)$ is given by
$$R_n(A_{r,n})=B_r(S)\setminus B_{r^{-1}\alpha_n}(S),$$
so that the estimate of Lemma \ref{limit_measure:lemma} has the identical form
$$\int\limits_{B_r(S)\setminus B_{r^{-1}\alpha_n}(S)}|d\widetilde{\Psi}_n|_g^2dv_g<Cr^{\sigma}$$
in terms of the local geometry near $S\in \mathbb{S}^2$. In particular, since $\widetilde{e}_n\equiv 0$ on $B_{\sqrt{\alpha_n}}(S)$, it follows exactly as it did for $e_n$ that
\begin{equation}\label{s.sigma.dens.bd}
\int\limits_{B_r(S)}\widetilde{e}_n<C r^{\sigma}\text{ for all }r>0.
\end{equation}

Likewise, the energy estimate \eqref{small.e.ann} yields a bound of the form
$$\int\limits_{B_1(S)\setminus B_{C\alpha_n}(S)}|d\widetilde\Psi_n|_g^2dv_g<\varepsilon_0$$
in our local coordinates near $S$, and we can appeal to Lemma \ref{regularity:lemma} as we did for $e_n$ to obtain the pointwise bound
\begin{equation}\label{e.tild.linfty}
d_S(x)^2\widetilde{e}_n(x)\leq C\text{ on }B_{1/2}(S).
\end{equation}
With the density bound \eqref{s.sigma.dens.bd} and the pointwise bound \eqref{e.tild.linfty} in hand, we can now argue exactly as we did for $e_n$ to conclude that
$$\|\widetilde{e}_n\|_{L^q(B_{1/2}(S))}\leq C_q\text{ for every }q<1+\frac{\sigma}{2};$$
and again, since we know from Lemma \ref{convergence:lemma} that $\widetilde{e}_n\to \widetilde{e}_{\infty}$ in $L^q_{loc}(B_{1/2}(S)\setminus \{S\})$, the desired convergence \eqref{nearneck.cvg} follows.
\end{proof}

All that remains now is to prove Lemma \ref{limit_measure:lemma}, establishing the desired energy decay bounds on the annuli $A_{r,n}=A_r(c_n)\setminus A_{r^{-1}\alpha_n}(c_n)$.

\begin{proof}[Proof of Lemma \ref{limit_measure:lemma}]

First, we recall the suspension procedure used by Parker~\cite{Parker} on pp. 607 -- 608 (see also~\cite{Gruter, Jost, Schoen}). 
Recall that $r_1>0$ is such that $g_p$ is defined on $B_{2r_1}(p)$. For the harmonic map $\Psi_n\colon M\to\mathbb{S}^{N_n}$ the Hopf differential $\mathcal H(\Psi_n)$ is defined in local coordinates by 
$$
\mathcal H(\Psi_n) = (\partial_z\Psi_n,\partial_z\Psi_n)=|\partial_x\Psi_n|^2-|\partial_y\Psi_n|^2-2i\langle\partial_x\Psi_n,\partial_y\Psi_n\rangle.
$$
The Hopf differential is holomorphic and is equal to $0$ iff $\Psi_n$ is weakly conformal. The suspension procedure associates to each $\Psi_n$ a weakly conformal harmonic map $\Upsilon_n\colon B_{2r_1}(p)\to \mathbb{S}^{N_n}\times \mathbb{C}$, so that $\mathcal H(\Upsilon_n) \equiv 0$. Namely, we select a holomorphic function $\xi_n:B_{2r_1}(p)\to \mathbb{C}$ satisfying
$$\partial_z\xi_n = -\mathcal H(\Psi_n),$$
and set in local complex coordinates 
$$\Upsilon_n(z) := (\Psi_n(z),\bar z+\xi_n).$$
Then one has
$$
\mathcal H(\Upsilon_n) = \mathcal H(\Psi_n) + \partial_z(\bar z + \xi_n)\partial_z\overline{(\bar z + \xi_n)} = \mathcal H(\Psi_n)-\mathcal H(\Psi_n) = 0
$$

Furthermore, one has $||\mathcal H(\Psi_n)||_{L^1(B_{2r_1(p)})}<CE(\Psi_n|_{B_{2r_1}(p)})$ by the definition of $\mathcal{H}(\Psi_n)$, and since $\mathcal{H}(\Psi_n)$ is holomorphic, it follows that
$$
||\mathcal H(\Psi_n)||_{L^\infty(B_{r_1}(p))}<CE(\Psi_n|_{B_{2r_1}(p)})< C.
$$
Moreover, the differential $d(\bar{z}+\xi_n)$ satisfies
$$
\frac{1}{2}|d(\bar z+\xi_n)|^2 = \partial_z(\bar z+\xi_n)\overline{\partial_z(\bar z+\xi_n)} + \partial_{\bar z}(\bar z+\xi_n)\overline{\partial_{\bar z}(\bar z+\xi_n)} = |\mathcal H(\Psi_n)|^2 + 1,
$$
so that on $B_{r_1}(p)$ one has $0<\frac{1}{2}|d(\bar z+\xi_n)|^2<C$. In particular, it follows that $\Upsilon_n$ defines a conformal immersion into $S^{N_n}\times \mathbb{C}$.

As a result, for any domain $A\subset B_{r_1}(p)$ one has 
\begin{equation}
\label{suspension_energy:ineq}
E(\Upsilon_n|_A) = \frac{1}{2}\int\limits_A |d\Psi_n|^2 + |d(\bar z+\xi_n)|^2\leqslant E(\Psi_n|A) + C\area_{g_p}(A).
\end{equation}
The utility of the suspension trick stems from the well-known fact that the images of conformal harmonic maps from two-dimensional domains are minimal surfaces. In particular, we will make use of the following fact, which follows, e.g., from the classical isoperimetric inequalities of Hoffman and Spruck \cite{HS}.

\begin{theorem}\label{HS:thm}
Let $\Sigma$ be a minimal surface in $\mathbb{S}^N\times \mathbb{C}$. Then there exist constants $\varepsilon_1,c_0>0$ independent of $N$ such that if $\area(\Sigma)<\varepsilon_1$ then $\area(\Sigma)\leqslant c_0 Length^2(\partial\Sigma)$.
\end{theorem}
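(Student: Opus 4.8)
The plan is to obtain Theorem~\ref{HS:thm} from the classical isoperimetric inequality of Hoffman and Spruck \cite{HS} for two-dimensional minimal surfaces, the one thing that needs genuine checking being that the relevant constants can be chosen independently of $N$. Recall that Hoffman--Spruck produce constants $\varepsilon_1,c_0>0$ depending only on the dimension of the minimal surface (here $2$), on an \emph{upper} bound for the sectional curvature of the ambient manifold, and on a \emph{lower} bound for its injectivity radius, such that a two-dimensional minimal surface $\Sigma$ of that ambient manifold with nonempty boundary and $\area(\Sigma)<\varepsilon_1$ satisfies $\area(\Sigma)\leq c_0\length(\partial\Sigma)^2$. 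So the first step is simply to record the ambient geometry of $W:=\mathbb{S}^N\times\mathbb{C}$: its sectional curvatures all lie in $[0,1]$ (they equal $1$ on planes tangent to the $\mathbb{S}^N$-factor, $0$ on planes tangent to the flat factor $\mathbb{C}$, and a value in between on mixed planes), and its injectivity radius equals $\min\{\mathrm{inj}(\mathbb{S}^N),\mathrm{inj}(\mathbb{C})\}=\min\{\pi,\infty\}=\pi$; both bounds are manifestly independent of $N$, and hence so are the Hoffman--Spruck constants for minimal surfaces in $W$.

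Next I would arrange that Hoffman--Spruck actually applies, i.e. that each component of $\Sigma$ has nonempty boundary and sits in a metric ball of $W$ of controlled radius. For this I would invoke the monotonicity formula for minimal surfaces in a manifold with sectional curvature $\leq 1$ and injectivity radius $\geq\pi$: for $x\in\Sigma$ the quantity $e^{Cr^2}\area(\Sigma\cap B_r(x))/(\pi r^2)$ is nondecreasing for $0<r<\min\{\mathrm{dist}_W(x,\partial\Sigma),\rho_0\}$ and tends to a number $\geq 1$ as $r\to 0$, so that $\area(\Sigma)\geq\tfrac12\pi\min\{\mathrm{dist}_W(x,\partial\Sigma),\rho_0\}^2$ for every $x\in\Sigma$. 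Taking $\varepsilon_1<\tfrac12\pi\rho_0^2$ then forces $\Sigma$ to have no closed components and to lie inside the $\rho_0$-neighbourhood of $\partial\Sigma$, so each connected component $\Sigma_i$ of $\Sigma$ meets $\partial\Sigma$ and is contained in a small ball about one of its boundary points; applying Hoffman--Spruck to each $\Sigma_i$ and summing over components (whose boundaries are disjoint) gives
$$
\area(\Sigma)=\sum_i\area(\Sigma_i)\leq c_0\sum_i\length(\partial\Sigma_i)^2\leq c_0\Big(\sum_i\length(\partial\Sigma_i)\Big)^2=c_0\,\length(\partial\Sigma)^2.
$$
Since the surfaces $\Sigma$ of interest are images of branched conformal harmonic maps, all of this should be phrased for integral $2$-varifolds with area counted with multiplicity; the monotonicity formula and the Hoffman--Spruck estimate both persist in that setting, and branch points create no trouble.

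An alternative route, which makes the $N$-independence especially transparent, is to embed $W=\mathbb{S}^N\times\mathbb{C}$ isometrically into $\mathbb{R}^{N+1}\times\mathbb{C}=\mathbb{R}^{N+3}$: a surface minimal in $W$ then has Euclidean mean curvature vector equal to the trace over its tangent plane of the second fundamental form of $\mathbb{S}^N\subset\mathbb{R}^{N+1}$, hence $|H|\leq 2$ pointwise, and the Michael--Simon Sobolev inequality in Euclidean space --- whose constant depends only on the surface dimension $2$ --- gives $\area(\Sigma)^{1/2}\leq c(2)\big(\length(\partial\Sigma)+\int_\Sigma|H|\big)\leq c(2)\big(\length(\partial\Sigma)+2\area(\Sigma)\big)$; for $\area(\Sigma)<\varepsilon_1$ sufficiently small the mean-curvature term is absorbed into the left side and the claim follows with constants independent of $N$. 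In either approach the only nonroutine point is the geometric bookkeeping needed to apply the isoperimetric/Sobolev estimate --- ensuring the small-area minimal surface stays in a controlled region and handling branch points via the varifold formalism --- and this is exactly where I expect the main (though essentially standard) effort to lie.
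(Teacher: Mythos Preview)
Your proposal is correct and matches the paper's approach almost exactly: the paper states the result as following from Hoffman--Spruck, with a remark pointing out precisely your two routes to $N$-independence --- uniform sectional curvature and injectivity radius of $\mathbb{S}^N$, and alternatively realizing $\Sigma$ as a bounded-mean-curvature surface in $\mathbb{R}^{N+3}$. Your treatment is in fact more detailed than the paper's, which leaves the monotonicity/no-closed-components bookkeeping implicit.
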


\begin{remark} To see that the constants $\varepsilon_1$ and $c_0$ in Theorem \ref{HS:thm} do not depend on the dimension $N$ of the target sphere, it is enough to note that the injectivity radius and sectional curvature of $\mathbb{S}^N$ are the same for all $N$. Alternatively, one can realize the minimal surfaces $\Sigma\subset \mathbb{S}^N\times \mathbb{C}$ as surfaces of mean curvature $H\equiv 2$ in $\mathbb{R}^{N+3}$, and appeal to the isoperimetric inequalities for surfaces of constant mean curvature in Euclidean space.
\end{remark}

Now, let $r<\min\{\frac{r_1}{2},\frac{1}{2}\}$ so that $A_{r,n} = B_r(c_n)\setminus B_{r^{-1}\alpha_n}(c_n)\subset B_{r_1}(p)$. 
Since for conformal maps, energy coincides with area, by~\eqref{suspension_energy:ineq} one has that
\begin{equation*}
\begin{split}
&\area(\Upsilon_n(A_{r,n}))\leqslant E(\Psi_n|A_{r,n}) + C \area_{g_p}(A_{r,n})\\
&\leqslant \int\limits_{A_{r,n}\setminus B_{\varepsilon_n(c_n)}}\frac{1}{2}|d\Psi_n|^2_g\,dv_g + \int\limits_{B_{\varepsilon_n(c_n)}\setminus B_{2\alpha_n}(c_n)}\frac{1}{2}|d\Psi_n|^2_g\,dv_g + C\pi r^2\\
&\leqslant \int\limits_{A_{r,n}\setminus B_{\varepsilon_n(c_n)}} \frac{1}{2}\left(\left||d\Psi_n|^2_g-e_\infty\right| + e_\infty\right)\,dv_g + C_R + C\pi r^2\\
&\leqslant \varepsilon_n^2 + \int_{B_{2r}(p)}e_\infty\,dv_g + C_R + C\pi r^2.
\end{split}
\end{equation*}
 In the third line we used the definition of $\alpha_n$ and the fact that $B_{\varepsilon_n(c_n)}\setminus B_{2\alpha_n}(c_n)\subset B_{2\varepsilon_n(p)}\setminus B_{\alpha_n}(c_n)$; and in the fourth line we used~\eqref{tree:eq0}.

We now choose our normalization constant $C_R<\varepsilon_0$, by requiring that $C_R<\frac{\varepsilon_1}{3}$, where $\varepsilon_1$ is the constant from Theorem \ref{HS:thm}. Then, choosing $0<r_2<\min\{\frac{r_1}{2},\frac{1}{2}\}$ such that
$$\int_{B_{2r_2}(p)}e_{\infty}dv_g+C\pi r_2^2<\frac{\varepsilon_1}{3},$$
it follows from the preceding estimates that
$$\area(\Upsilon_n(A_{r,n}))<\varepsilon_1$$
for all $r\in (0,r_2)$. In particular, we can apply Theorem \ref{HS:thm} to the images $\Upsilon_n(A_{r,n})$ for $r\in (0,r_2)$ to conclude that
\begin{equation}
\label{HS:application}
E(\Psi_n|_{A_{r,n}})\leqslant \area(\Upsilon_n(A_{r,n}))\leqslant c_0\length^2(\partial \Upsilon_n(A_{r,n})).
\end{equation}

Next, we estimate the right-hand side of~\eqref{HS:application} in terms of the energy of $\Psi_n$. Let $r<r_2$; then by the mean value inequality, there exists $t\in [r,2r]$ such that 
\begin{eqnarray*}
t\int\limits_{\partial B_t(c_n)} |d\Upsilon_n|^2\,d\mathcal{H}^1_{g_p} &\leqslant & 2r \int\limits_{\partial B_t(c_n)}|d\Upsilon_n|^2d\mathcal{H}^1_{g_p}\\
& \leqslant & 2\int_r^{2r}\left(\int\limits_{\partial B_t(c_n)}|d\Upsilon_n|^2d\mathcal{H}_{g_p}^1\right)dt\\
&=&\int\limits_{B_{2r}(c_n)\setminus B_r(c_n)} |d \Upsilon_n|^2\,dv_{g_p}.
\end{eqnarray*}

In particular, in polar coordinates around $c_n$ one has
\begin{equation*}
\begin{split}
&\length^2(\Upsilon_n(\partial B_t(c_n)))\leqslant 2\pi \int\limits_0^{2\pi} |\partial_\theta \Upsilon_n(t,\theta)|^2d\theta = 2\pi t\int\limits_0^{2\pi} |\frac{1}{t}\partial_\theta\Upsilon_n(t,\theta)|^2 td\theta \\
& \leqslant 2\pi t\int\limits_{\partial B_t(c_n)} |d \Upsilon_n|^2\,d\mathcal{H}^1_{g_p}\leqslant 16\pi\int\limits_{B_{2r}(c_n)\setminus B_r(c_n)} |d \Upsilon_n|^2\,dv_{g_p} \\
&\leqslant 16\pi\int\limits_{B_{2r}(c_n)\setminus B_r(c_n)} |d \Psi_n|^2 + Cr^2,
\end{split}
\end{equation*}
where we used~\eqref{suspension_energy:ineq} in the last step. 

Similarly, there exists $r\leqslant t_2\leqslant 2r$ such that 
$$
\length^2(\Upsilon_n(\partial B_{t_2^{-1}\alpha_n}(c_n)))\leqslant 16\pi\int\limits_{B_{r^{-1}\alpha_n}(c_n)\setminus B_{(2r)^{-1}\alpha_n}(c_n)} |\nabla \Psi_n|^2 + C(r^{-1}\alpha_n)^2.
$$
Note that $r^{-1}\alpha_n\leqslant r$, since we are only considering $r>\sqrt{\alpha_n}$ (so that $A_{r,n}\neq\varnothing$). Thus, the previous inequality becomes
$$
\length^2(\Upsilon_n(\partial B_{t_2^{-1}\alpha_n}(c_n)))\leqslant 16\pi\int\limits_{B_{r^{-1}\alpha_n}(c_n)\setminus B_{(2r)^{-1}\alpha_n}(c_n)} |\nabla \Psi_n|^2 + Cr^2.
$$

As a result, by~\eqref{HS:application} one has
$$
E(\Psi_n |_{A_{r,n}})\leqslant E(\Psi_n|_{B_t(c_n)\setminus B_{t_2^{-1}\alpha_n}(c_n)}) \leqslant  CE(\Psi_n|_{A_{2r,n}\setminus A_{r,n}}) + Cr^2.
$$
Adding $CE(\Psi_n |_{A_{r,n}})$ to both sides of the inequality we obtain that for all $r<r_2$ one has
\begin{equation}
\label{doubling:ineq}
E(\Psi_n |_{A_{r,n}})\leqslant \theta \left(E(\Psi_n |_{A_{2r,n}}) + r^2\right),
\end{equation}
where 
$$\theta := \frac{C}{C+1}<1.$$
Since we can increase $\theta$ if necessary, we assume without loss of generality that $\theta >\frac{3}{4}$.

Let $k$ be the number so that $2^kr<r_2\leqslant 2^{k+1}r$. Applying inequality~\eqref{doubling:ineq} $k+1$ times one obtains
\begin{equation*}
\begin{split}
E(\Psi_n |_{A_{r,n}})\leqslant \theta^{k+1}E(\Psi_n |_{A_{2^{k+1}r,n}}) + \sum _{i=1}^{k+1}\left(\frac{r^2}{4}(4\theta)^i 
\right)
\end{split}
\end{equation*}

At the same time, $\log_2(r_2/r)\leqslant k+1$ and since $\theta<1$, one has 
$$
\theta^{k+1}\leqslant \theta^{\log_2(r_2/r)} = (r/r_2)^\frac{|\ln\theta|}{\ln 2}
$$
and additionally
$$
\sum_{i=1}^{k+1}(4\theta)^i =  4\theta\frac{(4\theta)^{k+1}-1}{4\theta-1}\leqslant (4\theta)^{k+1} \frac{4\theta}{4\theta-1}\leqslant 2 (4\theta)^{k+1},
$$
where we used $\theta>\frac{3}{4}$.

As a result,
\begin{equation*}
\begin{split}
E(\Psi_n |_{A_{r,n}})&\leqslant \left(\frac{r}{r_2}\right)^{\frac{|\ln\theta|}{\ln 2}}E(\Psi_n |_{A_{2r_2,n}}) + \frac{\theta^{k+1}}{2}(2^{k+1}r)^2 
\\
&\leqslant r^\sigma \left( \frac{E(\Psi_n)}{r_2^\sigma} + 2r_2^{2-\sigma}\right) 
\leqslant Cr^\sigma 
,
\end{split}
\end{equation*}
where $\sigma = \frac{|\ln\theta|}{\ln 2}>0$.

\end{proof}

\begin{remark} Note that throughout this subsection, we have invoked the spherical geometry of the target manifolds $\mathbb{S}^n$ only twice: to establish uniformity of the constants in the $\epsilon$-regularity theorem (Lemma \ref{regularity:lemma}), and to obtain uniform constants in the isoperimetric inequality for minimal surfaces in $\mathbb{S}^n\times \mathbb{C}$ (Theorem \ref{HS:thm}). We therefore expect the conclusion of Lemma \ref{bub.lem} to hold in greater generality, giving a compactness result for the energy densities associated to harmonic maps from surfaces into a larger class of target manifolds of varying dimension with suitably bounded geometry.
\end{remark}

\section{The min-max construction for the second eigenvalue}
\label{second.min-max}
\subsection{Definition and main properties of the second min-max energy}
As before, let $(M,g)$ be a fixed Riemannian surface. 
For the min-max construction corresponding to the conformal maximization of the second Laplacian eigenvalue, we consider for each $n\geq 2$ a collection 
$$\Gamma_{n,2}(M)\subset C^0([\overline{B}^{n+1}]^2,W^{1,2}(M,\mathbb{R}^{n+1}))$$
of $2(n+1)$-parameter families of maps 
$$[\overline{B}^{n+1}]^2\ni (a,b)\mapsto F_{a,b}\in W^{1,2}(M,\mathbb{R}^{n+1})$$
satisfying the boundary conditions
\begin{equation}\label{2nd.bdry.1}
F_{a,b}\equiv a\text{ if }|a|=1
\end{equation}
and
\begin{equation}\label{2nd.bdry.2}
F_{a,b}=\tau_b \circ F_{\tau_b(a),-b}\text{ if }|b|=1,
\end{equation}
where $\tau_b\in O(n+1)$ denotes reflection through the hyperplane perpendicular to $b\in \mathbb{S}^n$. Note that both conditions \eqref{2nd.bdry.1} and \eqref{2nd.bdry.2} are preserved by the gradient flow of the energies $E_{\epsilon}$, since $E_{\epsilon}$ is invariant under the action of $O(n+1)$.

This definition is motivated by Nadirashvili's computation of $\Lambda_2(\mathbb{S}^2)$ in~\cite{NadirashviliS2}. The construction was later revisited in~\cite{Petrides, GNP, GL}, but always in the context of spheres or planar domains.

For $n\geq 2$ and $\epsilon>0$, setting $\Omega_{n,2}:=(\overline{B}^{n+1})^2$, we then define the second min-max energy
\begin{equation}
\mathcal{E}_{n,2,\epsilon}(M,g):=\inf_{F\in \Gamma_{n,2}(M)}\max_{(a,b)\in\Omega_{n,2}}E_{\epsilon}(F_{a,b}),
\end{equation}
and the limit
\begin{equation}
\mathcal{E}_{n,2}(M):=\sup_{\epsilon>0}\mathcal{E}_{n,2,\epsilon}(M)=\lim_{\epsilon\to 0}\mathcal{E}_{n,2,\epsilon}(M).
\end{equation}
As with the first min-max energy $\mathcal{E}_n(M,g)$ defined in Section \ref{first.min-max}, it is easy to see that the second limiting min-max energy 
$$\mathcal{E}_{n,2}(M,g)=\mathcal{E}_{n,2}(M,[g])$$
is a conformal invariant. In what follows, we will show that it gives an upper bound for (half of) the conformal supremum $\Lambda_2(M,[g])$ of the second Laplacian eigenvalue
\begin{equation}
\Lambda_2(M,[g]):=\sup_{g\in [g]}\bar\lambda_2(M,g).
\end{equation}

\begin{proposition}\label{lambda2.below} If $\area(M,g)=1$, then 
$$2\mathcal{E}_{n,2,\epsilon}(M,g)\geq (1-2\epsilon\mathcal{E}_{n,2,\epsilon}^{1/2})\lambda_2(M,g),$$
and in particular
\begin{equation}
2\mathcal{E}_{n,2}(M,g)\geq \Lambda_2(M,[g]).
\end{equation}

\end{proposition}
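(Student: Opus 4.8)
The plan is to run the argument of Proposition~\ref{lambda.below} with two changes: the one-parameter degree argument producing a zero-average map is replaced by a two-parameter argument adapted to the boundary conditions \eqref{2nd.bdry.1}--\eqref{2nd.bdry.2}, and the variational characterization of $\lambda_1$ is replaced by that of $\lambda_2$. Fix $g\in[g]$ with $\area(M,g)=1$ and $\epsilon>0$, let $F\in\Gamma_{n,2}(M)$ be arbitrary, and put $L:=\max_{(a,b)\in\Omega_{n,2}}E_\epsilon(F_{a,b})$. It suffices to prove
$$2L\ \geq\ (1-2\epsilon L^{1/2})\,\lambda_2(M,g):$$
the first inequality of the proposition then follows by choosing $F$ with $L$ arbitrarily close to $\mathcal{E}_{n,2,\epsilon}(M,g)$ and passing to the limit, while the second follows by letting $\epsilon\to 0$, using the conformal invariance of $\mathcal{E}_{n,2}$, and taking the supremum over unit-area metrics in $[g]$.

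The heart of the matter---this is where we follow (and streamline) Nadirashvili's construction of $\Lambda_2(\mathbb{S}^2)$---is the following topological assertion: there is a pair $(a_*,b_*)\in\Omega_{n,2}$ and a $3$-dimensional subspace $V\subset W^{1,2}(M)$ such that
$$\sup_{0\neq\phi\in V}\ \frac{\int_M|\nabla\phi|_g^2\,dv_g}{\int_M\phi^2\,dv_g}\ \leq\ \frac{2L}{1-2\epsilon L^{1/2}}.$$
Given this, the characterization of $\lambda_2(M,g)$ as the infimum of $\sup_{0\neq\phi\in V'}\big(\int_M|\nabla\phi|_g^2\,dv_g\big)/\big(\int_M\phi^2\,dv_g\big)$ over $3$-dimensional subspaces $V'\subset W^{1,2}(M)$ (with the convention $\lambda_0=0$) forces $\lambda_2(M,g)\leq 2L/(1-2\epsilon L^{1/2})$, which is the claim. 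To build $(a_*,b_*)$ and $V$ one sets $\Phi:=F_{a_*,b_*}$, partitions $M=M^+\sqcup M^-$ according to the sign of $\langle\Phi,b_*\rangle$, and assembles the competitors in $V$ out of the components of $\Phi$ over the two pieces $M^\pm$, after \emph{folding}---reflecting through the hyperplane $b_*^\perp$ to map $M^-$ onto $M^+$ and vice versa---and renormalizing. Here the $2(n+1)$ parameters play a double role: $a$ provides the global renormalization via the average map $a\mapsto\int_M F_{a,b}$, exactly as in Proposition~\ref{lambda.below}, while $b$ encodes both the fold direction and the renormalization of the two folded halves; the boundary condition \eqref{2nd.bdry.2} is precisely what makes these folding/renormalization operations extend continuously over the face $\{|b|=1\}$ of $\partial\Omega_{n,2}$. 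A degree argument on $\Omega_{n,2}$, using \eqref{2nd.bdry.1} on the face $\{|a|=1\}$ and \eqref{2nd.bdry.2} on the face $\{|b|=1\}$, then locates $(a_*,b_*)$ at which all the resulting test functions have vanishing $g$-mean, and from there the bookkeeping is identical to Proposition~\ref{lambda.below}: the reflection through $b_*^\perp$ is a linear isometry of $\mathbb{R}^{n+1}$, so Dirichlet energies are controlled by $2L$; the Poincar\'e inequality bounds $L^2$-norms below once the means vanish; and the Ginzburg-Landau potential together with $\area(M,g)=1$ produces the factor $(1-2\epsilon L^{1/2})$.

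I expect the topological assertion to be the main obstacle, and within it the step of producing a \emph{balanced} pair $(a_*,b_*)$---one at which the two folded maps can be simultaneously renormalized to have zero mean. This is exactly where previous versions of Nadirashvili's argument encountered the delicate ``uniqueness of the renormalizing point'' issue; the advantage of packaging everything into the single $2(n+1)$-parameter family $\Gamma_{n,2}(M)$ is that the pair $(a_*,b_*)$ can be extracted from one application of a Borsuk--Ulam/degree argument built on \eqref{2nd.bdry.1} and \eqref{2nd.bdry.2}, rather than by iterating two separate renormalization constructions. Secondary technical points---to be handled along the way---include verifying that $V$ is genuinely $3$-dimensional (i.e.\ that the folded maps are non-constant and not proportional), excluding or treating by hand the degenerate configuration in which one of $M^\pm$ is empty (so that $\langle\Phi,b_*\rangle$ is one-signed), and the routine mollification of the families in $\Gamma_{n,2}(M)$ needed to make the averages depend continuously on $(a,b)$, as in the proof of Proposition~\ref{vc.above}.
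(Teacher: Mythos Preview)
Your proposal takes a genuinely different route from the paper, and the route you sketch has a real gap.

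The paper's proof is considerably simpler and does not involve any folding or splitting of $M$. Fix a first Laplace eigenfunction $\phi_1$ on $(M,g)$ and, given $F\in\Gamma_{n,2}(M)$, consider the continuous map
\[
\mathcal I:\overline{B}^{n+1}\times\overline{B}^{n+1}\to\mathbb{R}^{2(n+1)},\qquad \mathcal I(a,b)=\Big(\int_M F_{a,b},\ \int_M \phi_1 F_{a,b}\Big).
\]
The boundary conditions \eqref{2nd.bdry.1}--\eqref{2nd.bdry.2} translate directly into symmetry conditions on $\mathcal I/|\mathcal I|$ restricted to $\partial([\overline B^{n+1}]^2)\cong\mathbb S^{2n+1}$, and a Lefschetz-type lemma (Lemma~\ref{degree.lem.2}) forces such a map to have nonzero degree. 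Hence $\mathcal I$ vanishes at some $(a_*,b_*)$, and each scalar component of $u:=F_{a_*,b_*}$ is $L^2$-orthogonal to both $1$ and $\phi_1$. The min-max characterization of $\lambda_2$ then gives $\int_M|du|^2\geq\lambda_2(M,g)\int_M|u|^2$ directly, and the rest is identical to Proposition~\ref{lambda.below}. No three-dimensional test space is built by hand; the auxiliary eigenfunction $\phi_1$ does that work.

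Your scheme, by contrast, tries to manufacture the three-dimensional test space from a domain decomposition $M=M^+\sqcup M^-$ determined by the sign of $\langle F_{a_*,b_*},b_*\rangle$. There is a conceptual mismatch here: the parameter $b$ in $\Gamma_{n,2}(M)$ is not a ``fold direction'' for the domain---the boundary condition \eqref{2nd.bdry.2} is an $O(n+1)$-equivariance of the \emph{target}, inherited from the $E_\epsilon$-invariance under linear reflections, and it says nothing about how $F_{a,b}$ partitions $M$. In particular, the sets $M^\pm$ you define depend on the \emph{values} of $F_{a_*,b_*}$ (an arbitrary $\mathbb R^{n+1}$-valued $W^{1,2}$ map), not on the parameter $b$ alone, so the degree argument you envisage on $\partial\Omega_{n,2}$ does not see them. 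You would still need to force the folded halves to have zero $\mu_g$-mean, and nothing in the boundary data gives you that; this is exactly the ``uniqueness of renormalizing point'' difficulty you flagged, and the $\Gamma_{n,2}$ packaging does not, by itself, resolve it. The paper sidesteps all of this by replacing the second fold-and-renormalize with the single linear constraint $\int_M\phi_1 F_{a,b}=0$, which \emph{does} interact cleanly with the $O(n+1)$-symmetry (both averaging maps commute with $\tau_b$), making the degree argument go through.
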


The proof follows much the same lines as that of Proposition \ref{lambda.below}, with the aid of the following topological lemma.

\begin{lemma}\label{degree.lem.2} Let 
$$\Phi: \mathbb{S}^{2n+1}\to \mathbb{S}^{2n+1}$$
be a self-map of $\mathbb{S}^{2n+1}$ satisfying
\begin{equation}\label{phi.bd.1}
\Phi(a,b)=(\frac{a}{|a|},0)\text{ when }|a|\geq|b|
\end{equation}
and
\begin{equation}\label{phi.bd.2}
\Phi(\tau_b(a),-b)=(\tau_b\times \tau_b)(\Phi(a,b))\text{ when }|b|>0,
\end{equation}
where $\tau_b\in O(n+1)$ denotes reflection through the hyperplane orthogonal to $b$.
Then $\Phi$ has nonzero degree $\deg(\Phi)\neq 0$.
\end{lemma}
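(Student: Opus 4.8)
The plan is to compute the degree of $\Phi$ by reducing to a boundary-value problem on $\overline{B}^{n+1}$ and then invoking a homotopy/degree argument. First I would set up coordinates: write $\mathbb{S}^{2n+1}\subset \mathbb{R}^{n+1}\times\mathbb{R}^{n+1}$ as pairs $(a,b)$ with $|a|^2+|b|^2=1$, and split the sphere into the two regions $R_a:=\{|a|\geq|b|\}$ and $R_b:=\{|b|\geq|a|\}$, which intersect in the ``equatorial'' set $\{|a|=|b|=\tfrac{1}{\sqrt{2}}\}$. By hypothesis \eqref{phi.bd.1}, $\Phi$ collapses all of $R_a$ onto the equatorial $\mathbb{S}^n\subset\mathbb{S}^{2n+1}$ given by $\{(u,0):|u|=1\}$, sending $(a,b)\mapsto (a/|a|,0)$. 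Consequently the degree of $\Phi$ is entirely carried by its behaviour on $R_b$, and the key point is that $R_b$ is homeomorphic to $\overline{B}^{n+1}\times\mathbb{S}^n$, with $\Phi|_{R_b}$ determined (via \eqref{phi.bd.2}) by its restriction to a fundamental domain for the $\mathbb{Z}_2$-action $(a,b)\mapsto(\tau_b(a),-b)$.

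Next I would make the degree computation explicit. Parametrize $R_b$ by the map sending $(x,b)\in \overline{B}^{n+1}\times\mathbb{S}^n$ to the point of $\mathbb{S}^{2n+1}$ whose $b$-component is $\sqrt{1-\tfrac{|x|^2}{?}}\,b$-type expression and whose $a$-component is a rescaling of $x$; the precise formula is chosen so that $|b\text{-part}|\geq|a\text{-part}|$ and so that the boundary $\partial\overline{B}^{n+1}\times\mathbb{S}^n$ maps onto the equatorial sphere $\{|a|=|b|\}$. Under this identification, $\Phi|_{R_b}$ becomes a map $\overline{B}^{n+1}\times\mathbb{S}^n\to\mathbb{S}^{2n+1}$ which, on the boundary $\mathbb{S}^n\times\mathbb{S}^n$, must agree with the collapsing map coming from $R_a$, i.e. with $(a,b)\mapsto(a/|a|,0)$ — a map of the equatorial $\mathbb{S}^n\times\mathbb{S}^n$ into $\mathbb{S}^{2n+1}$ that is essentially the ``first factor'' projection followed by the standard inclusion. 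I would then use the standard fact that $\deg(\Phi)$ can be read off from the degree of $\Phi$ restricted to a suitably chosen codimension-zero piece, together with the relative homology long exact sequence of the pair $(\mathbb{S}^{2n+1}, R_a)$: since $R_a$ is contractible (it deformation retracts onto the equatorial $\mathbb{S}^n$, which is further contractible within $R_a$, or at least $H_{2n+1}(R_a)=0$), the map $H_{2n+1}(\mathbb{S}^{2n+1})\to H_{2n+1}(\mathbb{S}^{2n+1}, R_a)$ is an isomorphism, and $\deg(\Phi)$ is computed by the induced map on $H_{2n+1}(\overline{B}^{n+1}\times\mathbb{S}^n,\ \partial\overline{B}^{n+1}\times\mathbb{S}^n)\cong \mathbb{Z}$.

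The heart of the argument, and what I expect to be the main obstacle, is showing that this induced map is an isomorphism onto $\mathbb{Z}$ rather than the zero map — i.e. that the symmetry condition \eqref{phi.bd.2} genuinely \emph{forces} nontrivial degree. Here I would argue by contradiction using an averaging/equivariance principle: the $\mathbb{Z}_2$-action $\iota(a,b)=(\tau_b(a),-b)$ on the domain corresponds under $\Phi$ to the action $(\tau_b\times\tau_b)$ on the target; if $\Phi$ had degree zero it would be equivariantly null-homotopic (relative to the boundary behaviour fixed by \eqref{phi.bd.1}), and one can derive a contradiction with a Borsuk–Ulam-type statement for the involution $\iota$ — concretely, the fixed-point set of $\iota$ is $\{(0,b):|b|=1\}\cong\mathbb{S}^n$, and tracing through \eqref{phi.bd.1}–\eqref{phi.bd.2} shows $\Phi$ restricted to this fixed sphere, composed with the collapsing data on $R_a$, realizes a generator of $\pi_n(\mathbb{S}^n)$, which obstructs the null-homotopy. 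An alternative, possibly cleaner route is to exhibit an explicit model map $\Phi_0$ satisfying \eqref{phi.bd.1}–\eqref{phi.bd.2} (for instance built from the join structure $\mathbb{S}^{2n+1}=\mathbb{S}^n * \mathbb{S}^n$ and the folding map associated with $\tau_b$) for which one can directly compute $\deg(\Phi_0)=\pm 1$ (or $\pm 2$), and then observe that the boundary conditions \eqref{phi.bd.1}–\eqref{phi.bd.2} define a \emph{connected} space of admissible maps up to homotopy, so that every admissible $\Phi$ is homotopic to $\Phi_0$ through admissible maps and hence has the same (nonzero) degree. I would pursue the explicit-model approach first, since it sidesteps the subtleties of equivariant obstruction theory and gives the sharp value of the degree for free.
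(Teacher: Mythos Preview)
Your setup decomposing $\mathbb{S}^{2n+1}$ into $R_a=\{|a|\geq|b|\}$ and $R_b=\{|b|\geq|a|\}$ is sound, and it is correct that the degree of $\Phi$ is carried by $R_b$, since $\Phi(R_a)$ lies in the $n$-dimensional subsphere $\{(u,0)\}$. However, the proposal does not actually prove the lemma: the part you yourself flag as ``the heart of the argument'' consists of two sketched routes, neither of which is carried through, and the first contains an error.

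Your Borsuk--Ulam route asserts that the fixed-point set of the involution $\iota(a,b)=(\tau_b(a),-b)$ is $\{(0,b):|b|=1\}$. This is false: $\iota(0,b)=(0,-b)\neq(0,b)$, and in fact $\iota$ is \emph{free} on all of $\{|b|>0\}$, since the second coordinate is always moved. (This freeness is precisely the structural fact one should exploit, but your argument does not use it.) Moreover, the implication ``$\deg\Phi=0\Rightarrow\Phi$ is equivariantly null-homotopic relative to the boundary data'' is unjustified and not true in general without additional input. Your alternative explicit-model route requires showing that the space of maps satisfying \eqref{phi.bd.1}--\eqref{phi.bd.2} is path-connected; you assert this but do not argue it, and it is not obviously easier than the lemma itself.

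The paper's proof takes a completely different and much shorter route: it applies the Lefschetz--Hopf fixed-point theorem to $\overline{\Phi}=-\Phi$. After a perturbation (preserving the symmetry \eqref{phi.bd.2}) to make the fixed-point set of $\overline{\Phi}$ transverse, one observes that $\{x:\Phi(x)=-x\}$ lies in $\{|b|>0\}$ (on $R_a$ one checks directly that $(a/|a|,0)\neq -(a,b)$) and is invariant under the \emph{free} involution $\iota$, hence has even cardinality. The Lefschetz formula on an odd-dimensional sphere then gives $\deg(\overline{\Phi})\equiv \#\mathrm{Fix}(\overline{\Phi})+1\equiv 1\pmod 2$, so $\deg\Phi$ is odd.
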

\begin{proof} The idea is to show that $\Phi$ has odd degree, by applying the Lefschetz fixed-point theorem to the map $\overline{\Phi}=-\Phi$. Thus, for a suitable perturbation of $\Phi$ preserving the relevant symmetries, we are interested in understanding the structure of the set
$$\mathcal{F}_-(\Phi):=\{(a,b)\in \mathbb{S}^{2n+1}\mid \Phi(a,b)=-(a,b)\}$$
comprising the fixed points of $\overline{\Phi}=-\Phi$. Our arguments are closely modeled on the proof of Claim 3 in \cite{Petrides}, suitably modified to fit our situation.

As a first step, we claim that we may take $\Phi$ to be smooth without loss of generality. To begin, note that we may deform $\Phi$ via a simple mollification procedure to a smooth map $\Phi_1\in C^{\infty}(\mathbb{S}^{2n+1},\mathbb{S}^{2n+1})$ such that
$$\Phi_1(a,b)=\Phi(a,b)=\frac{(a,0)}{|a|}\text{ on }\{|a|\geq \frac{\sqrt{3}}{2}\}\Subset \{|a|\geq |b|\}$$
and
\begin{equation}\label{phi1.c0}
\|\Phi_1-\Phi\|_{C^0}<\delta
\end{equation}
for $\delta>0$ arbitrarily small. Then, note that $\Phi_1$ automatically satisfies the symmetry \eqref{phi.bd.2} on $\{1>|a|\geq \frac{\sqrt{3}}{2}\}$, while in general for $|b|>0$, it follows from \eqref{phi.bd.2} and \eqref{phi1.c0} that
$$|\Phi_1(a,b)-(\tau_b\times\tau_b)(\Phi_1(a,b))|\leq 2\delta\text{ whenever }|b|>0.$$
In particular, provided $\delta<1$, we obtain a well-defined smooth map $\Phi_2\in C^{\infty}(\mathbb{S}^{2n+1},\mathbb{S}^{2n+1})$ by setting
$$\Phi_2(a,b):=\frac{\Phi_1(a,b)+(\tau_b\times\tau_b)(\Phi_1(a,b))}{|\Phi_1(a,b)+(\tau_b\times \tau_b)(\Phi_1(a,b))|}\text{ for }|b|>0$$
and $\Phi_2(a,0)=\Phi_1(a,0)=\frac{(a,0)}{|a|}$. The map $\Phi_2$ then satisfies 
$$\Phi_2(a,b)\equiv \frac{(a,0)}{|a|}\text{ for }|a|\geq \frac{\sqrt{3}}{2}$$
as well as the symmetry \eqref{phi.bd.2}. Moreover, by choosing $\delta>0$ sufficiently small in \eqref{phi1.c0}, it is clear that $\Phi_2$ must be $C^0$-close--and in particular, homotopic--to the original map $\Phi$.

So, suppose now that $\Phi\in C^{\infty}(\mathbb{S}^{2n+1},\mathbb{S}^{2n+1})$ is a smooth map satisfying the symmetry \eqref{phi.bd.2} as well as
\begin{equation}\label{phi.bd.1.2}
\Phi(a,b)\equiv \frac{(a,0)}{|a|}\text{ for }|a|\geq \frac{\sqrt{3}}{2}.
\end{equation}
We wish to deform $\Phi$ to a map $\Psi\in C^{\infty}(\mathbb{S}^{2n+1},\mathbb{S}^{2n+1})$ satisfying a transversality condition on the set
\begin{equation}\label{trans.map}
\mathcal{F}_-(\Psi):=\{(a,b)\in \mathbb{S}^{2n+1}\mid \Psi(a,b)=-(a,b)\},
\end{equation}
while continuing to satisfy the symmetry \eqref{phi.bd.2} near $\mathcal{F}_-(\Psi)$. Namely, to apply the Lefschetz fixed point theorem to the map $\overline{\Psi}=-\Psi$, we need to ensure the nondegeneracy of the linear map
$$(d\Psi_x+I): \mathcal{H}_x\to \mathcal{H}_x$$
at each point $x\in \mathcal{F}_-(\Psi)$, where we denote by $\mathcal{H}_x=T_x\mathbb{S}^{2n+1}$ the hyperplane in $\mathbb{R}^{2n+2}$ perpendicular to $x$. To this end, following \cite{Petrides}, we write our map $\Phi: \mathbb{S}^{2n+1}\to \mathbb{S}^{2n+1}$ as
$$\Phi(x)=X_{\Phi}(x)+\eta_{\Phi}(x) x,$$
where $\langle X_{\Phi}(x),x\rangle=0$. Note that a point $x\in\mathcal{F}_-(\Phi)$ is characterized by the conditions $X_{\Phi}(x)=0$ and $\eta_{\Phi}(x)=-1$, and it's easy to see that
$$d\Phi_x+I=d(X_{\Phi})_x\text{ for }x\in \mathcal{F}_-(\Phi).$$

Now, define for each $k=0,\ldots,2n+1$ and $\alpha>0$ the set
$$\mathcal{C}_k^{\alpha}:=\{(a,b)\in \mathbb{S}^{2n+1}\mid |b|\geq \alpha,\text{ }\langle b, e_k\rangle \geq \alpha|b|\},$$
where we denote by $e_0,\ldots,e_n$ the standard unit vectors in $\mathbb{R}^{n+1}$. By choosing $\alpha=\alpha(n)\in (0,1/8)$ sufficiently small, we may arrange that
$$\bigcup_{j=0}^{2n+1}\mathcal{C}_j^{2\alpha}\cup(-\mathcal{C}_j^{2\alpha})=\{(a,b)\in \mathbb{S}^{2n+1}\mid |b|\geq 2\alpha\}.$$
Starting from the vector field $X_0:=X_{\Phi}$ on $\mathbb{S}^{2n+1}$, note (appealing, as in \cite{Petrides}, to Sard's theorem in appropriate coordinate charts) that one may easily deform $X_0$ via a perturbation supported in $\mathcal{C}_0^{\alpha}$ to a smooth vector field $X_1$ which is transverse to the zero section in $\mathcal{C}_0^{2\alpha}\Subset \mathcal{C}_0^{\alpha}$. We may then define $X_1$ on $-\mathcal{C}_0^{\alpha}$ by setting
$$X_1(a,b)=(\tau_b\times \tau_b)(X_1(\tau_b(a),-b))\text{ for }(a,b)\in \mathcal{C}_0^{\alpha},$$
and set $X_1=X_0\text{ on }\mathbb{S}^{2n+1}\setminus [\mathcal{C}_0^{\alpha}\cup -\mathcal{C}_0^{\alpha}]$, to obtain a new, smooth tangent vector field on $\mathbb{S}^{2n+1}$ such that 
$$X_1=X_0\text{ on }\mathbb{S}^{2n+1}\setminus [\pm \mathcal{C}_0^{\alpha}],$$
$$X_1\text{ is transverse to the zero section in }\pm \mathcal{C}_0^{2\alpha},$$
and
$$X_1(\tau_b(a),-b)=(\tau_b\times \tau_b)(X_1(a,b))\text{ when }|b|>0,$$
and we may also ask that $X_1$ remains arbitrarily close to $X_0=X_{\Phi}$ in $C^1$.
Repeating the process, we obtain inductively a sequence of tangent vector fields $X_1,\ldots, X_{2n+2}$ such that
$$X_{j+1}=X_j\text{ on }\mathbb{S}^{2n+1}\setminus [\pm \mathcal{C}_j^{\alpha}],$$
$$X_{j+1}\text{ is transverse to the zero section on }\bigcup_{k=0}^j[\pm \mathcal{C}_k^{2\alpha}],$$
$$X_{j+1}(\tau_b(a),-b)=(\tau_b\times \tau_b)(X_{j+1}(a,b))\text{ when }|b|>0,$$
and
$$\|X_{j+1}-X_j\|_{C^1}<\epsilon_j$$
for some $\epsilon_j>0$ which we can take arbitrarily small. Finally, provided each $\epsilon_j>0$ is taken sufficiently small, we define a map $\Psi\in C^{\infty}(\mathbb{S}^{2n+1},\mathbb{S}^{2n+1})$ by
$$\Psi(a,b):=\frac{X_{2n+2}(a,b)+\eta(a,b)(a,b)}{|X_{2n+2}(a,b)+\eta(a,b)|},$$
and readily check that this map satisfies the symmetry
\begin{equation}\label{same.sym}
\Psi(\tau_b(a),-b)=(\tau_b\times\tau_b)(\Psi(a,b))\text{ for $|a|<1$}
\end{equation}
as well as the transversality condition
$$(d\Psi_x+I): \mathcal{H}_x\to \mathcal{H}_x\text{ is invertible for every }x\in \mathcal{F}_-(\Psi),$$
where we use that $\mathcal{F}_-(\Psi)\cap \{|a|=1\} = \varnothing$.
Therefore, $\overline{\Psi}=-\Psi$ satisfies the desired transversality condition at its fixed point set.
Since $\Psi$ is $C^1$-close to the map $\Phi$, it's evidently homotopic to $\Phi$, so once we've shown that $\Psi$ is homotopically nontrivial, we'll complete the proof of the lemma.

Finally, it is an easy consequence of the Lefschetz-Hopf fixed point theorem that for a self-map $\overline{\Psi}$ of the sphere $\mathbb{S}^{2n+1}$ satisfying the natural transversality condition at its fixed point set $Fix(\overline{\Psi})=\{x\in \mathbb{S}^{2n+1}\mid \overline{\Psi}(x)=x\}$, the degree $\deg(\overline{\Psi})$ satisfies
\begin{equation}\label{lefschetz.cor}
\deg(\overline{\Psi})\equiv \# Fix(\overline{\Psi})+1\text{ }\mod 2.
\end{equation}
Now, taking $\overline{\Psi}=-\Psi$ for the map $\Psi: \mathbb{S}^{2n+1}\to \mathbb{S}^{2n+1}$ obtained above, it follows from the symmetry \eqref{same.sym} that $(a,b)\in Fix(\overline{\Psi})$ if and only $(\tau_b\times \tau_b)(a,b)=(\tau_b(a),-b)\in Fix(\overline{\Psi})$ as well. In particular, it follows that $\# Fix(\overline{\Psi})$ must be even, so by \eqref{lefschetz.cor}, $\overline{\Psi}$ must have odd degree. Thus, $\Psi$ and our initial map $\Phi$ must have odd degree as well, and in particular must be homotopically nontrivial.

\end{proof}

With this topological lemma in place, we turn now to the proof of Proposition \ref{lambda2.below}.

\begin{proof}[Proof of Proposition \ref{lambda2.below}]

Suppose $\area(M,g)=1$, and let 
$$[\overline{B}^{n+1}]^2\ni (a,b)\mapsto F_{a,b}\in W^{1,2}(M,\mathbb{R}^{n+1})$$
be a family in $\Gamma_{n,2}(M)$. Now, let $\phi_1$ be an eigenfunction for the Laplacian $\Delta_g$ on $(M,g)$ corresponding to the first (nonzero) eigenvalue, and consider the map $\mathcal{I}: [\overline{B}^{n+1}]^2\to \mathbb{R}^{2(n+1)}$ given by
$$\mathcal{I}(a,b):=\left(\int_M F_{a,b},\int_M \phi_1 F_{a,b}\right).$$
We claim now that $\mathcal{I}(a,b)=0$ for some $(a,b)\in [\overline{B}^{n+1}]^2$. 

Suppose, to the contrary, that $\mathcal{I}$ is nowhere vanishing; then we may define a continuous map 
$$P: [\overline{B}^{n+1}]^2\to \mathbb{S}^{2n+1};\text{ }P(a,b):=\frac{\mathcal{I}(a,b)}{|\mathcal{I}(a,b)|}.$$
In particular, restricting $P$ to the boundary of $[\overline{B}^{n+1}]^2$ and identifying the boundary with $\mathbb{S}^{2n+1}$ in the obvious way, we see that the resulting map 
\begin{equation}\label{phi.map.def}
\Phi: \mathbb{S}^{2n+1}\to \mathbb{S}^{2n+1}\text{; }\Phi(a,b):=P\left(\frac{(a,b)}{\max\{|a|,|b|\}}\right)
\end{equation}
must be homotopically trivial. 

On the other hand, since $F\in \Gamma_{n,2}(M)$, it follows from \eqref{2nd.bdry.1} and \eqref{2nd.bdry.2} and the definition of $\mathcal{I}$ that 
$$P(a,b)=(a,0)\text{ for }(a,b)\in \mathbb{S}^n\times \overline{B}^{n+1}$$
and
$$P(\tau_b(a),-b)=(\tau_b\times \tau_b)(P(a,b)),$$
since the averaging maps $W^{1,2}(M,\mathbb{R}^{n+1})\to \mathbb{R}^{n+1}$ given by $u\mapsto \int_Mu$ and $u\mapsto \int_M\phi_1 u$ commute with linear transformations of $\mathbb{R}^{n+1}$. In particular, we easily deduce that the map $\Phi: \mathbb{S}^{2n+1}\to \mathbb{S}^{2n+1}$ given by \eqref{phi.map.def} satisfies the hypotheses of Lemma \ref{degree.lem.2}, and therefore must be homotopically \emph{nontrivial}, by the lemma. Thus, we see that the map $\mathcal{I}:[\overline{B}^{n+1}]^2\to \mathbb{R}^{2(n+1)}$ must have a zero somewhere.

We've now shown that for any family $F\in \Gamma_{n,2}(M)$, there exists some $(a,b)\in \overline{B}^{2(n+1)}$ for which the map $u=F_{a,b}\in W^{1,2}(M,\mathbb{R}^{n+1})$ satisfies
$$\int_Mu=\int_M\phi_1u=0\in \mathbb{R}^{n+1}.$$
That is, each scalar component $u^j$ of $u=F_{a,b}$ is $L^2$ orthogonal to $1$ and $\phi_1$, from which it follows that
$$\int_M |du^j|^2\geq \lambda_2(M,g)\int_M (u^j)^2\text{ for each }j=0,\ldots,n,$$
where $\lambda_2(M,g)$ denotes the second nontrivial eigenvalue of the Laplacian $\Delta_g$. In particular, summing from $j=0$ to $n$, we have the lower bound
\begin{equation}\label{ener.eigen.bd}
\int_M |du|^2\geq \lambda_2(M,g)\int_M |u|^2
\end{equation}
for the full energy of the map $u=F_{a,b}$. On the other hand, recalling the definition of the functionals $E_{\epsilon}$, we see that
\begin{eqnarray*}
\int_M|u|^2&\geq &1-\int_M|1-|u|^2|^2\\
&\geq &1-2\epsilon E_{\epsilon}(u)^{1/2}\\
&\geq &1-2\epsilon [\max_{(a,b)}E_{\epsilon}(F_{a,b})]^{1/2},
\end{eqnarray*}
and combining this with the preceding estimate gives
\begin{eqnarray*}
2\max_{a,b}E_{\epsilon}(F_{a,b})&\geq &\int_M |du|^2\\
&\geq &\lambda_2(M,g)(1-2\epsilon [\max_{a,b}E_{\epsilon}(F_{a,b})]^{1/2}.
\end{eqnarray*}

Applying the preceding inequality to a sequence of families $F^j\in \Gamma_{n,2}(M)$ with 
$$\max_{(a,b)}E_{\epsilon}(F^j_{a,b})\to \mathcal{E}_{n,2,\epsilon}(M,g),$$
we obtain the desired estimate
$$2\mathcal{E}_{n,2,\epsilon}(M,g)\geq(1-2\epsilon \mathcal{E}_{n,2,\epsilon}(M,g)^{1/2})\lambda_2(M,g).$$
Moreover, recalling that 
$$\mathcal{E}_{n,2}(M,g)=\sup_{\epsilon>0}\mathcal{E}_{n,2,\epsilon}(M,g)$$
is a conformal invariant, taking the limit as $\epsilon\to 0$ yields the bound
\begin{equation}
2\mathcal{E}_{n,2}(M,g)\geq \Lambda_2(M,[g]),
\end{equation}
completing the proof of the proposition.

\end{proof}

Next, we use a variant of a construction of Nadirashvili~\cite{NadirashviliS2} to provide uniform upper bounds for the min-max energies $\mathcal{E}_{n,2,\epsilon}(M,g)$ as $\epsilon\to 0$, giving the finiteness of the limiting min-max energies $\mathcal{E}_{n,2}(M)$.

\begin{proposition}\label{e2.upper}
For any conformal class $[g]$ on $M$, we have the upper bound
$$\mathcal{E}_{n,2}(M,[g]):=\sup_{\epsilon>0}\mathcal{E}_{n,2,\epsilon}(M,g)\leq 2V_c(n,M,[g]).$$
\end{proposition}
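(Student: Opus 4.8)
The plan is to mimic the proof of Proposition~\ref{vc.above}. Fix a branched conformal immersion $\phi\colon M\to\mathbb{S}^n$. I would construct a weakly continuous family $(a,b)\mapsto F_{a,b}\in W^{1,2}(M,\mathbb{S}^n)$ over $[\overline{B}^{n+1}]^2$ obeying the boundary conditions \eqref{2nd.bdry.1}--\eqref{2nd.bdry.2} together with the uniform energy bound $E(F_{a,b})\le 2V_c(n,\phi)$; then mollify it by the heat kernel so that the resulting family lies in $\Gamma_{n,2}(M)$; and finally take the infimum over $\phi$ and the supremum over $\epsilon$.

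For the model family I would use a variant of Nadirashvili's construction~\cite{NadirashviliS2} (cf.~\cite{Petrides,GNP,GL}). The parameter $a\in\overline{B}^{n+1}$ plays the same role as in Proposition~\ref{vc.above}: it dilates a ``main copy'' of $\phi$ via the conformal automorphism $G_a$, which degenerates to the constant $a$ as $|a|\to 1$, giving \eqref{2nd.bdry.1}. The second parameter $b$ governs a second bubble: writing $b=|b|\hat b$ with $\hat b\in\mathbb{S}^n$, the ``radius'' $|b|\in[0,1]$ interpolates between no bubble at $|b|=0$ (where $F_{a,0}=G_a\circ\phi$) and, at $|b|=1$, a configuration consisting of a copy of $G_a\circ\phi$ together with a reflected copy $\tau_{\hat b}\circ G_{\tau_{\hat b}(a)}\circ\phi$ of $\phi$; the reflection $\tau_{\hat b}$ is precisely what is needed for \eqref{2nd.bdry.2} to hold on $\{|b|=1\}$. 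Since at every parameter $F_{a,b}(M)$ is covered by at most two conformally transformed copies of $\phi(M)$, one obtains
$$E(F_{a,b})\le E(G_a\circ\phi)+E\big(G_{\tau_{\hat b}(a)}\circ\phi\big)\le 2V_c(n,\phi),$$
each term being $\le V_c(n,\phi)$ because composing the maps $G_c$ stays inside the conformal group of $\mathbb{S}^n$, on which the supremum of areas pulled back through $\phi$ is already attained over $O(n+1)\cdot\{G_c\}$.

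The remaining steps copy Proposition~\ref{vc.above} almost verbatim. The family $(a,b)\mapsto F_{a,b}$ is only weakly continuous in $W^{1,2}$ near the boundary, so I would set $F^t_{a,b}:=\Phi^t F_{a,b}$, the heat flow for time $t>0$. Then $F^t$ is continuous as a map into $C^1\subset W^{1,2}$; it still satisfies \eqref{2nd.bdry.1}--\eqref{2nd.bdry.2}, since $\Phi^t$ fixes constants and commutes with the $O(n+1)$-action on the target; and $E(F^t_{a,b})\le E(F_{a,b})\le 2V_c(n,\phi)$ because the heat flow decreases the Dirichlet energy. A compactness argument identical to the one in Proposition~\ref{vc.above} shows that $\delta(t):=\max_{(a,b)}\int_M(1-|F^t_{a,b}|^2)^2\to 0$ as $t\to 0$, so $F^t\in\Gamma_{n,2}(M)$ and $\max_{(a,b)}E_\epsilon(F^t_{a,b})\le 2V_c(n,\phi)+\tfrac{1}{4\epsilon^2}\delta(t)$. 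Letting $t\to 0$ gives $\mathcal{E}_{n,2,\epsilon}(M,g)\le 2V_c(n,\phi)$, and taking $\inf_\phi$ followed by $\sup_{\epsilon>0}$ yields $\mathcal{E}_{n,2}(M,[g])\le 2V_c(n,M,[g])$.

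The main obstacle is the construction of the model family itself: one must pinch off the second bubble as $|b|\to 1$ so that, simultaneously, the reflection symmetry \eqref{2nd.bdry.2} holds \emph{exactly}, the Dirichlet energy stays $\le 2V_c(n,\phi)$ \emph{uniformly} along the family and not merely in the limit, and the whole family remains weakly continuous everywhere---in particular at the corner $\{|a|=1\}\cap\{|b|=1\}$ and along the locus where the two bubbles would otherwise collide. This is the delicate core of Nadirashvili's argument, and the point at which the authors' streamlined version of it (which sidesteps the question of uniqueness of a renormalizing point) does the real work; the heat-kernel mollification and the $\inf_\phi$ / $\sup_\epsilon$ steps are then routine.
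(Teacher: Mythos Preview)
Your outline is correct and matches the paper: build a weakly continuous family satisfying \eqref{2nd.bdry.1}--\eqref{2nd.bdry.2} with uniform energy bound $2V_c(n,\phi)$, mollify by the heat kernel (which fixes constants and commutes with the $O(n+1)$-action on the target, hence preserves both boundary conditions), then take $t\to 0$, $\inf_\phi$, $\sup_\epsilon$. Those later steps are indeed routine and carried out exactly as you describe.

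The gap is precisely where you place it: you do not actually construct the model family, and the bubble-gluing picture you sketch is harder than necessary. The paper sidesteps every difficulty you list by a much simpler device. Rather than inserting a second bubble into $M$, one defines a \emph{fold map} $T_b:\mathbb{S}^n\to\mathbb{S}^n$ on the target for each $b\in\overline{B}^{n+1}$: set $T_b=\mathrm{Id}$ on the spherical cap $C_b=\{x\in\mathbb{S}^n:\langle x,b\rangle\le|b|-|b|^2\}$, and on the complementary cap let $T_b=Rf_b$ be the unique conformal reflection onto $C_b$ fixing $\partial C_b$ pointwise. Then take $F_{a,b}:=G_a\circ T_b\circ\phi$. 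At $b=0$ the cap $C_0$ is all of $\mathbb{S}^n$, so $T_0=\mathrm{Id}$ and $F_{a,0}=G_a\circ\phi$; at $|b|=1$ the cap is a hemisphere and $T_b$ acts by the linear reflection $\tau_b$ on the opposite hemisphere, whence \eqref{2nd.bdry.2} follows from the identity $G_{\tau_b(a)}\circ T_{-b}=\tau_b\circ G_a\circ T_b$. The energy bound is immediate: splitting the domain into $\phi^{-1}(C_b)$ and its complement gives
\[
E(F_{a,b})\le E(G_a\circ\phi)+E(G_a\circ Rf_b\circ\phi)\le 2V_c(n,\phi),
\]
since $G_a\circ Rf_b$ is again a conformal automorphism of $\mathbb{S}^n$. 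There is no gluing, no bubble collision, and no renormalizing point; the family is a single explicit formula, manifestly weakly continuous everywhere including at the corner $\{|a|=1\}\cap\{|b|=1\}$.
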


\begin{proof} Similar to the proof of Proposition \ref{vc.above}, we'll first construct a weakly continuous family of conformal maps from $M$ to $\mathbb{S}^n$ satisfying the requisite symmetry assumptions and desired energy bounds, then produce strongly continuous approximations $M\to \mathbb{R}^{n+1}$ via mollification.

To this end, consider the family of maps $B^{n+1}\ni a \mapsto T_a\in Lip(\mathbb{S}^n,\mathbb{S}^n)$ defined as follows: 
$$T_a=\mathrm{Id}\text{ on the spherical cap }C_a:=\{x\in \mathbb{S}^n\mid \langle x,a\rangle \leq |a|-|a|^2\}$$
and on the complementary cap $\mathbb{S}^n\setminus C_a$, $T_a$ is the unique conformal reflection
$$T_a=Rf_a: \mathbb{S}^n\setminus C_a\to C_a$$
which acts as the identity on the boundary $\partial C_a$. Note that
$$T_0\equiv\mathrm{ Id} :\mathbb{S}^n\to \mathbb{S}^n,$$
since $C_0=\mathbb{S}^n$ is the whole sphere, and when $|a|=1$, note that $C_a$ defines the hemisphere opposite $a\in \mathbb{S}^n$, and $T_a$ acts on $\mathbb{S}^n\setminus C_a$ by linear reflection $\tau_a$ through the hyperplane perpendicular to $a$. 

Now, as in Proposition \ref{vc.above}, we consider the family of conformal maps
$$B^{n+1}\ni \xi \mapsto G_{\xi}(x):=\frac{(1-|\xi|^2)}{|x+\xi|^2}(x+\xi)+\xi,$$
and we define a new family
$$[B^{n+1}]^2\ni (a,b)\mapsto \Upsilon_{a,b}\in Lip(\mathbb{S}^n,\mathbb{S}^n)$$
by the composition
\begin{equation}
\Upsilon_{a,b}:=G_a\circ T_b
\end{equation}
of the two $(n+1)$-parameter families.

For $n\geq 2$, fix now a branched conformal immersion
$$\phi: M\to \mathbb{S}^n$$
from our Riemann surface $(M^2,g)$ into $\mathbb{S}^n$, and consider the family of maps
\begin{equation}\label{ups.def}
[B^{n+1}]^2\ni (a,b)\mapsto F_{a,b}:= \Upsilon_{a,b}\circ \phi: M\to \mathbb{S}^n.
\end{equation}
Though the family $(a,b)\mapsto F_{a,b}$ will not define a strongly continuous family in $W^{1,2}(M,\mathbb{S}^n)$ (indeed, we expect $F_{a,b}$ to exhibit some energy concentration both as $|a|\to 1$ and as $b\to 0$), it is not difficult to see that the energy $E(F_{a,b})$ can be bounded above in terms of the conformal volume $V_c(n,\phi)$. Indeed, it follows from the definition of the maps $\Upsilon_{a,b}$ that
\begin{eqnarray*}
E(F_{a,b})&=&\int_{\phi^{-1}(C_b)}\frac{1}{2}|d(G_a\circ \phi)|^2+\int_{\phi^{-1}(\mathbb{S}^n\setminus C_b)}\frac{1}{2}|d(G_a\circ Rf_b\circ \phi)|^2\\
&\leq & E(G_a\circ \phi)+E(G_a\circ Rf_b\circ \phi)\\
&\leq & V_c(n,\phi)+V_c(n,\phi),
\end{eqnarray*}
since $G_a$ and $G_a\circ Rf_b$ are both conformal automorphisms of $\mathbb{S}^n$ (unless $|a|=1$, in which case $F_{a,b}\equiv a$ is constant). In particular, it follows that
\begin{equation}\label{f.bd.1}
\sup_{(a,b)\in [B^{n+1}]^2}E(F_{a,b})\leq 2V_c(n,\phi).
\end{equation}
Moreover, note that $G_a\equiv a$ when $|a|=1$, and when $|b|=1$, it follows from the definition of $G_a$ and $T_b$ that
$$G_{\tau_b(a)}\circ T_{-b}=G_{\tau_b(a)}\circ \tau_b\circ T_b=\tau_b\circ G_a\circ T_b$$
for any $a\in B^{n+1}$. Hence, by definition of $F_{a,b}$, we have
\begin{equation}\label{pre.fam.sym}
F_{a,b}\equiv a\text{ if }|a|=1\text{ and }F_{a,b}=\tau_b\circ F_{\tau_b(a),-b}\text{ if }|b|=1.
\end{equation}

To produce families in $\Gamma_{n,2}(M)$ satisfying the desired energy bounds, we will once again mollify the weakly continuous family $F_{a,b}=\Upsilon_{a,b}\circ \phi$ to obtain strongly continuous families in $W^{1,2}(M,\mathbb{R}^{n+1})$ satisfying the same symmetries and energy bound. Namely, let $K_t(x,y)$ again denote the heat kernel on $(M,g)$, and denote by
$$\Phi^t: L^1(M,\mathbb{R}^{n+1})\to C^2(M,\mathbb{R}^{n+1})$$
the mollification map
$$(\Phi^tF)(x):=\int_MF(y)K_t(x,y)dy$$
for $t>0$. Then, letting $F\in C^0(\overline{B}^{2(n+1)}, L^1(M, \mathbb{S}^n))$ be a family of the form $F_{a,b}:=\Upsilon_{a,b}\circ \phi$ for some branched conformal immersion $\phi: M\to \mathbb{S}^n$, it's easy to see--as in the proof of Proposition \ref{vc.above}--that the mollified families
$$F^t_{a,b}:=\Phi^tF_{a,b}:=\Phi^t(\Upsilon_{a,b}\circ \phi)$$
define strongly continuous assignments $\overline{B}^{2(n+1)}\to W^{1,2}(M,\mathbb{R}^{n+1})$, and inherit from $F_{a,b}$ the symmetries \eqref{pre.fam.sym}. 

In particular, it follows that $F^t\in \Gamma_{n,2}(M)$ for each $t>0$. Since $F^t_{a,b}$ is obtained from the heat flow with initial data $F_{a,b}$, we also have the energy bound
$$\int_M \frac{1}{2}|dF^t_{a,b}|^2\leq \int_M \frac{1}{2}|dF_{a,b}|^2$$
while arguments identical to those in the proof of Proposition \ref{vc.above} show that
$$\lim_{t\to 0}\max_{(a,b)\in \overline{B}^{2(n+1)}}\int_M (1-|F^t_{a,b}|^2)^2=0.$$
Recalling that the initial family $F_{a,b}$ satisfies the energy bound \eqref{f.bd.1}, we deduce that, for any $\epsilon>0$,
\begin{eqnarray*}
\mathcal{E}_{n,2,\epsilon}(M,g)&\leq & \inf_{t>0}\max_{a,b}E_{\epsilon}(F^t_{a,b})\\
&\leq &\max_{a,b}\int_M\frac{1}{2}|dF_{a,b}|^2\\
&\leq &2V_c(n,\phi).
\end{eqnarray*}
Since the bound holds for arbitrary $\epsilon>0$, it follows that
$$\mathcal{E}_{n,2}(M,[g])=\sup_{\epsilon>0}\mathcal{E}_{n,2,\epsilon}(M,g)\leq 2V_c(n,\phi),$$
and taking the infimum over all branched conformal immersions $\phi: M\to \mathbb{S}^n$ gives the desired estimate
$$\mathcal{E}_{n,2}(M,[g])\leq 2V_c(n,M,[g]).$$

\end{proof}

\subsection{Existence of min-max harmonic maps}
\label{second.min-max:properties}
We have already seen in Proposition \ref{ps.fred.prop} that the functionals $E_{\epsilon}$ are $C^2$ functionals on the Hilbert space $W^{1,2}(M,\mathbb{R}^{n+1})$, satisfying the technical conditions needed to produce critical points with index bounds via classical min-max techniques. Moreover, since the functionals $E_{\epsilon}$ are invariant under the action of $O(n+1)$ on $\mathbb{R}^{n+1}$, we see that the collection of $(2n+2)$-parameter families $\Gamma_{n,2}(M)$ is preserved under the gradient flow of $E_{\epsilon}$, so we can again appeal to standard results in critical point theory (again, see \cite{Ghou}, Chapter 10) to arrive at the following existence result.

\begin{proposition}\label{2nd.ex.thm} For each $\epsilon>0$, there exists a nontrivial critical point $\Psi_{\epsilon}: M^2\to \mathbb{R}^{n+1}$ of $E_{\epsilon}$ on $(M,g)$, of energy
\begin{equation}
E_{\epsilon}(\Psi_{\epsilon})=\mathcal{E}_{n,2,\epsilon}(M,g)
\end{equation}
and Morse index
\begin{equation}
\ind_{E_{\epsilon}}(\Psi_{\epsilon})\leq 2n+2.
\end{equation}
\end{proposition}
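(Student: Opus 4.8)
The plan is to run the argument of Proposition~\ref{min.max} essentially verbatim, the only new point being a verification that the enlarged collection $\Gamma_{n,2}(M)$ of $2(n+1)$-parameter families is an admissible min-max class for the Ginzburg--Landau functionals $E_\epsilon$. Thus I would first recall from Proposition~\ref{ps.fred.prop} that $E_\epsilon\in C^2(W^{1,2}(M,\mathbb{R}^{n+1}))$, that $E_\epsilon''(u)$ is Fredholm at every critical point, and that $E_\epsilon$ satisfies the Palais--Smale condition; these are exactly the hypotheses under which the classical min-max theory with Morse-index bounds (as in Chapter~10 of \cite{Ghou}, or via the Marino--Prodi perturbation scheme) applies, yielding: for any homotopy-stable family of subsets of $W^{1,2}(M,\mathbb{R}^{n+1})$ of dimension $\le d$ which is preserved by a pseudo-gradient flow of $E_\epsilon$, the min-max value is a critical value attained by a critical point of Morse index $\le d$.

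Next I would check that $\Gamma_{n,2}(M)$ is such a family with $d=\dim\bigl([\overline{B}^{n+1}]^2\bigr)=2n+2$. The only issue requiring care is invariance under the gradient flow $\eta_t$ of $E_\epsilon$: given $F\in\Gamma_{n,2}(M)$, set $\tilde F_{a,b}:=\eta_t(F_{a,b})$. The boundary condition \eqref{2nd.bdry.1} is preserved because every constant map $u\equiv a$ with $|a|=1$ is a critical point of $E_\epsilon$ (indeed $E_\epsilon'(a)=-\epsilon^{-2}(1-|a|^2)a=0$), hence fixed by $\eta_t$. The boundary condition \eqref{2nd.bdry.2} is preserved because $E_\epsilon$ is invariant under the $O(n+1)$-action on $\mathbb{R}^{n+1}$, so $\eta_t$ may be chosen $O(n+1)$-equivariant, and then for $|b|=1$
$$\tilde F_{a,b}=\eta_t(F_{a,b})=\eta_t\bigl(\tau_b\circ F_{\tau_b(a),-b}\bigr)=\tau_b\circ\eta_t\bigl(F_{\tau_b(a),-b}\bigr)=\tau_b\circ\tilde F_{\tau_b(a),-b},$$
so $\tilde F\in\Gamma_{n,2}(M)$. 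Homotopy-stability relative to these boundary conditions is then automatic, and Proposition~\ref{lambda2.below} (via the degree computation of Lemma~\ref{degree.lem.2}) guarantees the class is topologically nontrivial, so that $\mathcal{E}_{n,2,\epsilon}(M,g)$ is genuinely attained as a critical value.

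Applying the abstract theorem then produces a critical point $\Psi_\epsilon$ of $E_\epsilon$ with $E_\epsilon(\Psi_\epsilon)=\mathcal{E}_{n,2,\epsilon}(M,g)$ and $\ind_{E_\epsilon}(\Psi_\epsilon)\le 2n+2$. For nontriviality I would invoke the two a priori bounds already proved: by Proposition~\ref{e2.upper} one has $\mathcal{E}_{n,2,\epsilon}\le 2V_c(n,M,[g])<\infty$ uniformly in $\epsilon$, so for $\epsilon$ small enough that $2\epsilon\mathcal{E}_{n,2,\epsilon}^{1/2}<1$, Proposition~\ref{lambda2.below} gives $\mathcal{E}_{n,2,\epsilon}\ge\tfrac12\bigl(1-2\epsilon\mathcal{E}_{n,2,\epsilon}^{1/2}\bigr)\lambda_2(M,g)>0$; in particular $\Psi_\epsilon$ is not one of the constant critical points $a\in\mathbb{S}^n$, and, since $\mathcal{E}_{n,2,\epsilon}<\area(M,g)/4\epsilon^2$ in this regime, $\Psi_\epsilon$ is not the zero map either, hence is a nontrivial critical point.

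The step I expect to be the main obstacle is the careful bookkeeping in the abstract min-max argument in this equivariant, boundary-constrained setting: one must ensure that the deformation lemma can be iterated while keeping the deformed families inside $\Gamma_{n,2}(M)$ (which is what the equivariance of $\eta_t$ and the fact that $\mathbb{S}^n\subset\mathbb{R}^{n+1}$ consists of critical points are for), and that the resulting Morse-index count is exactly the parameter dimension $2n+2$ rather than overshooting it. As in the proof of Proposition~\ref{min.max}, however, all of this is by now standard once the functional-analytic hypotheses of Proposition~\ref{ps.fred.prop} are in place.
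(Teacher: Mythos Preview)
Your proposal is correct and follows exactly the paper's approach: the paper simply observes (in the paragraph preceding the proposition) that $E_\epsilon$ satisfies the hypotheses of Proposition~\ref{ps.fred.prop}, that $\Gamma_{n,2}(M)$ is preserved by the gradient flow because $E_\epsilon$ is $O(n+1)$-invariant, and then appeals to Chapter~10 of \cite{Ghou}. Your write-up fills in precisely the details behind that sentence (the verification of both boundary conditions under the flow, and the nontriviality via Propositions~\ref{lambda2.below} and~\ref{e2.upper}); the only minor caveat is that your positivity argument for $\mathcal{E}_{n,2,\epsilon}$ is stated for $\epsilon$ small, whereas the proposition claims ``for each $\epsilon>0$''---but this is harmless for the paper's purposes, since only the limit $\epsilon\to 0$ is used downstream.
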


Finally, combining this basic existence result with Propositions \ref{lambda2.below} and \ref{e2.upper}, and invoking the bubbling analysis of Lemma \ref{bubble.business}, we take the limit of these maps as $\epsilon\to 0$, arriving at the following conclusion.

\begin{theorem}
\label{En2:thm}
 For any closed Riemannian surface $(M,[g])$ of positive genus and any $n\geqslant 2$, there exists a harmonic map $\Psi_n\colon M\to \mathbb{S}^n$ and harmonic maps $\phi_1,\ldots, \phi_k: M\to \mathbb{S}^n$ such that
$$
\frac{1}{2}\Lambda_2(M,[g])\leqslant \mathcal{E}_{n,2}(M,[g])=E(\Psi_n)+\sum_{j=1}^kE(\phi_j)\leqslant 2V_c(n,M,[g])
$$
and
$$\ind_E(\Psi)+\sum_{j=1}^k\ind_E(\phi_j)\leqslant 2n+2.$$
Moreover, if $n\geq 9$, then we have $k=0$ or $1$, and if $k=1$, then $\phi_1\colon \mathbb{S}^2\to \mathbb{S}^n$ is a totally geodesic embedding.

\end{theorem}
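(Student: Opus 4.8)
The plan is to follow the same scheme used for the first eigenvalue in the proof of Theorem~\ref{eigenvalue:thm}, the only genuinely new input being a short bookkeeping argument with energies and Morse indices. First I would invoke Proposition~\ref{2nd.ex.thm} to obtain, for each $\epsilon>0$, a critical point $\Psi_\epsilon$ of $E_\epsilon$ on $(M,g)$ with $E_\epsilon(\Psi_\epsilon)=\mathcal{E}_{n,2,\epsilon}(M,g)$ and $\ind_{E_\epsilon}(\Psi_\epsilon)\leq 2n+2$. Since $\epsilon\mapsto\mathcal{E}_{n,2,\epsilon}$ is nonincreasing and bounded above by $2V_c(n,M,[g])<\infty$ (Proposition~\ref{e2.upper}), we have $\lim_{\epsilon\to 0}E_\epsilon(\Psi_\epsilon)=\mathcal{E}_{n,2}(M,[g])<\infty$, so Lemma~\ref{bubble.business} applies with $m=2n+2$. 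Passing to a subsequence $\epsilon_j\to 0$, it produces a harmonic map $\Psi_n\colon M\to\mathbb{S}^n$ and nonconstant bubbles $\phi_1,\dots,\phi_k\colon\mathbb{S}^2\to\mathbb{S}^n$ with the energy identity $\mathcal{E}_{n,2}=E(\Psi_n)+\sum_{j=1}^k E(\phi_j)$ and the index bound $\ind_E(\Psi_n)+\sum_{j=1}^k\ind_E(\phi_j)\leq 2n+2$. The two-sided bound on $\mathcal{E}_{n,2}$ asserted in the theorem is then immediate from Propositions~\ref{lambda2.below} (lower) and~\ref{e2.upper} (upper), and this already proves the statement for $n\geq 2$.

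It remains to analyze the bubble tree when $n\geq 9$. First I would note that every bubble $\phi_j$ must be a totally geodesic embedding $\mathbb{S}^2\hookrightarrow\mathbb{S}^n$: otherwise Proposition~\ref{indES2:prop} (which applies for $n>8$) would give $\ind_E(\phi_j)>2(n+1)=2n+2$, contradicting the index bound above. In particular $E(\phi_j)=4\pi$ for each $j$, and by Proposition~\ref{indEM:prop} each bubble contributes at least $n-2$ to the total index (as does $\Psi_n$, whenever $\Psi_n$ is nonconstant).

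Next I would rule out a constant base map. If $\Psi_n$ were constant, the energy identity would give $\mathcal{E}_{n,2}=4\pi k$; but $M$ has positive genus, so Theorem~\ref{rigidity:thm} gives $\Lambda_1(M,[g])>8\pi$, whence $\Lambda_2(M,[g])\geq\Lambda_1(M,[g])+8\pi>16\pi$ by~\eqref{Lambdak:condition}, and the lower bound $\mathcal{E}_{n,2}\geq\tfrac12\Lambda_2(M,[g])>8\pi$ forces $k\geq 3$. For $k\geq 3$ the index bound reads $k(n-2)\leq 2n+2$, which fails once $n\geq 9$ since $3(n-2)>2n+2$ for $n>8$. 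Hence $\Psi_n$ is nonconstant, so $\ind_E(\Psi_n)\geq n-2$ by Proposition~\ref{indEM:prop}, and the index bound becomes $(k+1)(n-2)\leq 2n+2$; for $n\geq 9$ this again excludes $k\geq 2$ (because $3(n-2)>2n+2$), so $k\leq 1$, and if $k=1$ the unique bubble is totally geodesic by the previous step. This is exactly the final assertion.

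The substantive work is all borrowed: the bubble-convergence-with-index-semicontinuity of Lemma~\ref{bubble.business}, the sharp index gap of Proposition~\ref{indES2:prop}, and the energy bounds of Propositions~\ref{lambda2.below}--\ref{e2.upper}. The only delicate point in the new argument is the elimination of a constant base map, where the positive-genus hypothesis is genuinely used through Theorem~\ref{rigidity:thm}: on $\mathbb{S}^2$, where $\Lambda_2=16\pi$ is realized in the limit by a pair of equatorial spheres, a two-bubble configuration over a constant base map cannot be excluded, so one should expect the $n\geq 9$ refinement to fail without the positive-genus assumption.
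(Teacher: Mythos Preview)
Your proof is correct and follows essentially the same approach as the paper's. Both arguments combine Proposition~\ref{2nd.ex.thm} with Lemma~\ref{bubble.business} to get the energy identity and index bound, then use the inequality $3(n-2)>2(n+1)$ for $n\geq 9$ together with Propositions~\ref{indEM:prop} and~\ref{indES2:prop} to restrict the bubble tree, and finally invoke $\Lambda_2(M,[g])>16\pi$ (via Theorem~\ref{rigidity:thm} and~\eqref{Lambdak:condition}) to exclude the two-equatorial-bubble configuration. Your case analysis is organized slightly differently---you first show every bubble is totally geodesic, then separately exclude a constant base map---whereas the paper enumerates the three possible configurations in one step and kills the bad one; but the content is identical.
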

\begin{proof}
The proof is similar to the proof of Theorem~\ref{eigenvalue:thm}. The first part easily follows from Proposition~\ref{2nd.ex.thm} and Lemma~\ref{bubble.business}.
Assume $n\geqslant 9$.
Since $2(n+1)<3(n-2)$ for $n\geqslant 9$, Propositions~\ref{indEM:prop} and~\ref{indES2:prop} imply that one of the following three possibilities must hold: either $\Psi_n$ is constant, $k=2$, and $\phi_1,\phi_2$ are equatorial bubbles; or $k=1$, $\phi_1$ is an equatorial bubble; or $k=0$.

The first case, in which the energy $\mathcal{E}_{n,2}$ is achieved by two equatorial bubbles, can be ruled out using Theorem~\ref{rigidity:thm}. Indeed by~\eqref{Lambdak:condition} applied to $\Lambda_2(M,[g])$ one has $\Lambda_2(M,c)>16\pi$; thus, if $\mathcal{E}_{n,2}$ is achieved by two equatorial bubbles, then one has
$$
8\pi<\frac{1}{2}\Lambda_2(M,[g])\leqslant\mathcal E_{n,2} = E(\phi_1) + E(\phi_2) =  8\pi,
$$
which is a contradiction.
\end{proof}

\subsection{Stabilization for $\mathcal E_{n,2}$} 
\label{second.min-max:stabilization}
Similarly to Section~\ref{stabilization:sec} we will conclude that the inequality
\begin{equation}
\label{En2:bound}
\Lambda_2(M,[g])\leqslant 2\mathcal E_{n,2}(M,g)
\end{equation}
is an equality for large $n$.

The proof of the following proposition is identical to Proposition~\ref{monotoneEn:prop} 
\begin{proposition}
The quantity $\mathcal E_{n,2}(M,[g])$ is non-increasing in $n$.
\end{proposition}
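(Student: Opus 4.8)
The plan is to mimic the proof of Proposition~\ref{monotoneEn:prop}: given any $F\in\Gamma_{n,2}(M)$, we construct $\bar F\in\Gamma_{n+1,2}(M)$ with $\max_{(a,b)}E_\epsilon(\bar F_{a,b})\leqslant\max_{(a,b)}E_\epsilon(F_{a,b})$; taking the infimum over $F$ then gives $\mathcal E_{n+1,2,\epsilon}\leqslant\mathcal E_{n,2,\epsilon}$, and letting $\epsilon\to 0$ yields $\mathcal E_{n+1,2}\leqslant\mathcal E_{n,2}$. As in Proposition~\ref{monotoneEn:prop}, identify $\overline B^{n+1}$ with the equatorial disk $\{x_{n+2}=0\}\cap\overline B^{n+2}$ and $\mathbb S^{n}$ with the equator of $\mathbb S^{n+1}$, in each of the two parameter balls and in the target $\mathbb R^{n+1}\subset\mathbb R^{n+2}$; write $a=(a',s)$ and $b=(b',t)$ with $a',b'\in\mathbb R^{n+1}$, and let $\bar\tau_B\in O(n+2)$ denote reflection through the hyperplane perpendicular to $B\in\mathbb S^{n+1}$.

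First I would suspend in the $a$-variable exactly as in Proposition~\ref{monotoneEn:prop}. For $b'\in\overline B^{n+1}$ set
$$
F^{(1)}_{(a',s),\,b'}:=\Bigl(\sqrt{1-s^{2}}\,F_{\frac{a'}{\sqrt{1-s^{2}}},\,b'},\ s\Bigr)\ \ (s\neq\pm1),\qquad F^{(1)}_{(0,\pm1),\,b'}:=(0,\pm1).
$$
One checks directly, using $F\in\Gamma_{n,2}(M)$, the totally geodesic inclusion $\mathbb S^{n}\subset\mathbb S^{n+1}$, and the identity $\bar\tau_{(b',0)}=\tau_{b'}\times\mathrm{id}_{\mathbb R}$ for $b'\in\mathbb S^{n}$, that $F^{(1)}$ is a continuous family over $\overline B^{n+2}\times\overline B^{n+1}$ valued in $W^{1,2}(M,\mathbb R^{n+2})$, with $F^{(1)}_{A,b'}\equiv A$ when $|A|=1$ and $F^{(1)}_{A,b'}=\bar\tau_{(b',0)}\circ F^{(1)}_{\bar\tau_{(b',0)}A,\,-b'}$ when $|b'|=1$. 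The second step is the suspension in the $b$-variable: I would cone $F^{(1)}$ off toward the two new poles $b=(0,\pm1)$ of the $b$-ball, declaring $\bar F_{A,(0,\pm1)}$ to be the $a$-suspension of the central slice $F_{\cdot,0}$. Since $\bar\tau_{(0,\pm1)}$ acts on $\mathbb R^{n+2}=\mathbb R^{n+1}\times\mathbb R$ by negating the last coordinate, and the $a$-suspension formula above is even in $s$ in its $\mathbb R^{n+1}$-component and odd in $s$ in its last component, the symmetry \eqref{2nd.bdry.2} holds automatically at the poles; in between, for $b=(b',t)$ with $|b|=1$ the reflection $\bar\tau_b$ acts as a rotation of the $(b',x_{n+2})$-plane (composed with a reflection in $b'^{\perp}$), and the interpolating map defining $\bar F_{A,(b',t)}$ is chosen to incorporate exactly this rotation, so that \eqref{2nd.bdry.2} is preserved along the whole boundary sphere $\{|b|=1\}$. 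The condition \eqref{2nd.bdry.1} is inherited from the $a$-suspension as in Proposition~\ref{monotoneEn:prop}, and one sees as there that $\bar F$ is continuous in $(a,b)$.

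Finally, the energy estimate is the same computation as in Proposition~\ref{monotoneEn:prop}, using in addition that the reflections $\bar\tau_b$ are isometries of $\mathbb R^{n+2}$: each suspension multiplies the Dirichlet energy density $|du|_g^{2}$ by a constant $\lambda\in[0,1]$ (the scale factor is constant on $M$, hence contributes nothing to $|du|^2$) and replaces $(1-|u|^{2})^{2}$ by $\lambda^{2}(1-|u|^{2})^{2}\leqslant(1-|u|^{2})^{2}$, while post-composition with an element of $O(n+2)$ changes neither $|du|^2$ nor $|u|^2$; hence $E_\epsilon(\bar F_{a,b})\leqslant E_\epsilon(F_{\hat a,\hat b})$ for suitable rescaled parameters $(\hat a,\hat b)\in[\overline B^{n+1}]^{2}$, which is the required inequality on the min-max energies. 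I expect the one place not literally identical to Proposition~\ref{monotoneEn:prop}, and the step requiring genuine care, to be the $b$-suspension: because the reflections $\bar\tau_b$ for $b\notin\mathbb R^{n+1}$ mix the new target coordinate with the old ones, the coning toward the $b$-poles must be carried out so as to build in this mixing, rather than leaving the new coordinate inert as one may do in the $a$-variable.
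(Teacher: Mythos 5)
Your $a$-suspension is set up correctly, and your verification of the boundary conditions for $F^{(1)}$ on $\overline{B}^{n+2}\times\overline{B}^{n+1}$ (including the identity $\bar\tau_{(b',0)}=\tau_{b'}\times\mathrm{id}$ for $|b'|=1$ and the parity of the suspension formula in $s$) is sound, as is the energy estimate for that half of the construction. The paper's own proof here consists of the single sentence that the argument is ``identical'' to Proposition~\ref{monotoneEn:prop}, so you are adding detail the paper omits.

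However, you correctly flag the $b$-suspension as ``the step requiring genuine care'' and then do not actually supply it. What you write---``the interpolating map defining $\bar F_{A,(b',t)}$ is chosen to incorporate exactly this rotation''---is an assertion, not a construction, and the obvious candidate fails. Concretely, if one tries
$$\bar F_{(a',s),(b',t)}:=\Bigl(\sqrt{1-s^2}\,F_{\frac{a'}{\sqrt{1-s^2}},\,\frac{b'}{\sqrt{1-t^2}}},\ s\Bigr),$$
then for $|B|=1$ with $0<|t|<1$ the required identity $\bar F_{A,B}=\bar\tau_B\circ\bar F_{\bar\tau_B A,-B}$ fails: writing $\bar\tau_B(a',s)=(a'',s'')$ one has $s''=s-2(\langle a',b'\rangle+st)t\neq\pm s$ in general, so the last coordinate of the left side is $s$ while that of the right side is a nontrivial mixture of $s''$ and the components of $F$, and one also needs $\tau_{b'/\sqrt{1-t^2}}\bigl(a'/\sqrt{1-s^2}\bigr)=a''/\sqrt{1-(s'')^2}$, which does not hold. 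The difficulty is not cosmetic: any attempt to route the $b$-parameter through a continuous map $\pi:\overline{B}^{n+2}\to\overline{B}^{n+1}$ with $\pi|_{\overline{B}^{n+1}}=\mathrm{id}$ and $\pi(\mathbb{S}^{n+1})\subset\mathbb{S}^n$ is obstructed (such a $\pi$ would give a retraction $\mathbb{S}^{n+1}\to\mathbb{S}^n$, which does not exist), and building the needed rotation $\bar\tau_{(b',t)}=R\circ\bar\tau_{(b',0)}$ into the family runs into a continuity problem at $b'=0$, where the axis of the rotation $R$ degenerates. So the construction of $\bar F\in\Gamma_{n+1,2}(M)$ from $F\in\Gamma_{n,2}(M)$ is genuinely missing, and until it is supplied (or the comparison $\mathcal E_{n+1,2}\leqslant\mathcal E_{n,2}$ is obtained by some other means, e.g.\ a direct comparison at the level of the min-max sweepouts using the $O(n+2)$-invariance of $E_\epsilon$), your proof of the proposition is incomplete.
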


Next, note that one of the two cases in the conclusion of Theorem~\ref{En2:thm} must hold for infinitely many $n\in \mathbb{N}$. Thus, for any $(M,[g])$, we know that at least one of the following must hold:
\begin{itemize}
\item[{\bf Case 1:}] There exists a sequence $n_k\to\infty$ such that $\mathcal E_{n_k,2}(M,[g]) = E(\Psi_{n_k}) + 4\pi$, where 
$\Psi_{n_k}\colon (M,[g])\to\mathbb{S}^{n_k}$ is a harmonic map with $\ind_E(\Psi_{n_k})\leqslant n_k+4$;
\item[{\bf Case 2:}] There exists a sequence $n_k\to\infty$ such that $\mathcal E_{n_k,2}(M,[g]) = E(\Psi_{n_k})$, where 
$\Psi_{n_k}\colon (M,[g])\to\mathbb{S}^{n_k}$ is a harmonic map with $\ind_E(\Psi_{n_k})\leqslant 2n_k+2$.
\end{itemize}

Assuming Case 1, the same arguments as in Section~\ref{stabilization:sec} yield the existence of $\Psi_n$ such that $\mathcal E_{n,2} = E(\Psi_n) + 4\pi$ and $\ind_S(\Psi_n) = 1$. Then one has
$$
\Lambda_2(M,[g])\leqslant 2\mathcal E_{n,2}(M,[g]) = \bar \lambda_1(M,g_{\Psi_n}) + 8\pi \leqslant \Lambda_1(M,[g])+8\pi\leqslant \Lambda_2(M,[g]).
$$
In particular, inequality~\eqref{En2:bound} is an equality.

Assuming Case 2, the arguments of Section~\ref{stabilization:sec} yield the existence of $\Psi_n$ such that $\mathcal E_{n,2} = E(\Psi_n)$ and $\ind_S(\Psi_n) \leqslant 2$. If $\ind_S(\Psi_n) = 1$, then one has
$$
\Lambda_2(M,[g])\leqslant 2\mathcal E_{n,2}(M,[g]) = \bar\lambda_1(M,g_{\Psi_n})\leqslant \Lambda_1(M,[g]),
$$
which is a contradiction. If $\ind_S(\Psi_n) = 2$, then one has
$$
\Lambda_2(M,[g])\leqslant 2\mathcal E_{n,2}(M,[g]) = \bar\lambda_2(M,g_{\Psi_n})\leqslant \Lambda_2(M,[g]).
$$
In particular, inequality~\eqref{En2:bound} is an equality. 

As a result, we obtain
\begin{theorem}
For any $(M,[g])$ there exists $N$ such that for all $n\geqslant N$ one has
$$
\frac{1}{2}\Lambda_2(M,[g]) = \mathcal E_{n,2}(M,[g]).
$$
\end{theorem}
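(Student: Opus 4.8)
The plan is to run the stabilization scheme of Section~\ref{stabilization:sec}, carried out now for the $2(n+1)$-parameter construction and keeping track of the two alternatives in Theorem~\ref{En2:thm}. I will use three inputs: the energies $\mathcal{E}_{n,2}(M,[g])$ are non-increasing in $n$; by Proposition~\ref{lambda2.below} one has $\mathcal{E}_{n,2}(M,[g])\geq\frac{1}{2}\Lambda_2(M,[g])$; and by Proposition~\ref{e2.upper}, together with the fact that the conformal volumes $V_c(n,M,[g])$ are finite and bounded above independently of $n$ (they converge to $V_c(M,[g])<\infty$ as $n\to\infty$; see Section~\ref{vc.def}), there is a constant $E_0<\infty$, independent of $n$, with $\mathcal{E}_{n,2}(M,[g])\leq E_0$. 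By Theorem~\ref{En2:thm}, for each $n\geq 9$ the energy $\mathcal{E}_{n,2}(M,[g])$ is realized either (Case~2) by a harmonic map $\Psi_n\colon M\to\mathbb{S}^n$ alone, with $\mathcal{E}_{n,2}=E(\Psi_n)$ and $\ind_E(\Psi_n)\leq 2n+2$, or (Case~1) by such a map together with a totally geodesic bubble, so that $\mathcal{E}_{n,2}=E(\Psi_n)+4\pi$ and $\ind_E(\Psi_n)\leq n+4$; by pigeonhole, at least one of the two alternatives holds along a sequence $n_k\to\infty$, and I treat the two cases in turn.

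Consider Case~2 first. Since $E(\Psi_{n_k})=\mathcal{E}_{n_k,2}(M,[g])\leq E_0$, Theorem~\ref{stabilization:thm} provides $N=N([g],E_0)$ with $\nul_S(\Psi_{n_k})\leq N+1$ for all $k$. Exactly as in the proof of Proposition~\ref{indS1:prop}, this forces the image of $\Psi_{n_k}$ to lie in an $N$-dimensional totally geodesic subsphere of $\mathbb{S}^{n_k}$ once $n_k>N$, so that Proposition~\ref{indEindS:prop} gives $\ind_E(\Psi_{n_k})\geq(n_k-N)\,\ind_S(\Psi_{n_k})$. Since $\mathcal{E}_{n_k,2}\geq\frac{1}{2}\Lambda_2(M,[g])\geq 8\pi>0$ the map $\Psi_{n_k}$ is non-constant, whence $\ind_S(\Psi_{n_k})\geq 1$; and $\ind_S(\Psi_{n_k})\geq 3$ would give $\ind_E(\Psi_{n_k})\geq 3(n_k-N)>2n_k+2$ for $n_k$ large, a contradiction. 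Hence $\ind_S(\Psi_{n_k})\in\{1,2\}$ for $k$ large. If $\ind_S(\Psi_{n_k})=1$, then using $\bar\lambda_{\ind_S(\Psi)}(M,g_\Psi)=2E(\Psi)$ and reading eigenvalues of the possibly conically singular metric $g_{\Psi_{n_k}}$ via Remark~\ref{conical:remark2},
$$\Lambda_2(M,[g])\leq 2\mathcal{E}_{n_k,2}(M,[g])=\bar\lambda_1(M,g_{\Psi_{n_k}})\leq\Lambda_1(M,[g]),$$
contradicting the strict inequality $\Lambda_1(M,[g])<\Lambda_2(M,[g])$ from~\eqref{Lambdak:condition}. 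Therefore $\ind_S(\Psi_{n_k})=2$, and
$$\Lambda_2(M,[g])\leq 2\mathcal{E}_{n_k,2}(M,[g])=\bar\lambda_2(M,g_{\Psi_{n_k}})\leq\Lambda_2(M,[g]),$$
so that $2\mathcal{E}_{n_k,2}(M,[g])=\Lambda_2(M,[g])$ for all large $k$. In Case~1 one argues identically, now with $E(\Psi_{n_k})=\mathcal{E}_{n_k,2}-4\pi\in[4\pi,E_0]$ and with the better index bound $\ind_E(\Psi_{n_k})\leq n_k+4$, which by the same reasoning forces $\ind_S(\Psi_{n_k})=1$ for large $k$; then
$$\Lambda_2(M,[g])\leq 2\mathcal{E}_{n_k,2}(M,[g])=\bar\lambda_1(M,g_{\Psi_{n_k}})+8\pi\leq\Lambda_1(M,[g])+8\pi\leq\Lambda_2(M,[g]),$$
so again $2\mathcal{E}_{n_k,2}(M,[g])=\Lambda_2(M,[g])$.

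In either case we obtain some $N\in\mathbb{N}$ with $2\mathcal{E}_{N,2}(M,[g])=\Lambda_2(M,[g])$. Combining this with the monotonicity $\mathcal{E}_{n,2}(M,[g])\leq\mathcal{E}_{N,2}(M,[g])$ for $n\geq N$ and the lower bound $\mathcal{E}_{n,2}(M,[g])\geq\frac{1}{2}\Lambda_2(M,[g])$ of Proposition~\ref{lambda2.below} forces $\mathcal{E}_{n,2}(M,[g])=\frac{1}{2}\Lambda_2(M,[g])$ for all $n\geq N$, which is the assertion. The only substantial ingredient is Theorem~\ref{stabilization:thm} (equivalently Proposition~\ref{MainProp:intro}), proved in Section~\ref{stabilization_proof:sec}; granting it, the remaining work is bookkeeping, the main subtleties being to exclude $\ind_S=1$ in Case~2 via the strict gap $\Lambda_1<\Lambda_2$, and to observe that the totally geodesic bubble in Case~1 merely shifts the energies by $4\pi$ and improves the index bound from $2n+2$ to $n+4$, leaving the rest of the argument unchanged.
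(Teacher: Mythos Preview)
Your proof is correct and follows essentially the same approach as the paper: split into the two alternatives of Theorem~\ref{En2:thm} along a subsequence, invoke Theorem~\ref{stabilization:thm} and Proposition~\ref{indEindS:prop} to force $\ind_S\leq 2$ (Case~2) or $\ind_S=1$ (Case~1), exclude $\ind_S=1$ in Case~2 via $\Lambda_1<\Lambda_2$, and then use monotonicity plus the lower bound to propagate the equality to all $n\geq N$. Your write-up is in fact more explicit than the paper's, which abbreviates the index argument by pointing back to Section~\ref{stabilization:sec}.
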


\section{Applications}\label{app.sec}

The starting point for the geometric applications of our min-max characterization for $\Lambda_k(M,c)$ is the following theorem, showing that for $k=1,2$, the supremum $\Lambda_k(M,c)$ of the eigenvalue $\bar{\lambda}_k(M,g)$ over the conformal class $c=[g]$ is an upper bound for the generalized eigenvalues $\lambda_k(M,c,\mu)$ (recall the definition in~\eqref{MeasureRayleigh:quotient}) associated to any Radon probability measure $\mu$. 

\begin{theorem}\label{mu.eigen.bd}
\label{measures:thm}
Let $\mu$ be an admissible Radon measure of unit mass $\mu(M)=1$. Then one has
$$
\lambda_1(M,c,\mu)\leqslant \Lambda_1(M,c),
$$
with equality if only if 
$$\lambda_1(M,c,\mu) \mu=|du|_g^2\,dv_g$$
for some harmonic map $u: (M,g)\to \mathbb{S}^n$ of spectral index $1$.
Furthermore,
$$\lambda_2(M,c,\mu)\leqslant \Lambda_2(M,c),$$
with equality if and only if
$$\lambda_2(M,c,\mu)\mu=|du|_g^2\,dv_g$$
for some harmonic map $u:(M,g)\to \mathbb{S}^n$ of spectral index $2$.

\end{theorem}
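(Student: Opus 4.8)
The plan is to derive the inequality from the min-max characterization (Theorems~\ref{MainTh1:intro},~\ref{MainTh2:intro} for $k=1$, and Theorems~\ref{MainTh3:intro},~\ref{MainTh4:intro} for $k=2$), by adapting the balancing argument of Propositions~\ref{lambda.below} and~\ref{lambda2.below}, and then to obtain the equality case from the standard Euler--Lagrange analysis for maximal measures. Throughout, admissibility of $\mu$ is used to make sense of $\int_M u\,d\mu := \int_M (Tu)\,d\mu$ for $u\in W^{1,2}(M,g)$, and the genuinely new point is that the Ginzburg--Landau potential estimate must be transferred from $L^1(M,g)$ to $L^1(M,\mu)$ using the \emph{compactness} of $T$.

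\textbf{The inequality $\lambda_1(M,c,\mu)\leq\Lambda_1(M,c)$.} Fix $n\geq 2$. Given $F\in\Gamma_n(M)$, the map $a\mapsto \int_M TF_a\,d\mu\in\mathbb{R}^{n+1}$ is continuous by admissibility and restricts to the identity on $\mathbb{S}^n$, so there is $a\in B^{n+1}$ with $\int_M TF_a\,d\mu=0$; applying the variational characterization of $\lambda_1(M,c,\mu)$ to each component and summing gives $\lambda_1(M,c,\mu)\int_M|TF_a|^2\,d\mu\leq\int_M|dF_a|_g^2\,dv_g\leq 2\max_a E_\epsilon(F_a)$. The remaining task is to show that along near-optimal families one has $\int_M|TF_a|^2\,d\mu\geq 1-\omega(\epsilon)$ with $\omega(\epsilon)\to 0$ as $\epsilon\to 0$. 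If this failed, one could find $\epsilon_j\to 0$, families $F^j$ with $\max_a E_{\epsilon_j}(F^j_a)$ bounded, and $a_j$ with $\|1-|TF^j_{a_j}|^2\|_{L^1(\mu)}\geq\delta>0$; passing to a subsequence, $F^j_{a_j}\rightharpoonup G$ weakly in $W^{1,2}(M)$, and since $\int_M(1-|F^j_{a_j}|^2)^2\,dv_g\leq 4\epsilon_j^2\max_a E_{\epsilon_j}(F^j_a)\to 0$ forces $|G|\equiv 1$ $dv_g$-a.e., while compactness of $T$ gives $TF^j_{a_j}\to TG$ in $L^2(\mu)$, a short smooth-approximation argument shows $|TG|\equiv 1$ $\mu$-a.e., contradicting $\delta>0$. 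Feeding this back and letting $\epsilon\to 0$ (using $2\mathcal{E}_{n,\epsilon}\leq 2\mathcal{E}_n$) yields $\lambda_1(M,c,\mu)\leq 2\mathcal{E}_n(M,[g])$ for every $n$; choosing $n\geq N(M,c)$ and invoking Theorem~\ref{MainTh2:intro} gives $\lambda_1(M,c,\mu)\leq\Lambda_1(M,c)$. The bound $\lambda_2(M,c,\mu)\leq\Lambda_2(M,c)$ is proved identically, replacing $\int_M TF_{a,b}\,d\mu$ with $\bigl(\int_M TF_{a,b}\,d\mu,\ \int_M\phi_1\,TF_{a,b}\,d\mu\bigr)$ and applying the topological Lemma~\ref{degree.lem.2} exactly as in Proposition~\ref{lambda2.below}.

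\textbf{The equality case.} Assume $\lambda_k(M,c,\mu)=\Lambda_k(M,c)=:\Lambda$. Since the first part shows $\mu$ maximizes $\bar\lambda_k$ over admissible probability measures, I would run the usual first-order analysis: for $f\in L^\infty(M,\mu)$ the measures $d\mu_t=(1+tf)\,d\mu$ are again admissible (the $L^2(\mu_t)$ and $L^2(\mu)$ norms being equivalent for small $t$), and the inequality $\bar\lambda_k(M,c,\mu_t)\leq\Lambda$ combined with one-sided perturbation theory for $\lambda_k$ forces, for all such $f$, $\min_{\phi}\int_M\phi^2 f\,d\mu\leq\int_M f\,d\mu\leq\max_{\phi}\int_M\phi^2 f\,d\mu$, where $\phi$ ranges over the unit sphere of the $\lambda_k$-eigenspace in $L^2(\mu)$. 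A classical convexity/Hahn--Banach balancing argument (as in~\cite{ESI,Kokarev,GKL}) then yields finitely many eigenfunctions $\phi_1,\dots,\phi_m$ and weights $c_i>0$ such that $u=(\dots,c_i^{1/2}\phi_i,\dots)$ satisfies $|Tu|\equiv 1$ $\mu$-a.e.\ and $\Delta_g u=\Lambda u\,\mu$ weakly. Testing against $\eta u$ for $\eta\in C^\infty(M)$ gives $\int_M\eta|du|_g^2\,dv_g+\tfrac12\int_M\langle d\eta,d|u|^2\rangle_g\,dv_g=\Lambda\int_M\eta\,d\mu$, so one needs $|u|\equiv 1$ $dv_g$-a.e.; this is the regularity step of~\cite{Kokarev,GKL}, following from a maximum-principle argument using that $u$ is distributionally harmonic on $M\setminus\operatorname{supp}\mu$ and equals $1$ in modulus on $\operatorname{supp}\mu$. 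It then follows that $\Lambda\mu=|du|_g^2\,dv_g$, i.e.\ $u$ solves $\Delta_g u=|du|_g^2 u$, so $u$ is a smooth harmonic map and $d\mu$ has smooth density; finally $\ind_S(u)=k$ since the components of $u$ are $\lambda_k(M,c,\mu)$-eigenfunctions for the conformal metric $g_u\propto\mu$, using strictness of~\eqref{Lambdak:condition} to exclude $\lambda_1(M,c,\mu)=\lambda_2(M,c,\mu)$ when $k=2$. Conversely, if $\Lambda\mu=|du|_g^2\,dv_g$ for a harmonic $u$ of spectral index $k$, then $\lambda_k(M,c,\mu)=2E(u)=\bar\lambda_k(M,g_u)\leq\Lambda_k(M,c)$, forcing equality.

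I expect the main obstacle to be exactly the two non-routine points flagged above: the passage from $\int_M(1-|F_a|^2)^2\,dv_g=O(\epsilon^2)$ to uniform smallness of $\int_M(1-|TF_a|^2)^2\,d\mu$ for a possibly singular $\mu$ — which has no pointwise analogue and is the reason the compactness of $T$ is indispensable — and the maximum-principle regularity step in the equality case, which requires care when $\operatorname{supp}\mu$ is small or irregular.
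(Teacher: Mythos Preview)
Your argument for the inequality is essentially the paper's: balance against $\mu$, extract a weak $W^{1,2}$ limit $u$, and use compactness of $T$ to get $T(u_j)\to T(u)$ in $L^2(\mu)$; the paper packages this as Lemma~\ref{wk.cvg.lem} and passes to the limit directly rather than by contradiction, but the mechanism is identical.

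Your treatment of the equality case, however, takes a genuinely different route and contains a real gap. You run the classical Nadirashvili--El~Soufi--Ilias perturbation $\mu_t=(1+tf)\mu$ and Hahn--Banach to produce eigenfunctions $\phi_i$ with $\sum c_i\phi_i^2=1$ $\mu$-a.e., obtaining a map $u$ with $|Tu|\equiv 1$ $\mu$-a.e.\ and $\Delta_g u=\Lambda u\,\mu$. You then need $|u|\equiv 1$ $dv_g$-a.e.\ and propose a maximum-principle argument. But componentwise harmonicity of $u$ on $M\setminus\operatorname{supp}\mu$ only makes $|u|^2$ \emph{subharmonic} there, so the maximum principle gives $|u|\le 1$, not $|u|=1$; and for measures whose support is a curve (the Steklov case) or otherwise thin, there is no obvious mechanism forcing equality. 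This is exactly the regularity question left open in~\cite{Kokarev} that the present theorem resolves, so deferring to ``the regularity step of~\cite{Kokarev,GKL}'' is circular.

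The paper's approach sidesteps this entirely, and this is the point of the min-max characterization. In the equality case $\lambda_k(\mu,c)=2\mathcal{E}_n$, the chain $\lambda_k(\mu,c)\le\int|du|^2\le\liminf\int|du_j|^2\le 2\mathcal{E}_n$ collapses, forcing $u_j\to u$ \emph{strongly} in $W^{1,2}$, and the components of the min-max limit $u$ are themselves $\lambda_k(\mu)$-eigenfunctions. Crucially, this $u$ already satisfies $|u|\equiv 1$ $dv_g$-a.e.\ on all of $M$, because it arises as an $L^2(dv_g)$ limit of maps with $\int(1-|u_j|^2)^2\,dv_g\to 0$. One then tests the eigenfunction equation with $w=v-\langle v,u\rangle u\perp u$ to get weak harmonicity, and with $v=\varphi u$ to read off $\lambda_k\mu=|du|^2\,dv_g$ directly, since the $\langle d\varphi,d|u|^2\rangle$ term vanishes. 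The regularity of $\mu$ is thus a \emph{consequence} of the min-max construction rather than an input, which is precisely what the classical perturbation approach cannot deliver.
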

We postpone the proof of the theorem to Section \ref{meas.thm.pf}. This theorem has a nice application to the study of Steklov eigenvalues, which we describe in the following section.

\subsection{Steklov eigenvalues}\label{steklov.sec} Given a sufficiently regular (e.g. Lipschitz) domain $\Omega\subset M$ (or any surface with boundary) the Steklov eigenvalues $\sigma_k(\Omega, g)$ are defined via Rayleigh quotients, as
 \begin{equation}
 \label{SteklovRayleigh:quotient}
\sigma_k(\Omega,g)= \inf_{G_{k+1}}\sup_{u\in G_{k+1}\setminus\{0\}}\frac{\displaystyle\int_{\Omega}|\nabla u|^2_g\,dv_g}{\displaystyle\int_{\partial\Omega} u^2\,ds_g},
\end{equation}
where the infimum is taken over $(k+1)$-dimensional subspaces $G_{k+1}\subset C^\infty(\Omega)$ that remain $(k+1)$-dimensional in $L^2(\partial \Omega)$. It is not difficult to check that the eigenvalues $\sigma_k(\Omega,g)$ defined by \eqref{SteklovRayleigh:quotient} correspond to the spectrum of the Dirichlet-to-Neumann map
$$C^{\infty}(\partial\Omega) \ni \varphi \mapsto \frac{\partial \hat{\varphi}}{\partial \nu}\in C^{\infty}(\partial \Omega)\text{, where }\Delta \hat{\varphi}=0\text{ in }\Omega,\text{ }\hat{\varphi}|_{\partial\Omega}=\varphi.$$
Similar to the normalized Laplacian eigenvalues, one defines the normalized Steklov eigenvalues by
$$
\bar\sigma_k(\Omega,g) = \sigma_k(\Omega,g)\length(\partial \Omega, g).
$$
The theory of optimal eigenvalue inequalities for $\bar\sigma_k$ is very much parallel to that of $\bar\lambda_k$, and has received considerable attention in recent years, in connection with the study of free boundary minimal surfaces in Euclidean balls; see~\cite{GP,FS} for some recent surveys. 

Let $\mu = \mu_{\partial\Omega}$ to be the length density $s_g$ of $\partial\Omega$. Let $\Omega\subset M$ and assume the measure $\mu_{\partial\Omega}$ is admissible, i.e. the trace map $W^{1,2}(M,g)\to L^2(\partial \Omega, g)$ is compact. This is satisfied, for example, provided $\Omega$ is Lipschitz. Comparing~\eqref{MeasureRayleigh:quotient} and~\eqref{SteklovRayleigh:quotient} one easily sees that
$$
\sigma_k(\Omega,g)\leqslant \lambda_k(M,[g],\mu_{\partial \Omega}).
$$
Since the measure $\mu_{\partial \Omega}$ can not have full support, Theorem~\ref{measures:thm} has the following corollary.
\begin{theorem}
\label{Steklov:thm}
For any $\Omega\subset M$ such that the trace map $W^{1,2}(M,g)\to L^2(\partial \Omega, g)$ is compact and for $k=1,2$ one has
$$
\bar\sigma_k(\Omega,g)< \Lambda_k(M,c)
$$ 
for every $g\in c$.
\end{theorem}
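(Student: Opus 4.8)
The plan is to obtain Theorem~\ref{Steklov:thm} as a direct consequence of Theorem~\ref{measures:thm}, the elementary comparison $\sigma_k(\Omega,g)\le\lambda_k(M,[g],\mu_{\partial\Omega})$ recorded above, and the scaling behaviour of the generalized eigenvalues; the only substantive point will be the \emph{strictness} of the inequality.

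Fix $g\in c$ and write $L:=\length(\partial\Omega,g)$ and $\mu:=\mu_{\partial\Omega}$ for the length measure of $\partial\Omega$ (we may assume $L>0$, the case $\partial\Omega=\varnothing$ being vacuous). Since the trace map $W^{1,2}(M,g)\to L^2(\partial\Omega,g)$ is compact, $\mu$ and the probability measure $L^{-1}\mu$ are admissible. Because $\mu$ enters the Rayleigh quotient~\eqref{MeasureRayleigh:quotient} only in the denominator, the eigenvalues scale as $\lambda_k(M,[g],L^{-1}\mu)=L\,\lambda_k(M,[g],\mu)$, so
\[
\bar\sigma_k(\Omega,g)=L\,\sigma_k(\Omega,g)\le L\,\lambda_k(M,[g],\mu)=\lambda_k(M,[g],L^{-1}\mu)\le\Lambda_k(M,c)
\]
for $k=1,2$, where the last step is Theorem~\ref{measures:thm} applied to the admissible probability measure $L^{-1}\mu$. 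This already yields the non-strict inequality $\bar\sigma_k(\Omega,g)\le\Lambda_k(M,c)$.

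To rule out equality, suppose for contradiction that $\bar\sigma_k(\Omega,g)=\Lambda_k(M,c)$ for some $g\in c$ and some $k\in\{1,2\}$. Then the displayed chain of inequalities forces $\lambda_k(M,[g],L^{-1}\mu)=\Lambda_k(M,c)$, i.e. $L^{-1}\mu$ is $\bar\lambda_k$-maximal, and the equality case of Theorem~\ref{measures:thm} produces a harmonic map $u\colon(M,g)\to\mathbb S^n$ of spectral index $k$ with
\[
\lambda_k(M,[g],L^{-1}\mu)\,L^{-1}\mu=|du|_g^2\,dv_g.
\]
The left-hand side is a positive multiple of $\mu$, hence a nonzero measure supported in the curve $\partial\Omega$, which has $dv_g$-measure zero; the right-hand side is absolutely continuous with respect to $dv_g$. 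Since $u$ has spectral index $k\ge1$ it is nonconstant, so $|du|_g^2$ is a smooth nonnegative function vanishing at most at isolated points (Remark~\ref{conical:remark}), and hence $\operatorname{supp}\big(|du|_g^2\,dv_g\big)=M$. As two equal measures have the same support, this contradicts $\operatorname{supp}(L^{-1}\mu)\subseteq\partial\Omega\subsetneq M$, and the strict inequality follows.

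The argument is short once Theorem~\ref{measures:thm} is available; the only delicate ingredient — and the one genuinely beyond Kokarev's non-strict bound — is the strictness, which rests entirely on the equality characterization in Theorem~\ref{measures:thm}: a nonconstant sphere-valued harmonic map has smooth, a.e.-positive energy density, so its energy-density measure cannot be concentrated on the boundary curve. The main bookkeeping to handle with care is the rescaling $\lambda_k(M,[g],c\mu)=c^{-1}\lambda_k(M,[g],\mu)$, which ensures that it is the \emph{normalized} length measure $L^{-1}\mu$ that gets fed into Theorem~\ref{measures:thm}.
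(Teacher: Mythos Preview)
Your proof is correct and follows essentially the same approach as the paper, which simply states Theorem~\ref{Steklov:thm} as a corollary of Theorem~\ref{measures:thm} together with the comparison $\sigma_k(\Omega,g)\le\lambda_k(M,[g],\mu_{\partial\Omega})$. You have spelled out the details the paper leaves implicit, in particular the strictness argument: the equality case of Theorem~\ref{measures:thm} would force the singular measure $\mu_{\partial\Omega}$ to coincide with the absolutely continuous measure $|du|_g^2\,dv_g$ for a nonconstant harmonic map, which is impossible.
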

Theorem~\ref{Steklov:thm} gives a sharp bound, independent of the number of boundary components of $\partial\Omega$. See e.g.~\cite{KarSteklov, FS, Has} for other bounds on Steklov eigenvalues.

\subsection{Applications to the existence of maximal metrics for Steklov eigenvalues}
\label{Steklov_applications:sec}

As discussed in the introduction, the following result--obtained in the recent preprint~\cite{GLS}--implies that Theorem \ref{Steklov:thm} above is sharp.

\begin{theorem}[Girouard, Lagac\'e~\cite{GLS}]
\label{GLS:thm}
Given a surface $(M,g)$, there exists a sequence of smooth domains $\Omega_n\subset M$ such that for all $k$ one has
$$
\lim_{n\to\infty}\bar\sigma_k(\Omega_n,g)\to\bar\lambda_k(M,g).
$$
\end{theorem}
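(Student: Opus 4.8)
The plan is to realize $\bar\lambda_k(M,g)$ as a limit of normalized Steklov eigenvalues on \emph{perforated domains}, via a homogenization argument of Rauch--Taylor / Cioranescu--Murat type adapted to the Steklov setting. The domains $\Omega_n$ will be obtained from $M$ by deleting a large number $N=N(n)$ of tiny geodesic disks, spread out uniformly with respect to $dv_g$ and with radii small enough that the disks are invisible to the Dirichlet energy. Concretely, I would fix a sequence of finite nets $\{x_1^N,\dots,x_N^N\}\subset M$ equidistributing with respect to $dv_g$ (for instance the vertices of an increasingly fine triangulation), with mutual $g$-distances $\gtrsim\delta_N$ where $N\delta_N^2\to\area(M,g)$, pick radii $r_N\ll\delta_N$, and set $\Omega_N:=M\setminus\bigcup_{i=1}^N\overline{B_{r_N}(x_i^N)}$, which is automatically a smooth domain. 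Writing $\ell_N:=\length(\partial\Omega_N)=2\pi Nr_N(1+o(1))$ and $\mu_N:=\ell_N^{-1}\,\mathcal H^1|_{\partial\Omega_N}$, the first routine observations are that $\mu_N\rightharpoonup^*\area(M,g)^{-1}\,dv_g$ as Radon measures and that $\int_{\partial\Omega_N}u^2\,ds=\ell_N\bigl(\area(M,g)^{-1}\int_M u^2\,dv_g+o(1)\bigr)$ for each fixed $u\in C^\infty(M)$ (a Riemann-sum estimate over the small circles).

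The convergence $\bar\sigma_k(\Omega_N)\to\bar\lambda_k(M,g)$ then splits into two one-sided bounds, with the construction of $\Omega_N$ independent of $k$. For $\limsup_N\bar\sigma_k(\Omega_N)\le\bar\lambda_k(M,g)$ I would use the first $k+1$ Laplacian eigenfunctions $\phi_0,\dots,\phi_k$ of $(M,g)$, restricted to $\Omega_N$, as test functions in the Steklov Rayleigh quotient \eqref{SteklovRayleigh:quotient}: deleting the disks changes the Dirichlet energies by $O(Nr_N^2)=o(1)$, while the boundary inner products satisfy $\ell_N^{-1}\int_{\partial\Omega_N}\phi_i\phi_j\,ds\to\area(M,g)^{-1}\delta_{ij}$ by the observation above, and the variational characterization of $\sigma_k$ gives the bound. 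For $\liminf_N\bar\sigma_k(\Omega_N)\ge\bar\lambda_k(M,g)$ I would take Steklov eigenfunctions $u_{N,0},\dots,u_{N,k}$ on $\Omega_N$ (harmonic in $\Omega_N$, $L^2(\partial\Omega_N)$-normalized), extend each across the holes by harmonic extension into the disks $B_{r_N}(x_j^N)$ to obtain $\tilde u_{N,i}\in W^{1,2}(M)$ with $\int_M|\nabla\tilde u_{N,i}|^2\le C\int_{\Omega_N}|\nabla u_{N,i}|^2$ (a scale-invariant extension estimate on the annuli $B_{2r_N}\setminus B_{r_N}$, summed over $j$), extract a weak $W^{1,2}(M)$-limit $\psi_i$, and verify that $\psi_0,\dots,\psi_k$ form an orthonormal family in $L^2(M,g)$ of generalized eigenfunctions of $\Delta_g$ with eigenvalues at most $\liminf_N\area(M,g)^{-1}\ell_N\sigma_k(\Omega_N)$. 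The variational characterization of $\lambda_k(M,g)$ then closes the argument, and since $\Omega_N$ is built without reference to $k$ the same sequence works for every $k$ at once.

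I expect the crux to lie in the lower bound, specifically in the \emph{non-concentration} step needed to identify the limit: one must show that the Steklov eigenfunctions $u_{N,i}$ do not place a definite fraction of their boundary $L^2$-mass near $\bigcup_i\partial B_{r_N}(x_i^N)$, equivalently that the perforations lie in the subcritical homogenization regime so that no capacitary ``strange term'' of Cioranescu--Murat type survives in the limit. This is precisely what allows one to pass from $\int_M\psi_i\psi_j\,dv_g\le\delta_{ij}$ to the equality $\int_M\psi_i\psi_j\,dv_g=\lim_N\area(M,g)\ell_N^{-1}\int_{\partial\Omega_N}u_{N,i}u_{N,j}\,ds=\delta_{ij}$, and dually to see that $\psi_i$ solves the eigenvalue equation across the (capacity-zero) limiting ``hole set''. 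Because in two dimensions the $2$-capacity of a disk decays only logarithmically, making this rigorous forces a precise decay rate for $r_N$ relative to the number of holes — roughly $N/|\log r_N|\to0$, so $r_N$ must be taken exponentially small in $N$ — and the estimate itself is most naturally proved via a Poincar\'e inequality on the punctured cells $B_{\delta_N}(x_j^N)\setminus B_{r_N}(x_j^N)$ in the spirit of Proposition~\ref{eigen.drop}, or via explicit capacitary cutoff functions supported in those annuli. The remaining ingredients — the equidistribution estimates, the energy bounds for harmonic extensions over small disks, and the two Rayleigh-quotient comparisons — are routine.
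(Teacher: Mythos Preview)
This theorem is not proved in the present paper; it is stated with attribution to Girouard--Lagac\'e~\cite{GLS} and invoked as a black box. The paper's only description of the argument is the remark in the introduction that~\cite{GLS} ``use homogenisation techniques to show that by making many small holes in $M$'' one obtains the desired sequence of domains. Your outline is precisely this homogenisation scheme, so there is nothing in the paper to compare against beyond confirming that your strategy matches the cited one.

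As for the outline itself, the architecture is sound and you have correctly located the real work in the lower bound: passing from weak $W^{1,2}$-convergence of the extended Steklov eigenfunctions to strong $L^2(\mu_N)$-convergence of their boundary traces, which is exactly the step that requires the subcritical scaling $N/|\log r_N|\to 0$ (so that the relative $2$-capacity of the deleted set vanishes). One point to tighten: the harmonic extension bound $\int_M|\nabla\tilde u_{N,i}|^2\le C\int_{\Omega_N}|\nabla u_{N,i}|^2$ is not automatic from harmonic extension alone --- harmonic extension into a disk minimizes energy \emph{given the boundary values}, but gives no a priori control in terms of the exterior energy. What you need is the Rauch--Taylor extension estimate (cited in the paper as~\cite{RT}), which uses the values of $u_{N,i}$ on the full annulus $B_{2r_N}\setminus B_{r_N}$, not just on $\partial B_{r_N}$; you allude to this with ``scale-invariant extension estimate on the annuli,'' so just make sure that is the version you invoke.
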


In this section we explore further applications of Theorems~\ref{Steklov:thm} and~\ref{GLS:thm}. Some of these statements also appear in~\cite{GLS}. Let us first introduce the notation
$$
\Sigma_k(\Omega,c) = \sup_{g\in c}\bar\sigma_k(\Omega,g).
$$
The following theorem is an analog of Theorem~\ref{existence:thm} for Steklov eigenvalues.

\begin{theorem}[Petrides~\cite{PetridesSteklov}]
Assume that 
\begin{equation}
\label{SigmaK:condition}
\Sigma_k(\Omega,c)>\Sigma_{k-1}(\Omega,c)+2\pi.
\end{equation}
Then there exists a metric $g\in c$ such that $\bar\sigma_k(\Omega,g) = \Sigma_k(\Omega,c)$.
\end{theorem}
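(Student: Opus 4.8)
The plan is to run the by-now-standard variational scheme for scale-invariant spectral functionals, in the form developed by Petrides, adapted to the Steklov setting. The starting point is a measure-theoretic reformulation: since the Dirichlet energy on the surface $\Omega$ is conformally invariant, the normalized eigenvalue $\bar\sigma_k(\Omega,g)$ depends on $g\in c$ only through the boundary length measure $\beta = s_g|_{\partial\Omega}$, so that $\Sigma_k(\Omega,c)=\sup_\beta\sigma_k(\Omega,c,\beta)$, the supremum being over Radon probability measures $\beta$ on $\partial\Omega$ (realized, up to approximation, by $\sqrt f|_{\partial\Omega}\,s_{g_0}$ with $f\in C^\infty(\Omega)$, $f>0$); moreover $\Sigma_k(\Omega,c)\leq\Lambda_k(M,c)<\infty$ by Theorem~\ref{Steklov:thm}. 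Pick a maximizing sequence $\beta_m$. Since $\partial\Omega$ is compact, after passing to a subsequence $\beta_m\rightharpoonup^*\beta_\infty$ weak-$*$ to a Radon probability measure, and a uniform mass bound forces the decomposition $\beta_\infty=\beta_\infty^{ac}+\sum_{i=1}^N a_i\delta_{p_i}$ into an absolutely continuous part and finitely many atoms. The atoms are the Steklov analog of bubbling, and the crux is to rule them out.

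The key step is to show that the strict gap hypothesis~\eqref{SigmaK:condition} forces $N=0$. This requires a quantitative concentration--compactness analysis: near each atom $p_i\in\partial\Omega$ the rescaled pictures converge to a Steklov ``bubble'' modeled on the flat half-plane (equivalently, on the unit disk, whose first normalized Steklov eigenvalue $\bar\sigma_1$ equals $2\pi$). One establishes an energy/eigenvalue identity showing that the limiting data $\beta_\infty$ together with the bubbles realize the supremum, of the form $\Sigma_k(\Omega,c)=\sigma_j(\Omega,c,\beta_\infty)+2\pi(k-j)$ for some $0\leq j\leq k$, where $j$ counts the eigenvalues not ``absorbed'' by the atoms and $\sigma_j(\Omega,c,\beta_\infty)\leq\Sigma_j(\Omega,c)$. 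If some mass escapes into atoms then $j<k$, and iterating the Steklov analog of the disk-gluing inequality~\eqref{Lambdak:condition} (namely $\Sigma_i(\Omega,c)\geq\Sigma_{i-1}(\Omega,c)+2\pi$) gives $\Sigma_j(\Omega,c)\leq\Sigma_{k-1}(\Omega,c)-2\pi(k-1-j)$, whence $\Sigma_k(\Omega,c)\leq\Sigma_{k-1}(\Omega,c)+2\pi$, contradicting~\eqref{SigmaK:condition}. Therefore $\beta_\infty$ is non-atomic, which upgrades the weak-$*$ convergence to convergence along which the functional $\sigma_k(\Omega,c,\cdot)$ is continuous, so $\beta_\infty$ attains the supremum.

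It remains to produce a metric and record its regularity. A first-variation (subdifferential) argument --- perturbing $\beta_\infty$ to $(1+t\varphi)\beta_\infty$ for $\varphi\in C^\infty(\partial\Omega)$ and using that $t=0$ is a critical point of the Lipschitz function $t\mapsto\sigma_k$ --- produces an orthonormal family $u_1,\dots,u_{n+1}$ of $\sigma_k$-eigenfunctions in $L^2(\partial\Omega,\beta_\infty)$ such that, writing $\beta_\infty=\rho\,s_{g_0}$, one has $\sum_j|\partial_\tau u_j|^2=\sigma_k(\Omega,c,\beta_\infty)\,\rho$ along $\partial\Omega$. Since the $u_j$ are harmonic in $\Omega$ with $\partial_\nu u_j=\sigma_k u_j$ on $\partial\Omega$, the map $u=(u_1,\dots,u_{n+1})$ restricts to $\partial\Omega$ as a free boundary branched minimal immersion into a ball $B^{n+1}$; boundary elliptic regularity then shows $\rho$ is smooth and positive away from finitely many branch points. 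Choosing $f\in C^\infty(\Omega)$ with $f>0$ off isolated zeros and $\sqrt f|_{\partial\Omega}=\rho$, the (possibly conically degenerate) metric $g=fg_0\in c$ satisfies $\bar\sigma_k(\Omega,g)=\Sigma_k(\Omega,c)$, as claimed.

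The main obstacle is the second step: the precise Steklov bubble analysis --- neck-energy decay on annuli meeting $\partial\Omega$ and the attendant energy identity --- that pins the escaping eigenvalue budget to integer multiples of $2\pi$, which is where the gap hypothesis genuinely enters. This is the Steklov counterpart of Lemma~\ref{limit_measure:lemma} and the energy-identity part of Lemma~\ref{bubble.business}, complicated by the presence of the boundary. Alternatively, one could bypass direct maximization by developing a min-max characterization of $\Sigma_k(\Omega,c)$ parallel to Theorems~\ref{MainTh2:intro} and~\ref{MainTh4:intro}, using families of $\mathbb{S}^n$-valued maps on $\Omega$ with appropriate boundary behavior together with the Ginzburg--Landau relaxation; the index-bound and stabilization machinery developed above would then carry over.
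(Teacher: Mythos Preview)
The paper does not prove this theorem at all; it is quoted from Petrides~\cite{PetridesSteklov} as a black box and used only as input to the applications in Section~\ref{Steklov_applications:sec}. So there is no ``paper's own proof'' to compare against.

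That said, your outline is a faithful sketch of Petrides' actual method in~\cite{PetridesSteklov}: the concentration--compactness for a maximizing sequence of boundary measures, the identification of atoms with disk bubbles each carrying $2\pi$ of normalized eigenvalue, the counting argument that the strict gap~\eqref{SigmaK:condition} forbids bubbling, and the first-variation/regularity step producing a free boundary minimal immersion. Two small caveats. First, your appeal to Theorem~\ref{Steklov:thm} for the a priori bound $\Sigma_k(\Omega,c)<\infty$ is only stated in this paper for $k=1,2$; for general $k$ one should invoke the Fraser--Schoen or Hassannezhad-type bounds instead. Second, the clean ``energy identity'' you write, $\Sigma_k(\Omega,c)=\sigma_j(\Omega,c,\beta_\infty)+2\pi(k-j)$, oversimplifies what Petrides actually proves: the analysis goes through eigenfunction convergence (not just measure convergence), tracks multiplicities carefully, and the inequality one obtains is one-sided, which is all that is needed to derive the contradiction with~\eqref{SigmaK:condition}. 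The genuine technical work---boundary neck analysis and the no-loss-of-eigenvalues estimate---is exactly where you flag the main obstacle, and it is substantially harder than the interior harmonic-map bubbling of Lemma~\ref{bubble.business}.
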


The following proposition states that the condition~\eqref{SigmaK:condition} is satisfied for many conformal classes.

\begin{proposition}
Let $(M, c_0)$ be a surface with a fixed conformal class $c_0$. Then for any $0<k\leqslant 3$ there exists $b_0\geqslant 0$ such that for any $b\geqslant b_0$ there exists $(\Omega_b, c_b)\subset (M,c_0)$ such that  $\Omega$ has exactly $b$ boundary components and the condition~\eqref{SigmaK:condition}  is satisfied for $(\Omega_b,c_b)$.
\end{proposition}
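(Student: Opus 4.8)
The plan is to realize $\Omega_b$ as $M$ with $b$ small disks removed, and to use the monotonicity relation \eqref{Lambdak:condition}/\eqref{SigmaK:condition} together with the gluing and homogenisation results already available. First I would recall the Steklov analog of the Colbois--El Soufi--Girouard gluing construction: given any surface $(\Omega, c)$ with boundary, one can attach a ``Steklov bubble'' (a copy of a half-sphere, or more precisely a domain realizing the maximal $\bar\sigma_k$ on the disk) in such a way that the conformal class is not changed near the rest of the surface and the new domain $(\Omega', c')$ satisfies $\Sigma_k(\Omega',c')\geq \Sigma_{k-1}(\Omega,c)+2\pi$; iterating, one obtains the chain of inequalities $\Sigma_k(\Omega,c)\geq \Sigma_{k-1}(\Omega,c)+2\pi\geq\cdots\geq 2\pi k$ for suitable domains. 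The point of the present proposition, however, is the \emph{strict} inequality, and here I would argue by contradiction, exactly as in the Laplacian case: if $\Sigma_k(\Omega_b,c_b)=\Sigma_{k-1}(\Omega_b,c_b)+2\pi$ for all large $b$, we will contradict a lower bound for $\Sigma_k(\Omega_b,c_b)$ coming from Theorem~\ref{GLS:thm}.

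The key steps, in order, are as follows. \emph{Step 1.} For each $b$, let $\Omega_b\subset M$ be obtained by deleting $b$ small geodesic disks from $(M,c_0)$, with conformal class $c_b$ the restriction of $c_0$; these have exactly $b$ boundary components. \emph{Step 2.} Invoke Theorem~\ref{GLS:thm} (and the refinement in the final remark of the introduction, giving $\lim_n \bar\sigma_k(\Omega_n,g)=\bar\lambda_k(M,g)$ for domains obtained by making many small holes): this shows that for any fixed metric $g\in c_0$ one can arrange, by taking $b$ large and the holes suitably placed and scaled, that $\bar\sigma_k(\Omega_b,g)$ is as close as we like to $\bar\lambda_k(M,g)$, hence $\Sigma_k(\Omega_b,c_b)$ is bounded below by a quantity approaching $\bar\lambda_k(M,g)$. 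Taking $g$ to nearly realize $\Lambda_k(M,c_0)$ — or simply any fixed $g$ with $\bar\lambda_k(M,g)$ large enough — gives $\Sigma_k(\Omega_b,c_b) > 2\pi k$ for all sufficiently large $b$, provided $\bar\lambda_k(M,g)>2\pi k$ for some metric, which holds for $k\le 3$ on any surface by known lower bounds for $\Lambda_k$ (e.g.\ $\Lambda_1>8\pi$ off the sphere by Theorem~\ref{rigidity:thm}, and analogous strict bounds for $k=2,3$). \emph{Step 3.} Combine with the upper estimate $\Sigma_{k-1}(\Omega_b,c_b)\le \Lambda_{k-1}(M,c_0)$ from Theorem~\ref{Steklov:thm}: if \eqref{SigmaK:condition} failed for infinitely many $b$, then $\Sigma_k(\Omega_b,c_b)\le \Lambda_{k-1}(M,c_0)+2\pi$ along that subsequence, a fixed finite bound, contradicting Step 2 once we have chosen the comparison metric so that $\bar\lambda_k(M,g)>\Lambda_{k-1}(M,c_0)+2\pi$ — which is possible precisely because $\Lambda_k(M,c_0)\ge\Lambda_{k-1}(M,c_0)+8\pi>\Lambda_{k-1}(M,c_0)+2\pi$ by \eqref{Lambdak:condition}. \emph{Step 4.} Since the set of $b$ for which \eqref{SigmaK:condition} holds is then cofinite, we may set $b_0$ to be its largest excluded element (or $0$), finishing the proof.

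The main obstacle is Step 2: one must check that the homogenisation construction of \cite{GLS} can be carried out \emph{within the fixed conformal class $c_0$}, i.e.\ that the domains $\Omega_n$ there (and their metrics realizing $\bar\sigma_k$ close to $\bar\lambda_k$) can be taken conformal to the restriction of a fixed representative of $c_0$, and moreover with a prescribed number $b$ of holes rather than an uncontrolled number. I expect this to follow by inspecting the proof in \cite{GLS}: the holes are introduced by a periodic array of small balls, so their number is a free parameter, and the construction is conformally natural once one fixes a background metric — but this is the point that requires the most care and an explicit appeal to the internals of \cite{GLS} rather than a black-box citation. The remaining ingredient, that $\bar\lambda_k(M,g)$ can exceed any prescribed constant below $\Lambda_k(M,c_0)$ for $k\le 3$ and that $\Lambda_k(M,c_0)>\Lambda_{k-1}(M,c_0)+2\pi$, is immediate from \eqref{Lambdak:condition} and the definition of $\Lambda_k$ as a supremum, and needs no further work.
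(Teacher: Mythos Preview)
Your overall strategy is correct and matches the paper's: bound $\Sigma_{k-1}(\Omega,c)$ above by $\Lambda_{k-1}(M,c_0)$ via Theorem~\ref{Steklov:thm}, bound $\Sigma_k(\Omega,c)$ below using Theorem~\ref{GLS:thm}, and exploit the gap $\Lambda_k(M,c_0) \geq \Lambda_{k-1}(M,c_0) + 8\pi$ from \eqref{Lambdak:condition}. (The opening paragraph about attaching ``Steklov bubbles'' is a red herring and plays no role in the argument that follows.)

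The gap is exactly what you flag as the ``main obstacle'': Theorem~\ref{GLS:thm} produces a \emph{sequence} of domains $\Omega_n$ with an increasing but uncontrolled number of boundary components, not one for each prescribed $b$. Your hope that the number of holes in the periodic array can be taken as a free parameter is not obviously correct, and inspecting the internals of \cite{GLS} is not how the paper proceeds. Instead, the paper first fixes $g\in c_0$ with $\bar\lambda_k(M,g) > \Lambda_k(M,c_0) - 2\pi$, takes a \emph{single} domain $\Omega_0$ from Theorem~\ref{GLS:thm} with $\bar\sigma_k(\Omega_0,g) > \Lambda_k(M,c_0) - 4\pi$, and sets $b_0$ equal to its number of boundary components. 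Then for any $b \geq b_0$ it invokes the spectral stability result of \cite{BGT} to cut $b-b_0$ additional small holes in $\Omega_0$, yielding $\Omega_b$ with exactly $b$ boundary components and $\bar\sigma_k(\Omega_b,g) \geq \bar\sigma_k(\Omega_0,g) - 2\pi$. Since the upper bound $\Sigma_{k-1}(\Omega_b,c_b) \leq \Lambda_{k-1}(M,c_0) \leq \Lambda_k(M,c_0) - 8\pi$ holds for \emph{every} such subdomain, one gets directly
\[
\Sigma_k(\Omega_b,c_b) \geq \bar\sigma_k(\Omega_b,g) > \Lambda_k(M,c_0) - 6\pi > \Sigma_{k-1}(\Omega_b,c_b) + 2\pi,
\]
with no contradiction argument needed. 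The ingredient missing from your proposal is precisely the hole-cutting result of \cite{BGT}, which is what converts the subsequence statement of \cite{GLS} into a statement valid for every $b\geq b_0$.
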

\begin{proof}
For any $(\Omega,c)\subset (M,c_0)$ by Theorem~\ref{Steklov:thm} one has
$$
\Sigma_{k-1}(\Omega,c)<\Lambda_{k-1}(M,c)\leqslant \Lambda_k(M,c) - 8\pi,
$$
where in the last inequality we used~\eqref{Lambdak:condition}. Let $g\in c$ be a metric such that $\bar\lambda_k(M,g)+2\pi>\Lambda_k(M,c)$. By Theorem~\ref{GLS:thm} there exists $\Omega_0\subset M$ such that 
$$
\bar\sigma_k(\Omega_0, g)\geqslant \bar\lambda_k(M,g)-2\pi>\Lambda_k(M,c)-4\pi.
$$
Combining the two previous inequalities one has
$$
\Sigma_k(\Omega_0,c)\geqslant \bar\sigma_k(\Omega_0, g)>\Sigma_{k-1}(\Omega_0,c)+4\pi.
$$
Set $b_0$ to be the number of boundary components of $\Omega_0$. If $b\geqslant b_0$, then by the results of~\cite{BGT} (which continue to hold in manifold setting, see the proof of~\cite[Lemma 3.1]{GLS})
one can cut out several holes in $\Omega_0$ to obtain $\Omega_b$ such that
$$
\bar\sigma_k(\Omega_b,g)\geqslant \bar\sigma_k(\Omega_0, g)-2\pi.
$$
For such $\Omega_b$ one has
$$
\Sigma_k(\Omega_b,c)\geqslant \bar\sigma_k(\Omega_b,g)>\Sigma_{k-1}(\Omega_b,c)+2\pi.
$$
\end{proof}

Let us further introduce the following notation: let
$$
\Lambda_k(\gamma) = \sup_{c}\Lambda_k(M_\gamma,c),
$$
denote the supremum of $\bar{\lambda}_k(M,g)$ over \emph{all metrics} on the closed, orientable surface  $M_\gamma$ of genus $\gamma$. Similarly, for Steklov eigenvalues we define
$$
\Sigma_k(\gamma,b) = \sup_c\Sigma_k(\Omega_{\gamma, b},c),
$$
where $\Omega_{\gamma,b}$ is an orientable surface of genus $\gamma$ with $b$ boundary components.

\begin{theorem}[Petrides~\cite{Petrides, PetridesSteklov}]
Assume that 
\begin{equation}
\label{Lambda1:condition}
\Lambda_1(\gamma)>\Lambda_1(\gamma-1),
\end{equation}
where $\Lambda_1(-1)$ is set to be $0$ by definition.
Then there is a metric $g$ on $M_\gamma$ such that $\bar\lambda_1(M_\gamma, g)=\Lambda_1(\gamma)$, induced by a branched minimal immersion, by first eigenfunctions, into some sphere $\mathbb{S}^n$.

Assume that 
\begin{equation}
\label{Sigma1:condition}
\Sigma_1(\gamma,b)>\max\{\Sigma_1(\gamma,b-1),\Sigma_1(\gamma-1,b+1)\}.
\end{equation}
Then there exists a metric $g$ on $\Omega_{\gamma.b}$ such that $\bar\sigma_1(\Omega_{\gamma,b},g) = \Sigma_1(\gamma,b)$, induced by a (branched) free boundary minimal immersion, by first Steklov eigenfunctions, into some Euclidean ball $B^n$.
\end{theorem}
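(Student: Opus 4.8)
The plan is to reduce the theorem to the fixed--conformal--class existence results of the preceding sections, combined with a compactness analysis for a maximizing sequence of \emph{conformal classes} in moduli space; the degeneration part of that analysis is where the hypotheses \eqref{Lambda1:condition} and \eqref{Sigma1:condition} are used, and it is the main obstacle. Consider first the closed case. For $\gamma=0$ the surface $M_0=\mathbb{S}^2$ carries a single conformal class and the assertion is Hersch's inequality, with the round metric (the totally geodesic $\mathbb{S}^2\hookrightarrow\mathbb{S}^2$) as maximizer. For $\gamma\ge 1$, choose conformal classes $c_m$ on $M_\gamma$ with $\Lambda_1(M_\gamma,c_m)\to\Lambda_1(\gamma)$. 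Since $M_\gamma\ne\mathbb{S}^2$, Theorem~\ref{rigidity:thm} gives the strict inequality $\Lambda_1(M_\gamma,c_m)>8\pi$, so Theorem~\ref{existence:thm} --- or, equivalently, the min--max construction of Theorems~\ref{MainTh1:intro}--\ref{MainTh2:intro} --- furnishes harmonic maps $\Psi_m\colon M_\gamma\to\mathbb{S}^{N_m}$ with $g_{\Psi_m}\in c_m$, $\ind_S(\Psi_m)=1$, $\ind_E(\Psi_m)\le N_m+1$, and
\[ E(\Psi_m)=\tfrac12\bar\lambda_1(M_\gamma,g_{\Psi_m})=\tfrac12\Lambda_1(M_\gamma,c_m)\longrightarrow\tfrac12\Lambda_1(\gamma). \]
In particular the energies $E(\Psi_m)$ are uniformly bounded, and one must analyse how the conformal classes $c_m$ behave in moduli space.

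Suppose first that, along a subsequence, $c_m$ converges to an interior point $c_\infty$ of the moduli space of $M_\gamma$. Then the $\Psi_m$ are harmonic maps into the \emph{fixed} target $\mathbb{S}^{N_m}$ with bounded energy and domain conformal structures converging to $c_\infty$; since the target dimensions are uniformly bounded by Proposition~\ref{MainProp:intro} (whose constant is locally uniform in the conformal class), after passing to a subsequence $N_m\equiv n$, and one runs the bubble--tree analysis of Lemma~\ref{bubble.business} --- now with varying domain conformal structure --- to extract a harmonic map $\Psi_\infty\colon M_\gamma\to\mathbb{S}^n$ and finitely many bubbles $\phi_j\colon\mathbb{S}^2\to\mathbb{S}^n$ with the energy identity and $\ind_E(\Psi_\infty)+\sum_j\ind_E(\phi_j)\le n+1$. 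Arguing verbatim as in the proof of Theorem~\ref{eigenvalue:thm} --- Propositions~\ref{indEM:prop} and~\ref{indES2:prop} permit only a single equatorial bubble over a constant $\Psi_\infty$, which $\Lambda_1(M_\gamma,c_\infty)>8\pi$ then excludes --- we conclude there is no bubbling, so $E(\Psi_\infty)=\tfrac12\Lambda_1(\gamma)$. Lower semicontinuity of the spectral index along bubble convergence (the scalar analogue of the energy--index semicontinuity in Lemma~\ref{bubble.business}) gives $\ind_S(\Psi_\infty)\le 1$, hence $=1$, so $g_{\Psi_\infty}\in c_\infty$ is a maximizer: $\bar\lambda_1(M_\gamma,g_{\Psi_\infty})=\Lambda_1(\gamma)$, induced by the branched minimal immersion $\Psi_\infty$.

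It remains --- and this is the crux --- to rule out that every subsequence of $c_m$ leaves all compact subsets of moduli space. If it does, then by the Deligne--Mumford picture $(M_\gamma,c_m)$ develops collars of conformal modulus $\to\infty$ pinching along an essential multicurve. If the limiting nodal surface is genuinely disconnected with a definite share of the area of $g_{\Psi_m}$ (normalized to one) on at least two components, then a test function locally constant on each side and interpolating linearly across a thin collar --- with Dirichlet energy $O(1/\mathrm{modulus})$ by conformal invariance --- shows $\bar\lambda_1(M_\gamma,g_{\Psi_m})\to 0$, against $\bar\lambda_1(M_\gamma,g_{\Psi_m})\to\Lambda_1(\gamma)>0$. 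Otherwise the area concentrates on one component, whose normalization is a closed surface $M_{\gamma'}$ with $\gamma'\le\gamma-1$; a neck analysis of the energy densities $|d\Psi_m|^2$ patterned on Lemmas~\ref{bub.lem} and~\ref{limit_measure:lemma} (using the dimension--free constants of Section~\ref{stabilization_proof:sec}, so that varying $N_m$ causes no trouble) shows that no eigenvalue mass escapes along the necks --- escaping energy would form equatorial bubbles of energy $4\pi$, which are absorbed --- so the limiting density defines an admissible generalized metric $\mu$ on $M_{\gamma'}$ with $\bar\lambda_1(M_{\gamma'},\mu)\ge\Lambda_1(\gamma)$. By Theorem~\ref{mu.eigen.bd} together with the elementary monotonicity $\Lambda_1(\gamma')\le\Lambda_1(\gamma-1)$ (attach a thin handle to a near--maximal surface of genus $\gamma'$), this forces $\Lambda_1(\gamma)\le\Lambda_1(\gamma-1)$, contradicting \eqref{Lambda1:condition}; equivalently, one may invoke upper semicontinuity of the conformal spectrum under pinching (cf.~\cite{CES}). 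Iterating whenever $M_{\gamma'}$ itself requires a further degeneration --- the process terminates since the genus strictly drops --- completes the closed case. This semicontinuity--at--the--boundary step is the main obstacle; all other steps are essentially contained in the earlier sections.

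The Steklov case follows the same scheme. First, $\Sigma_1(\Omega_{\gamma,b},c)\ge 2\pi$ for every conformal class, with equality only for the disk (which \eqref{Sigma1:condition} excludes), so along a maximizing sequence $\Sigma_1(\Omega_{\gamma,b},c_m)>2\pi=\Sigma_0(\Omega_{\gamma,b},c_m)+2\pi$, i.e.\ condition \eqref{SigmaK:condition} holds with $k=1$, and the Steklov analog of Theorem~\ref{existence:thm} (Petrides~\cite{PetridesSteklov}) provides a maximal metric in each $c_m$, realized by a free boundary branched minimal immersion into a ball by first Steklov eigenfunctions; Proposition~\ref{MainProp:intro} again bounds the target dimension along the maximizing sequence. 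When $c_m$ converges nondegenerately in the bordered moduli space one runs the bubble analysis exactly as above, in the free boundary setting, obtaining the maximizing metric on $\Omega_{\gamma,b}$. When $c_m$ degenerates, an essential curve of modulus $\to\infty$ is either parallel to a boundary component --- that component then shrinks to a puncture and the mass--carrying limit lives on $\Omega_{\gamma,b-1}$, giving $\Sigma_1(\gamma,b)\le\Sigma_1(\gamma,b-1)$ --- or it lowers the genus of the mass--carrying component by at least one without raising its number of boundary components, giving, by monotonicity of $\Sigma_1$ in $\gamma$ and in $b$, $\Sigma_1(\gamma,b)\le\Sigma_1(\gamma-1,b+1)$ (disconnected splittings with mass on two pieces again force $\bar\sigma_1\to 0$ and are excluded). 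Either way \eqref{Sigma1:condition} is contradicted, and, as in the closed case, the neck/semicontinuity analysis is the delicate point.
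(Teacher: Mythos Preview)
The paper does not prove this theorem at all: it is quoted from Petrides~\cite{Petrides, PetridesSteklov} as an external input and used as a black box in Section~\ref{Steklov_applications:sec}. There is therefore no ``paper's own proof'' to compare your proposal against.

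That said, your sketch is a plausible outline of the strategy in \cite{Petrides, PetridesSteklov}, but several steps are not justified by anything in the present paper and would require substantial additional work. First, you invoke Proposition~\ref{MainProp:intro} with a constant ``locally uniform in the conformal class''; the proposition as stated and proved here fixes $(M,[g])$, and no uniformity over varying conformal structures is established. Second, the bubble/index analysis of Lemma~\ref{bubble.business} is carried out for critical points of the Ginzburg--Landau functionals $E_\epsilon$ on a \emph{fixed} surface, not for sequences of genuine harmonic maps on degenerating domains; transplanting it to that setting is nontrivial. Third, your neck analysis asserts that escaping energy ``would form equatorial bubbles of energy $4\pi$, which are absorbed'' --- this is not a consequence of Lemmas~\ref{bub.lem} or \ref{limit_measure:lemma}, which concern $L^p$ compactness of energy densities on a fixed surface, not energy quantization under conformal degeneration. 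Finally, the Steklov half of your argument appeals to a ``Steklov analog of Theorem~\ref{existence:thm}'' and to free-boundary bubble analysis, neither of which is developed in this paper. In short, your proposal identifies the right architecture (maximize over conformal classes, use the conformal-class existence result, and show the hypotheses \eqref{Lambda1:condition}, \eqref{Sigma1:condition} rule out degeneration), but the analytic content of the degeneration step lies entirely outside the scope of the present paper and is precisely what \cite{Petrides, PetridesSteklov} supply.
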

\begin{remark}
\label{strict:remark}
Note that the non-strict versions of inequalities~\eqref{Lambda1:condition},~\eqref{Sigma1:condition} are always satisfied.
\end{remark}
Note that it follows from the following lower bound proved in~\cite{BBD}
\begin{equation}
\label{ref:lower_bound}
\Lambda_1(\gamma)\geq \frac{3}{4}\pi(\gamma-1)
\end{equation}
that the inequality~\eqref{Lambda1:condition} holds for infinitely many values of $\gamma$.


The following proposition also appears in~\cite[Corollary 1.6]{GLS}. 
\begin{proposition}
\label{Sigma_limit:prop}
For $k=1,2$ one has 
$$
\lim_{b\to\infty} \Sigma_k(\gamma,b) =\Lambda_k(\gamma).
$$
\end{proposition}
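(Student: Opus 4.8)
The plan is to establish the two inequalities $\limsup_{b\to\infty}\Sigma_k(\gamma,b)\leq \Lambda_k(\gamma)$ and $\liminf_{b\to\infty}\Sigma_k(\gamma,b)\geq \Lambda_k(\gamma)$ separately, both following quickly from the tools already assembled in the excerpt. For the upper bound, fix any domain $\Omega_{\gamma,b}\subset M_\gamma$ with a conformal class $c$. Since $\Omega_{\gamma,b}$ is (may be taken to be) a Lipschitz domain, Theorem~\ref{Steklov:thm} applies and gives $\bar\sigma_k(\Omega_{\gamma,b},g)<\Lambda_k(M_\gamma,c)\leq \Lambda_k(\gamma)$ for every $g\in c$; taking the supremum over $g$ and then over $c$ yields $\Sigma_k(\gamma,b)\leq \Lambda_k(\gamma)$ for \emph{every} $b$. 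In particular $\limsup_{b\to\infty}\Sigma_k(\gamma,b)\leq \Lambda_k(\gamma)$, and this half in fact holds without any limit.

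For the lower bound, fix $\eta>0$ and choose a metric $g$ on $M_\gamma$ with $\bar\lambda_k(M_\gamma,g)>\Lambda_k(\gamma)-\eta$. Apply Theorem~\ref{GLS:thm} (Girouard--Lagac\'e) to $(M_\gamma,g)$: there is a sequence of smooth domains $\Omega_n\subset M_\gamma$ with $\bar\sigma_k(\Omega_n,g)\to\bar\lambda_k(M_\gamma,g)$, so for $n$ large we get $\bar\sigma_k(\Omega_n,g)>\Lambda_k(\gamma)-2\eta$. Each $\Omega_n$ has some number of boundary components, say $b_n$, and $\Omega_n$ is a domain of genus at most $\gamma$; using the doubling/handle trick (e.g. gluing a flat cylinder or appealing to \eqref{Lambdak:condition}-type constructions) one may arrange $\Omega_n$ to have genus exactly $\gamma$ without decreasing $\bar\sigma_k$ by more than an arbitrarily small amount, or simply observe that one may take $b_n\to\infty$ by the construction in \cite{GLS}. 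In any case, $\Sigma_k(\gamma,b_n)\geq \bar\sigma_k(\Omega_n,g)>\Lambda_k(\gamma)-2\eta$ for a sequence $b_n\to\infty$.

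To promote this to a genuine limit over \emph{all} $b\to\infty$, I would use the monotonicity-type input from \cite{BGT} (valid in the manifold setting, cf. the proof of \cite[Lemma~3.1]{GLS}) already invoked in the excerpt: cutting additional small holes in $\Omega_n$ changes $\bar\sigma_k$ by at most $2\pi$ per hole, and more precisely by an arbitrarily small amount if the holes are chosen small enough; hence for every $b\geq b_n$ one can produce a domain $\Omega_{\gamma,b}$ with $\bar\sigma_k(\Omega_{\gamma,b},g)\geq \bar\sigma_k(\Omega_n,g)-\eta>\Lambda_k(\gamma)-3\eta$, giving $\Sigma_k(\gamma,b)>\Lambda_k(\gamma)-3\eta$ for \emph{all} sufficiently large $b$. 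Combined with the uniform upper bound $\Sigma_k(\gamma,b)\leq\Lambda_k(\gamma)$, letting $\eta\to 0$ yields $\lim_{b\to\infty}\Sigma_k(\gamma,b)=\Lambda_k(\gamma)$.

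The only genuinely delicate point is the interplay between the genus and the number of boundary components in the last step: one must be sure that adding holes to $\Omega_n$ (to increase $b$) can be done while keeping the genus equal to $\gamma$ and controlling the loss in $\bar\sigma_k$; this is exactly the content of the quantitative hole-cutting estimates of \cite{BGT} as adapted in \cite{GLS}, so I expect this to be the main technical obstacle, though it is already available in the literature cited here. Everything else is a direct concatenation of Theorem~\ref{Steklov:thm} (for the upper bound) and Theorem~\ref{GLS:thm} together with the $[g]$-free definition $\Lambda_k(\gamma)=\sup_c\Lambda_k(M_\gamma,c)$ (for the lower bound).
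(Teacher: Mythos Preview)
Your proof is correct and follows the same two-sided approach as the paper. One small imprecision in your upper bound: when you write ``fix any domain $\Omega_{\gamma,b}\subset M_\gamma$ with a conformal class $c$'' and then invoke $\Lambda_k(M_\gamma,c)$, you are implicitly using that \emph{every} conformal class on the abstract surface $\Omega_{\gamma,b}$ arises as the restriction of a conformal class $\bar c$ on the closed surface $M_\gamma$; the paper makes this step explicit (``glue-in the holes to obtain a conformal class $\bar c$ on $M_\gamma$''), and you should too, since $\Sigma_k(\gamma,b)$ is a supremum over conformal classes on $\Omega_{\gamma,b}$, not on $M_\gamma$. On the lower bound you are actually more careful than the paper: the paper simply asserts that Theorem~\ref{GLS:thm} yields $\liminf_{b\to\infty}\Sigma_k(\gamma,b)\geq\Lambda_k(\gamma)$, whereas you correctly worry about filling in \emph{all} large $b$ (not just the subsequence $b_n$ coming from the homogenisation), and your fix via the hole-cutting stability of \cite{BGT} is exactly the device used elsewhere in the paper. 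Your concern about the genus of $\Omega_n$ is unnecessary, since the GLS domains are obtained by removing small disks from $M_\gamma$ and hence have genus exactly $\gamma$.
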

\begin{proof}
Theorem~\ref{GLS:thm} implies that for all $k\geqslant 0$ 
$$
\lim_{b\to\infty}\Sigma_k(\gamma,b)\geqslant \Lambda_k(\gamma).
$$
At the same time, for any conformal class $c$ on $\Omega_{\gamma,b}$ one can glue-in the holes to obtain a conformal class $\bar c$ on $M_\gamma$ such that $(\Omega_{\gamma,b},c)\subset (M_\gamma,\bar c)$. Then by Theorem~\ref{Steklov:thm} for any $b$ and $k=1,2$ one has
$$
\Sigma_k(\gamma,b) = \sup_c\Sigma_k(\Omega_{\gamma,b},c)\leqslant \sup_{\bar c}\Lambda_k(M_\gamma,\bar c) = \Lambda_k(\gamma).
$$
\end{proof}
\begin{theorem}
\label{fbms:prop}
Let $\gamma$ be such that the condition~\eqref{Lambda1:condition} is satisfied. Then
one has 
\begin{equation}
\label{SigmaLambda:ineq}
\Sigma_1(\gamma,b)<\Lambda_1(\gamma) 
\end{equation}
and there are infinitely many $b$ such that the inequality~\eqref{Sigma1:condition} holds. 
\end{theorem} 
\begin{proof}
The non-strict version of inequality~\eqref{SigmaLambda:ineq} follows from the proof of Proposition~\ref{Sigma_limit:prop}.

We start with the second statement. Let $\gamma$ be fixed. Combining~\eqref{Lambda1:condition} and Proposition~\ref{Sigma_limit:prop} one has
$$
\lim_{b\to\infty}\Sigma_1(\gamma,b)>\lim_{b\to\infty}\Sigma_1(\gamma-1,b).
$$
Therefore, for large $b$ one has
$$
\Sigma_1(\gamma,b)> \Sigma_1(\gamma-1,b+1).
$$
Hence, it only remains to establish that for infinitely many of these large $b$ one has 
$$
\Sigma_1(\gamma,b)>\Sigma_1(\gamma,b-1).
$$
Assume the contrary. Then by Remark~\ref{strict:remark} for large enough $b$ one has that
$$
\Lambda_1(\gamma) = \Sigma_1(\gamma, b),
$$
which would also violate the claimed strict inequality~\eqref{SigmaLambda:ineq}.
Then there exists $b_0$ such that 
$$
\Lambda_1(\gamma) = \Sigma_1(\gamma, b_0)>\Sigma_1(\gamma, b_0-1).
$$
We claim that $\Sigma_1(\gamma, b_0)>\Sigma_1(\gamma-1,b_0+1)$. Indeed, otherwise by Remark~\ref{strict:remark} one has equality $\Sigma_1(\gamma, b_0)=\Sigma_1(\gamma-1,b_0+1)$ and, thus, by Remark~\ref{strict:remark}
$$
\Lambda_1(\gamma-1) = \lim_{b\to\infty}\Sigma_1(\gamma-1,b)\geqslant \Sigma_1(\gamma-1,b_0+1) = \Sigma_1(\gamma, b_0) = \Lambda_1(\gamma),
$$
which contradicts~\eqref{Lambda1:condition}. As a result, one has that the condition~\eqref{Sigma1:condition} is satisfied for $(\gamma,b_0)$, i.e. there exists a metric $g$ on $\Omega_{\gamma,b_0}$ such that  
$$
\bar\sigma_1(\Omega_{\gamma,b_0},g) = \Sigma_1(\gamma,b_0) = \Lambda_1(\gamma).
$$
Let $(M_\gamma,\bar g)$ be obtained by gluing-in the holes in $(\Omega_{\gamma,b_0},g)$ so that $(\Omega_{\gamma,b_0},g)\subset (M_\gamma,\bar g)$. Then, by Theorem~\ref{Steklov:thm} one has
$$
\Lambda_1(\gamma) = \bar\sigma_1(\Omega_{\gamma,b_0},g) < \Lambda_1(M_\gamma,[\bar g])\leqslant\Lambda_1(\gamma),
$$
which is a contradiction.
\end{proof}

Finally, we recall the connection to free boundary minimal surfaces, see e.g.~\cite{FS, FS2}.

\begin{theorem}
There are infinitely many values of $\gamma\geqslant 0$ satisfying 
$$
\Lambda_1(\gamma)>\Lambda_1(\gamma-1),
$$
where $\Lambda_1(-1)$ is set to be $0$.
For each such $\gamma$ there are infinitely many $b\geqslant 1$ such that the value $\Sigma_1(\gamma,b)$ is achieved by a smooth metric. In particular, there exists a free boundary minimal branched immersion $f\colon \Omega_{\gamma,b}\to B^{n_{\gamma,b}}$ by the first Steklov eigenfunctions.
\end{theorem}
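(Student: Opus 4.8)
The plan is to obtain this statement as an essentially immediate corollary of Theorem~\ref{fbms:prop} combined with Petrides' existence theorem for Steklov-maximal metrics (the Steklov analogue of Theorem~\ref{existence:thm} recalled above). First I would fix the genus $\gamma\geqslant 0$ and invoke the second assertion of Theorem~\ref{fbms:prop}: there are infinitely many $b\geqslant 1$ for which the strict gap condition \eqref{Sigma1:condition}, namely
$$
\Sigma_1(\gamma,b)>\max\{\Sigma_1(\gamma,b-1),\,\Sigma_1(\gamma-1,b+1)\},
$$
is satisfied for the pair $(\gamma,b)$. For each such $b$ I would then apply Petrides' theorem, which asserts precisely that when \eqref{Sigma1:condition} holds there exists a smooth metric $g$ on $\Omega_{\gamma,b}$ realizing $\bar\sigma_1(\Omega_{\gamma,b},g)=\Sigma_1(\gamma,b)$, and that the first Steklov eigenfunctions of $(\Omega_{\gamma,b},g)$ define a (branched) free boundary minimal immersion $f\colon\Omega_{\gamma,b}\to B^{n_{\gamma,b}}$ into a Euclidean ball. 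This yields both conclusions of the theorem simultaneously for those infinitely many values of $b$.

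Thus the genuine content lies entirely in Theorem~\ref{fbms:prop}, whose proof I would carry out as indicated there: the strict bound $\Sigma_1(\gamma,b)<\Lambda_1(\gamma)$ follows from Theorem~\ref{Steklov:thm} applied after gluing the boundary components of $\Omega_{\gamma,b}$ back in; the asymptotic equality $\lim_{b\to\infty}\Sigma_1(\gamma,b)=\Lambda_1(\gamma)$ comes from Proposition~\ref{Sigma_limit:prop} (itself a consequence of the homogenisation result of~\cite{GLS}); and the strict genus monotonicity $\Lambda_1(\gamma)>\Lambda_1(\gamma-1)$ is the theorem of Matthiesen--Siffert (Theorem~\ref{MS:thm}). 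One then argues by contradiction: if $\Sigma_1(\gamma,b)=\Sigma_1(\gamma,b-1)$ for all sufficiently large $b$, monotonicity of $b\mapsto\Sigma_1(\gamma,b)$ and Proposition~\ref{Sigma_limit:prop} force $\Sigma_1(\gamma,b)=\Lambda_1(\gamma)$ for large $b$, contradicting the strict bound; a parallel argument using Theorem~\ref{MS:thm} rules out $\Sigma_1(\gamma,b)\leqslant\Sigma_1(\gamma-1,b+1)$ for those $b$.

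The main obstacle is therefore not in the final deduction, which is purely formal once the two cited results are granted, but in verifying the strict inequalities of Theorem~\ref{fbms:prop}; the delicate point there is converting the non-strict inequalities (which hold trivially by the gluing/homogenisation comparison of Remark~\ref{strict:remark}) into strict ones, for which the genuinely new input is the sharp Steklov bound in terms of $\Lambda_1(M,[g])$ provided by Theorem~\ref{Steklov:thm} together with the Matthiesen--Siffert genus gap. I would also remark that the method only produces infinitely many $b$, not every $b$, since the argument proceeds by ruling out an \emph{eventual} coincidence $\Sigma_1(\gamma,b)=\Sigma_1(\gamma,b-1)$ rather than controlling each individual $b$; the case of all $b$ is exactly what was later settled in~\cite{MP}.
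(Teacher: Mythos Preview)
Your deduction of the stated theorem from Theorem~\ref{fbms:prop} together with Petrides' existence theorem is exactly how the paper obtains it: the theorem is presented there without a separate proof, as an immediate corollary once \eqref{Sigma1:condition} is known to hold for infinitely many $b$.

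One imprecision in your sketch of Theorem~\ref{fbms:prop} is worth flagging. You write that the strict bound $\Sigma_1(\gamma,b)<\Lambda_1(\gamma)$ ``follows from Theorem~\ref{Steklov:thm} applied after gluing the boundary components back in,'' and then use this strict bound to derive the contradiction. But Theorem~\ref{Steklov:thm} only gives $\bar\sigma_1(\Omega,g)<\Lambda_1(M,[\bar g])$ for each individual metric $g$; taking the supremum over $g$ and over conformal classes does not preserve strictness, so a priori this yields only the non-strict inequality (as the paper itself notes). The paper avoids this circularity by running a single contradiction argument for both assertions at once: assuming $\Sigma_1(\gamma,b)=\Lambda_1(\gamma)$ for some $b$, one takes the minimal such $b_0$, checks that \eqref{Sigma1:condition} holds at $(\gamma,b_0)$ (using Matthiesen--Siffert to handle the genus drop), invokes Petrides to produce an actual metric $g$ with $\bar\sigma_1(\Omega_{\gamma,b_0},g)=\Lambda_1(\gamma)$, and only then applies Theorem~\ref{Steklov:thm} to that specific metric to obtain $\Lambda_1(\gamma)<\Lambda_1(\gamma)$. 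So Petrides' theorem is used not just at the final step but also inside the proof of the strict inequality itself.
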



\begin{remark}
We expect that the results of this section extend to non-orientable surfaces. However, to the best of authors' knowledge, the analog of condition~\eqref{Sigma1:condition} for non-orientable surfaces or of lower bound~\eqref{ref:lower_bound} have not appeared in the literature, so we refrain from stating the non-orientable version of Theorem~\ref{fbms:prop} here. Note that the non-orientable analog of~\eqref{Lambda1:condition} can be found in~\cite{MS}.
\end{remark}

\subsection{Proof of Theorem~\ref{measures:thm}}\label{meas.thm.pf}

In light of the min-max characterization provided by Theorem \ref{MainTh2:intro}, Theorem \ref{measures:thm} is an immediate consequence of the following proposition.

\begin{proposition}\label{en.meas.bd} Let $\mu\in [C^0(M)]^*$ be an admissible probability measure on $M$, and fix a conformal class of metrics $c=[g]$ on $M$. Then
$$\lambda_1(\mu,c)\leq 2\mathcal{E}_n(M,c),$$
with equality if and only if
$$\lambda_1(M,c,\mu) \mu=|du|_g^2\,dv_g$$
for some harmonic map $u: (M,g)\to \mathbb{S}^n$ of spectral index $1$.

If in addition $\lambda_1(M,c,\mu)>0$, then   
$$\lambda_2(\mu,c)\leq 2 \mathcal{E}_{n,2}(M,c)$$
with equality only if
$$\lambda_2(M,c,\mu)\mu=|du|_g^2\,dv_g$$
for a harmonic map $u: (M,g)\to \mathbb{S}^n$ of spectral index $2$.
\end{proposition}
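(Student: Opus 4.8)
The plan is to prove, for $k=1$ (with $\mathcal{E}_n$) and $k=2$ (with $\mathcal{E}_{n,2}$), the inequality $\lambda_k(\mu,c)\le 2\mathcal{E}(M,c)$ by a Hersch-type balancing argument applied to the Ginzburg--Landau relaxation, and then to characterize the equality case by extracting a strongly convergent subsequence from a nearly optimal family. The proof of the inequality runs parallel to Propositions~\ref{lambda.below} and~\ref{lambda2.below}, with the volume measure replaced by $\mu$ in the role of the test measure. Fix $\epsilon>0$ and choose $F\in\Gamma_n(M)$ (resp.\ $\Gamma_{n,2}(M)$) with $\max E_\epsilon(F)$ within $\epsilon$ of $\mathcal{E}_{n,\epsilon}$ (resp.\ $\mathcal{E}_{n,2,\epsilon}$). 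Post-composing each $F_a$ with the nearest-point projection $P$ onto the ball of radius $2$ in $\mathbb{R}^{n+1}$ --- a $1$-Lipschitz map commuting with the $O(n+1)$-action, which does not increase $E_\epsilon$ (the potential $(1-|\cdot|^2)^2$ being nondecreasing for $|u|>1$) and preserves the boundary conditions, hence keeps the family in $\Gamma_n(M)$ or $\Gamma_{n,2}(M)$ --- we may assume $|F_a|\le 2$ everywhere. As in Proposition~\ref{lambda.below}, the averaging map $a\mapsto\int_M F_a\,d\mu$ is continuous (by admissibility $W^{1,2}(M,g)\hookrightarrow L^2(M,\mu)$ is bounded) and restricts to the identity on $\partial B^{n+1}$ since $\mu(M)=1$, so it vanishes at some interior point; in the second case one instead applies Lemma~\ref{degree.lem.2} to $\mathcal{I}(a,b)=\bigl(\int_M F_{a,b}\,d\mu,\ \int_M\phi_1 F_{a,b}\,d\mu\bigr)$, where $\phi_1$ is a $\lambda_1(M,c,\mu)$-eigenfunction (here $\lambda_1(M,c,\mu)>0$ by Proposition~\ref{ef:prop}), to obtain a point where both averages vanish. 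Writing $u$ for the corresponding relaxed map, each of its $n+1$ scalar components is $L^2(\mu)$-orthogonal to the constants (resp.\ to $\{1,\phi_1\}$); since $\{1\}$, resp.\ $\{1,\phi_1\}$, is a $1$-, resp.\ $2$-, dimensional space achieving the maximum in the Courant--Fischer characterization of $\lambda_k(M,c,\mu)$ --- this is true \emph{regardless of the multiplicity of} $\lambda_1(M,c,\mu)$ --- we obtain $\int_M|du|_g^2\,dv_g\ge\lambda_k(M,c,\mu)\int_M|u|^2\,d\mu$ from~\eqref{MeasureRayleigh:quotient}.

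The remaining point is that $\int_M|u|^2\,d\mu\to 1$ as $\epsilon\to 0$ along a suitable sequence, and \textbf{this is the main obstacle}: the Ginzburg--Landau potential only controls $\int_M(1-|u|^2)^2\,dv_g\le 4\epsilon^2 E_\epsilon(u)$, whereas $\mu$ may be singular with respect to $dv_g$. To bridge this, take $\epsilon=\epsilon_j\to 0$ with associated maps $u_j$; since $|u_j|\le 2$ and $\int_M|du_j|_g^2\,dv_g\le 2E_{\epsilon_j}(u_j)$ is bounded, $u_j$ is bounded in $W^{1,2}(M,\mathbb{R}^{n+1})$, and the identity $\nabla(1-|u_j|^2)=-2\langle u_j,\nabla u_j\rangle$ gives a uniform $W^{1,2}$-bound on $1-|u_j|^2$, while $1-|u_j|^2\to 0$ in $L^2(M,dv_g)$. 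Passing to a subsequence, $1-|u_j|^2\rightharpoonup 0$ weakly in $W^{1,2}(M,g)$, so the \emph{compactness} of $T\colon W^{1,2}(M,g)\to L^2(M,\mu)$ gives $1-|u_j|^2\to 0$ in $L^2(M,\mu)$, hence $\int_M\bigl|1-|u_j|^2\bigr|\,d\mu\to 0$ and $\int_M|u_j|^2\,d\mu\to 1$. Combining with the eigenvalue bound above and $2E_{\epsilon_j}(u_j)\le 2\mathcal{E}+2\epsilon_j$ and letting $j\to\infty$ yields $\lambda_k(M,c,\mu)\le 2\mathcal{E}$, where $\mathcal{E}=\mathcal{E}_n$ (resp.\ $\mathcal{E}_{n,2}$).

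For the equality case ("only if"), suppose $\lambda_k(M,c,\mu)=2\mathcal{E}$. Running the above with $\epsilon_j\to 0$, Rellich's theorem and compactness of $T$ give, along a subsequence, $u_j\rightharpoonup u$ weakly in $W^{1,2}(M,\mathbb{R}^{n+1})$ and strongly in $L^2(M,dv_g)$ and $L^2(M,\mu)$, and from $1-|u_j|^2\to 0$ in $L^2(dv_g)$ we get $|u|=1$ a.e., so $u\in W^{1,2}(M,\mathbb{S}^n)$. The components of $u$ inherit $L^2(\mu)$-orthogonality to $\{1\}$, resp.\ $\{1,\phi_1\}$, so $\int_M|du|_g^2\,dv_g\ge\lambda_k(M,c,\mu)\int_M|u|^2\,d\mu=2\mathcal{E}$; on the other hand, weak lower semicontinuity together with the squeeze $\lambda_k(M,c,\mu)\int_M|u_j|^2\,d\mu\le\int_M|du_j|_g^2\,dv_g\le 2\mathcal{E}+2\epsilon_j$ forces $\int_M|du_j|_g^2\,dv_g\to 2\mathcal{E}=\int_M|du|_g^2\,dv_g$. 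Hence $u_j\to u$ strongly in $W^{1,2}$ --- in particular \emph{no energy escapes into bubbles}, which is why the bubbling alternative of Theorem~\ref{En2:thm} plays no role here --- and equality holds in each component Poincaré inequality, so every $u^i$ is a $\lambda_k(M,c,\mu)$-eigenfunction, i.e.\ $\Delta_g u^i=\lambda_k(M,c,\mu)\,u^i\mu$ weakly. Multiplying by $u^i$, summing over $i$, and using $\sum_i(u^i)^2\equiv 1$ together with $\sum_i u^i\langle\nabla u^i,\nabla v\rangle=\tfrac12\langle\nabla 1,\nabla v\rangle=0$ gives $\int_M v\,|du|_g^2\,dv_g=\lambda_k(M,c,\mu)\int_M v\,d\mu$ for every test $v$; that is, $\lambda_k(M,c,\mu)\,\mu=|du|_g^2\,dv_g$, and substituting this into the component equations shows $\Delta_g u=|du|_g^2\,u$, so $u$ is a weakly harmonic map to $\mathbb{S}^n$, hence smooth by H\'elein's regularity theorem. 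Finally $\ind_S(u)=k$: the components of $u$ are, by construction, $\lambda_k(M,c,\mu)$-eigenfunctions of $\mu=\tfrac1{\lambda_k(M,c,\mu)}|du|_g^2\,dv_g$ orthogonal to the lower eigenfunctions, and when $k=2$ the degenerate possibility $\lambda_1(M,c,\mu)=\lambda_2(M,c,\mu)$ is excluded for $n$ large since it would give $\lambda_1(M,c,\mu)=2\mathcal{E}_{n,2}>2\mathcal{E}_n\ge\lambda_1(M,c,\mu)$. The converse implication --- that the existence of such a $u$ forces equality --- is straightforward once one unwinds the normalization $\mu(M)=1$, which yields $2E(u)=\lambda_k(M,c,\mu)$.
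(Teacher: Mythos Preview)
Your proof is correct and follows the same strategy as the paper: balance against $\mu$, extract a weak $W^{1,2}$ limit $u$, and use compactness of $T$ to transfer $|u_j|\to 1$ from $L^2(dv_g)$ to $L^2(\mu)$. The truncation to $|F_a|\le 2$ is a pleasant addition not in the paper --- it gives a clean $W^{1,2}$ bound on $1-|u_j|^2$, whence $\int|u_j|^2\,d\mu\to 1$ by compactness of $T$; the paper instead passes to the limit $u$ first and asserts $|T(u)|\equiv 1$ in $L^2(\mu)$ from $|u|\equiv 1$ in $L^2(dv_g)$, which implicitly uses a multiplicativity property of $T$ that your route sidesteps. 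Your handling of $\ind_S(u)=2$ via $\mathcal{E}_{n,2}>\mathcal{E}_n$ for large $n$ is more explicit than the paper, which simply says the $k=2$ rigidity ``follows the same lines''.

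One caveat: your last sentence on the converse (``if'') direction does not actually close the loop. From $\lambda_k(\mu,c)\mu=|du|^2\,dv_g$ and $\mu(M)=1$ you correctly get $\lambda_k(\mu,c)=2E(u)$, but to conclude equality with $2\mathcal{E}$ you would still need $E(u)=\mathcal{E}$, and for an arbitrary harmonic $u$ with $\ind_S(u)=k$ one only has $2E(u)=\bar\lambda_k(M,g_u)\le\Lambda_k(M,c)\le 2\mathcal{E}$, which can be strict. The paper's own proof in fact establishes only the ``only if'' direction (and the $k=2$ statement is already phrased that way), so this is a wording issue in the proposition rather than a defect in your argument.
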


The proof follows roughly the same lines as that of Propositions \ref{lambda.below} and \ref{lambda2.below} for the volume measures, with some aid from the following technical lemma.

\begin{lemma}\label{wk.cvg.lem} Let $\mu$ be an admissible probability measure, with associated map $T: W^{1,2}(M,g)\to L^2(M,\mu)$. For any sequence $\varphi_j$ which is bounded in $W^{1,2}$ and converges weakly to $\varphi \in W^{1,2}$, we also have the convergence
$$T(\varphi_j)\to T(\varphi)\text{ in }L^2(\mu).$$
\end{lemma}
\begin{proof} Since $\varphi_j$ is bounded in $W^{1,2}$, it follows from definition of admissibility that, after passing to a subsequence, the functions $T(\varphi_j)$ converge strongly 
$$T(\varphi_j)\to \psi\text{ in }L^2(\mu).$$
Now, for any $\eta\in L^2(\mu)$, the continuity of $T$ implies that the linear functional
$$W^{1,2}\ni f\mapsto \langle T^*(\eta),f\rangle:=\int_M T(f) \eta d\mu$$
defines an element $T^*(\eta)\in (W^{1,2})^*$ of the dual space to $W^{1,2}$; thus, since $\varphi_j\to \varphi$ weakly in $W^{1,2}$, it follows that
\begin{eqnarray*}
\int_M T(\varphi)\eta&=&\langle T^*(\eta),\varphi\rangle\\
&=&\lim_{j\to\infty} \langle T^*(\eta),\varphi_j\rangle\\
&=&\lim_{j\to\infty}\int_MT(\varphi_j)\eta d\mu\\
&=&\int_M \psi \eta d\mu.
\end{eqnarray*}
It follows that $T(\varphi)=\psi$, as desired.
\end{proof}

%

The proof of Theorem~\ref{en.meas.bd} is now fairly straightforward.

\begin{proof}[Proof of Proposition \ref{en.meas.bd}]

By definition of $\mathcal{E}_n(M,g)$, we can find a sequence $\epsilon_j\to 0$ and a sequence of families $F^j\in \Gamma_n(M)$ such that
$$\lim_{j\to\infty}\max_{y\in B^{n+1}}E_{\epsilon_j}(F^j_y)=\mathcal{E}_n(M,c).$$
Since the map $T:W^{1,2}(M,g)\to L^2(\mu)$ is continuous, we see that the map
$$B^{n+1}\ni y \mapsto \int_M T(F_y^j)d\mu \in \mathbb{R}^{n+1}$$
is a continuous map coinciding with the identity $\mathbb{S}^n\to \mathbb{S}^n$ on the boundary $\partial B^{n+1}$. Thus, it follows as before that there exists $y_j\in B^{n+1}$ such that the maps $u_j=F^j_{y_j}$ satisfy
$$\int_M T(u_j) d\mu=0\in \mathbb{R}^{n+1},$$
while
\begin{equation}\label{gl.bd}
\limsup_{j\to\infty} \int_M|du_j|^2+\frac{1}{2\epsilon_j^2}(1-|u_j|^2)^2\leq 2\mathcal{E}_n(M,c).
\end{equation}

Passing to a subsequence, by Banach-Alaoglu, we can find a map $u\in W^{1,2}(M,\mathbb{R}^{n+1})$ such that
$$u_j\to u\text{ weakly in }W^{1,2}\text{ and strongly in }L^2(M).$$
By Lemma \ref{wk.cvg.lem}, it also follows that
$$T(u_j)\to T(u)\text{ strongly in }L^2(\mu),$$
and since $\int_M (1-|u_j|^2)^2=O(\epsilon_j^2)$, the limit map $u$ must satisfy
$$|u|\equiv 1\text{ in }L^2(M)$$
and
$$|T(u)|\equiv 1\text{ in }L^2(\mu).$$
Combining all this information, we see that
\begin{equation}
\lambda_1(\mu,c)\leq \int_M |du|^2\leq \liminf_{j\to\infty}\int_M |du_j|^2\leq 2\mathcal{E}_n(M,c),
\end{equation}
from which the desired estimate follows. 

In the case of equality $\lambda_1(\mu,c)=2\mathcal{E}_n(M,c)$, we see that each inequality in the chain above is an equality, from which it follows that
$$u_j\to u\text{ strongly in }W^{1,2}(M,g)$$
and the nonzero components $u^i\in W^{1,2}(M,\mathbb{R})$ of $u$ minimize the Rayleigh quotient among functions with $\mu$-average $0$. In particular, it follows that
\begin{equation}\label{eigen.cond}
\int_M\langle du,dv\rangle_g \,dv_g=\lambda_1(\mu,c)\int_M \langle T(u),T(v)\rangle d\mu
\end{equation}
for all $v\in W^{1,2}(M,\mathbb{R}^{n+1})$. 

Now, if $v\in W^{1,2}(M,\mathbb{R}^{n+1})\cap L^{\infty}$ satisfies $\langle u,v\rangle \equiv 0$ in $W^{1,2}(M,g)$, it's easy to see that 
$$\langle T(u),T(v)\rangle \equiv T(\langle u,v\rangle)\equiv 0$$
in $L^2(\mu)$ as well. As a consequence, for any map $v\in W^{1,2}(M,\mathbb{R}^{n+1})\cap L^{\infty}$, testing the map $w=v-\langle v,u\rangle u$ (which is pointwise perpendicular to $u$) in \eqref{eigen.cond} gives
$$\int_M\langle du,d(v-\langle v,u\rangle u\rangle)\rangle_g\,dv_g=\int_M\langle du,dv\rangle-\int_M|du|^2\langle v,u\rangle=0.$$
In particular, it follows that $u:(M,g)\to \mathbb{S}^n$ is weakly harmonic, and setting $v=\varphi u$, we see that
$$\lambda_1(\mu,c)\int_M \varphi d\mu=\int_M\langle du,dv\rangle=\int_M|du|^2\varphi \,dv_g,$$
so that
$$\lambda_1(\mu,c)\mu=|du|^2 \,dv_g,$$
as claimed.

The proof of the second inequality $\lambda_2(M,c,\mu)\leq 2\mathcal{E}_{n,2}(M,c)$ follows similar lines. Assume now that $\mu$ admits a ``first eigenfunction" $\phi \in W^{1,2}(M,g)$ minimizing the Rayleigh quotient $\int_M|d\phi|^2/\|T(\phi)\|_{L^2(\mu)}^2$ among all $\phi$ with $\int_M T(\phi)d\mu=0$, so that
$$\int_M \langle d\phi, d\psi\rangle_g \,dv_g=\lambda_1(\mu,c)\int_MT(\phi)T(\psi)d\mu$$
for all $\psi \in W^{1,2}(M,g)$.

As before, consider a sequence of families $F^j\in \Gamma_{n,2}(M)$ such that 
$$\lim_{j\to\infty}\max_{y\in [B^{n+1}]^2}E_{\epsilon_j}(F_y^j)=\mathcal{E}_{n,2}(M,c).$$
By appealing to Lemma \ref{degree.lem.2} and the continuity of the map $T: W^{1,2}(M,g)\to L^2(M,\mu)$, we see that the averaging maps
$$[\overline{B}^{n+1}]^2\ni (a,b)\mapsto \left(\int_M F_{a,b} d\mu, \int_M \phi F_{a,b}d\mu\right)\in \mathbb{R}^{2(n+1)}$$
must have a zero. In particular, we can extract from the families $F^j$ sequence of maps
$$u_j=F_{a_j,b_j}^j:M\to \mathbb{R}^{n+1}$$
such that
\begin{equation}\label{0mean.cond}
\int_Mu_jd\mu=\int_M\phi u_jd\mu=0\in \mathbb{R}^{n+1}
\end{equation}
while
$$\limsup_{j\to\infty}\int_M |du_j|^2+\frac{1}{2\epsilon_j^2}(1-|u_j|^2)^2\leq 2\mathcal{E}_{n,2}(M,c).$$

Once again, by appealing to Banach-Alaoglu and Lemma \ref{wk.cvg.lem}, we can pass to a subsequence to find a weak limit $u$ of the sequence $u_j$, such that
$$u_j\to u\text{ strongly in }L^2(M),\text{ }T(u_j)\to T(u)\text{ in }L^2(\mu),$$
and
$$|u|\equiv |T(u)|\equiv 1.$$
Now, it follows from \eqref{0mean.cond} that
$$\int_M ud\mu=\int_M\phi ud\mu=0\in \mathbb{R}^{n+1},$$
so we see that each nonzero component $u^i$ of $u$ satisfies
$$\int_M |du^i|^2\,dv_g\geq \lambda_2(\mu,c)\int_M (u^i)^2d\mu,$$
and summing over $i=1,\ldots,n+1$ gives
$$\int_M |du|^2\,dv_g\geq \lambda_2(\mu,c).$$
In particular, it follows that
$$\lambda_2(\mu,c)\leq \int_M |du|_g^2\,dv_g\leq \liminf_{j\to\infty}\int_M |du_j|_g^2\,dv_g\leq 2\mathcal{E}_{n,2}(M,c).$$
This gives the desired estimate for the case $k=2$, and the proof of the rigidity result in the case of equality follows exactly the same lines as the proof of the corresponding result in the case $k=1$.

\end{proof}


\begin{thebibliography}{99}






\bibitem[AH]{AH} D.~R.~Adams, L.~I.~Hedberg, Function spaces and potential theory, volume 314
of {\em Grundlehren der Mathematischen Wissenschaften [Fundamental Principles of Mathematical
Sciences].} Springer-Verlag, Berlin, 1996.
\bibitem[Bar]{Barbosa} J.~Barbosa, On minimal immersions
of $\mathbb{S}^{2}$ into $\mathbb{S}^{2m}$, {\em Trans. Amer. Math. Soc.,} 
{\bf 210} (1975), 75--106.
\bibitem[BGT]{BGT} D.~Bucur, A.~Giacomini, P.~Trebeschi, $L^\infty$ bounds of Steklov eigenfunctions and spectrum stability under domain variations. {\em Journal of Differential Equations} {\bf 269}:12 (2020), 11461 -- 11491.
\bibitem[BBD]{BBD} P.~Buser, M.~Burger, J.~Dodziuk, Riemann surfaces of large genus and large $\lambda_1$, {\it Geometry and Analysis on Manifolds.} Lecture Notes in Mathematics {\bf 1339} (1988), 54--63.
\bibitem[CES]{CES} B.~Colbois, A.~El Soufi, Extremal eigenvalues of the Laplacian in a conformal class of metrics: the 'Conformal Spectrum', {\em Annals of Global Analysis and Geometry} {\bf 24}:4 (2003), 337--349.
\bibitem[CM]{CM} T.~H.~Colding, W.~P.~Minicozzi, A course in minimal surfaces. {\em Graduate studies in mathematics vol. 121.} American Mathematical Society, 2011.
\bibitem[CL]{CL} Y.M. Chen, F.H. Lin, Evolution of harmonic maps with Dirichlet boundary conditions, \emph{Comm. Anal. Geom.} {\bf 1} (1993), 327 -- 346.
\bibitem[CS]{CS} Y.M. Chen, M. Struwe, Existence and partial regularity results for the heat flow for harmonic maps, \emph{Math. Z.} {\bf 201}:1 (1989), 83 -- 103.
\bibitem[CSch]{ChoiSchoen} H.I. Choi, R. Schoen, The space of minimal embeddings of a surface into a three-dimensional manifold of positive Ricci curvature, \emph{Invent. Math.} {\bf 81} (1985), 387 -- 394.
\bibitem[CKM]{CKM} D.~Cianci, M.~Karpukhin, V.~Medvedev, On branched minimal immersions of surfaces by first eigenfunctions. {\em Annals of Global Analysis and Geometry} {\bf 56}:4 (2019), 667 -- 690.
Preprint {\tt arXiv:1711.05916}.
\bibitem[ES]{ESindex} A.~El Soufi, Applications harmoniques, immersions minimales et transformations conformes de la sph\`ere. {\em Compositio Mathematica}. {\bf 85}:3 (1993), 281 -- 298.
\bibitem[ESI]{ESI} A.~El Soufi, S.~Ilias, Laplacian eigenvalues functionals and metric deformations on compact manifolds. {\em J. Geom. Phys.} {\bf 58}:1 (2008), 89--104. 
\bibitem[FS]{FS} A.~Fraser, R.~Schoen, Sharp eigenvalue bounds and minimal surfaces in the ball.  {\em Inventiones mathematicae} {\bf 203}:3 (2016), 823--890.
\bibitem[FS2]{FS2} A.~Fraser, R.~Schoen, Minimal surfaces and eigenvalue problems. {\em Geometric analysis, mathematical relativity, and nonlinear partial differential equations}. {\bf 599} (2012), 105 -- 121.
\bibitem[GG]{GaspGu1} P.~Gaspar, M.~A.~Guaraco, The Allen-Cahn equation on closed manifolds, {\em Calc. Var. Partial Differential Equations.} {\bf 57} (2018), Art. 101.
\bibitem[GG2]{GaspGu2} P.~Gaspar, M.~A.~Guaraco, The Weyl Law for the phase transition spectrum and density of limit interfaces, {\em Geom. Funct. Anal.} {\bf 29} (2019), 382 -- 410.
\bibitem[Gh]{Ghou} N. Ghoussoub,  \emph{Duality and perturbation methods in critical point theory}, vol. 107, Cambridge University Press, 1993.
\bibitem[GNP]{GNP} A.~Girouard, N.~Nadirashvili, I.~Polterovich. Maximization of the second positive Neumann eigenvalue for planar domains. {\em Journal of Differential Geometry} {\bf 83}:3 (2009), 637 -- 662.
\bibitem[GKL]{GKL} A.~Girouard, M.~Karpukhin, J.~Lagac\'e, Continuity of eigenvalues and shape optimization for Laplace and Steklov problems. {\em Geometric and Functional Analysis} {\bf 31} (2021), 513 -- 561. Preprint {\tt arXiv:2004.10784}.
\bibitem[GLa]{GLS} A.~Girouard, J.~Lagac\'e, Large Steklov eigenvalues via homogenisation on manifolds. {\em Inventiones Mathematicae}, {\bf 226} (2021), 1011 -- 1056. Preprint {\tt arXiv:2004.04044}
\bibitem[GL]{GL} A.~Girouard, R.~Laugesen, Robin spectrum: two disks maximize the third eigenvalue. {\em Indiana Univ. Math. J.} {\bf 70}:6 (2021), 2711 -- 2742. Preprint {\tt arXiv:1907.13173}.
\bibitem[GP]{GP} A.~Girouard, I. Polterovich, Spectral geometry of the Steklov problem (survey article) {\em  Journal of Spectral Theory} {\bf 7}:2 (2017), 321--360.
\bibitem[G]{Gruter} M.~Gr\"uter, Eine Bemerkung zur regularit\"at station\"arer punkte von konform invarianten
Variationsintegralen, {\em Manuscripta Math.} {\bf 55} (1986), 451 -- 453.
\bibitem[Gu]{Gu} M. A. Guaraco, Min-max for phase transitions and the existence of embedded minimal hypersurfaces, \emph{J. Differential Geom.}, {\bf 108}:1 (2018), 91 -- 133. 
\bibitem[Has]{Has} A.~Hassannezhad, Conformal upper bounds for the eigenvalues of the Laplacian and Steklov problem. {\em Journal of Functional Analysis}, {\bf 261}:12 (2011), 3419--3436.
\bibitem[Her]{Hersch} J. Hersch, Quatre propri\'et\'es isop\'erim\'etriques de membranes sph\'eriques homog\`enes.  {\em C. R.
Acad. Sci. Paris S\'er A-B} 270 (1970), A1645--A1648.
\bibitem[HS]{HS} D.~Hoffman, J.~Spruck, Sobolev and isoperimetric inequalities for Riemannian
manifolds, {\em Comm. Pure Appl. Math.} {\bf 27} (1974), 715--727; Correction, ibid. {\bf 28}
(1975) 765--766.
\bibitem[J]{Jost} J.~Jost, Two-dimensional geometric variational problems, Wiley, New York, 1991.
\bibitem[Kar1]{KarRP2} M.~Karpukhin, Index of minimal spheres and isoperimetric eigenvalue inequalities, {\em Inventiones Mathematicae} {\bf 223} (2021), 335 - 377. Preprint {\tt arXiv:1905.03174}.
\bibitem[Kar2]{KarSteklov} M.~Karpukhin, Bounds between Laplace and Steklov eigenvalues on nonnegatively curved manifolds. {\em ERA MS} {\bf 24} (2017), 100-109.
\bibitem[Kar3]{KarNonOrientable} M.~Karpukhin, Upper bounds for the first eigenvalue of the Laplacian on non-orientable surfaces. {\em Int. Math. Research Notices}, {\bf 20}  (2016), 6200--6209.
\bibitem[KNPP]{KNPP} M.~Karpukhin, N.~Nadirashvili, A.~Penskoi, I.~Polterovich, An isoperimetric inequality for Laplace eigenvalues on the sphere. {\em J. Differential Geom.} {\bf 118}:2 (2021), 313 -- 333. Preprint {\tt arXiv:1706.05713}.
\bibitem[KNPP2]{KNPP2} M.~Karpukhin, N.~Nadirashvili, A.~Penskoi, I.~Polterovich, Conformally maximal metrics for Laplace eigenvalues on surfaces. {\em Surveys in Differential Geometry} {\bf 24}:1 (2019), 205 -- 256. Preprint {\tt arXiv:2003.02871}.
\bibitem[KNPS]{KNPS} M. Karpukhin, M. Nahon, I. Polterovich, D. Stern, Stability of isoperimetric problems for Laplace eigenvalues on surfaces. Preprint {\tt arXiv:2106.15043}.
\bibitem[Kok1]{Kokarev} G.~Kokarev, Variational aspects of Laplace eigenvalues on Riemannian surfaces. {\em Advances in Mathematics} {\bf 258} (2014), 191--239.
\bibitem[Kok2]{KokarevVc} G.~Kokarev, Conformal volume and eigenvalue problems. {\em Indiana Univ. Math. J.} {\bf 69} (2020), 1975 -- 2003. Preprint {\tt arXiv:1712.08150}.
\bibitem[Kor]{Korevaar} N.~Korevaar, Upper bounds for eigenvalues of conformal metrics. {\em J. Differential Geom.} {\bf 37}:1 (1993), 79--93.
\bibitem[KS21]{KS21} M.~Karpukhin, D.~L.~Stern, From Steklov to Laplace: free boundary minimal surfaces with many boundary components. Preprint {\tt arXiv:2109.11029}.
\bibitem[L]{L} R.~Laugesen, Well-posedness of Weinberger's center of mass by euclidean energy minimization. {\em Journal of Geometric Analysis}, {\bf 31}:9 (2021), 8762--8779. Preprint {\tt arXiv:2003.09071}.
\bibitem[LY]{LiYau} P.~Li, S.-T.~Yau, A new conformal invariant and its applications to the Willmore conjecture and the first eigenvalue of compact surfaces. {\em Inventiones mathematicae}, {\bf 69}:2 (1982), 269--291.
\bibitem[LW]{LW} F-H Lin, C-Y Wang, Harmonic and quasi-harmonic spheres, Part II, \emph{Comm. Anal. Geom.} {\bf 10} (2002), 341--375.
\bibitem[MS]{MS} H.~Matthiesen, A.~Siffert, Existence of metrics maximizing the first eigenvalue on non-orientable surfaces. {\em J. Spectr. Theory} {\bf 11}:3 (2021), 1279--1296. Preprint {\tt arXiv:1703.01264}.
\bibitem[MR]{MooreReam} J.D. Moore, R. Ream, Minimal two-spheres of low index in manifolds with positive complex sectional curvatures. \emph{ Math. Z.} {\bf 291} (2019), 1295--1335.
\bibitem[N1]{NadirashviliTorus} N. Nadirashvili, Berger's isoperimetric problem and minimal immersions of surfaces. {\em Geom. Funct. Anal}, {\bf 6}:5 (1996), 877--897.
\bibitem[N2]{NadirashviliS2} N.~Nadirashvili, Isoperimetric inequality for the second eigenvalue of a sphere. {\em J. Differential Geom.} {\bf 61}:2 (2002), 335--340.
\bibitem[NS]{NS} N.~Nadirashvili, Y.~Sire, Conformal spectrum and harmonic maps. {\em Moscow Mathematical Journal} {\bf 15}:1 (2015), 123--140.
\bibitem[Par]{Parker} T.~H.~Parker, Bubble tree convergence for harmonic maps. {\em Journal of Differential Geometry} {\bf 44}:3 (1996), 595--633.
\bibitem[P1]{PetridesS2} R.~Petrides, Maximization of the second conformal eigenvalue of spheres. {\em Proceedings of the American Mathematical Society} {\bf 142}:7 (2014), 2385--2394.
\bibitem[P2]{Petrides} R.~Petrides,  Existence and regularity of maximal metrics for the first Laplace eigenvalue on surfaces. {\em Geometric and Functional Analysis,} {\bf 24}:4 (2014), 1336--1376.
\bibitem[P3]{Petridesk} R.~Petrides, On the existence of metrics which maximize Laplace eigenvalues on surfaces.
{\em Int. Math. Research Notices}, {\bf 14} (2018), 4261--4355.
\bibitem[P4]{PetridesSteklov} R.~Petrides, Maximizing Steklov eigenvalues on surfaces. {\em  Journal of Differential Geometry} {\bf 113}:1 (2019), 95--188.
\bibitem[RT]{RT} J.~Rauch, M.~Taylor, Potential and scattering theory on wildly perturbed domains, {\em J. of Functional Analysis,} {\bf 18} (1975), 27 -- 59.
\bibitem[SU]{SU} J.~Sacks, K.~Uhlenbeck, The existence of minimal immersions of 2-spheres. {\em Annals of mathematics} (1981), 1--24.
\bibitem[Sch]{Schoen} R.~Schoen, Analytic aspects of the harmonic map problem, {\em M.S.R.I. Seminar on
Nonlinear Partial Differential Equations, (ed. S.S. Chern),} Springer, New York, 1984, 321 -- 358.
\bibitem[St1]{Stern} D. Stern, Existence and limiting behavior of min-max solutions of the Ginzburg-Landau equations on compact manifolds. {\em J. Differential Geom.} {\bf 118}:2 (2021),  335 -- 371.
\bibitem[St2]{St.pharm} D. Stern, $p$-Harmonic maps to $\mathbb{S}^1$ and stationary varifolds of codimension 2. {\em Calc. Var.} {\bf 59}:187 (2020). Preprint {\tt arXiv:1802.03053.}
\bibitem[YY]{YY} P.~C.~Yang, S.-T.~Yau, Eigenvalues of the Laplacian of compact Riemann surfaces and minimal submanifolds, {Annali della Scuola Normale Superiore di Pisa-Classe di Scienze} {\bf 7}:1 (1980), 55 -- 63.

\end{thebibliography}
\end{document}